\documentclass[10.5pt,reqno]{amsart}
\usepackage{longtable} 
\usepackage{hyperref}
\usepackage[T1]{fontenc}
\usepackage[utf8]{inputenc}
\usepackage[english]{babel} 
\usepackage{textcomp}
\usepackage{dsfont}
\usepackage{latexsym}
\usepackage{amssymb}
\usepackage{amsthm}
\usepackage{amsmath}
\DeclareMathAlphabet{\mathpzc}{OT1}{pzc}{m}{en}
\usepackage{yfonts}
\usepackage{xfrac}
\usepackage{newlfont}
\usepackage{graphicx}
\usepackage{mathtools}
\usepackage{comment}
\usepackage{indentfirst}
\usepackage{braket}
\usepackage{mathrsfs}
\usepackage{xcolor}

\usepackage{etoolbox}

\usepackage{scalerel}[2014/03/10]
\usepackage[usestackEOL]{stackengine}
\newcommand{\dashint}{\,\ThisStyle{\ensurestackMath{%
			\stackinset{c}{.2\LMpt}{c}{.5\LMpt}{\SavedStyle-}{\SavedStyle\phantom{\int}}}%
		\setbox0=\hbox{$\SavedStyle\int\,$}\kern-\wd0}\int}

\newcommand{\Car}{\mathrm{C}}
\newcommand{\RC}{\mathrm{RC}}
\newcommand{\Samp}{\mathrm{S}}

\DeclareMathOperator{\Ad}{Ad}
\DeclareMathOperator{\card}{Card}
\DeclareMathOperator{\sgn}{sgn}

\DeclareMathOperator{\supp}{Supp}

\DeclareMathOperator{\ad}{ad}

\newcommand{\Supp}[1]{\supp\left( #1\right) }

\newcommand{\ee}{\mathrm{e}}

\newcommand{\Ss}{\mathscr{S}}

\newcommand{\loc}{\mathrm{loc}}
\newcommand{\vect}[1]{\mathbf{{#1}}}
\newcommand{\dd}{\mathrm{d}}

\DeclarePairedDelimiter{\abs}{\lvert}{\rvert}

\DeclarePairedDelimiter{\norm}{\lVert}{\rVert}

\let\originalleft\left
\let\originalright\right
\renewcommand{\left}{\mathopen{}\mathclose\bgroup\originalleft}
\renewcommand{\right}{\aftergroup\egroup\originalright}

\newcommand{\grado}{\Df}
\newcommand{\N}{\mathds{N}}
\newcommand{\Z}{\mathds{Z}}
\newcommand{\Q}{\mathds{Q}}

\newcommand{\C}{\mathds{C}}

\newcommand{\R}{\mathds{R}}

\newcommand{\gf}{\mathfrak{g}}

\newcommand{\Df}{\mathfrak{D}}

\newcommand{\Ac}{\mathcal{A}}

\newcommand{\Cc}{\mathcal{C}}
\newcommand{\Dc}{\mathcal{D}}
\newcommand{\Ec}{\mathcal{E}}

\newcommand{\Kc}{\mathcal{K}}
\newcommand{\Lc}{\mathcal{L}}
\renewcommand{\Mc}{\mathcal{M}}
\newcommand{\Nc}{\mathcal{N}}
\newcommand{\Oc}{\mathcal{O}}

\newcommand{\Sc}{\mathcal{S}}

\newcommand{\Sr}{\mathscr{S}}

\newcommand{\meg}{\leqslant}
\newcommand{\Meg}{\geqslant}
\newcommand{\eps}{\varepsilon}
\renewcommand{\phi}{\varphi}
\newcommand{\mi}{\mu}

\keywords{Lie groups,  Triebel--Lizorkin spaces, weighted subcoercive operators.}
\thanks{{\em Math Subject Classification 2020}: 46E36, 22E30.}
\thanks{The author is a member of the 	Gruppo Nazionale per l'Analisi
	Matematica, la Probabilit\`a e le	loro Applicazioni (GNAMPA) of
	the Istituto Nazionale di Alta Matematica (INdAM). The author was partially funded by the INdAM-GNAMPA Project CUP\_E5324001950001.
}

\begin{document}
	
	\title[Besov and Triebel--Lizorkin Spaces]{Besov and Triebel--Lizorkin Spaces on Filtered Lie Groups with Polynomial Growth, I: Inclusions, Discretization, Duality}
	
	\author[M.\ Calzi]{Mattia Calzi} 
	\address{Dipartimento di Matematica, Universit\`a degli Studi di
		Milano, Via C. Saldini 50, 20133 Milano, Italy}
	\email{{\tt mattia.calzi@unimi.it}}
	
	\theoremstyle{definition}
	\newtheorem{deff}{Definition}[section]

	\newtheorem{oss}[deff]{Remark}
	
	\newtheorem{ass}[deff]{Assumptions}
	
	\newtheorem{nott}[deff]{Notation}

	\theoremstyle{plain}
	\newtheorem{teo}[deff]{Theorem}
	
	\newtheorem{lem}[deff]{Lemma}
	
	\newtheorem{prop}[deff]{Proposition}
	
	\newtheorem{cor}[deff]{Corollary}
	
	\begin{abstract}
		We continue to develop a theory of  Besov and Triebel--Lizorkin spaces associated with weighted subcoercive operators on a real connected Lie group, specializing to the case of groups with polynomial volume growth. We consider the full scale of spaces and consider equivalent definitions, inclusions, discretization, and duality.
	\end{abstract}
	
	\maketitle
	
	\section{Introduction}

	Besov and Triebel--Lizorkin spaces form a large class of function spaces on the Euclidean spaces which provides a uniform, albeit somewhat technical, way to study simultaneously several classical function spaces, such as Sobolev spaces with integer regularity, fractional Sobolev spaces, both in the version of Sovolev--Slobodeckij--Gagliardo spaces and in the version of Bessel potential spaces, Lipschitz spaces, Hardy and BMO spaces, etc. These spaces have been extensively studied in the classical Euclidean setting (cf., e.g.,~\cite{TriebelFS,TriebelFS2,TriebelFS3}), but have also been extended to more general contexts, such as: open subsets of $\R^n$ (cf., e.g.,~\cite[Chapter 5]{TriebelFS2}); Riemannian manifolds with bounded geometry (cf., e.g.,~\cite[Chapter 7]{TriebelFS2}); Lie groups endowed with a left-invariant Riemannian (cf., e.g.,~\cite[Chapter 7]{TriebelFS2}) or sub-Riemannian metric (cf., e.g.,~\cite{BPV,BPV2,BPV3}); metric spaces endowed with suitable operators resembling a (sub-)Laplacian (cf., e.g.,~\cite{TriebelFS3,Hu}).\footnote{The literature on the subject is quite extensive and the above mentioned reference should only be intended as a very short list of examples, which is by no means complete.} 
	There are nonetheless some contexts where `measuring regularity' in a `Riemannian way,' that is, grouping together all differential operators of the same order, or, more generally, in a `sub-Riemannian way,' appears to be inconvenient since it either clashes with the underlying geometry of the space or with the structure of the differential operator at hand. For instance, let $G$ be a homogeneous group, that is, a simply connected nilpotent Lie group whose Lie algebra $\gf$ has a graduation $(\gf_\lambda)_{\lambda>0}$; in other words, $\gf=\bigoplus_{\lambda>0} \gf_\lambda$ and $[\gf_\lambda,\gf_\nu]\subseteq \gf_{\lambda+\mi}$ for every $\lambda,\mi>0$. Then, $G$ may be identified with $\gf$ by means of the exponential map and $\gf$ may be endowed with a family of automorphic dilations $(\delta_r)_{r>0}$ defined so that $\delta_r(X)=r^\lambda X$ for every $X\in \gf_\lambda$. Operators which are compatible with these dilations (that is, homogeneous operators) are consequently quite natural in this context and one is therefore led to consider (`Goodman type') Sobolev spaces of the form $\Set{f\in L^p(G)\colon\forall \alpha\; (d_\alpha\meg k \implies\vect X^\alpha f\in L^p(G))}$, where $\vect X^\alpha=X_1^{\alpha_1}\cdots X_n^{\alpha_n}$ for some homogeneous basis $(X_1,\dots, X_n)$ of $\gf$, and where $d_\alpha=\sum_j \alpha_j \deg(X_j)$. It turns out, however, that these spaces behave quite weirdly for general $k$ -- for instance, they do not interpolate as one may expect. In fact, a different class of `Bessel potential' Sobolev spaces exhibiting a more natural behviour was introduced in~\cite{FischerRuzhansky}, and was shown to coincide with the previous `Goodman type' Sobolev spaces only for specific values of $k$. It is now worthwhile remarking that, whereas the usual `Bessel potential' Sobolev spaces (and, more generally, several of the Besov and Triebel--Lizorkin spaces briefly mentioned above) are essentially constructed using a (sub-)Laplacian, these `homogeneous' Sobolev spaces were constructed using positive Rockland operators instead, that is, homogeneous and hypoelliptic left-invariant differential operators. A definition using second order differential operator would simply not be possible. 
	Notice, by the way, that replacing a second-order subelliptic differential operator with a higher-order one provides several technical difficulties, since the associated heat kernel cannot be positive, and there is no longer any finite speed property for the corresponding wave propagator -- tools which often lie at the core of several proofs in the literature.
	
	It is therefore natural to wonder whether there is a more general framework which allows to deal at the same time with sub-Laplacians and positive Rockland operators. As a matter of fact, ter Elst and Robinson showed that weighted subcoercive operators provide are quite reasonable and natural choice (cf.~\cite{ElstRobinson}). Indeed, given a group $G$ whose Lie algebra is endowed with a suitable increasing filtration $(\gf_\lambda)_{\lambda>0}$, it is possible to associate a homogeneous group $G_*$ (its `contraction') to $G$ in a natural way, and to associate to every left-invariant differential operator $\Lc$ of degree $d$ some homogeneous left-invariant differential operator $P$ of degree $d$ on $G_*$ (which plays the r\^ole of the `principal part' of $\Lc$). The operator $\Lc$ is then said to be weighted subcoercive if $P+P^*$ is a positive Rockland operator. Notice that this definition mimics closely that of elliptic operators, and that Rockland operators play the r\^ole of homogeneous elliptic operators. Weighted subcoercive operators then enjoy several useful properties. For example, they generate a heat semigroup $(\ee^{-t\Lc})$ whose convolution kernel satisfies suitable Gaussian estimates (even though the exponential decay depends on the degree $d$ and is milder than the classical one). If, in addition, $\Lc$ is formally self-adjoint, then the closure of $\Lc$ on the space of test functions is self-adjoint on $L^2$, hence generates a functional calculus which enjoys particularly interesting properties when $G$ has polynomial growth.
	As shown in~\cite{BCP}, using the heat semigroup associated with a weighted subcoercive operator allows one to define natural Besov and Triebel--Lizorkin  spaces  $B^{p,q}_\alpha$ and $F^{p,q}_\alpha$ on a general connected filtered Lie group. The resulting spaces then do not depend on the chosen operator. In~\cite{BCP,Calzi2,Calzi3} several properties of these spaces were proved, including:
	\begin{itemize}
		\item $B^{p,q}_\alpha$ and $F^{p,q}_\alpha$ are Banach spaces;
		
		\item the space $C^\infty_c(G)$ of test functions is dense in $B^{p,q}_\alpha$ and $F^{p,q}_\alpha$ for $p,q<\infty$;
		
		\item $B^{p',q'}_{-\alpha}$ and $F^{p',q'}_{-\alpha}$ may be canonically identified with the duals of $B^{p,q}_\alpha$ and $F^{p,q}_\alpha$, respectively, when $p,q<\infty$;
		
		\item $B^{p_1,q_1}_{\alpha_1}\subseteq B^{p_2,q_2}_{\alpha_2}$ when $p_1\meg p_2$, $\alpha_2 -Q_*/p_2\meg \alpha_1-Q_*/p_1$, and either $q_1\meg q_2$ or $\alpha_2 -Q_*/p_2< \alpha_1-Q_*/p_1$, where $Q_*$ denotes the homogeneous dimension of $G_*$;\footnote{This and the following fact are actually true when $G$ is endowed with a left Haar measure, and should be slightly modified in the general case.}
		
		\item   $F^{p_1,q_1}_{\alpha_1}\subseteq F^{p_2,q_2}_{\alpha_2}$ when $p_1\meg p_2$, $\alpha_2 -Q_*/p_2\meg \alpha_1-Q_*/p_1$, and either $q_1\meg q_2$ or $p_1<p_2$;
		
		\item $B^{p,q}_\alpha,F^{p,q}_\alpha \subseteq L^p$ when $\alpha>0$, and $F^{p,2}_0=L^p$ for $p\in (1,\infty)$;
		
		\item if $\omega\in \R$ is sufficiently large, then $(\Lc+\omega I)^{-\alpha'}$ induces canonical isomorphisms of $B^{p,q}_\alpha$ and $F^{p,q}_\alpha$ onto $B^{p,q}_{\alpha+\alpha'}$ and $F^{p,q}_{\alpha+\alpha'}$, respectively;
		
		\item if $(X_j)$ is a family of elements of $\gf$ such that the corresponding elements $Y_j$ of $\gf_*$ induce a basis of $\gf_*/[\gf_*,\gf_*]$, and if $\dd$ is the least common multiple of the $d_j=\deg(X_j)$, then $f\in B^{p,q}_\alpha$ (resp.\ $f\in F^{p,q}_\alpha$) if and only if $\ee^{-\Lc}f\in L^p$ and $X_j^{\dd/d_j}\in B^{p,q}_{\alpha-\dd}$ (resp.\ $X_j^{\dd/d_j}\in F^{p,q}_{\alpha-\dd}$) for every $j$. In particular $F^{p,2}_{k\dd}$ coincides with the above-mentioned `Goodman type' Sobolev spaces when $k\in\N$ and $p\in (1,\infty)$;
		
		\item the spaces $B^{p,q}_\alpha$ and the spaces $F^{p,q}_\alpha$ interpolate as the classical ones;
		
		\item $B^{p,q}_\alpha\cap L^\infty$ and $F^{p,q}_\alpha\cap L^\infty$ are algebras under pointwise multiplication for every $\alpha>0$. In particular, $B^{p,q}_\alpha$ and $F^{p,q}_\alpha$ are algebras for every $\alpha>Q_*/p$;
		
		\item the spaces of pointwise multipliers of $B^{p,q}_\alpha$ and $F^{p,q}_\alpha$ may be characterized for $\alpha>Q_*/p$;
		
		\item  the spaces $F^{p,q}_\alpha$  enjoy a localization property as the classical ones;

		\item some Besov and Triebel--Lizorkin spaces may be described in terms of (finite) differences.
	\end{itemize}
	
	The purpose of this paper is to consider the full scale of the Besov and Triebel--Lizorkin spaces (that is, for $p,q\in(0,\infty]$) when $G$ has polynomial volume growth. Indeed, in this case it is possible to effectively use the $L^2$ functional calculus associated to any formally self-adjoint weighted subcoercive operator in order to provide `Littlewood--Paley decompositions' and then obtain descriptions of Besov and Triebel--Lizorkin spaces which, besides the notable absence of the Fourier transform, resemble more closely the `classical' descriptions in the Euclidean setting. We shall, by the way, prove that the more general characterizations with the heat semigroup actually define the same spaces, a fact which was not addressed in~\cite{BCP}, even in the classical Euclidean setting. As a matter of fact, in the classical setting the available proofs (cf., e.g.,~\cite{Triebel2}) only show that the heat semigroup may allow to construct equivalent (quasi-)norms on Besov and Triebel--Lizorkin spaces, but do \emph{not} guarantee that the latter (quasi-)norms, when suitably extended to tempered distributions, are finite exactly on the corresponding Besov and Triebel--Lizorkin spaces. We shall therefore address this little gap in the general theory.
	
	We shall then prove the analogues of several of the above properties of Besov and Triebel--Lizorkin spaces. Notice that the proofs may sometimes be similar in spirit, but require different techniques, since the heat semigroup proves to be quite difficult to handle when $\min(p,q)<1$. In addition to these analogues, we shall also consider the analogue of the discretization procedure devised in~\cite{FrazierJawerth}. Notice that, in this more general context, we cannot hope to get a discretization map which induces isomorphisms of $B^{p,q}_\alpha$ onto the corresponding discretized space $b^{p,q}_\alpha\cong\ell^{p,q}$ and of $F^{p,q}_\alpha$ onto the corresponding discretized space $f^{p,q}_\alpha$. Nonetheless, these discretization techniques still prove quite useful in developing the theory of Besov and Triebel--Lizorkin spaces.
	
	Let us also mention that several additional works (cf.~\cite{Calzi5,CalziRizzo}) on this subject are in preparation. In particular, we plan to compare the spaces $F^{p,2}_0$ with suitably defined local Hardy and bmo spaces, as well as to consider the analogues of other properties of the classical Besov and Triebel--Lizorkin spaces, such as interpolation, algebra properties, and pointwise multipliers. Since the comparison between $F^{p,2}_0$ with local Hardy spaces is most naturally done for distributions with values in a Hilbert space, in this paper we shall  develop a theory of Banach-space valued Besov and Triebel--Lizorkin spaces. Even though we shall state (almost) all technical lemmas in the vector-valued case, we shall actually state the main theory in the   scalar case for simplicity, and only point out the little modifications (mostly in the interpretation of the formulae) that one should implement to the scalar case in order to deal with the vector valued one.
	
	Here is a plan of the paper. In Section~\ref{sec:2}, we collect several preliminary results: after recalling the definition and the main properties of weighted subcoercive operators, we shall add some technical lemmas which allow to control the Schwartz norms of the convolution kernel of suitable functions of $\Lc$; we shall then introduce an appropriate maximal function and discuss some abstract results which will be of great use when developing the aforementioned discretization. In Section~\ref{sec:3}, we shall introduce Besov and Triebel--Lizorkin spaces and discuss some equivalences of the corresponding quasi-norms. We shall then discuss the density of smooth and test functions and the interaction of these spaces with Bessel operators and differential operators. 
	In Section~\ref{sec:4}, we shall provide the sampling and atomic decomposition theorems which are at the core of the discretization of Besov and Triebel--Lizorkin spaces. As a corollary, we shall get the `Sobolev' embeddings between these spaces. The proof of the atomic decomposition theorem, which is partially tangled to the duality results, will only be completed in Section~\ref{sec:5}, which deals with duality.

	\smallskip
	
	We would like to thank professor A.\ Martini for pointing out and discussing his work~\cite{MartiniSfere}, which turned out to be fundamental in the elaboration of this paper and the one to follow.
	
	\section{Preliminaries}\label{sec:2}
	
	\subsection{Relatively Invariant Measures and Convolution}

	Throughout the paper, we shall denote with $G$ a connected (finite-dimensional, real) Lie group  with Lie algebra $\gf$. We shall denote with $\beta$ a left Haar measure and we shall assume that $\beta$ has polynomial volume growth. In other words, we shall assume that there is $Q_G \Meg 0$ such that, for every compact neighbourhood $V$ of the identity $e$ in $G$,  
	\[
	\beta(V^n)\asymp n^{Q_G}
	\]
	for $n\to \infty$. Then, $\beta$ is also a right Haar measure.
		
	We now recall some facts about convolution. Given two convolvable\footnote{We shall not provide a precise definition of this concept, since this would drag us too far away from the main topic. } distributions $f,g$ on $G$, one has
	\[
	\langle f*g, \phi\rangle= \langle f\otimes g, (x,y)\mapsto \phi(xy)\rangle
	\]
	for every $\phi\in C^\infty_c(G)$.
	We shall identify each $f\in L^1_\loc(G)$ with $f\cdot \beta$, that is, the measure with density $f$ with respect to $\beta$. Given two convolvable functions $f,g$ such that $f*g$ is absolutely continuous with respect to $\beta$, we shall generally identify $f*g$ with its density with respect to $\beta$. Thus, under very mild conditions which will be always verified in the applications,
	\begin{equation}\label{eq:1}
	(f* g)(x)=\int_G f(x y^{-1}) g(y)  \,\dd \beta(y)= \int_G f(y) g(y^{-1}x) \,\dd \beta(y).
	\end{equation} 
	
	Similar formulae apply when either $f$ or $g$ is a distribution (and the convolution is still a function).
	Observe that, if $f,g$ are convolvable distributions, $X$ is a left-invariant differential operator, and $Y$ is a right-invariant differential operator, then $Yf $ and $X g$ are convolvable and
	\[
	YX(f*g)=(Yf)*(Xg).
	\]
	
	If $f,g,h$ are distributions then, under some reasonable conditions that will always be satisfied in the applications, 
	\[
	\langle f * g, h \rangle =\langle f, h*  \check g \rangle =\langle g,   \check f * h\rangle,
	\]
	where $\langle \check f,\phi\rangle=\langle f,\check \phi \rangle$ for every $\phi\in C^\infty_c(G)$, and $\check \phi=\phi(\,\cdot\,^{-1})$.  
	
	We now recall Young's inequality in this context. Take $p_1,p_2,p_3\in [1,\infty]$ so that $\frac{1}{p_1'}+\frac{1}{p_2'}=\frac{1}{p_3'}$. Then,
	\[
	\norm{f*g}_{L^{p_3}(G)}\meg \norm{f}_{L^{p_1}(G)}\norm{g}_{L^{p_2}(G)}
	\] 
	for every two positive $\beta$-measurable functions $f,g$ (for positive measurable functions, convolution may be defined by means of~\eqref{eq:1}).  
	
	In order to simplify the notation, given a measure $\mi$ on a measurable space $X$, $p\in (0,\infty]$, and a $\mi$-measurable function $f$ on $X$, we shall also write 
	\[
	\norm{f(x)}_{L^p_x(\mi)} \qquad \text{instead of} \qquad\norm{f}_{L^p(\mi)}.
	\]
	This will be particularly useful when dealing with nested norms.

	\subsection{Differential Operators}
	
	\begin{deff}
		We shall generally identify the (complexification of the) enveloping algebra $U(G)$ of $\gf$ with the algebra of left-invariant differential operators. We shall fix a scalar product on $\gf$, and we shall endow $U(G)$ with the corresponding scalar product, namely the quotient of the natural scalar product on the (complexfication of the) tensor algebra over $\gf$.\footnote{The actual scalar product on $U(G)$ will not matter in the sequel.}
		
		Let $X$ be a left-invariant differential operator. We  denote with $X^R$ the right-invariant differential operator which induces the same point distribution as $X$ at $e$. In other words, $(X f)(e)=(X^R f)(e)$ for every $f\in C^\infty (G)$. We denote with $X^+$ the transpose of $X$ in the enveloping algebra $U(G)$. In other words, the mapping $X\mapsto X^\dag$ is the unique anti-automorphism of $U(G)$ (that is, such that $(XY)^\dag=Y^\dag X^\dag$) which extends the automorphism $X\mapsto -X$ of $\gf$. 
		Then, $X^\dag$ the formal transpose of $X$ (with respect to $\beta$), that is, the unique left-invariant differential operator such that
		\[
		\int_G (X f) g\,\dd \beta=\int_G f X^\dag g\,\dd \beta
		\]
		for every $f,g\in C^\infty_c(G)$. We denote with $X^*$ the formal adjoint of $X$, that is, $\overline X^\dag$. We define the formal transpose and the formal adjoint of right-invariant differential operators in a similar way. If $u$ is a distribution, we then define $X u$ so that
		\[
		\langle X u,\phi\rangle =\langle u, X^\dag \phi\rangle
		\]
		for every $\phi\in C^\infty_c(G)$. In this way, if $f\in C^\infty(G)$, then $(X f)\cdot \beta=X(f\cdot \beta)$.

		In order to simplify the notation, we write $X^{R\dag}$ instead of $(X^R)^\dag$, etc.
	\end{deff}
	 
	Cf.~\cite[Proposition 2.2]{BCP} for a proof of the following result.
	
	\begin{prop}\label{prop:9}
		The following hold:
		\begin{enumerate}
			\item[\textnormal{(1)}]   $X^{R\dag}  =  X^{\dag R} $ for every $X\in U(G)$;
			
			\item[\textnormal{(2)}] $X\delta_e= X^R\delta_e=  (X^\dag \delta_e)\check{\;}$   for every $X\in U(G)$;
			
			\item[\textnormal{(3)}] $X^\dag f=(X^R \check f) \check{\,}$ for every $X\in U(G)$ and for every $f\in C^\infty(G)$.
		\end{enumerate}
	\end{prop}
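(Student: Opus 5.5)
Since every map involved is (anti-)multiplicative, the plan is to reduce the three identities to the case $X\in\gf$ and then extend to all of $U(G)$ by induction on the length of monomials. By linearity it suffices to treat monomials $X=X_1\cdots X_k$ with $X_j\in\gf$. For $X\in\gf$ we use the concrete formulas
\[
Xf(x)=\frac{\dd}{\dd t}f(x\exp(tX))\Big|_{t=0},\qquad X^Rf(x)=\frac{\dd}{\dd t}f(\exp(tX)x)\Big|_{t=0}.
\]
The map $X\mapsto X^R$ is an anti-homomorphism of $U(G)$ into the right-invariant operators. Indeed, right-invariant vector fields satisfy $[X^R,Y^R]=-[X,Y]^R$, and the rule $(XY)^R=Y^RX^R$ is forced by requiring $(Xf)(e)=(X^Rf)(e)$: left- and right-invariant operators commute, so $(XYf)(e)=(X(Yf))(e)=(X^R Yf)(e)=(YX^Rf)(e)=(Y^RX^Rf)(e)$.

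\textbf{Proof of (3).} For $X\in\gf$ the identity is a direct computation. We have $\check f(\exp(tX)x^{-1})=f(x\exp(-tX))$, so
\[
(X^R\check f)\check{\;}(x)=-Xf(x)=X^\dag f(x),
\]
using unimodularity (which follows from polynomial growth) to get $X^\dag=-X$ for vector fields. For general monomials we induct. If $X=X_1\cdots X_k$, then $X^\dag=(-1)^kX_k\cdots X_1$ and $X^R=X_k^R\cdots X_1^R$. Iterating the case $k=1$ gives
\[
X^\dag f=(-1)^k X_k\cdots X_1 f=\bigl(X_k^R\cdots X_1^R\check f\bigr)\check{\;}.
\]

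\textbf{Proof of (1).} Take the formal transpose of $X^R$ with respect to $\beta$. For a right-invariant vector field, $(X^R)^\dag=-X^R$, since the flow is left translation and $\beta$ is left invariant. Anti-multiplicativity of transposition then gives $(X_k^R\cdots X_1^R)^\dag=(-1)^kX_1^R\cdots X_k^R$. On the other side, $X^{\dag R}=((-1)^kX_k\cdots X_1)^R=(-1)^kX_1^R\cdots X_k^R$, and the two agree.

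\textbf{Proof of (2).} Pair each distribution with a test function $\phi$. First,
\[
\langle X\delta_e,\phi\rangle=(X^\dag\phi)(e)=(X^{\dag R}\phi)(e)=(X^{R\dag}\phi)(e)=\langle X^R\delta_e,\phi\rangle,
\]
using the defining property of ${}^R$ and then (1). Second,
\[
\langle (X^\dag\delta_e)\check{\;},\phi\rangle=\langle X^\dag\delta_e,\check\phi\rangle=(X\check\phi)(e).
\]
Applying (3) to $X^\dag$ gives $X\check\phi=(X^{\dag R}\phi)\check{\;}$, whose value at $e$ is $(X^{\dag R}\phi)(e)=(X^\dag\phi)(e)$, matching the first expression.

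The main obstacle is bookkeeping of conventions: the sign of $X^\dag$ on $\gf$, the order reversal in $X\mapsto X^R$, and whether $\check{}$ is taken with respect to $\beta$. The latter is harmless here because $\beta$ is bi-invariant, but in the non-unimodular case a modular factor would appear. I would fix everything by checking the $k=1$ computations carefully and then inducting.
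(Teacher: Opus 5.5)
Your proof is correct. The paper gives no argument of its own for this statement; it only refers to \cite[Proposition 2.2]{BCP}, so there is no in-paper proof to compare against. Your argument is a complete, self-contained replacement. It reduces to $X\in\gf$ and then extends by anti-multiplicativity of three maps:
\begin{itemize}
\item $X\mapsto X^R$, which you justify correctly: left- and right-invariant operators commute, and you implicitly use that a right-invariant operator is determined by its point distribution at $e$;
\item the transpose $\dag$ on $U(G)$;
\item formal transposition of differential operators.
\end{itemize}

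One small point of attribution. In (3) you do not need unimodularity to get $X^\dag=-X$ on $\gf$, because that is the definition of $\dag$ as the anti-automorphism extending $X\mapsto -X$. Unimodularity is what identifies this algebraic transpose with the formal transpose with respect to $\beta$. That identification is what underlies the convention $\langle Xu,\phi\rangle=\langle u,X^\dag\phi\rangle$ you use in (2). In fact, (3) is a purely group-theoretic identity, and your proof of (1) uses only the left invariance of $\beta$.
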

	
	Notice that, by (2), 
	\[
	\begin{split}
		(Xf)*g=(f*X\delta_e)*g=f*(X\delta_e*g)=f*(X^{R} \delta_e*g)=  f*(X^R g)
	\end{split}
	\]
	under some reasonable conditions on $f$ and $g$ (which are needed to grant associativity of convolution).
	
	In addition, observe that, by our choice of the scalar product on $U(G)$, one has $\abs{X}=\abs{X^+}=\abs{\overline X}$, hence also $\abs{X}=\abs{X^*}$ for every $X\in U(G)$.
	
	\subsection{Filtrations and Weighted Subcoercive Operators}
	
	Throughout the paper, $(\gf_\lambda)_{\lambda\Meg 0}$ will denote an increasing filtration of $\gf$ (that is, $[\gf_\lambda, \gf_\mi]\subseteq \gf_{\lambda+\mi}$ for every $\lambda, \mi\Meg 0$) such that $\gf_\lambda=0$ for every $\lambda<1$, $\bigcup_{\lambda\Meg 0} \gf_\lambda=\gf$, and $\bigcap_{\mi>\lambda} \gf_\mi=\gf_\lambda$ for every $\lambda\Meg 0$.\footnote{We consider the full range $\lambda\Meg 0$  for notational convenience, in analogy with the filtration $(U_\lambda)$, for which $U_\lambda\neq \Set{0}$ for every $\lambda\Meg 0$.}
	
	\emph{In order for a weighted subcoercive operator adapted to the filtration $(\gf_\lambda)$ to exist, we shall assume that the set $\Lambda\coloneqq \Set{\lambda\Meg 1\colon \gf_\lambda \neq \bigcup_{\mi<\lambda} \gf_\mi}$ generate a $\Q$-vector space of dimension $1$.} In other words, setting $\dd_0\coloneqq \min \Lambda$, we require that $\Lambda \subseteq \Q \dd_0$.
	
	For every $X\in \gf$, we define  $\deg X\coloneqq \min\Set{\lambda\Meg0\colon X\in \gf_\lambda}$ and we call $\deg X$ the degree of $X$.
	Define, for every $\lambda>0$, $\gf_{\lambda^-}\coloneqq \bigcup_{\mi<\lambda} \gf_\mi$, $\gf_{*,\lambda}\coloneqq\gf_\lambda/\gf_{\lambda^-} $, and 
	\[
	\gf_*\coloneqq \bigoplus_{\lambda>0} \gf_{*,\lambda}.
	\]
	Define a Lie algebra structure on $\gf_*$ as follows: if $X=\sum_{\lambda>0} (X_\lambda+ \gf_{\lambda^-})$ and $Y= \sum_{\lambda>0} (Y_\lambda+ \gf_{\lambda^-})$ for some $(X_\lambda),(Y_\lambda)\in \prod_{\lambda>0} \gf_\lambda$, then
	\[
	[X,Y]\coloneqq \sum_{\lambda,\mi>0}\left(  [X_{\lambda},Y_{\mi}]+\gf_{(\lambda+\mi)^-}\right) .
	\]
	It is easily seen that $(\gf_{*,\lambda})_{\lambda>0}$ is a graduation of type $((0,+\infty),+)$ of $\gf_*$, that is, $[\gf_{*,\lambda}, \gf_{*,\mi}]\subseteq \gf_{*,\lambda+\mi}$ for every $\lambda,\mi>0$. One may then endow $\gf_*$ with the automorphic dilations $(\delta_r)_{r>0}$ defined so that $\delta_r(X)=r^\lambda X$ for every $X\in \gf_{*,\lambda}$ and for every $\lambda>0$. Thus, $\gf_*$ is the Lie algebra of some homogeneous group $G_*$, with homogeneous dimension $Q_*\coloneqq \sum_{\lambda>0} \dim \gf_{*,\lambda}$.
	
	We observe explicitly that we required $\gf_\lambda=\Set{0}$ for $\lambda<1$ in order for the control modulus $\abs{\,\cdot\,}_*$ (cf.~Definition~\ref{def:3} below) to induce a left-invariant \emph{distance} on $G$ (rather than a quasi-distance). There may be also good reasons to require $\gf_1\neq \Set{0}$. We preferred to avoid imposing this condition in order to keep a natural comparison with graded (or, more generally, homogeneous) groups: if $\gf$ has a graduation $(\tilde \gf_j)_{j\in \Z_+^*}$ (with integer degrees), then it is natural to set $\gf_\lambda=\bigoplus_{j=1}^{[\lambda]} \tilde \gf_j$ for every $\lambda \Meg 0$, but there is no guarantee that $\tilde \gf_1$ should be non-trivial.

	We now extend this filtration to the  enveloping algebra $U(G)$.	
	For every $\lambda\Meg 0$, define $U_\lambda$ as the vector space generated by the products of the form $X_1\cdots X_k$, for $k\Meg 0$, $X_1,\dots, X_k\in \gf$ and $\deg X_1+\cdots+ \deg X_k\meg \lambda$.\footnote{Thus, the identity operator, corresponding to the case $k=0$, belongs to all $U_\lambda$.} 
	Then, $(U_\lambda)$ is an increasing filtration of $U(G)$, that is, $U_\lambda, U_\mi\subseteq U_\lambda U_\mi\subseteq U_{\lambda+\mi}$ for every $\lambda, \mi\Meg 0$.
	For every $X\in U(G)$, we define $\deg X\coloneqq \min\Set{\lambda\Meg 0\colon X\in U_\lambda}$. Notice that $\gf\cap U_\lambda=\gf_\lambda$ for every $\lambda\Meg 0$ as a consequence of Proposition~\ref{prop:8} below (and its proof), so that this definition is consistent with the previous one.
	
	Define $U_{\lambda^-}\coloneqq \bigcup_{\mi<\lambda} U_\mi$ for $\lambda>0$, $U_{0^-}\coloneqq\Set{0}$, $U_{*,\lambda}\coloneqq U_\lambda/U_{\lambda^-}$ for every $\lambda\Meg 0$, and 
	\[
	U_*\coloneqq \bigoplus_{\lambda\Meg 0} U_{*,\lambda} .
	\] 
	We define an algebra structure on $U_*$ as follows:  if $X=\sum_{\lambda\Meg 0} (X_\lambda+ U_{\lambda^-})$ and $Y= \sum_{\lambda\Meg 0} (Y_\lambda+ U_{\lambda^-})$ for some $(X_\lambda),(Y_\lambda)\in \prod_{\lambda\Meg 0} U_\lambda$, then
	\[
	XY\coloneqq \sum_{\lambda,\mi\Meg 0}\left(  X_{\lambda}Y_{\mi}+U_{(\lambda+\mi)^-}\right) .
	\]
	It is easily seen that $U_*$ becomes a graded algebra of type $([0,+\infty),+)$ with this structure.
	
	Cf.~\cite[Proposition 4.2]{BCP} for a proof of the following result.
	
	\begin{prop}\label{prop:8}
		The canonical inclusions $\gf_\lambda \subseteq U_\lambda$, $\lambda> 0$, induce a linear mapping $\pi \colon \gf_*\to U_*$. The canonical extension $U(\pi)\colon U(G_*)\to U_*$ of $\pi$ is an  isomorphism of graded algebras.
	\end{prop}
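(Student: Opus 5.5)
We prove Proposition~\ref{prop:8} by first building $U(\pi)$, then showing it is surjective, and finally injective by a dimension count on each graded piece.

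\emph{Construction of $\pi$ and $U(\pi)$.} Since $\gf_\lambda\subseteq U_\lambda$ and $\gf_{\lambda^-}\subseteq U_{\lambda^-}$, each inclusion passes to the quotient and gives a linear map $\gf_{*,\lambda}\to U_{*,\lambda}$. Summing over $\lambda$ defines $\pi$. To extend $\pi$ to $U(G_*)$ by the universal property, we need
\[
\pi([X,Y])=\pi(X)\pi(Y)-\pi(Y)\pi(X).
\]
It suffices to check this for $X\in\gf_\lambda$ and $Y\in\gf_\mi$ of degrees exactly $\lambda,\mi$. In $U(G)$ we have $XY-YX=[X,Y]\in\gf_{\lambda+\mi}\subseteq U_{\lambda+\mi}$. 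Reducing modulo $U_{(\lambda+\mi)^-}$ gives the identity, because the product in $U_*$ is the one induced from $U(G)$. This yields an algebra homomorphism $U(\pi)$, and it preserves degrees by construction.

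\emph{Surjectivity.} By definition, $U_\lambda$ is spanned by products $X_1\cdots X_k$ with $\sum_j\deg X_j\meg\lambda$. If $\sum_j\deg X_j<\lambda$, the product lies in $U_{\lambda^-}$ and vanishes in $U_{*,\lambda}$. Otherwise its class equals $\pi(\bar X_1)\cdots\pi(\bar X_k)$, where $\bar X_j$ is the class of $X_j$ in $\gf_{*,\deg X_j}$. Hence $U(\pi)$ is onto each $U_{*,\lambda}$.

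\emph{Injectivity.} This is the main step. Choose a basis $X_1,\dots,X_n$ of $\gf$ adapted to the filtration: for every $\lambda$, the $X_j$ with $\deg X_j\meg\lambda$ form a basis of $\gf_\lambda$. Then the classes $\bar X_j$ form a homogeneous basis of $\gf_*$. By Poincaré--Birkhoff--Witt, the ordered monomials $\vect{\bar X}^\alpha$ form a basis of $U(G_*)$, and the monomials of homogeneous degree $d_\alpha=\sum_j\alpha_j\deg X_j$ equal to $\lambda$ span the degree-$\lambda$ piece. It is therefore enough to prove that the classes of the ordered monomials $\vect X^\alpha$ with $d_\alpha=\lambda$ are linearly independent in $U_\lambda/U_{\lambda^-}$. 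Equivalently, we show that $U_\lambda$ is spanned by the ordered monomials $\vect X^\alpha$ with $d_\alpha\meg\lambda$.

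Given this, PBW independence of all ordered monomials in $U(G)$ does the rest. Suppose a combination $\sum_{d_\alpha=\lambda}c_\alpha\vect X^\alpha$ lies in $U_{\lambda^-}$. Then it equals a combination of ordered monomials of degree $<\lambda$ (using that the set of degrees is discrete: $\Lambda\subseteq\Q\dd_0$ with bounded denominators, so finitely many degree values lie below $\lambda$ in any bounded range). By PBW independence in $U(G)$, all $c_\alpha$ vanish.

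\emph{Spanning claim.} We prove it by induction on the length $k$ and then on the number of inversions of a product $X_{j_1}\cdots X_{j_k}$, after expanding each factor in the adapted basis. Swapping two adjacent factors $X_aX_b$ produces the error term $[X_a,X_b]$. This commutator lies in $\gf_{\deg X_a+\deg X_b}$, so it expands in basis elements of degree $\meg\deg X_a+\deg X_b$. The resulting product is shorter and still has total degree $\meg\lambda$, so the induction hypothesis applies. This step is the core of the argument and the one to write carefully.

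Finally, the identity $\gf\cap U_\lambda=\gf_\lambda$ follows from the same spanning description together with PBW. An element of $\gf$ written in the ordered-monomial basis uses only length-one monomials $X_j$, and lying in $U_\lambda$ forces these to satisfy $\deg X_j\meg\lambda$.
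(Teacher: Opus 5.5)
Your proof is correct. The paper gives no argument of its own for this proposition and only cites \cite[Proposition 4.2]{BCP}. Your route is the standard self-contained one:
\begin{itemize}
\item choose a basis adapted to the filtration;
\item prove the filtered PBW spanning lemma, namely that $U_\lambda$ is spanned by the ordered monomials $\vect X^\alpha$ with $d_\alpha\meg\lambda$, using $[\gf_\lambda,\gf_\mu]\subseteq\gf_{\lambda+\mu}$ to control the commutator terms;
\item conclude with PBW bases in both $U(G)$ and $U(G_*)$.
\end{itemize}
It also recovers the identity $\gf\cap U_\lambda=\gf_\lambda$, which the paper says follows from the proposition and its proof.

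One small remark on the injectivity step: the appeal to discreteness of the set of degrees is unnecessary. Since $U_{\lambda^-}=\bigcup_{\mu<\lambda}U_\mu$ is an increasing union, any element of $U_{\lambda^-}$ already lies in a single $U_\mu$ with $\mu<\lambda$. By the spanning lemma it is then a combination of ordered monomials with $d_\alpha\meg\mu<\lambda$, and PBW independence in $U(G)$ finishes the argument directly.
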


	From now on, we shall identify $U_*$ and $U(G_*)$ by means of $U(\pi)$.  
	
	\begin{deff}
		We say that a basis $(X_j)_{j\in J}$ of $\gf$ is compatible with the filtration $(\gf_\lambda)$ if $\gf_\lambda$ is generated by the $X_j$ with $\deg(X_j)\meg \lambda$ for every $\lambda \Meg 1$.
	\end{deff}
	
	By~\cite[Proposition 4.3]{BCP}, $(X_j)$ is a basis adapted to the filtration if and only if the $Y_j=X_j+\gf_{\deg(X_j)^-}$, $j\in J$, form a homogeneous basis of $\gf_*$.

	\begin{deff}	
		We say that a family $(X_j)_{j\in J}$ of elements of $\gf$ is a minimal basis if, setting $Y_j\coloneqq X_j+ \gf_{(\deg X_j)^-}$, the family $(Y_j)$ induces a (homogeneous) basis of $\gf_*/[\gf_*,\gf_*]$.
		We denote with $\dd$ the least common multiple of the degrees of the $X_j$, $j\in J$.\footnote{Notice that, since we assumed that $\Lambda \subseteq \Q \dd_0$, the $\deg X_j$ all belong to $\Q \dd_0$, so that their least common multiple is well defined.}
		
		We say that $\Lc \in U(G)$ is weighted subcoercive if $ \Lc+\Lc^*+U_{\grado^-}$ is a positive Rockland operator on $G_*$.
	\end{deff}
	
	Notice that, if $(X_j)$ is a minimal basis of $\gf$, then $(X_j)$ generates $\gf$ as a Lie algebra, and also generates the filtrations $(\gf_\lambda)$ and $(U_\lambda)$ (cf.~\cite[Proposition 4.4]{BCP}). In other words, $\gf_\lambda$  is the vector space generated by the vector fields of the form $\ad(X_{j_1})\cdots \ad(X_{j_{k-1}})X_{j_k}$, where $k\Meg 1$, $j_1,\dots, j_k\in J$, and $\deg(X_{j_1})+\cdots +\deg(X_{j_k})\meg \lambda$. Analogously, $U_\lambda$ is the vector space generated by the differential operators for the form $X_{j_1}\cdots X_{j_k}$, where $k\Meg 0$, $j_1,\dots, j_k\in J$, and $\deg(X_{j_1})+\cdots +\deg(X_{j_k})\meg \lambda$.
	In particular, $(X_j)$ is a `reduced weighted algebraic basis' of $\gf$, in the terminology of~\cite{ElstRobinson}.
	
	Observe that $\dd$ does \emph{not} depend on the choice of $(X_j)$, since it is the least common multiple of the degrees of the non-zero elements of $\gf_*/[\gf_*,\gf_*]$.
	
	Finally, as shown in~\cite[Theorem 4.7]{BCP}, the above   definition of weighted subcoercive operators is consistent with the one given in~\cite{ElstRobinson}. In particular, if $\Lc$ is weighted subcoercive, then $\deg(\Lc)/\dd$ is an even integer. 
	
	We recall the following example from~\cite[Proposition 4.9]{BCP} for the construction of weighted subcoercive operators

	\begin{prop}\label{prop:4}
		Let $(X_j)_{j\in J}$ be a minimal basis of $\gf$. Then, $\Lc\coloneqq \sum_{j\in J} (X_j^{\dd/d_j})^\dag X^{\dd/d_j}_j$, where $d_j=\deg X_j$ for every $j\in J$, is a real,  positive and formally self-adjoint  weighted subcoercive operator.
	\end{prop}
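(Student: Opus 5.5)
We have to show three things about $\Lc=\sum_{j\in J}(X_j^{\dd/d_j})^\dag X_j^{\dd/d_j}$: it is real, positive and formally self-adjoint; it has degree $2\dd$; and $\Lc+\Lc^*+U_{(2\dd)^-}$ is a positive Rockland operator on $G_*$. The algebraic properties come first and are immediate. Since the $X_j$ lie in the real Lie algebra $\gf$, each $X_j^{\dd/d_j}$ is real, so $\overline{X_j^{\dd/d_j}}=X_j^{\dd/d_j}$. Hence $(X_j^{\dd/d_j})^\dag=(X_j^{\dd/d_j})^*$ and $\Lc=\sum_j A_j^*A_j$ with $A_j\coloneqq X_j^{\dd/d_j}$. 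Formal self-adjointness then follows from $(A^*A)^*=A^*A$. Positivity follows from
\[
\int_G (\Lc \phi)\overline{\phi}\,\dd\beta=\sum_j\norm{A_j\phi}_{L^2(\beta)}^2\Meg 0
\]
for every $\phi\in C^\infty_c(G)$, using that $X^\dag$ is the formal transpose with respect to $\beta$.

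For the degree, each $A_j$ belongs to $U_{\dd}$ because $\deg X_j=d_j$ and $(\dd/d_j)d_j=\dd$. The anti-automorphism $\dag$ maps $\gf_\lambda$ to itself (it sends $X\mapsto -X$), so it preserves every $U_\lambda$. Therefore $A_j^\dag\in U_\dd$ and $\Lc\in U_{2\dd}$. Passing to $U_*\cong U(G_*)$ through Proposition~\ref{prop:8}, the image of $A_j$ in $U_{*,\dd}$ is $Y_j^{\dd/d_j}$, where $Y_j=X_j+\gf_{d_j^-}\in \gf_{*,d_j}$. Indeed, multiplication in $U_*$ is induced by multiplication in $U(G)$ modulo lower filtration. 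Similarly, the image of $A_j^\dag$ is $(Y_j^{\dd/d_j})^\dag$, with $\dag$ now the transpose on $U(G_*)$, because $\dag$ commutes with the quotient maps. Hence the class of $\Lc+\Lc^*=2\Lc$ in $U_{*,2\dd}$ is
\[
P\coloneqq 2\sum_j (Y_j^{\dd/d_j})^\dag Y_j^{\dd/d_j}=2\sum_j (-1)^{\dd/d_j}Y_j^{2\dd/d_j}.
\]
This element is nonzero, which will follow from the Rockland property below, so $\deg\Lc=2\dd$ exactly.

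It remains to prove that $P$ is a positive Rockland operator on $G_*$. Homogeneity of degree $2\dd$ holds because $\delta_r Y_j=r^{d_j}Y_j$. $P$ is formally self-adjoint and positive on $L^2(G_*)$ by the same computation as above, since the $Y_j$ are real. For the Rockland condition we use Rockland's criterion. Let $\pi$ be a nontrivial irreducible unitary representation of $G_*$ and $v$ a smooth vector with $d\pi(P)v=0$. Then
\[
0=\langle d\pi(P)v,v\rangle = 2\sum_j\norm{d\pi(Y_j)^{\dd/d_j}v}^2,
\]
so $d\pi(Y_j)^{\dd/d_j}v=0$ for all $j$. Because $d\pi(Y_j)$ is skew-adjoint, hence diagonalizable in the spectral sense with imaginary spectrum, a vanishing power forces $d\pi(Y_j)v=0$. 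Concretely, take $k=\dd/d_j$: if $k$ is even, $\norm{T^{k/2}v}^2=\pm\langle T^kv,v\rangle=0$, and we iterate down; if $k$ is odd, apply $T^{k-1}$ and pair with $T^{k-1}v$, giving $\norm{T^{k}v}\cdot$-type identities $\langle T^{k+1}v, T^{k-1} v\rangle$. A cleaner route is to use spectral calculus for the self-adjoint operator $iT$, under which $T^kv=0$ gives $v\in\ker T$.

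The last step is that the $Y_j$ generate $\gf_*$ as a Lie algebra. This holds because they span $\gf_*$ modulo $[\gf_*,\gf_*]$ and $\gf_*$ is nilpotent (standard). Consequently $d\pi(Y)v=0$ for all $Y$ in the space $W$ of smooth vectors annihilated by each $d\pi(Y_j)$, and this $W$ is closed under brackets. So $v$ is $G_*$-invariant, and irreducibility together with nontriviality gives $v=0$. The main technical care lies in the smooth-vector and skew-adjointness step and in checking that $\dag$ passes correctly to the graded algebra; the rest is routine. This also confirms $P\neq 0$, and hence that $\Lc$ is weighted subcoercive.
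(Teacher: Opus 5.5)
Your proof is correct. The paper gives no argument for this statement: it is quoted from \cite[Proposition 4.9]{BCP}. So your write-up is a self-contained substitute rather than a variant of a proof in the text.

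The route you take is the natural one and it works:
\begin{enumerate}
\item pass to the principal part $P=2\sum_j(-1)^{\dd/d_j}Y_j^{2\dd/d_j}$ in $U(G_*)\cong U_*$ via Proposition~\ref{prop:8};
\item check the Rockland condition on smooth vectors of nontrivial irreducible unitary representations;
\item use that the $Y_j$ generate the nilpotent algebra $\gf_*$ because they span it modulo $[\gf_*,\gf_*]$.
\end{enumerate}
Your use of $X^\dag$ (with $X^\dag=-X$ on $\gf$) as the formal transpose with respect to $\beta$ is legitimate here. This relies on $\beta$ being bi-invariant, which follows from polynomial growth. If ``Rockland'' is read as ``homogeneous and hypoelliptic'', as in the introduction, your representation-theoretic verification has to be combined with the Helffer--Nourrigat theorem. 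That is a standard citation, not a gap.

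Two passages should be tidied.
\begin{itemize}
\item The parity discussion for $T^kv=0\Rightarrow Tv=0$ (with $T=d\pi(Y_j)$, skew-symmetric on smooth vectors) is garbled in the odd case, but no parity is needed. Skew-symmetry gives $\norm{T^{m}v}^2=-\langle T^{m-1}v,T^{m+1}v\rangle$, so $T^{m+1}v=0$ forces $T^{m}v=0$, and you can descend from $m=k-1$ to $Tv=0$. Your spectral-calculus alternative is also valid.
\item The sentence about $W$ conflates vectors and Lie algebra elements. What you need is that $\{Y\in\gf_*\colon d\pi(Y)v=0\}$ is a Lie subalgebra, because $d\pi([Y,Y'])=[d\pi(Y),d\pi(Y')]$. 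It contains every $Y_j$, hence equals $\gf_*$. Then $v$ is $G_*$-invariant, and irreducibility together with nontriviality of $\pi$ give $v=0$.
\end{itemize}
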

	
	Here, by `positive' we mean that $\int \Lc f \overline f \,\dd \beta\Meg 0$ for every $f\in C^\infty_c(G)$.
	In particular, if $(\gf_\lambda)$ is the filtration generated by $\gf_1$, then $\Lc$ is a sub-Laplacian (a Laplacian, if $\gf_1=\gf$).

	We shall now define a control modulus on $G$. 
	\begin{deff}\label{def:3}
		Given an absolutely continuous curve $\gamma\colon [0,1]\to G$, we  define the content of $\gamma$ as the greatest lower bound of the $\eps>0$ such that 	
		\[
		\abs{P_\lambda \dd L_{\gamma(t)}^{-1}\gamma'(t)} \meg \min(\eps, \eps^{\lambda}) 
		\]
		for almost every $t\in [0,1]$ and for every $\lambda>0$, where $P_\lambda$ is the orthogonal projector of $\gf$ onto $\gf_\lambda \ominus \gf_{\lambda^-}$ (this is non-zero only for finitely many $\lambda>0$) and $L_{\gamma(t)}$ is the left translation by $\gamma(t)$. 
		
		Given $x\in G$, we shall define $\abs{x}_*$  as the greatest lower bound of the contents of the absolutely continuous curves $\gamma\colon [0,1]\to G$ such that $\gamma(0)=e$ and $\gamma(1)=x$.
		
		We  endow $G$ with the left-invariant distance $d(x,y)\coloneqq \abs{y^{-1}x}_*$.
	\end{deff} 
	
	Choosing a different scalar product on $\gf$   gives rise to bi-Lipschitz equivalent control moduli. Equivalence at infinity follows from the fact that all these control moduli are `connected moduli,'\footnote{In fact, every $x\in G$ may be written as $x_1\cdots x_k$, where $\abs{x_j}_*\meg 1$ for $j=1,\dots, k$, and $k\meg \abs{x}_*+1$.}  whereas equivalence near $e$ follows from~\cite[Corollary 6.5]{ElstRobinson}.  
	Notice that here we are not requiring $\gamma$ to be `horizontal.' One may also require $\gamma$ to be horizontal with respect to some fixed weighted algebraic basis compatible with the filtration $(\gf_\lambda)$, in the terminology of~\cite{ElstRobinson}, and still get an equivalent control modulus. This would provide a better mean value theorem for the corresponding `horizontal gradient,' but we shall not need this kind of more precise estimates.

	It is known that 
	\[
	\beta(B(e,r))\asymp r^{Q_*}, \qquad r\to 0^+,
	\]
	while
	\[
	\beta(B(e,r))\asymp r^{Q_G}
	\]
	for $r\to +\infty$.

	\begin{deff}
		From now on, we shall fix a \emph{formally self-adjoint} weighted subcoercive operator $\Lc$ with degree $\grado$. We shall denote with $(h_t)_{t>0}$ the corresponding heat kernel. In other words, $\ee^{-t\Lc}f=f*h_t$ for every $f\in L^2(G)$, where $(\ee^{-t\Lc})_{t>0}$ is the semigroup generated by the closure of $\Lc$, with initial domain $C^\infty_c(G)$, in $L^2(G)$ (cf.~Theorem~\ref{teo:7} below).
		
		For every $\omega\in \R$, we shall set $\Lc_\omega \coloneqq \Lc+ \omega I$.
	\end{deff}
	
	Cf.~\cite[Theorems 4.8 and 5.4]{BCP} for a proof of the following result.
	
	\begin{teo}\label{teo:7}
		There is $\omega\in \R$ such that the following hold:
		\begin{enumerate} 
			\item[\textnormal{(1)}] the closure of $\Lc$, with initial domain $C^\infty_c(G)$,  generates a semigroup of operators of $L^2(G)$; in addition, $\Lc$ is essentially self-adjoint on $C^\infty_c(G)$;
			
			\item[\textnormal{(2)}] for every $\lambda,\lambda'\Meg 0$ there are $b,C>0$ such that  
			\[
			\abs{X Y^R h_t(x)}\meg C \abs{X}\abs{Y} t^{-(Q_* + \deg(X)+\deg(Y))/\grado} \ee^{\omega t} \ee^{- b (\abs{x}_*^{\grado}/t)^{1/(\grado-1)}}
			\]
			for every $X\in U_\lambda$, for every $Y\in U_{\lambda'}$, and for every $x\in G$;
		\end{enumerate}
	\end{teo}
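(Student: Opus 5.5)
The plan is to reduce both assertions of Theorem~\ref{teo:7} to the theory of ter Elst and Robinson~\cite{ElstRobinson}, using the algebraic dictionary between the filtration $(U_\lambda)$ and the contraction $G_*$ set up above. By~\cite[Theorem 4.7]{BCP}, the definition of weighted subcoercivity adopted here agrees with that of~\cite{ElstRobinson}. Thus we may choose a minimal basis $(X_j)$, which is a reduced weighted algebraic basis in their sense, and view $\Lc$ as a weighted subcoercive form relative to it. The Gårding-type inequality of~\cite{ElstRobinson} then gives $\omega\in\R$ with
\[
\Re\langle \Lc f,f\rangle+\omega\norm{f}_{L^2}^2\Meg c\sum_{j}\norm{X_j^{\dd/d_j}f}_{L^2}^2
\]
for $f\in C^\infty_c(G)$.

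For (1), I would first show that the closure $\overline{\Lc}$ of $\Lc$ on $C^\infty_c(G)$ is $m$-sectorial, shifted by $\omega$, using this inequality. I would then invoke the result of~\cite{ElstRobinson} that $\overline{\Lc}$ coincides with the generator of the semigroup $S_t=\ee^{-t\Lc}$ obtained from the associated representation. Since $\Lc=\Lc^*$ formally, $\overline{\Lc}$ is symmetric. Identifying $\overline{\Lc}$ with the generator of a semigroup on $L^2$ shows that $\overline{\Lc}+\omega I$ is maximal accretive, hence $\overline{\Lc}$ is self-adjoint. This is the essential self-adjointness claim.

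For (2), I would start from the kernel estimates of~\cite{ElstRobinson}: for $X$ a monomial in the $X_j$ of weighted degree at most $\lambda$, and $t\meg1$, there are $b,C>0$ with
\[
\abs{X h_t(x)}\meg C\,t^{-(Q_*+\deg X)/\grado}\,\ee^{-b(\abs{x}_*^{\grado}/t)^{1/(\grado-1)}}.
\]
Here the local dimension $Q_*$ enters through $\beta(B(e,r))\asymp r^{Q_*}$. To handle large $t$, write $h_t=h_{t-1}*h_1$. One then obtains $\ee^{\omega t}$ growth from the $L^2$-bound $\norm{\ee^{-t\Lc}}\meg\ee^{\omega t}$ together with Gaussian-type convolution estimates. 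Enlarging $\omega$ lets us absorb polynomial factors in $t$, and also the mismatch between the powers of $t$ for $t\Meg1$ (any power of $t$ is dominated by $\ee^{\epsilon t}$). For right-invariant derivatives I would use Proposition~\ref{prop:9}: $Y^R h_t=h_{t/2}*(Y h_{t/2})$ through $h_t=h_{t/2}*h_{t/2}$ and $(Xf)*g=f*(X^Rg)$. Then
\[
XY^Rh_t=(Xh_{t/2})*(Yh_{t/2}).
\]
The estimate for $XY^R h_t$ follows by convolving two such bounds and using the triangle inequality for $\abs{\,\cdot\,}_*$. The exponential weight splits because $a\mapsto a^{\grado/(\grado-1)}$ is convex, which costs only a smaller $b$. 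Finally, linearity in $X$ and finite dimensionality of $U_\lambda$ turn monomial bounds into the factor $\abs{X}\abs{Y}$.

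The main obstacle is justifying that the estimates in~\cite{ElstRobinson} are stated with $\abs{\,\cdot\,}_*$ as defined here, i.e.\ with non-horizontal curves. I would handle this through the equivalence of control moduli noted after Definition~\ref{def:3}: they agree near $e$ by~\cite[Corollary 6.5]{ElstRobinson} and at infinity by connectedness. This equivalence only changes $b$.
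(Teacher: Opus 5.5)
Your overall route is the one the paper relies on: the paper gives no argument of its own and simply cites \cite[Theorems 4.8 and 5.4]{BCP}, which rest on \cite{ElstRobinson} through the equivalence of the definitions of weighted subcoercivity and of the control moduli. However, two of the steps you add on top of that citation are wrong as written.

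\textbf{Right-invariant derivatives.} By the rule $YX(f*g)=(Yf)*(Xg)$ recalled in Section~\ref{sec:2}, a left-invariant operator applied to the right factor differentiates the whole convolution. So $h_{t/2}*(Yh_{t/2})=Y(h_{t/2}*h_{t/2})=Yh_t$, not $Y^Rh_t$. Your identity would force $Y^Rh_t=Yh_t$, i.e.\ conjugation invariance of $h_t$, which fails in general on non-commutative groups. Likewise $(Xh_{t/2})*(Yh_{t/2})=Y\bigl[h_{t/2}*(X^Rh_{t/2})\bigr]$, which is not $XY^Rh_t$. The correct factorization is $XY^Rh_t=(Y^Rh_{t/2})*(Xh_{t/2})$. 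It still contains a right derivative, so you need an extra ingredient to turn left-derivative bounds into right-derivative bounds. That ingredient is the symmetry of the kernel. Since $\ee^{-s\Lc}$ is self-adjoint by (1), $h_s^*=h_s$, i.e.\ $\check h_s=\overline{h_s}$. Proposition~\ref{prop:9}(3) applied to $\overline{h_s}$ then gives $(Y^Rh_s)(x)=\overline{(Y^*h_s)(x^{-1})}$. Since $\abs{x^{-1}}_*=\abs{x}_*$, $\deg Y^*=\deg Y$ and $\abs{Y^*}=\abs{Y}$, the estimates transfer, and your convolution-plus-convexity argument then goes through.

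\textbf{Large $t$.} The Gaussian bounds in \cite{ElstRobinson} are already stated for all $t>0$, with the factor $\ee^{\omega t}$ that appears in the statement, so no extension is needed. But the extension you sketch would not work. From $\norm{\ee^{-t\Lc}}_{L^2\to L^2}\meg\ee^{\omega t}$ and $h_t=h_{1/2}*h_{t-1}*h_{1/2}$ you only get $\norm{h_t}_{L^\infty}\lesssim\ee^{\omega t}$. This suffices where $\abs{x}_*\lesssim t$, since there $\ee^{-b(\abs{x}_*^{\grado}/t)^{1/(\grado-1)}}\Meg\ee^{-bt}$. It gives nothing where $\abs{x}_*\gg t$, where superexponential decay in $\abs{x}_*$ is required. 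An induction $h_{n+1}=h_n*h_1$ on Gaussian bounds does not help either, since it loses a fixed amount of $b$ at every step.

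If you want to argue by hand, use a single $n$-fold convolution $h_t=h_{t/n}*\cdots*h_{t/n}$ with $n\approx t$, together with the exact inequality $a^qs^{1-q}+c^qr^{1-q}\Meg(a+c)^q(s+r)^{1-q}$ for $q=\grado/(\grado-1)$. This costs only one halving of $b$ and a factor $C^n\meg\ee^{\omega t}$. Alternatively, use Davies' exponential-weight perturbation method.
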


\subsection{`Carleson' and `Reverse Carleson' measures}

\begin{deff}
	We denote with $\Mc_\Car$ (`Carleson measures') the space of positive Radon measures $\mi$ on $(0,+\infty)$ with bounded support such that there is a constant $C>0$ such that
	\[
	\mi((r,2r])\meg C
	\]
	for every $r>0$.
	
	We denote with $\Mc_\RC$  (`reverse Carleson measures') the space of positive Radon measures $\mi$ on $(0,+\infty)$ such that there are constants $\eps,C\in (0,1)$   such that
	\[
	\mi((\eps r, r])\Meg C
	\]
	for every $r\in (0,\eps]$.
	
	We define $\Mc_\Samp\coloneqq \Mc_\Car\cap \Mc_\RC$  (`sampling measures'). 
\end{deff}

\begin{lem}\label{lem:25}
	Take $\eta,\gamma>0$,  $\mi,\nu\in \Mc_\Car$. Then, there is a constant $C>0$ such that
	\[
	\norm*{\int_0^\infty \frac{s^\gamma t^{\eta}}{(s+t)^{\eta+\gamma}} \abs{f(t)}\,\dd \nu(t)  }_{L^q_s(\mi)} \meg C \norm{f}_{L^q(\nu)}
	\]
	for every  $q\in [1,\infty]$ and for every   $\nu$-measurable function $f$.
\end{lem}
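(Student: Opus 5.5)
We treat the operator $T f(s)\coloneqq \int_0^\infty K(s,t)\abs{f(t)}\,\dd\nu(t)$ with kernel $K(s,t)\coloneqq \frac{s^\gamma t^\eta}{(s+t)^{\eta+\gamma}}$ through Schur's test. Since $T$ is positive and linear in $\abs{f}$, the cases $q=1$ and $q=\infty$ only require the two uniform bounds
\[
A\coloneqq\sup_{t>0}\int_0^\infty K(s,t)\,\dd\mi(s)<\infty,\qquad B\coloneqq\sup_{s>0}\int_0^\infty K(s,t)\,\dd\nu(t)<\infty.
\]
Indeed, $\norm{Tf}_{L^1(\mi)}\meg A\norm{f}_{L^1(\nu)}$ follows from Tonelli's theorem, and $\norm{Tf}_{L^\infty(\mi)}\meg B\norm{f}_{L^\infty(\nu)}$ is immediate. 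For $1<q<\infty$ we will apply Hölder's inequality with respect to the measure $K(s,t)\,\dd\nu(t)$:
\[
(Tf(s))^q\meg B^{q-1}\int K(s,t)\abs{f(t)}^q\,\dd\nu(t).
\]
Integrating in $\dd\mi(s)$ and using Tonelli then gives $\norm{Tf}_{L^q(\mi)}\meg B^{1/q'}A^{1/q}\norm{f}_{L^q(\nu)}$. The resulting constant $\max(A,B,1)$ does not depend on $q$, which is exactly what the statement requires.

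It remains to bound $A$ and $B$, and since both are the same kind of estimate we will treat a general $\rho\in\Mc_\Car$. Fix $t$ and split $(0,\infty)$ dyadically into the intervals $(2^k t,2^{k+1}t]$ with $k\in\Z$.
\begin{itemize}
\item For $k\Meg 0$ we have $s\Meg t$, so $K(s,t)\meg s^\gamma t^\eta s^{-\eta-\gamma}=(t/s)^\eta\meg 2^{-k\eta}$.
\item For $k<0$ we have $s<t$, so $K(s,t)\meg (s/t)^\gamma\meg 2^{(k+1)\gamma}$.
\end{itemize}
By the Carleson condition each dyadic interval carries $\rho$-mass at most $C_\rho$. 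Therefore
\[
\int K(s,t)\,\dd\rho(s)\meg C_\rho\Big(\sum_{k\Meg0}2^{-k\eta}+\sum_{k<0}2^{(k+1)\gamma}\Big)<\infty,
\]
uniformly in $t$, because $\eta,\gamma>0$. The same argument, with the roles of $s$ and $t$ exchanged, bounds $B$ using $\nu\in\Mc_\Car$.

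We note that the bounded-support hypothesis in $\Mc_\Car$ is not needed here; only the dyadic mass bound is used. Measurability of $Tf$ follows from Tonelli's theorem, since the kernel is continuous and positive. We do not expect any real obstacle. The only point needing care is that the Schur constants are uniform, which the geometric series in the dyadic splitting provide.
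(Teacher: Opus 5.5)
Your proof is correct. The dyadic splitting gives $\sup_{t>0}\int_0^\infty K(s,t)\,\dd\mi(s)\meg C_\mi\big((1-2^{-\eta})^{-1}+(1-2^{-\gamma})^{-1}\big)$, and the symmetric bound holds in $\nu$. Schur's test then yields the constant $\max(A,B)$, which is uniform in $q$ as the statement requires: Tonelli for $q=1$, the trivial bound for $q=\infty$, and H\"older against $K(s,\cdot)\,\dd\nu$ in between. The only tacit detail is that you should replace the $\nu$-measurable $f$ by a Borel representative before invoking Tonelli, which changes nothing.

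The paper states this lemma without proof; it is used only as a standard tool, e.g.\ in Lemma~\ref{lem:25d}. So there is no alternative route to compare with. Your observation that the bounded-support requirement in $\Mc_\Car$ plays no role here is also correct.
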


\begin{lem}\label{lem:25bis}
	Take $\eta,\gamma>0$ and  let $\mi$ be a Haar measure on $(0,+\infty)$. Take $\nu\in \Mc_\Car$, and assume that either $\eta\Meg 1$ or $\nu\in L^\infty(\mi)\cdot \mi$.	
	Then, there is a constant $C>0$ such that
	\[
	\norm*{\int_0^\infty t^\eta s^\gamma \abs{f(s+t)}\,\dd \mi(t)  }_{L^q_s(\nu)} \meg C \norm{ t^{\gamma+\eta}f(t)}_{L^q_t(\mi)}
	\]
	for every  $q\in [1,\infty]$ and for every  $\mi$-measurable function $f$.
\end{lem}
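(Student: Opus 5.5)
Throughout, $\mi$ may be taken to be $\dd t/t$, because Haar measures on $(0,+\infty)$ differ only by a positive constant. The plan is to reduce the statement to a Schur-type estimate for a positive integral operator from $L^q(\mi)$ to $L^q(\nu)$. Set $g(u)\coloneqq u^{\gamma+\eta}\abs{f(u)}$, so the right-hand side is $C\norm{g}_{L^q(\mi)}$. In the inner integral, substitute $u=s+t$:
\[
\int_0^\infty t^\eta s^\gamma \abs{f(s+t)}\,\frac{\dd t}{t}=\int_s^\infty K(s,u)\, g(u)\,\frac{\dd u}{u},\qquad K(s,u)\coloneqq \frac{s^\gamma (u-s)^{\eta-1}}{u^{\gamma+\eta-1}}.
\]
Since $K\Meg 0$, it suffices to prove the following Schur conditions:
\[
A\coloneqq\sup_{s>0}\int_s^\infty K(s,u)\,\frac{\dd u}{u}<\infty \qquad\text{and}\qquad B\coloneqq\sup_{u>0}\int_{(0,u)} K(s,u)\,\dd\nu(s)<\infty.
\]
Granting them, Hölder's inequality in $u$ applied to $K=K^{1/q'}K^{1/q}$, followed by Fubini, gives an operator bound at most $A^{1/q'}B^{1/q}\meg \max(A,B)$. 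This is uniform in $q\in[1,\infty]$, with the endpoints $q=1$ and $q=\infty$ checked directly.

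Split $K=K\chi_{\Set{u\Meg 2s}}+K\chi_{\Set{s<u<2s}}$ and verify both conditions for each piece separately.

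\emph{The far region $u\Meg 2s$.} Here $u-s\asymp u$, so $K(s,u)\lesssim (s/u)^\gamma$ regardless of the sign of $\eta-1$. Then $\int_{2s}^\infty (s/u)^\gamma\,\dd u/u\meg 1/\gamma$. For the other condition, decompose $(0,u/2]$ into the dyadic shells $(2^{-k-1}u,2^{-k}u]$ with $k\Meg 1$. The Carleson property of $\nu$ gives
\[
\int_{(0,u/2]}(s/u)^\gamma\,\dd\nu(s)\meg C\sum_{k\Meg1}2^{-k\gamma}<\infty.
\]

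\emph{The near region $s<u<2s$.} Here $s\asymp u$, so $K(s,u)\asymp \left((u-s)/u\right)^{\eta-1}$.
\begin{enumerate}
\item[(a)] If $\eta\Meg 1$, then $K\lesssim 1$ on this region. The first condition holds because $\int_s^{2s}\dd u/u=\log 2$. The second holds because $\nu((u/2,u))\meg C$ by the Carleson property.
\item[(b)] If $\eta<1$, the kernel is singular along the diagonal $u=s$, and this is the only genuine obstacle. The first condition still holds, since $\int_s^{2s}\left((u-s)/u\right)^{\eta-1}\,\dd u/u\lesssim\int_0^1 x^{\eta-1}\,\dd x<\infty$. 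For the second condition, a point mass of $\nu$ near $u$ would make the integral infinite. This is where the assumption $\nu=w\cdot\mi$ with $w\in L^\infty(\mi)$ is used: it yields
\[
\int_{u/2}^u \left(\frac{u-s}{u}\right)^{\eta-1}w(s)\,\frac{\dd s}{s}\meg \norm{w}_{L^\infty}\int_{1/2}^1(1-x)^{\eta-1}\,\frac{\dd x}{x}<\infty.
\]
\end{enumerate}

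Both pieces therefore satisfy the Schur conditions. Summing the two resulting bounds gives the lemma, with a constant $C$ depending only on $\eta$, $\gamma$, the Carleson constant of $\nu$, and, when $\eta<1$, on $\norm{w}_{L^\infty}$. The bounded support of $\nu$ is not needed in this argument.
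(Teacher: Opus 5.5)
Your argument is correct. The paper gives no proof of Lemma~\ref{lem:25bis}: it is recalled without proof among the preliminaries, together with Lemma~\ref{lem:25}. So there is no argument in the text to compare yours with, and I checked it on its own terms.

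\begin{itemize}
\item The substitution $u=s+t$ correctly produces the positive kernel $K(s,u)=s^\gamma(u-s)^{\eta-1}u^{1-\gamma-\eta}$ on $\{u>s\}$, acting from $L^q(\mi)$ to $L^q(\nu)$.
\item The Schur bound $A^{1/q'}B^{1/q}\meg\max(A,B)$ gives exactly the uniformity in $q\in[1,\infty]$ that the statement demands.
\item The split at $u=2s$ works. On the far part the kernel is dominated by $(s/u)^\gamma$, summed dyadically against $\nu((r,2r])\meg C$. On the near part the kernel is bounded when $\eta\Meg 1$. When $\eta<1$ it carries the diagonal singularity $((u-s)/u)^{\eta-1}$, and this is precisely where the density assumption $\nu=w\cdot\mi$ with $w\in L^\infty(\mi)$ is needed.
\end{itemize}

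You are also right that the bounded support of $\nu$ plays no role. The hypothesis is genuinely needed when $\eta<1$: for $\nu=\delta_{s_0}$, $q=1$ and $f(u)=\abs{u-s_0}^{-\eta}$ near $s_0$, the right-hand side is finite while the left-hand side diverges.

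The one point a written proof should add is measurability. The measure $\nu$ may be singular while $f$ is only $\mi$-measurable, so Tonelli's theorem in the Schur test needs justification. Replace $f$ by a Borel representative $\tilde f$. For each fixed $s$, the functions $f(s+\,\cdot\,)$ and $\tilde f(s+\,\cdot\,)$ agree Lebesgue-almost everywhere, so the left-hand side is unchanged pointwise in $s$ and is a Borel function of $s$. Tonelli's theorem then applies to $(s,u)\mapsto K(s,u)\,u^{q(\gamma+\eta)}\abs{\tilde f(u)}^q$ with respect to $\nu\otimes\mi$.
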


\subsection{The Schwartz Space}\label{sec:Schwartz}

\begin{deff}\label{def:1bis}
	We define $\Sr(G)$ as the space of $f\in C^\infty(G)$ such that the seminorms $\norm{ (1+ \abs{\,\cdot\,}_*)^k X^R f}_{L^1(G)}$, $k\in \N$, $X\in U(G)$, are finite, endowed with the corresponding topology.
	
	We denote with $\Sr'(G)$ the dual of $\Sr(G)$, endowed with the topology of uniform convergence on the bounded subsets of $\Sr(G)$.
\end{deff}

Notice that we denote the Schwartz space as $\Sr(G)$ instead of $\Sc(G)$ since we wish to avoid any confusion with the \emph{entirely different} `Schwartz space' $\Sc(G)$ which was used in~\cite{BCP, Calzi2, Calzi3}.
 
\begin{oss}\label{oss:2}
	Denoting with $\Ad$ the adjoint representation of $G$ (i.e., $\Ad_x$ is the differential at $e$ of the inner automorphism $y\mapsto x y x^{-1}$), there are $C>0$ and  $k\in\N$ so that $\abs{\Ad_x}\meg C(1+\abs{x}_*)^k$ for every $x\in G$.
\end{oss} 
 
\begin{proof}
	Combining~\cite[Corollary 1.5.12 and the remarks following Definition 1.4.2]{Schweitzer}, we see that there are a Borel-measurable function $\tau\colon G\to [0,+\infty)$, $C'>0$, and $k\in\N$ so that $\abs{\Ad_x}\meg C'(1+\tau(x))^k$ for every $x\in G$ and so that the mapping $(x,y)\mapsto \tau(x^{-1}y)$ is a pseudo-distance on $G$, in the terminology of~\cite{Schweitzer}. By~\cite[Theorem 1.1.21]{Schweitzer}, we may assume that $\tau(x)= \min\Set{n\in \N\colon x\in U^n}$, where $U$ is some compact neighbourhood of $e$ in $G$. Now, take $C''>0$ so that $B(e,C'')\subseteq U$ and take $x\in G$ and a curve  $\gamma\colon [0,1]\to G$   with content $< \abs{x}_*+C''$. Then, setting $k\coloneqq [\abs{x}_*/C'']+2$ and $x_j\coloneqq \gamma(j/k)$ for $j=0,\dots, k$, one has $\abs{x_{j-1}^{-1}x_j }_*< C''$ for every $j=1,\dots, k$, so that $x=\prod_{j=1}^k x_{j-1}^{-1}x_j \in U^k$. Thus, $\tau(x)\meg k \meg \abs{x}_*/C''$, so that the assertion follows by the arbitrariness of $x$.
\end{proof}

\begin{teo}\label{teo:8bis}
	The following hold:
	\begin{enumerate}
		\item[\textnormal{(1)}] $\Sr(G)$ is a nuclear Fréchet space;
		
		\item[\textnormal{(2)}] $\Sr(G)$ is reflexive;
		
		\item[\textnormal{(3)}] the bounded subsets of $\Sr(G)$ are relatively compact;
		
		\item[\textnormal{(4)}] $\Sr(G)$ is a Fréchet $*$-algebra under convolution and under pointwise multiplication;
		
		\item[\textnormal{(5)}] for every $p\in [1,\infty]$, $\Sr(G)$ is the space of $f\in C^\infty(G)$ such that the seminorms  $\norm{(1+ \abs{\,\cdot\,}_*)^k  X Y^R f}_{L^p(G)}$, $k\in \N$, $X,Y\in U(G)$ (resp.\ $\norm{(1+ \abs{\,\cdot\,}_*)^k X   f}_{L^p(G)}$, $k\in \N$, $X\in U(G)$; $\norm{(1+ \abs{\,\cdot\,}_*)^k  X^R f}_{L^p(G)}$, $k\in \N$, $X\in U(G)$), are finite, and has the corresponding topology.
	\end{enumerate} 
\end{teo}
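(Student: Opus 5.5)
I would first prove (5), since it reduces the other items to standard facts about weighted spaces. The key tools are Remark~\ref{oss:2} together with two properties of the control modulus. First, polynomial growth gives $\beta(B(e,r))\asymp r^{Q_G}$ for large $r$, so $(1+\abs{\,\cdot\,}_*)^{-N}\in L^1(G)$ for $N>Q_G$. Second, $\abs{\,\cdot\,}_*$ is subadditive: $\abs{xy}_*\meg\abs{x}_*+\abs{y}_*$ and $\abs{x^{-1}}_*=\abs{x}_*$.

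\emph{Switching between left- and right-invariant derivatives.} For $X\in\gf$ one has $(Xf)(x)=\frac{\dd}{\dd t}f(x\exp(tX))|_{t=0}$ and $x\exp(tX)=\exp(t\Ad_xX)x$. Hence $Xf(x)=((\Ad_x X)^Rf)(x)$. By induction, $Xf$ for $X\in U_\lambda$ is a sum of terms $c_k(x)Y_k^Rf$, where the $Y_k^R$ have bounded order and the coefficients satisfy $\abs{c_k(x)}\meg C(1+\abs{x}_*)^{m}$ by Remark~\ref{oss:2}. The polynomial factors are absorbed by increasing $k$. So the weighted $L^p$ families built from $X$, from $X^R$, and from $XY^R$ are all equivalent for a fixed $p$.

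\emph{Changing $p$.} To pass from $L^1$ to $L^\infty$ I would use a local Sobolev inequality on unit balls. Left translation of a fixed Sobolev inequality on $B(e,1)$, formulated with right-invariant fields, gives $\abs{f(x)}\meg C\sum_{\deg X\meg N}\norm{X^R f}_{L^1(B(x,1))}$ uniformly in $x$, since right-invariant fields commute with left translations. Because $1+\abs{y}_*\asymp 1+\abs{x}_*$ on $B(x,1)$, weighted $L^1$ controls weighted $L^\infty$. Conversely, weighted $L^\infty$ controls weighted $L^1$ through the integrable weight $(1+\abs{\,\cdot\,}_*)^{-N}$. Interpolation then handles every $p\in[1,\infty]$.

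\emph{Completeness, the algebra property, and nuclearity.} Completeness is routine: limits in the seminorms are smooth, which one sees by the $L^\infty$ version. For (4), pointwise products follow from Leibniz' rule in the $L^\infty$ norms. For convolution, $X^R(f*g)=(X^Rf)*g$, and
\[
(1+\abs{x}_*)^k\meg(1+\abs{y}_*)^k(1+\abs{y^{-1}x}_*)^k ,
\]
so Young's inequality in weighted $L^1$ gives continuity. The involution $f\mapsto \check{\overline f}$ is continuous by Proposition~\ref{prop:9}(3) together with the first step.

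The main obstacle is nuclearity in (1). Here I would exhibit $\Sr(G)$ as a projective limit of Hilbert spaces $H_k$ with norm $\norm{(1+\abs{\,\cdot\,}_*)^k(I+\Delta)^{k}f}_{L^2}$, using the $L^2$ version of (5). Here $\Delta=\Lc$ is a positive weighted subcoercive Laplacian built from right-invariant fields as in Proposition~\ref{prop:4}; since the right fields commute with the left-invariant weight structure, the right-invariant version is available. I would then show that the inclusions $H_{k+m}\to H_k$ are Hilbert--Schmidt for large $m$. This reduces to the Hilbert--Schmidt property of operators of the form $w^{-m}(I+\Delta)^{-m}$. Their integral kernels are $w(x)^{-m}K_m(y^{-1}x)$, where $K_m$ is the kernel of $(I+\Delta)^{-m}$, which lies in $L^2$ for large $m$ by subordination to the heat kernel bounds of Theorem~\ref{teo:7}(2). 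Since $w^{-m}\in L^2$ for large $m$, the kernel is in $L^2(G\times G)$. The commutation errors between $w^k$ and $(I+\Delta)^k$ must be controlled; each commutator lowers the weight exponent, so induction should work, but this bookkeeping is the delicate part. If the bookkeeping fails, a fallback is to compare with the known nuclear Schwartz spaces for polynomial-growth groups in \cite{Schweitzer}.

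Given (1), items (2) and (3) follow at once, since nuclear Fréchet spaces are Montel, hence reflexive, and have relatively compact bounded sets.
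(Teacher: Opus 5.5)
Your argument is correct in outline and takes a genuinely different, more self-contained route than the paper.

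The paper's proof is essentially a chain of citations:
\begin{itemize}
\item (1), and the independence of $p$ in (5), come from \cite[Theorem 6.24]{Schweitzer};
\item the convolution algebra structure comes from \cite[Theorem 1.3.13]{Schweitzer} together with Remark~\ref{oss:2};
\item (5) for $p=1$ with left-invariant derivatives is read off from the $*$-algebra structure, since $f\mapsto f^*$ exchanges left- and right-invariant derivatives (Proposition~\ref{prop:9}(3));
\item pointwise multiplication follows from (5) with $p=\infty$;
\item (2)--(3) follow from (1) via Tr\`eves, exactly as in your last step.
\end{itemize}
You instead use Remark~\ref{oss:2} directly through $(Xf)(x)=((\Ad_x X)^R f)(x)$. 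You change $p$ by a uniform local Sobolev inequality plus the integrability of $(1+\abs{\,\cdot\,}_*)^{-N}$. You get convolution from weighted Young, which is immediate with right-invariant derivatives since $X^R(f*g)=(X^Rf)*g$. For nuclearity you use Hilbert--Schmidt embeddings built from the kernel bounds of Theorem~\ref{teo:7}. This avoids \cite{Schweitzer} except through Remark~\ref{oss:2}, and uses only tools already in the paper. The paper's route buys brevity.

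Two details need repair.

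\emph{Sobolev step.} The direction of invariance is reversed: right-invariant fields commute with right translations, not left ones. Since $B(x,1)=xB(e,1)$ for the left-invariant distance $d$, transporting the inequality from $B(e,1)$ needs left-invariant fields. With right-invariant ones you pick up polynomially bounded $\Ad_x$ factors. Either way your first step absorbs this, so it is harmless.

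\emph{Nuclearity step.} The commutator induction is ill-posed as stated, because $w=1+\abs{\,\cdot\,}_*$ is only Lipschitz for $d$ while $\Lc$ has order $\grado$. It is also unnecessary. Use $\omega'+\Lc$, with $\omega'$ larger than the constant $\omega$ of Theorem~\ref{teo:7}, in place of $I+\Delta$, and set $\norm{f}_{H_k}=\norm{w^k(\omega'+\Lc)^k f}_{L^2(G)}$. Let $K_m$ denote the convolution kernel of $(\omega'+\Lc)^{-m}$.
\begin{itemize}
\item Through the isometries $f\mapsto w^k(\omega'+\Lc)^kf$, the inclusion $H_{k+m}\to H_k$ corresponds to the operator $w^k(\omega'+\Lc)^{-m}w^{-k-m}$.
\item Since $w(x)\meg w(y)\,w(y^{-1}x)$, its kernel is dominated by $w(y^{-1}x)^k\abs{K_m(y^{-1}x)}\,w(y)^{-m}$. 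Hence it is Hilbert--Schmidt as soon as $w^kK_m\in L^2(G)$ and $w^{-m}\in L^2(G)$, both of which hold for $m$ large.
\item The converse estimate follows from $Xf=[(\omega'+\Lc)^{k'}f]*XK_{k'}$ for left-invariant $X$ and weighted Young. This gives $\norm{w^kXf}_{L^2(G)}\meg\norm{w^kXK_{k'}}_{L^1(G)}\norm{f}_{H_{k'}}$ for $k'\Meg k$.
\item Here $w^kXK_{k'}\in L^1(G)$ whenever $\deg X<k'\grado$, by Theorem~\ref{teo:7}(2) and the Laplace-transform representation of $K_{k'}$.
\end{itemize}
No commutators appear.
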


\begin{proof}
	(1) This follows from~\cite[Theorem 6.24]{Schweitzer}.
	
	(2)--(3) These assertions follow from (1) and~\cite[Proposition 50.2, Theorem 36.4, and Corollary 1 to Proposition 33.2]{Treves}.
	
	(4) The first assertion follows from~\cite[Theorem 1.3.13]{Schweitzer} and Remark~\ref{oss:2}. The second assertion follows from the case $p=\infty$ in (5).
	
	(5) By~(4), $\Sr(G)$ is a $*$-algebra under convolution, so that $\Sr(G)$ is the space of $f\in C^\infty(G)$ such that the seminorms  $\norm{(1+ \abs{\,\cdot\,}_*)^k Y f}_{L^1(G)}$, $c>0$, $ Y\in U(G)$, are finite. Since the mapping $f\mapsto X^R f$ is clearly an endomorphism of $\Sr(G)$ for every $X\in U(G)$,   the assertion follows when $p=1$. The assertion for general $p$ follows from~\cite[Theorem 6.24]{Schweitzer} and the above remarks.
\end{proof}

\begin{cor}
	$\Sr'(G)$ is a complete, reflexive, bornological, and nuclear space.  The bounded subsets of $\Sr'(G)$ are relatively compact.
\end{cor}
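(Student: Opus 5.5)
The corollary follows from Theorem~\ref{teo:8bis} by general facts on duals of Fréchet spaces. No new analysis on $G$ is needed; the plan is to cite the right abstract results, for instance from~\cite{Treves}.

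\textbf{Completeness and bornology.} By Theorem~\ref{teo:8bis}(1), $\Sr(G)$ is a Fréchet space, and by (2) it is reflexive. A reflexive Fréchet space is a Montel space, as (3) confirms directly: bounded sets are relatively compact. The strong dual of a Fréchet space is complete, since a Fréchet space is bornological and the strong dual of a bornological space is complete. The strong dual of a Montel space is again Montel; in particular it is barrelled and reflexive. Moreover, the strong dual of a reflexive Fréchet space is bornological: it is the inductive limit of the Banach spaces generated by the polars of a fundamental sequence of neighbourhoods, i.e.\ a (DF)-space which is barrelled, hence bornological. This gives completeness, reflexivity and bornologicity of $\Sr'(G)$.

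\textbf{Nuclearity.} The strong dual of a nuclear Fréchet space is nuclear; see~\cite[Proposition 50.6]{Treves}. Combined with Theorem~\ref{teo:8bis}(1), this gives that $\Sr'(G)$ is nuclear.

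\textbf{Relative compactness of bounded sets.} I would take either of two routes. The first uses that $\Sr'(G)$ is Montel as the dual of a Montel space, so its bounded sets are relatively compact. The second uses that nuclear spaces which are complete (or quasi-complete) have relatively compact bounded sets, by~\cite[Corollary to Proposition 50.2]{Treves} applied to $\Sr'(G)$.

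The only step requiring care is bornologicity, since duals of general locally convex spaces need not be bornological. Here one must use that $\Sr(G)$ is a distinguished (indeed reflexive) Fréchet space, so its strong dual is barrelled (DF), and hence bornological.
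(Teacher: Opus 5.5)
Your proposal is correct and follows essentially the paper's route: Montel duality from Theorem~\ref{teo:8bis}(1) and (3) gives reflexivity and relative compactness of bounded sets, bornologicity of Fréchet spaces gives completeness of the strong dual, \cite[Proposition 50.6]{Treves} gives nuclearity, and bornologicity comes from the Bourbaki result that the strong dual of a reflexive Fréchet space is bornological, which is your (DF)/barrelled argument. One correction: a reflexive Fréchet space need not be Montel (for instance $\ell^2$), so the Montel property of $\Sr(G)$ has to come from (3) together with the barrelledness of Fréchet spaces, which is what you then actually use.
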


\begin{proof}
	Combining (1) and (3) of Theorem~\ref{teo:8bis} with~\cite[Corollary to Proposition 39.10 and Proposition 36.10]{Treves}, one sees that $\Sr'(G)$ is reflexive, and that its bounded subsets are relatively compact. Combining (2) of Theorem~\ref{teo:8bis} with~\cite[Proposition 50.6]{Treves} we see that $\Sr'(G)$ is nuclear. Completeness follows from~\cite[Chapter III, Proposition 2 of \S 2 and Corollary 1 to Proposition 12 of \S 3, No.\ 8]{BourbakiTVS}. The fact that $\Sr'(G)$ is bornological follows from~\cite[Corollary to Proposition 4 of Chapter IV, \S 3, No.\ 4]{BourbakiTVS}.
\end{proof}

\begin{deff}
	Define $\Oc'_{C,L}(G)$ and $\Oc'_{C,R}(G)$ as the spaces of $u\in \Dc'(G)$ such that the families $((1+\abs{x}_*)^k L_x u)_{x\in G}$ and $((1+\abs{x}_*)^k R_x u)_{x\in G}$  are bounded in $\Dc'(G)$ (with respect to the topology of uniform convergence on the bounded subset of $\Dc(G)$, or, equivalently, with respect to the weak dual topology), respectively, for every $k\in\N$. Here, $L_x$ and $R_x$ denote the left and right translation operators, so that $\langle L_x u, \phi \rangle= \langle u, \phi(x\,\cdot\,)\rangle$ and $\langle R_x u,\phi\rangle=\langle u, \phi(\,\cdot\,x^{-1})\rangle$ for every $\phi \in C^\infty_c(G)$. We endow $\Oc'_{C,L}(G)$ and $\Oc'_{C,R}(G)$ with the corresponding topology.
\end{deff}

In the following result we collect, without proof, some basic properties of the spaces $\Oc'_{C,L}(G)$ and $\Oc'_{C,R}(G)$. Cf.~\cite{Schwartz,Grothendieck} for more information on these spaces in the case $G=\R^n$.

\begin{prop}
	The following hold:
	\begin{enumerate}
		\item[\textnormal{(1)}] $\Oc'_{C,R}(G)$ and $\Oc'_{C,L}(G)$ are complete nuclear Lusin spaces;
		
		\item[\textnormal{(2)}] every bounded subset of $\Oc'_{C,R}(G)$ and $\Oc'_{C,L}(G)$ is relatively compact,  carries the topology induced by $\Dc'(G)$, and is contained in the closure of some bounded subset of $\Dc(G)$;
		
		\item[\textnormal{(3)}] the mapping $u\mapsto \check u$ induces an isomorphism of $\Oc'_{C,R}(G)$ onto $\Oc'_{C,L}(G)$;
		
		\item[\textnormal{(4)}] convolution induces hypocontinuous  bilinear mappings
		\[
		\begin{aligned}
			&\Sr(G)\times \Oc'_{C,R}(G)\to \Sr(G) & & \Oc'_{C,L}(G)\times \Sr(G)\to \Sr(G)\\
			&\Sr'(G)\times \Oc'_{C,L}(G)\to \Sr'(G) & & \Oc'_{C,R}(G)\times \Sr'(G)\to \Sr'(G)\\
			&\Oc'_{C,R}(G)\times \Oc'_{C,R}(G)\to \Oc'_{C,R}(G) & &\Oc'_{C,L}(G)\times \Oc'_{C,L}(G)\to \Oc'_{C,L}(G)
		\end{aligned}
		\]
		relative to the bounded subsets of each factor;
		
		\item[\textnormal{(5)}] the mappings $\Phi_R\colon \Oc_{C,R}'(G)\to \Lc(\Sr(G))$ and $\Phi_L\colon\Oc_{C,L}'(G)\to \Lc(\Sr(G))$,  defined so that $\Phi_R(u)\phi=\phi*u$ and $\Phi_L(u')\phi=u'*\phi$ for every $\phi\in \Sr(G)$, induce isomorphisms of $\Oc'_{C,R}(G)$ onto the space of left-invariant endomorphisms of $\Sr(G)$ and of  $\Oc'_{C,L}(G)$ onto the space of right-invariant endomorphisms of $\Sr(G)$, respectively;
		
		\item[\textnormal{(6)}] if $(u_1,u_2,u_3)$ belongs to either 
		\[
		\Oc'_{C,R}(G)\times \Oc'_{C,R}(G)\times \Sr'(G)  \qquad \text{or} \qquad \Sr'(G)\times \Oc'_{C,L}(G)\times \Oc'_{C,L}(G),
		\]
		then
		\[
		(u_1*u_2)*u_3=u_1*(u_2*u_3).
		\]
	\end{enumerate}
\end{prop}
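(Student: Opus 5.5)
The strategy is to reduce each item to the Euclidean theory of $\Oc'_C$ (cf.~\cite{Schwartz,Grothendieck}), using only three inputs: the seminorms of Theorem~\ref{teo:8bis}, the polynomial bound on $\Ad$ from Remark~\ref{oss:2}, and the subadditivity $\abs{xy}_*\meg \abs{x}_*+\abs{y}_*$ coming from the left-invariant distance.

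The first task is to make the definition concrete. By the uniform boundedness principle in $\Dc'(G)$, together with the local structure of distributions, $u\in\Oc'_{C,R}(G)$ if and only if the following holds: for every compact neighbourhood $K$ of $e$ there is a finite order $m$ such that, for each $k$, $u$ can be written on each translate $xK$ as a sum of derivatives of order $\meg m$ of measures. The total variation of these measures on $xK$ should be $O((1+\abs{x}_*)^{-k})$. Equivalently, I expect to show that $u\in\Oc'_{C,R}(G)$ if and only if $u*\phi\in\Sr(G)$ for every $\phi\in C^\infty_c(G)$ (the appropriate side), with the bounds holding uniformly. I would prove this using a parametrix: write $\delta_e=\Lc_\omega^N\psi+\psi'$ with $\psi,\psi'\in C^\infty_c$ and $N$ large. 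Such a decomposition follows from the local regularity of $(\Lc_\omega)^{-N}$, whose kernel is built from the heat kernel in Theorem~\ref{teo:7}~(2). Then $u=\Lc_\omega^{N}(u*\psi)+u*\psi'$, where the convolutions are taken on the side that commutes with the left-invariant $\Lc_\omega$, and $u*\psi, u*\psi'$ are smooth functions of rapid decay together with all their derivatives. This structure theorem is the main obstacle: every remaining item follows from it by bookkeeping.

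With the structure theorem in hand, items (3), (4) and (6) are handled as follows.
\begin{enumerate}
\item[(3)] Apply $u\mapsto\check u$, noting that $\abs{x^{-1}}_*=\abs{x}_*$ and that inversion exchanges left and right translations.
\item[(4)] For $\phi\in\Sr(G)$ and $u\in\Oc'_{C,R}(G)$, write $\phi*u=\phi*\Lc_\omega^N v_1+\phi*v_2$ with $v_i$ rapidly decreasing $C^\infty$ functions, and move $\Lc_\omega^N$ onto $\phi$ via Proposition~\ref{prop:9}. The weighted $L^1$ seminorms then follow from $(1+\abs{xy}_*)^k\meg(1+\abs{x}_*)^k(1+\abs{y}_*)^k$ together with Young's inequality. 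Commuting derivatives with the convolution may require conjugation by $\Ad$, which costs only a polynomial weight by Remark~\ref{oss:2}. The bounds obtained are uniform on bounded sets, which gives hypocontinuity. The mappings on $\Sr'(G)$ are defined by transposition, and the $\Oc'_C\times\Oc'_C$ products follow by testing against $\Sr(G)$.
\item[(6)] Associativity is checked on dense subsets ($C^\infty_c$ is sequentially dense in each space), and separate continuity then extends it.
\end{enumerate}

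For (1) and (2), the plan is to realise $\Oc'_{C,R}(G)$ as the projective limit over $k$ of the spaces of distributions that are bounded after weighting by $(1+\abs{x}_*)^k$. Via the structure theorem, this identifies it as a reduced projective limit of duals of nuclear Fréchet spaces, whose building block is $\Sr(G)$, nuclear by Theorem~\ref{teo:8bis}. From this description, nuclearity, completeness and the Lusin property follow from the standard permanence properties in~\cite{Treves}. Relative compactness of bounded sets follows as in the Corollary above, and so does the coincidence of the induced topology with that of $\Dc'(G)$ on bounded sets, since both topologies are Montel-type. The last claim of (2) is that bounded sets lie in the closure of bounded subsets of $\Dc(G)$. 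For this I would regularize with an approximate identity and cut off with a partition of unity adapted to $\abs{\cdot}_*$.

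For (5), the key observation is that $\Phi_R$ lands in left-invariant endomorphisms by (4), and that it is injective because $\phi*u$ determines $u$. For surjectivity, take a left-invariant continuous $T$ and set $u$ to be the distribution $\phi\mapsto (T\check\phi)(e)$, appropriately checked. Left invariance gives $T\phi=\phi*u$, and continuity of $T$ on $\Sr(G)$ supplies the uniform weighted bounds showing $u\in\Oc'_{C,R}(G)$, via the characterization obtained in the structure theorem. That $\Phi_R$ is a topological isomorphism then follows from the open mapping theorem for these (webbed, ultrabornological) spaces, or directly from the two-sided estimates.
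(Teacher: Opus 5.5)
The paper states this proposition without proof and defers to Schwartz and Grothendieck for $\R^n$, so your outline has to stand on its own. Two of its load-bearing steps fail.

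\textbf{The structure theorem is false as stated.} No decomposition $\delta_e=\Lc_\omega^N\psi+\psi'$ with $\psi,\psi'\in C^\infty_c(G)$ exists, because $\delta_e-\psi'$ would then be smooth. A parametrix only gives $\psi\in C^m_c(G)$ with $m$ finite.

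More seriously, for a fixed $N$ the function $\psi*u$ need not be rapidly decreasing, since the order needed to get decay of order $k$ can grow with $k$. Already on $\R$, let $u=\sum_{n\Meg 2}2^{-j_n}\Delta^{j_n}_{1/n}\delta_n$, the normalized $j_n$-th differences of Dirac masses with step $1/n$, where $j_n\to\infty$ slowly. Then $u*\phi=O(n^{-k})$ near $n$ as soon as $j_n\Meg k$, so $u\in\Oc'_C(\R)$. Yet for $E(y)=\abs{y}^{M+1/2}\chi(y)\in C^M_c(\R)$ one finds $\abs{(u*E)(n)}\Meg n^{-M-1/2-o(1)}$.

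So your quantifiers must be reversed. For every $k$ there are $N_k$ and $\psi_k\in C^{m_k}_c(G)$ such that $(1+\abs{\,\cdot\,}_*)^k(\psi_k*u)$ is bounded. These come from the equicontinuity of the bounded families $((1+\abs{x}_*)^kR_xu)_{x\in G}$, uniformly for $u$ in a bounded set. With this correction your estimates for (4) go through. You also need no $\Ad$-conjugation: by Theorem~\ref{teo:8bis}(5) right-invariant derivatives suffice, and $Y^R(\phi*v)=(Y^R\phi)*v$. Alternatively, split $\phi\in\Sr(G)$ with a partition of unity into left translates of a bounded subset of $\Dc(G)$ with rapidly decaying coefficients; this avoids the structure theorem entirely.

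\textbf{The argument for (1) does not work.} The spaces of distributions that become bounded after weighting by $(1+\abs{x}_*)^k$ are weighted $\Dc'_{L^\infty}$-type spaces. They are duals of weighted $\Dc_{L^1}$-type Fr\'echet spaces, which are not nuclear; they are not even Montel, since normalized left translates of a fixed test function form a bounded, non-precompact set. Nothing in your structure theorem turns them into duals of nuclear Fr\'echet spaces built on $\Sr(G)$.

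Nuclearity of $\Oc'_C$ is exactly the nontrivial part of Grothendieck's theory. One has to show that the linking maps of the projective system are nuclear, using the gain in weight and regularity between consecutive steps together with polynomial volume growth. A cleaner route on $G$ is the following.
\begin{enumerate}
\item Prove (5) as a topological isomorphism onto a closed subspace of $\Lc_b(\Sr(G))$. Continuity of $\Phi_R^{-1}$ is immediate, since $\langle R_xu,\phi\rangle=(\check\phi*u)(x)$; continuity of $\Phi_R$ is the hypocontinuity in (4).
\item Use $\Lc_b(\Sr(G))\cong\Sr'(G)\widehat{\otimes}\Sr(G)$, which is complete and nuclear because $\Sr(G)$ is a nuclear Fr\'echet space.
\end{enumerate}

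Your relative compactness in (2), and with it the coincidence of topologies on bounded sets, is deduced from (1), so these also remain unproved. The Lusin property needs an actual argument as well, since Treves' permanence results do not address it.
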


\begin{deff}
	Let $Z$ be a Banach space. Then, we set $\Sr'(G;Z)\coloneqq \Lc(\Sr(G);Z)$, endowed with the topology of uniform convergence on the bounded subsets of $\Sr(G)$. In addition, we define, for every $f\in \Sr'(G;Z)$, for every $\phi\in \Oc_{C,L}'(G)$ 
	\[
	f*\phi\colon \Sr(G)\ni \psi\mapsto \langle f, \psi*\check\phi\rangle\in Z,
	\]
	so that $f*\phi\in \Sr'(G;Z)$. We define $\phi'*f$, for $\phi'\in \Oc_{C,R}'(G)$, analogously.
\end{deff}

\begin{prop}
	Let $Z$ be a Banach space, and take $f\in \Sr'(G;Z)$ and $\phi\in \Sr(G)$. Then, $f*\phi, \phi*f\in C^\infty(G;Z)$.
\end{prop}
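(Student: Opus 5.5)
We want to show that for $f\in \Sr'(G;Z)$ and $\phi\in\Sr(G)$ the maps $f*\phi$ and $\phi*f$ are smooth $Z$-valued functions. By the definition,
\[
\langle f*\phi,\psi\rangle=\langle f,\psi*\check\phi\rangle,
\]
so the natural candidate for the function is $F(x)\coloneqq \langle f, \phi_x\rangle$. Here $\phi_x$ is the function of $y$ given by $\phi(y^{-1}x)$, that is, the left translate of $\check\phi$. We treat $f*\phi$; the case $\phi*f$ is symmetric, replacing left translations by right translations and $X^R$ by $X$.

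The plan is to show first that the map $\Phi\colon G\ni x\mapsto \phi_x\in \Sr(G)$ is of class $C^\infty$. Then $F=f\circ\Phi$ is $C^\infty(G;Z)$, because $f$ is a continuous linear map $\Sr(G)\to Z$. Its derivatives are $XF(x)=\langle f, X_x\phi_x\rangle$, where the derivative in $x$ is computed inside $\Sr(G)$.

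For the smoothness of $\Phi$, note that $\phi_x=L_{x^{-1}}$ applied to $\check\phi$, with $\check\phi\in\Sr(G)$. Indeed, Theorem~\ref{teo:8bis}(5) with $\check{}$ exchanges $X$ and $X^{R\dag}$ by Proposition~\ref{prop:9}, and $\abs{x^{-1}}_*=\abs{x}_*$, so $\Sr(G)$ is stable under $\check{}$. It therefore suffices to show that the left regular representation $x\mapsto \lambda(x)u$, with $\lambda(x)u=u(x^{-1}\,\cdot\,)$, is a smooth map $G\to\Sr(G)$ for every $u\in\Sr(G)$. This reduces to three facts.

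\begin{enumerate}
\item[(a)] $\lambda(x)$ is continuous on $\Sr(G)$ and locally equicontinuous in $x$. Since $\lambda(x)$ commutes with left-invariant $X$, the seminorms $\norm{(1+\abs{\,\cdot\,}_*)^kXu}_{L^1}$ are moved only by the weight. This is handled by $1+\abs{xy}_*\meg(1+\abs{x}_*)(1+\abs{y}_*)$ and by the invariance of $\beta$, using the description in Theorem~\ref{teo:8bis}(5) with left-invariant $X$.
\item[(b)] Strong continuity at $e$: $\lambda(x)u\to u$ in $\Sr(G)$ as $x\to e$. We write $\lambda(\exp tY)u-u=-\int_0^t\lambda(\exp sY)\,Y^Ru\,\dd s$ and use (a) to bound the seminorms by $t$ times seminorms of $Y^Ru$, which is again in $\Sr(G)$.
\item[(c)] Differentiability. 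The difference quotient $(\lambda(\exp tY)u-u)/t$ converges to $-Y^Ru$ in $\Sr(G)$. This follows from the same integral formula combined with (b) applied to $Y^Ru$.
\end{enumerate}

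Iterating (c) along one-parameter subgroups, and using continuity of all the resulting derivatives (from (a)–(b)), shows that $x\mapsto\lambda(x)u$ is $C^\infty$.

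It remains to check that $F$ represents $f*\phi$, that is,
\[
\int_G \psi(x)F(x)\,\dd\beta(x)=\langle f,\psi*\check\phi\rangle
\]
for $\psi\in C^\infty_c(G)$, or more generally $\psi\in\Sr(G)$. We have $\psi*\check\phi=\int\psi(x)\phi_x\,\dd\beta(x)$, interpreted as an $\Sr(G)$-valued Bochner or Riemann integral. The integral converges in the Fréchet space $\Sr(G)$, since $x\mapsto\phi_x$ is continuous with seminorms growing polynomially in $\abs{x}_*$ by (a), while $\psi$ has compact support. Since $f$ is continuous and linear, it commutes with the integral, which gives the identity.

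The main obstacle is (a)–(b): obtaining uniform control of the weighted Schwartz seminorms under translations, so that the orbit map is continuous into the Fréchet topology. The polynomial growth of $\beta$ and the subadditivity of $\abs{\,\cdot\,}_*$ should make this routine. For $\phi*f$ one would additionally invoke Remark~\ref{oss:2} if the right-translation seminorms require commuting $X$ past conjugation, via $\Ad_x$.
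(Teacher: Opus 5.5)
Your proposal is correct and carries out in detail the alternative argument the paper mentions in its last sentence: the map $x\mapsto\phi(\,\cdot\,^{-1}x)$ is $C^\infty$ from $G$ into $\Sr(G)$, and composing it with the continuous linear map $f$ gives a smooth function. The paper's main argument instead scalarizes, noting that $\langle f*\phi,v'\rangle=\langle f,v'\rangle*\phi\in C^\infty(G)$ for every $v'\in Z'$, and then upgrades weak smoothness to smoothness via Schwartz's lemma; that route relies on the scalar case, whereas your orbit-map argument proves it along the way.
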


\begin{proof}
	Define $\psi(x)\coloneqq \langle f, \phi(\,\cdot\,^{-1}x)\rangle$ for every $x\in G$, and observe that, for every $v'\in Z'$, $\langle \psi, v'\rangle=\langle f*\phi,v'\rangle= \langle f,v'\rangle*\phi\in C^\infty(G)$. By the arbitrariness of $v'$, this proves that $\psi\in C^\infty(G)$ (cf.~\cite[Lemma II in the Appendix]{Schwartz2}), and then that $\psi=f*\phi$. The other assertion is proved similarly. Alternatively, one may directly observe that the mapping $G\ni x\mapsto \phi(\,\cdot\,^{-1}x)\in \Sr(G)$ is of class $C^\infty$.
\end{proof}

\subsection{`Goodman' Sobolev Spaces}

\begin{deff}
	Suppose  $\alpha\Meg0$ and $p\in [1,\infty]$. 
	We define the Sobolev space $W^{\alpha,p}(G)$  
	as the space of $f\in L^p(G)$  
	such that $X f\in L^p(G)$  
	for every $X\in U_\alpha$, endowed with the norm 
	\[
	f\mapsto \max_{\substack{X\in U_\alpha, \;  \abs{X}\meg 1}} \norm{X f}_{L^p(G)}.
	\]
	We define $W^{\infty,p}(G)$ as te intersection of all the $W^{\alpha,p}(G)$, endowed with the corresponding topology.
	
	We define $W^{\alpha,p}_\loc(G)$ as the space of $f\in L^p_\loc(G)$ such that $f\phi\in W^{\alpha,p}$ for every $\phi\in C^\infty_c(G)$.
\end{deff}

\subsection{Auxiliary Results}

We shall now consider some results on the functional calculus associated with $\Lc$.
Observe first that, by Theorem~\ref{teo:7}, the operator $\Lc$, with initial domain $C^\infty_c(G)$, is essentially self-adjoint on $L^2(G)$, so that we may consider the associated functional calculus. In particular,  for every Borel function $m$ on $\R$ with polynomial growth, that is, such that $(1+\abs{\,\cdot\,})^{-k} m$ is bounded for some $k\in\N$, there is a unique  distribution $\Kc_\Lc(m)$ on $G$ such that $m(\Lc)\phi=\phi*\Kc_\Lc(m)$ for every $\phi\in C^\infty_c(G)$. In fact, $\Kc_\Lc(m)$ is necessarily a tempered distribution, so that the previous equality may be extended to every $\phi$ in the Schwartz space.

\begin{deff}
	We define $D$ as the maximum of $Q_*$ and $Q_G$.\footnote{Recall that $Q_G$ is defined so that $\beta(B(e,r))\asymp r^{Q_G}$ for $r\to +\infty$, while $Q_*$ is the homogeneous dimension of $G_*$, so that $\beta(B(e,r))\asymp r^{Q_*}$ for $r\to 0^+$}.
\end{deff}

\begin{oss}
	Observe that there is a constant $c>1$ such that
	\[
	\frac 1 c r^{Q_*}(1+r)^{Q_G-Q_*}\meg \beta(B(e,r))\meg c r^{Q_*}(1+r)^{Q_G-Q_*}
	\]
	for every $r>0$.  Consequently,
	\[
	\beta(B(e,t r))\meg c^2 (1+t)^D \beta(B(e,r))  
	\]
	for every $t,r>0$. In particular, $\beta$ is doubling.
\end{oss}

\begin{lem}\label{lem:37b}
	Take $ \rho>0$. Then, there is a constant $C>0$ such that
	\[
	\norm{\chi_{B(e,\rho)\setminus B(e,t)}\abs{\,\cdot\,}_*^{-Q_*/p-\alpha}  }_{L^p(G)}\meg \left( \frac{C}{1-2^{-p\alpha}}\right) ^{1/p} t^{-\alpha}
	\]
	while
	\[
	\norm{\chi_{G\setminus B(e,s)}\abs{\,\cdot\,}_*^{-Q_G/p-\alpha}  }_{L^p(G)}\meg \left( \frac{C}{1-2^{-p\alpha}}\right) ^{1/p} s^{-\alpha}
	\]
	for every $t\in (0,\rho]$, for every $s\in [\rho,+\infty)$, for every $\alpha>0$, and for every $p\in (0,\infty]$.
\end{lem}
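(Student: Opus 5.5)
The plan is a dyadic-shell decomposition combined with the two-sided volume estimate of the preceding remark, namely $\beta(B(e,r))\asymp r^{Q_*}(1+r)^{Q_G-Q_*}$. I first treat $p<\infty$ and raise everything to the power $p$.

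For the first inequality, with $t\in(0,\rho]$, I cover $B(e,\rho)\setminus B(e,t)$ by the shells
\[
A_k\coloneqq B(e,2^{k+1}t)\setminus B(e,2^k t),\qquad k=0,\dots,K,
\]
where $K$ is the least integer with $2^{K+1}t\Meg\rho$. On $A_k$ one has $\abs{x}_*^{-Q_*-p\alpha}\meg (2^kt)^{-Q_*-p\alpha}$. Moreover $\beta(A_k)\meg\beta(B(e,2^{k+1}t))\meg c\,(2^{k+1}t)^{Q_*}(1+2\rho)^{\abs{Q_G-Q_*}}$, because $2^{k+1}t\meg 2\rho$. Hence
\[
\int_{A_k}\abs{x}_*^{-Q_*-p\alpha}\,\dd\beta(x)\meg C_\rho\,2^{Q_*}\,(2^kt)^{-p\alpha}.
\]
Summing the geometric series over $k\Meg0$ gives $C\,t^{-p\alpha}/(1-2^{-p\alpha})$, and taking $p$-th roots yields the claim. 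The constant $C_\rho 2^{Q_*}$ does not depend on $p$ or $\alpha$.

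The second inequality, with $s\Meg\rho$, works the same way using the shells $B(e,2^{k+1}s)\setminus B(e,2^ks)$ for all $k\Meg0$. Here the radius satisfies $r\Meg\rho$, so $\beta(B(e,r))\meg c\,r^{Q_*}(1+r)^{Q_G-Q_*}\meg C'_\rho r^{Q_G}$, since $(1+r)/r\meg (1+\rho)/\rho$. Each shell therefore contributes at most $C'_\rho 2^{Q_G}(2^ks)^{-p\alpha}$, and summing gives the bound.

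The case $p=\infty$ is immediate: the functions are pointwise bounded by $t^{-\alpha}$ and $s^{-\alpha}$ respectively, and the prefactor equals $1$ under the convention $(\cdot)^{1/\infty}=1$.

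The only point needing care is that the constant $C$ must be uniform in $p\in(0,\infty)$ and in $\alpha$. This is guaranteed because the exponent $Q_*$ (respectively $Q_G$) cancels exactly against the volume growth. The residual factors $2^{Q_*}$, $2^{Q_G}$ and the $\rho$-dependent comparison constants are raised only to the power $1/p$ after summation, and can be absorbed into $C$.
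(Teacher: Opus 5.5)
Your proof is correct and follows essentially the same route as the paper. The paper also decomposes the regions into dyadic shells, bounds the volumes by $C r^{Q_*}$ for $r\meg 2\rho$ and by $C r^{Q_G}$ for $r\Meg 2\rho$ (you derive these from the two-sided estimate $\beta(B(e,r))\asymp r^{Q_*}(1+r)^{Q_G-Q_*}$), and sums the geometric series $\sum_k 2^{-kp\alpha}$ with a constant independent of $p$ and $\alpha$, treating $p=\infty$ as trivial.
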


\begin{proof}
	The case $p=\infty$ is trivial, so that we may assume that $p\in (0,\infty)$. Take $C_1>0$ such that $   \beta(B(e,r))\meg C_1 r^{Q_*}$ for every $r\in (0,2\rho]$ while $ \beta(B(e,r))\meg C_1 r^{Q_G}$ for every $r\in [2\rho,+\infty)$. Then,  
	\[
	\begin{split}
		\norm{\chi_{B(e,\rho)\setminus B(e,t)}\abs{\,\cdot\,}_*^{-Q_*/p+\alpha} }_{L^p(G)}^p&\meg\sum_{0\meg j<\log_2(\rho/t)} \int_{B(e, t 2^{j+1})\setminus B(e, t 2^{j})}  \abs{x}_*^{-p\alpha-Q_*}\,\dd \beta(x)  \\	
		&\meg C_1 \sum_{0\meg j<\log_2(\rho/t)} (2^{j+1} t)^{Q_*}   ( 2^{j }t)^{-Q_*-p\alpha}\\
		&\meg C_1  2^{Q_* }t^{-p\alpha}  \sum_{j\in\N }  2^{-j p \alpha}\\
		&= C_1  \frac{2^{Q_*}}{1-2^{-p\alpha}}t^{-p\alpha}  
	\end{split}
	\]
	for every $t\in (0,\rho]$. Analogously,
	\[
	\begin{split}
		\norm{\chi_{G\setminus B(e,s)}\abs{\,\cdot\,}_*^{-Q_G/p+\alpha} }_{L^p(G)}^p&\meg\sum_{j\in\N} \int_{B(e, s 2^{j+1})\setminus B(e, s 2^{j})}  \abs{x}_*^{-p\alpha-Q_G}\,\dd \beta(x)  \\	
		&\meg C_1 \sum_{j\in\N} (2^{j+1} s)^{Q_G}   ( 2^{j }s)^{-Q_G-p\alpha}\\
		&= C_1  \frac{2^{Q_G}}{1-2^{-p\alpha}}s^{-p\alpha}  
	\end{split}
	\]
	for every $s\in [\rho,+\infty)$.
	The assertion follows. 
\end{proof}

\begin{lem}\label{lem:37}
	There is a constant $C>0$ such that
	\[
	\norm{(1+\abs{\,\cdot\,}_*/t)^{-\alpha-D/p}  }_{L^p(G)}\meg 3 \left( \frac{C}{1-2^{-p\alpha}}\right) ^{1/p} t^{Q_*/p}\max(1,t)^{(Q_G-Q*)/p}
	\]
	for every $p\in (0,\infty]$ and for every $t,\alpha>0$.
\end{lem}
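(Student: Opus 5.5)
The case $p=\infty$ is immediate, since the function is bounded by $1$ and the right-hand side is at least $1$. For $p\in(0,\infty)$, I would split $G$ into the ball $B(e,t)$ and the annuli $B(e,2^{j+1}t)\setminus B(e,2^jt)$, $j\in\N$. This is the same dyadic decomposition used in Lemma~\ref{lem:37b}, but centred at the scale $t$ instead of $\rho$. The $p$-th power of the quantity to be estimated is then bounded by
\[
\beta(B(e,t))+\sum_{j\in\N}(1+2^j)^{-p\alpha-D}\,\beta(B(e,2^{j+1}t)).
\]

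The key input is the doubling estimate from the Remark preceding Lemma~\ref{lem:37b}, namely $\beta(B(e,sr))\meg c^2(1+s)^D\beta(B(e,r))$. Applied with $r=t$ and $s=2^{j+1}$, it gives
\[
\beta(B(e,2^{j+1}t))\meg c^2(1+2^{j+1})^D\beta(B(e,t))\meg c^2 3^D(1+2^j)^D\beta(B(e,t)).
\]
The factor $(1+2^j)^{-D}$ therefore cancels the volume growth. What remains is $\sum_j (1+2^j)^{-p\alpha}\meg\sum_j 2^{-jp\alpha}=(1-2^{-p\alpha})^{-1}$. Since $1\meg(1-2^{-p\alpha})^{-1}$, the contribution of the central ball is absorbed into the same bound.

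Finally, I would use $\beta(B(e,t))\meg c\,t^{Q_*}(1+t)^{Q_G-Q_*}\meg c\,2^{Q_G}t^{Q_*}\max(1,t)^{Q_G-Q_*}$, which holds because $Q_G\Meg Q_*$ under polynomial growth. A caveat: if $Q_G<Q_*$, I would instead keep the factor $(1+t)^{Q_G-Q_*}\meg \max(1,t)^{Q_G-Q_*}$, which again holds. Collecting the constants, the $p$-th power is at most $C'(1-2^{-p\alpha})^{-1}t^{Q_*}\max(1,t)^{Q_G-Q_*}$ with $C'$ independent of $p$, $t$ and $\alpha$. Taking $p$-th roots gives the claim, and the factor $3$ leaves room for slack in the constants; I would absorb the constant into $C$. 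In fact the constant $C'$ is of the form $K^p$ with $K$ fixed, because of the factors $3^D$ and $2^{Q_G}$. This is not a problem: after taking $p$-th roots it becomes a constant $K$ in front, which is harmless since the statement's $C$ sits inside the $1/p$ power only up to such a factor. I would check this bookkeeping carefully; it is the only delicate point.
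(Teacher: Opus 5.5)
Your argument is correct and is organised differently from the paper's. The paper splits $G$ relative to the \emph{fixed} scale $1$: into $B(e,t)$, $B(e,1)\setminus B(e,t)$ and $G\setminus B(e,1)$ when $t\meg 1$, and into $B(e,t)$ and $G\setminus B(e,t)$ when $t>1$. On the outer pieces it bounds the integrand by $t^{\alpha+D/p}\abs{x}_*^{-\alpha-D/p}$ and invokes Lemma~\ref{lem:37b}, which treats the power laws $r^{Q_*}$ near $e$ and $r^{Q_G}$ at infinity separately; using $D\Meg Q_*$ and $D\Meg Q_G$, this yields $t^{Q_*/p}$, resp.\ $t^{Q_G/p}$, in the two regimes.

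You instead decompose dyadically at the scale $t$ itself and compare every annulus with $B(e,t)$ through the doubling bound $\beta(B(e,sr))\meg c^2(1+s)^D\beta(B(e,r))$. The weight $(1+2^j)^{-D}$ then cancels the volume growth exactly, and everything collapses to $\beta(B(e,t))/(1-2^{-p\alpha})$; the two regimes reappear only at the end, through the two-sided estimate for $\beta(B(e,t))$. This dispenses with the case distinction in $t$ and with Lemma~\ref{lem:37b}. Because you work with $p$-th powers over disjoint pieces from the start, the argument is valid for $p<1$ as it stands. The paper instead adds three $L^p$ quasi-norms, which for $p<1$ must be justified via disjointness of supports, at the cost of a factor absorbed into $C$. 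Your route even gives the bound without the factor $3$.

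Two corrections to your write-up.

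First, delete your last paragraph, which is wrong on both counts. The constant $C'$ does \emph{not} depend on $p$, as you yourself stated just before. The $p$-th power of the integrand is $(1+\abs{x}_*/t)^{-p\alpha-D}$, and neither the doubling factor $c^2 3^D$ nor the factor coming from $(1+t)^{Q_G-Q_*}$ involves $p$. Moreover, a constant of the form $K^p$ would \emph{not} be harmless. After taking $p$-th roots you would need $K\meg 3C^{1/p}$ for all large $p$, which forces $K\meg 3$ since $C^{1/p}\to 1$ as $p\to\infty$.

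Second, $Q_G\Meg Q_*$ does not follow from polynomial growth: for compact $G$ one has $Q_G=0<Q_*$. So your ``caveat'' branch is genuinely needed, not optional. Combining both branches gives $(1+t)^{Q_G-Q_*}\meg 2^{(Q_G-Q_*)_+}\max(1,t)^{Q_G-Q_*}$, and the lemma follows with $C=c\,(1+3^D c^2)\,2^{(Q_G-Q_*)_+}$.
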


\begin{proof}
	The case $p=\infty$ is trivial, so that we may assume that $p\in [p_0,\infty)$. By Lemma~\ref{lem:37b}, we see that there is a constant $C_1>0$ such that
	\[
	\norm{\chi_{B(e,1)\setminus B(e,t)}\abs{\,\cdot\,}_*^{-Q_*/p-\alpha}  }_{L^p(G)}\meg \left( \frac{C_1}{1-2^{-p\alpha}}\right) ^{1/p} t^{-\alpha}
	\]
	while
	\[
	\norm{\chi_{G\setminus B(e,s)}\abs{\,\cdot\,}_*^{-Q_G/p-\alpha}  }_{L^p(G)}\meg \left( \frac{C_1}{1-2^{-p\alpha}}\right) ^{1/p} s^{-\alpha}
	\]
	for every $t\in (0,1]$, for every $s\in [1,+\infty)$, for every $\alpha>0$, and for every $p\in (0,\infty]$.
	In addition, we may find a constant $C_2>0$ such that $  \beta(B(e,r))\meg C_2 r^{Q_*}$ for every $r\in (0,1]$ while $  \beta(B(e,r))\meg C_2 r^{Q_G}$ for every $r\in [1,+\infty)$. 
	 If $t\in (0,1]$, then
	\[
	\begin{split}
		\norm{(1+\abs{\,\cdot\,}_*/t)^{-\alpha-D/p}  }_{L^p(G)}&\meg \beta(B(e,t))^{1/p}  +t^{\alpha+D/p} \norm{\chi_{B(e,1)\setminus B(e,t)}\abs{\,\cdot\,}_*^{-D/p-\alpha}  }_{L^p(G)}\\
			&\qquad+t^{\alpha+D/p}\norm{\chi_{G\setminus B(e,1)}\abs{\,\cdot\,}_*^{-D/p-\alpha}  }_{L^p(G)}\\
			&\meg C_2^{1/p}  t^{Q_*/p} + \left( \frac{C_1}{1-2^{-p\alpha}}\right) ^{1/p} t^{Q_*/p}+ \left( \frac{C_1}{1-2^{-p\alpha}}\right) ^{1/p} t^{\alpha+D/p}\\
			&\meg 3 \left( \frac{\max(C_1,C_2)}{1-2^{-p\alpha}}\right) ^{1/p} t^{Q_*/p}.
	\end{split}
	\]
	If, otherwise, $t>1$, then
	\[
	\begin{split}
		\norm{(1+\abs{\,\cdot\,}_*/t)^{-\alpha-D/p}  }_{L^p(G)}&\meg \beta(B(e,t))^{1/p}   +t^{\alpha+D/p}\norm{\chi_{G\setminus B(e,t)}\abs{\,\cdot\,}_*^{-D/p-\alpha}  }_{L^p(G)}\\
			&\meg C_2^{1/p} t^{Q_G/p} +  \left( \frac{C_1}{1-2^{-p\alpha}}\right) ^{1/p}  t^{Q_G/p}\\
			&\meg 2 \left( \frac{\max(C_1,C_2)}{1-2^{-p\alpha}}\right) ^{1/p}  t^{Q_G/p},
	\end{split}
	\]
	whence the conclusion.
\end{proof}

\begin{prop}\label{prop:22}
	There is $\omega_0\in \R$ such that the following hold.
	For every $\omega \Meg \omega_0$, for every $p\in [1,\infty]$, and for every $c,s\Meg 0$ such that $s>c+D(1/p-1/2)_+$ there is a constant $C>0$ such that, for every $m\in B^{\infty,\infty}_s(\R)$ supported in $[-1,1]$ and for every $r>0$,
	\[
	\beta(B(e,r^{1/\grado}))^{1/p'}\norm{\Kc_{r\Lc_\omega} (m ) (1+\abs{\,\cdot\,}_*/r^{1/\grado})^c}_{L^p(G)} \meg C\norm{m}_{B^{\infty,\infty}_s(\R)}.
	\]
	In addition, $\Kc_\Lc$ induces a continuous linear mapping of $\Sr(\R)$ into $\Sr(G)$.
\end{prop}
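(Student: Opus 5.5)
Fix $\omega_0$ so large that the heat kernel $h_t^{\omega}$ of $\Lc_\omega$ satisfies the estimates of Theorem~\ref{teo:7}(2) without the factor $\ee^{\omega t}$. Taking $\omega_0$ above the constant of Theorem~\ref{teo:7}, we get $\Lc_\omega\Meg \omega-\omega_0>0$, so $\sigma(\Lc_\omega)\subseteq[\omega_0',\infty)$ with some $\omega_0'>0$ if we want. The argument follows the standard weighted-Plancherel / Mauceri--Meda--Hebisch scheme.

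\textbf{Step 1: reduce to exponential-type multipliers.} Write $m(\lambda)=F(\ee^{-\lambda})\ee^{-\lambda}$ with $F$ supported near $[\ee^{-1},\ee]$ and $\norm{F}_{B^{\infty,\infty}_s}\lesssim \norm{m}_{B^{\infty,\infty}_s}$. Then
\[
\Kc_{r\Lc_\omega}(m)=\Kc_{r\Lc_\omega}(F(\ee^{-\,\cdot\,}))*h^\omega_{r}.
\]

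\textbf{Step 2: $L^2$ weighted estimate.} We aim for
\[
\norm{\Kc_{r\Lc_\omega}(m)(1+\abs{\,\cdot\,}_*/r^{1/\grado})^{c'}}_{L^2}\lesssim \beta(B(e,r^{1/\grado}))^{-1/2}\norm{m}_{B^{\infty,\infty}_{s'}},
\]
for $s'>c'$.
\begin{itemize}
\item Start from the Plancherel-type bound $\norm{\Kc_{r\Lc_\omega}(m)}_{L^2}\le \norm{m}_\infty\norm{h^\omega_{r}}_{L^2}\lesssim \norm{m}_\infty\beta(B(e,r^{1/\grado}))^{-1/2}$, using Theorem~\ref{teo:7} and the volume comparisons.
\item Obtain the weighted version by decomposing $m$ with a Littlewood--Paley decomposition on $\R$ (in the variable $\lambda$), $m=\sum_k m_k$, where $\norm{m_k}_\infty\lesssim 2^{-ks}\norm m_{B^{\infty,\infty}_s}$.
\item Each $m_k$ is approximated by its truncated Fourier representation $m_k(\lambda)=\int \hat m_k(\xi)\ee^{i\xi\lambda}\,\dd\xi$, combined with the heat kernel factor. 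The kernel $\Kc(\ee^{(-1+i\xi)r\Lc_\omega})$ decays at scale $(1+\abs\xi)r^{1/\grado}$; this comes from the complex-time Gaussian bounds for weighted subcoercive operators, or, alternatively, from a Phragmén--Lindelöf/Davies argument on the sector starting from Theorem~\ref{teo:7}.
\item Cut at radius $2^k r^{1/\grado}$. Inside the ball, use the trivial $L^2$ bound times $2^{kc'}$. Outside, the contribution is superpolynomially small in $2^k$.
\item Summing over $k$ gives convergence since $s'>c'$.
\end{itemize}

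\textbf{Step 3: pass to $L^p$.}
\begin{itemize}
\item For $p\in[1,2]$, use Hölder: $\norm{K w^c}_{L^p}\le \norm{Kw^{c'}}_{L^2}\norm{w^{c-c'}}_{L^{2p/(2-p)}}$, with $w=1+\abs{\,\cdot\,}_*/r^{1/\grado}$.
\item Choose $c'-c>D(1/p-1/2)$. Lemma~\ref{lem:37} then bounds the second factor by $\beta(B(e,r^{1/\grado}))^{1/p-1/2}$, up to the $t^{Q_*}\max(1,t)^{Q_G-Q_*}$ comparison with $\beta$.
\item Combining with Step 2 gives exactly the factor $\beta(B(e,r^{1/\grado}))^{-1/p'}$ and the requirement $s>c+D(1/p-1/2)$.
\item For $p\in(2,\infty]$, write $K=\Kc(\tilde m)*h^\omega_{r/2}$ as in Step 1. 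Apply Young's inequality with weights, using $w(xy)\le w(x)w(y)$, which holds since the distance is left-invariant. The weighted $L^2$ norm of $\Kc(\tilde m)$ and the weighted $L^{q}$ norm of the heat kernel, with $1/2+1/q=1+1/p$, then give the bound; here only $s>c$ is needed.
\end{itemize}

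\textbf{Step 4: Schwartz continuity.} For $\phi\in\Sr(\R)$, we need to bound $X Y^R\Kc_\Lc(\phi)$ in weighted $L^1$.
\begin{itemize}
\item Write $XY^R\Kc_\Lc(\phi)=X\,h_1*\Kc_\Lc(\phi\,\ee^{\,\cdot\,}\ee^{\,\cdot\,})*Y^Rh_1$, suitably shifted by $\omega$.
\item Decompose $\phi$ dyadically as $\phi=\sum_j\phi_j(2^{-j}\,\cdot\,)$ with supports in $[-1,1]$, up to the shift. Apply the main estimate with $r=2^j$ and $p=1$ to each piece, with the heat-kernel derivatives controlled by Theorem~\ref{teo:7}.
\item The weight grows at most like $2^{jc/\grado}$, and $\norm{\phi_j}_{B^{\infty,\infty}_s}$ decays rapidly in $j$ by the Schwartz seminorms of $\phi$, so the series converges. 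This yields continuity.
\end{itemize}

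\textbf{Main obstacle.} The main obstacle is Step 2: weighted $L^2$ decay of $\Kc(\ee^{(-1+i\xi)r\Lc_\omega})$ with polynomial dependence on $\xi$, because there is no finite propagation speed. I would derive it via Davies' perturbation, i.e.\ weighted $L^2$ estimates for $\ee^{-z\Lc}$ conjugated by $\ee^{\psi}$ with $\psi$ Lipschitz for $d$, followed by the Phragmén--Lindelöf interpolation in $z$.
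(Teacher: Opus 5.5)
Your Steps 1--3 prove the weighted estimate by a different route from the paper's, and they work once two details (below) are fixed. The paper builds no $L^2$ bound by hand. It fixes $\omega_0$ so that $\Lc_{\omega_0}\Meg 0$ and so that, for $\omega\Meg\omega_0$, $\ee^{-\omega t}h_t$ is bounded by $\ee^{-(\omega_0/2)t}$ times a Gaussian. It then applies Martini's abstract theorem \cite[Theorem 6.1]{MartiniSfere} with $\Phi(\lambda)=\ee^{-\lambda}$, $\Psi=-\log$ on $\Omega=(0,+\infty)$ and $\eps_r(\lambda)=r^{\grado}\lambda$. The only hypothesis to check is a real-time bound, $\sup_{r>0}\beta(B(e,r))^{1/2}\ee^{-\omega r^{\grado}}\norm{h_{r^{\grado}}(1+\abs{\,\cdot\,}_*/r)^a}_{L^2(G)}<\infty$, which is immediate from Theorem~\ref{teo:7} and Lemma~\ref{lem:37}.

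Your Duong--Ouhabaz--Sikora-type argument instead writes the Littlewood--Paley pieces as superpositions of $\ee^{-(1-i\xi)r\Lc_\omega}$. You therefore first need off-diagonal bounds on the whole right half-plane. This is more work but self-contained, and it does not need Davies' perturbation:
\begin{enumerate}
\item Theorem~\ref{teo:7} and Young's inequality already give real-time $L^2$ off-diagonal bounds $\exp(-b(d^{\grado}/t)^{\gamma})$, with $\gamma=1/(\grado-1)$.
\item Contractivity on $\operatorname{Re}z>0$ follows from $\Lc_\omega\Meg 0$.
\item Phragm\'en--Lindel\"of in each quadrant, comparing with $\operatorname{Re}(\ee^{\pm i(\gamma-1)\pi/2}z^{-\gamma})$ (which vanishes on the imaginary axis), gives decay at scale $(1+\abs{\xi})r^{1/\grado}$ for $z=r(1-i\xi)$.
\end{enumerate}

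The two details to fix:
\begin{enumerate}
\item Cutting exactly at radius $2^k r^{1/\grado}$ does not make the outer part small. There $d/((1+\abs{\xi})r^{1/\grado})$ is of order $1$, and you also pay roughly $2^{k/2}$ from $\norm{\hat m_k}_{L^1}$. Cut at $2^{k(1+\eta)}r^{1/\grado}$ with $\eta>0$ small, which the strict inequality $s'>c'$ allows.
\item Removing the factor $\ee^{\omega t}$ alone does not give $\norm{h^\omega_r}_{L^2}\lesssim \beta(B(e,r^{1/\grado}))^{-1/2}$ as $r\to+\infty$ when $Q_G>Q_*$, since then $t^{-Q_*/\grado}\beta(B(e,t^{1/\grado}))\to\infty$. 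You need the extra exponential decay in $t$ that the paper builds into its choice of $\omega_0$.
\end{enumerate}
Your Step 3 is essentially how the paper itself moves between exponents in Corollary~\ref{cor:12}.

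Step 4, however, fails as written. For $\phi\in\Sr(\R)$ the function $\lambda\mapsto \ee^{2\lambda}\phi(\lambda)$ need not be bounded on $\sigma(\Lc)$; take $\phi(\lambda)=\ee^{-(1+\lambda^2)^{1/4}}$. So the factorization $XY^R\Kc_\Lc(\phi)=(Y^R h_1)*\Kc_\Lc(\ee^{2\,\cdot\,}\phi)*(X h_1)$ has no admissible middle factor. Moreover, the dyadic pieces of $\ee^{2\,\cdot\,}\phi$ near $\lambda\sim 2^j$ have sup norms of order $\ee^{c2^{j}}\sup_{\abs{\lambda}\sim 2^j}\abs{\phi(\lambda)}$, which do not decay. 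The scales are also reversed. The part of $\phi$ at $\abs{\lambda}\sim 2^j$ is $\Kc_{2^{-j}\Lc}(m_j)$ with $m_j\coloneqq(\phi\eta_j)(2^j\,\cdot\,)$ supported in $[-1,1]$. So $r=2^{-j}\meg 1$, and the weight $(1+\abs{\,\cdot\,}_*)^c\meg(1+\abs{\,\cdot\,}_*/r^{1/\grado})^c$ costs nothing.

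The fix is to decompose first and to factor each piece with the heat kernel at the matching time:
\[
\Kc_{2^{-j}\Lc}(m_j)=h_{2^{-j}}*\Kc_{2^{-j}\Lc}(\ee^{2\,\cdot\,}m_j)*h_{2^{-j}}.
\]
By Theorem~\ref{teo:7} the derivatives $X,Y^R$ falling on $h_{2^{-j}}$ then cost only $2^{j(\deg(X)+\deg(Y))/\grado}$. Passing from $\Lc$ to $\Lc_\omega$ is a harmless rescaling of the support, as in Step I of Corollary~\ref{cor:12}. Finally, $\norm{m_j}_{B^{\infty,\infty}_s(\R)}\lesssim 2^{-jM}$ for every $M$, times a Schwartz seminorm of $\phi$, so the series converges. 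This is the mechanism of Step II of Corollary~\ref{cor:12} and of Lemma~\ref{lem:36}.
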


Here, we write $\Lc_\omega$ instead of $\Lc+ \omega I$, so that $\Kc_{r\Lc_\omega} (m )= \Kc_\Lc(m(r\,\cdot\,+r\omega))$.

\begin{proof}
	Choose $\omega_0>0$ so that $\Lc_{\omega_0}$ is positive and so that there are $b,C_1>0$ such that $\abs{h_t}\meg C_1 \ee^{(\omega_0/2) t} p_{b,t}$ for every $t>0$, where $(h_t)_{t>0}$ denotes the heat kernel associated with $\Lc$ (cf.~Theorem~\ref{teo:7}). 
	We wish to apply~\cite[Theorem 6.1]{MartiniSfere} to the functional calculus associated with $\Lc_\omega$, $\omega \Meg \omega_0$.\footnote{Notice that in the cited reference the fractional Sobolev space $L^\infty_s(\R)$ is used instead of $B^{\infty,\infty}_s(\R)$. Since the condition on $s$ is `open' and since all reasonable fractional Sobolev spaces of type $L^\infty_s$ embed into one another up to (arbitrarily small) losses in regularity, we may use the spaces $B^{\infty,\infty}_s(\R)$.  } To this aim, with the notation of~\cite{MartiniSfere}, we choose $\Phi\colon \R\to \R$, $\Phi(\lambda)=\ee^{-\lambda}$ for every $\lambda\in \R$, $\Omega=(0,+\infty)$ and $\Psi\colon \Omega\to \R$ with $\Psi(\lambda)=-\log(\lambda)$ for every $\lambda>0$, and $\eps_r(\lambda)= r^{\grado} \lambda$ for every $r>0$ and for every $\lambda\in \R$. We therefore need to show that
	\[
	\sup_{r>0} \beta(B(e,r))^{1/2}\ee^{-\omega r^{\grado}}\norm*{ h_{r^{\grado}}(1+\abs{\,\cdot\,}_*/r)^a }_{L^2(G)} 
	\]
	is finite for every $a\Meg 0$.  This follows from Lemma~\ref{lem:37}, the Gaussian estimates (cf.~Theorem~\ref{teo:7}) and our choice of $\omega_0$.
\end{proof}

\begin{cor}\label{cor:12} 
	Take $p_0\in (0,1]$, $\lambda,c\Meg 0$ and $\rho,\eps>0$. Then, there is a constant $C>0$ such that for every $p\in [p_0,\infty]$, for every $s\Meg \eps+ c+D(1/p-1/2)_+$, for every $m\in B^{\infty,\infty}_s(\R)$ supported in $[-\rho,\rho]$, for every $r\in (0,\rho]$, and for every $X,Y\in U_\lambda$,
	\[
	\norm{XY^R\Kc_{r^{\grado}\Lc } (m ) (1+\abs{\,\cdot\,}_*/r)^c}_{L^p(G)} \meg C r^{-\deg(X)-\deg(Y)+(1/p-1)Q_*}\abs{X}\abs{Y}\norm{m}_{B^{\infty,\infty}_s(\R)}.
	\]
\end{cor}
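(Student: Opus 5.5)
We reduce Corollary~\ref{cor:12} to Proposition~\ref{prop:22} in three moves: absorb the differential operators $X,Y^R$ into the multiplier, pass from $L^{\bar p}$ with $\bar p\Meg 1$ to $L^p$ with $p<1$ by H\"older, and exploit $r\meg\rho$ so that only the small-scale behaviour $\beta(B(e,r))\asymp r^{Q_*}$ matters.

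\emph{Step 1: reduction to $X=Y=1$.} Fix $\omega\Meg\omega_0$ as in Proposition~\ref{prop:22}. Choose $\psi\in C^\infty_c(\R)$ with $\psi=1$ on $[-\rho,\rho]$ and $\psi$ supported in $[-2\rho,2\rho]$, and write
\[
m(r^{\grado}\lambda)=m(r^{\grado}\lambda)\,\psi(r^{\grado}\lambda)\,\ee^{r^{\grado}(\lambda+\omega)}\,\ee^{-r^{\grado}(\lambda+\omega)} .
\]
This gives $\Kc_{r^{\grado}\Lc}(m)=\Kc_{r^{\grado}\Lc_\omega}(\tilde m_r)*h_{r^{\grado}}\ee^{-\omega r^{\grado}}$, where $\tilde m_r(\mu)=m(\mu-r^{\grado}\omega)\psi(\mu-r^{\grado}\omega)\ee^{\mu}$. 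Since $r\meg\rho$, the function $\tilde m_r$ is supported in a fixed compact set and $\norm{\tilde m_r}_{B^{\infty,\infty}_s}\meg C\norm{m}_{B^{\infty,\infty}_s}$ uniformly in $r$. The factor $\ee^{\mu}$ and the translation are smooth and bounded on that compact set, so they act boundedly on $B^{\infty,\infty}_s$; a dilation normalises the support to $[-1,1]$. Moving derivatives across the convolution via $X(f*g)=f*Xg$ and $Y^R(f*g)=Y^Rf*g$, we obtain
\[
XY^R\Kc_{r^{\grado}\Lc}(m)=\bigl(Y^R\Kc_{r^{\grado}\Lc_\omega}(\tilde m_r)\bigr)*\bigl(Xh_{r^{\grado}}\bigr)\ee^{-\omega r^{\grado}} .
\]
We then split once more so that each derivative lands on its own heat kernel:
\[
XY^R\Kc=\bigl(Y^Rh_{r^{\grado}/2}\bigr)*\Kc_{r^{\grado}\Lc_\omega}(\tilde m_r')*\bigl(Xh_{r^{\grado}/2}\bigr)\cdot\ee^{-\omega r^{\grado}} .
\]
Theorem~\ref{teo:7}(2) gives $\abs{Xh_t(x)}\meg C\abs{X}t^{-(Q_*+\deg X)/\grado}\ee^{-b(\abs{x}_*^{\grado}/t)^{1/(\grado-1)}}$ for $t\meg\rho^{\grado}$, and similarly for $Y^R$. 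Hence $(1+\abs{\,\cdot\,}_*/r)^N Xh_{r^{\grado}}$ has $L^1$ norm at most $C\abs{X}r^{-\deg X}$, and more generally $L^q$ norm at most $C\abs{X}r^{-\deg X-Q_*(1-1/q)}$.

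\emph{Step 2: weights and exponents.} The weight is handled by the quasi-triangle inequality $(1+\abs{xy}_*/r)^c\meg(1+\abs{x}_*/r)^c(1+\abs{y}_*/r)^c$. When $p\Meg 1$, Young's inequality together with Proposition~\ref{prop:22} (noting that $\beta(B(e,r))^{1/p'}\asymp r^{Q_*/p'}$ for $r\meg\rho$) gives the bound $r^{-\deg X-\deg Y}r^{-Q_*/p'}$. This is exactly $r^{-\deg X-\deg Y+(1/p-1)Q_*}$. When $p<1$, we use H\"older with an extra weight: for $c'=c+D/p$,
\[
\norm{F(1+\abs{\,\cdot\,}_*/r)^{c}}_{L^p}\meg\norm{F(1+\abs{\,\cdot\,}_*/r)^{c'+\eps'}}_{L^{1}}\,\norm{(1+\abs{\,\cdot\,}_*/r)^{-D/p-\eps'}}_{L^{p/(1-p)}\text{-type}} .
\]
Lemma~\ref{lem:37} controls the last factor by $r^{Q_*(1/p-1)}$.

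\emph{Main obstacle.} The delicate point is the regularity count. With the H\"older trick, the $L^1$ estimate from Proposition~\ref{prop:22} costs the weight exponent $c+D/p-D+\eps'$. The requirement there is $s>c+D(1/p-1)+\eps'+D/2$, whereas the hypothesis only gives $s\Meg\eps+c+D(1/p-1/2)_+$, and these do not obviously match. To fix this I would apply H\"older from $L^2$ instead of $L^1$. With $1/p=1/2+1/q$, the $L^2$ estimate requires $s>c'$ with $c'=c+D/q+\eps'=c+D(1/p-1/2)+\eps'$, which matches the hypothesis once $\eps'<\eps$. The weight factor $(1+\abs{\,\cdot\,}_*/r)^{-D/q-\eps'}$ in $L^q$ is then bounded by $r^{Q_*/q}$ via Lemma~\ref{lem:37}. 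Combining this with $\beta(B(e,r))^{-1/2}\asymp r^{-Q_*/2}$ gives $r^{Q_*(1/p-1)}$, as required.

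Finally, uniformity in $p\in[p_0,\infty]$ holds because the constants in Lemma~\ref{lem:37} depend on $p$ only through $(1-2^{-p\eps'})^{-1/p}$, which is bounded for $p\Meg p_0$. Those in Proposition~\ref{prop:22} are taken at $p=1,2,\infty$ and interpolated by H\"older, so they are uniform as well.
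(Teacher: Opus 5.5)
Your route is the paper's. You shift and dilate the multiplier so that Proposition~\ref{prop:22} applies. You sandwich the kernel between heat kernels so that $X$ and $Y^R$ land on $h_{r^{\grado}}$ and are absorbed by Theorem~\ref{teo:7} and the weighted Young inequality. You reach small $p$ by H\"older from $L^2$, with an extra weight controlled by Lemma~\ref{lem:37}. The paper only separates the shift/dilation step from the sandwich $h_{r^{\grado}}*\Kc_{\Lc}((\ee^{2\,\cdot\,}m)(r^{\grado}\,\cdot\,))*h_{r^{\grado}}$.

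Your ``main obstacle'' is an arithmetic slip. H\"older from $L^1$ to $L^p$ with $p<1$ costs an extra weight $D(1/p-1)+\eps'$. Proposition~\ref{prop:22} at exponent $1$ then requires $s>c+D(1/p-1)+\eps'+D/2=c+D(1/p-1/2)+\eps'$, which is exactly what the hypothesis gives. So the $L^1$ route works too, and going through $L^2$ is only a convenience.

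The real gap is uniformity in $p\in[p_0,\infty]$, which the statement requires, and your justification of it fails. For $p\in(1,2)$ you cannot interpolate between the cases $p=1$ and $p=2$ of Proposition~\ref{prop:22}. The case $p=1$ needs $s>c+D/2$, which is not available when $p$ is close to $2$. Use Proposition~\ref{prop:22} only at the exponents $2$ and $\infty$: H\"older interpolation then covers $p\in[2,\infty]$, and you apply the $L^2$--H\"older step to every $p\in[p_0,2)$, not only to $p<1$.

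This exposes a second issue. The weight exponent $c''=c+D(1/p-1/2)+\eps'$ in the $L^2$ estimate varies with $p$. Proposition~\ref{prop:22}, however, gives a constant only for one fixed weight exponent and one regularity at a time. There are two ways to fix this.
\begin{itemize}
\item The paper's way: prove the $L^2$ and $L^\infty$ bounds for the two weight exponents $\bar c\in\{0,c'\}$, with $c'\Meg\eps+c+D(1/p_0-1/2)$ and regularity $\eps+\bar c$. Then interpolate in the weight to get all $\bar c\in[0,c']$ with a single constant.
\item A finite grid, using the slack $\eps$ in the hypothesis. Take $\eps'=\eps/3$ and finitely many weight exponents with mesh $\eps/3$. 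Replace $c''$ by the next grid point $c_i$, which is allowed since $1+\abs{\,\cdot\,}_*/r\Meg 1$. Apply Proposition~\ref{prop:22} with regularity $c_i+\eps/3<c+D(1/p-1/2)+\eps\meg s$.
\end{itemize}
With either fix, your argument is complete.
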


\begin{proof}
	\textsc{Step I}  Take $c'\Meg \eps+c  +D(1/p_0-1/2)$. By Proposition~\ref{prop:22}, there are $\omega\in \R$ and $C_1>0$ such that 
	\[
	\norm{ \Kc_{r^{\grado}\Lc_\omega } (\phi ) (1+\abs{\,\cdot\,}_*/r)^{\bar c}}_{L^p(G)} \meg C_1 \beta(B(e,r))^{-1/p'}\norm{\phi}_{B^{\infty,\infty}_{\eps+\bar c}(\R)}
	\]
	for every $p\in \Set{2,\infty}$, for every $\bar c\in \Set{0,c'}$, for every $r>0$, and for every $\phi \in B^{\infty,\infty}_{\eps+\bar c}(\R)$ supported in $[-1,1]$. By H\"older's inequality, the same holds for every $p\in [2,\infty]$, while by interpolation the same holds for every $\bar c\in [0,c']$ (cf.~\cite[Theorem 6.4.5]{BerghLofstrom}). 
	In addition, observe that there is a constant $C_2\Meg 1$ such that 
	\[
	\frac{1}{C_2} r^{Q_*}(1+r)^{Q_G-Q_*}\meg \beta(B(e,r))\meg  C_2  r^{Q_*}(1+r)^{Q_G-Q_*}
	\]
	for every $r>0$.  For every $r\in (0,\rho]$, define an endomorphism $T_r$ of $L^\infty(\R)$ so that    $(T_r\phi)(\lambda) =\phi((\rho +\rho^{\grado}\abs{\omega}) \lambda- r^{\grado}\omega)$ for every $\lambda\in \R$, and observe that there is a constant $C_3>0$ such that $\norm{T_r \phi}_{B^{\infty,\infty}_s(\R)}\meg C_3 \norm{\phi}_{B^{\infty,\infty}_s(\R)}$ for every $\phi \in B^{\infty,\infty}_s(\R)$, for every $r\in (0,\rho]$, and for every $s\in (0, \eps+c']$. In addition, if $\phi$ is supported in $[-\rho,\rho]$, then $T_r\phi$ is supported in $[-1,1]$, and $\phi(r^{\grado} \lambda)=(T_r\phi)(r^{\grado}(\rho+\rho^{\grado}\abs{\omega})^{-1} (\lambda+ \omega) )$ for every $r>0$ and for every $\lambda\in \R$. Consequently, assuming that $\eta\coloneqq \rho+\rho^{\grado}\abs{\omega}\Meg 1$ (as we may),
	\[
	\begin{split}
		&\norm{ \Kc_{r^{\grado}\Lc } (m ) (1+\abs{\,\cdot\,}_*/r)^{\bar c}}_{L^p(G)} = \norm{ \Kc_{(r\eta^{-1/\grado})^{\grado}\Lc_\omega } ( T_r m ) (1+\abs{\,\cdot\,}_*/r)^{\bar c}}_{L^p(G)}\\
			&\qquad\meg\norm{ \Kc_{(r\eta^{-1/\grado})^{\grado}\Lc_\omega } ( T_r m ) (1+\abs{\,\cdot\,}_*/(r\eta^{-1/\grado}))^{\bar c}}_{L^p(G)}\\
			&\qquad\meg C_1 \beta(B(e,r\eta^{-1/\grado}))^{-1/p'}\norm{T_r m}_{B^{\infty,\infty}_{\eps+\bar c}(\R)}\\
			&\qquad\meg  C_1 C_2^{1/p'} C_3 \eta^{Q_*/(p'\grado)} r^{-Q_*/p'} (1+r\eta^{-1/\grado})^{-(Q_G-Q_*)/p'} \norm{m}_{B^{\infty,\infty}_{\eps+\bar c}(\R)}\\
			&\qquad\meg  C_1  C_2 C_3 (1+\rho)^{(Q_*-Q_G)_+}\eta^{Q_*/\grado} r^{-Q_*/ p'} \norm{m}_{B^{\infty,\infty}_{\eps+\bar c}(\R)}
	\end{split}
	\]
	for every $r\in (0,\rho]$, for every $p\in [2,\infty]$, for every $m\in B^{\infty,\infty}_{\eps+\bar c }(\R)$ supported in $[-\rho,\rho]$, and for every $\bar c\in [0,c']$. 
	
	\textsc{Step II} Observe that there is a constant $C_4>0$ such that $\norm{ \ee^{2\,\cdot\,} m}_{B^{\infty,\infty}_{\eps+\bar c}(\R)} \meg C_4\norm{   m}_{B^{\infty,\infty}_{\eps+\bar c}(\R)}$ for every $\bar c\in [0,c']$ and for every $m\in B^{\infty,\infty}_{\eps+\bar c}(\R)$ supported in $[-\rho,\rho]$. In addition,  $\Kc_{\Lc }(m(r^{\grado}\,\cdot\,))=h_{r^{\grado}}*\Kc_\Lc((\ee^{2\,\cdot\,}m)(r^{\grado}\,\cdot\,))* h_{r^{\grado}}$, where $(h_t)$ denotes the heat kernel associated with $\Lc$.  Consequently, by Young's inequality,
	\[
	\begin{split}
	\norm{XY^R\Kc_{r^{\grado}\Lc } (m ) (1+\abs{\,\cdot\,}_*/r )^{\bar c}}_{L^p(G)}&\meg\norm{ Y^R h_{r^{\grado}} (1+\abs{\,\cdot\,}_*/r )^{\bar c}  }_{L^1(G)}\norm{\Kc_{r^{\grado}\Lc } ( \ee^{2\,\cdot\,} m  ) (1+\abs{\,\cdot\,}_*/r )^{\bar c}}_{L^p(G)}\\
		&\qquad\times\norm{ X h_{r^{\grado}} (1+\abs{\,\cdot\,}_*/r )^{\bar c}  }_{L^1(G)}
	\end{split}
	\]
	for every $r\in (0,\rho]$ and for every $X,Y\in U_\lambda$. 
	Using Lemma~\ref{lem:37} and Theorem~\ref{teo:7}, we  see that there is a constant $C_4>0$ such that
	\[
	\norm{ XY^R h_{r^{\grado}} (1+\abs{\,\cdot\,}_*/r )^{\bar c}  }_{L^1(G)}\meg C_4 \abs{X}\abs{Y} r^{-\deg(X)-\deg(Y)}
	\]
	for every $r\in (0,\rho]$, and for every $X,Y\in U_\lambda$. Therefore, by means of~\textsc{step I}, we   see that there is a constant $C_5>0$ such that
	\[
	\norm{XY^R\Kc_{r^{\grado}\Lc } (m ) (1+\abs{\,\cdot\,}_*/r)^{\bar c}}_{L^p(G)} \meg C_4^2C_5 r^{-\deg(X)-\deg(Y)-Q_*/p'}\abs{X}\abs{Y}\norm{m}_{B^{\infty,\infty}_{\eps+\bar c}(\R)}.
	\]
	for every $r\in (0,\rho]$, for every $p\in [2,\infty]$, and for every $\bar c\in [0,c']$.
	
	\textsc{Step III} Now, take $p\in [p_0,2]$  
	and observe that
	\[
	\begin{split}
		\norm{XY^R\Kc_{r^{\grado}\Lc } (m ) (1+\abs{\,\cdot\,}_*/r)^{  c}}_{L^p(G)} &\meg \norm{XY^R\Kc_{r^{\grado}\Lc } (m ) (1+\abs{\,\cdot\,}_*/r)^{  c_p}}_{L^2(G)} \norm{(1+\abs{\,\cdot\,}_*/r)^{  c-c_p}}_{L^{2p/(2-p)}(\R)}
	\end{split}
	\]
	where $c_p\coloneqq \eps+c+D(1/p-1/2)$. 
	Therefore,  by means of~\textsc{step II} and Lemma~\ref{lem:37}, we   infer that there is a constant  $C_6>0 $ such that
	\[
	\norm{XY^R\Kc_{r^{\grado}\Lc } (m ) (1+\abs{\,\cdot\,}_*/r)^{  c}}_{L^p(G)}\meg C_6 r^{Q_*(1/p-1/2)} r^{-\deg(X)-\deg(Y)-Q_*/2}\abs{X}\abs{Y}\norm{m}_{B^{\infty,\infty}_{\eps+c_p}(\R)},
	\]
	whence the result.
\end{proof}

 \begin{deff}
 	For every $p\in (0,+\infty]$, for every $r>0$, and for every $\beta$-measurable function $f$ from $G$ into a Banach space $Z$, we define 
 	\[
 	(\Nc_{p,a,r} f)(x)\coloneqq r^{-Q_*/p}\norm{ f(y) (1+d(x,y)/r)^{-a}  }_{L^p_y(\beta;Z)}=r^{-Q_*/p}\norm{ \abs{f(y)} (1+d(x,y)/r)^{-a}  }_{L^p_y(G)}
 	\]
 	for every $x\in G$. We define also $(\Nc_{p,a}  f)(x)\coloneqq \sup_{r>0} (1+r)^{(Q_*-D)/p} (\Nc_{p,a,r} f)(x)$.

 	In addition, we define
 	\[
 	(\Mc_p f)(x)\coloneqq \sup_{r>0} \left(\dashint_{B(x,r)}\abs{f}^p\,\dd \beta\right)^{1/p}
 	\]
 	for every $\beta$-measurable function $f\colon G\to Z$ and for every $p\in (0,+\infty)$.
 \end{deff}

\begin{lem}\label{lem:39}
	Take  $a>D $. Then, there is a constant $C>0$ such that
	\[
	\Nc_{p,a/p} f\meg C^{1/p} \Mc_p f
	\]
	for every Banach space $Z$, for every $\beta$-measurable function $f\colon G\to Z$ and for every $p\in (0,+\infty)$.
\end{lem}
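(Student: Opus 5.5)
We need to show $\Nc_{p,a/p} f(x)\meg C^{1/p}\Mc_p f(x)$, i.e. for every $r>0$,
\[
(1+r)^{(Q_*-D)/p}\, r^{-Q_*/p}\norm{\abs{f(y)}(1+d(x,y)/r)^{-a/p}}_{L^p_y(G)}\meg C^{1/p}\,\Mc_p f(x).
\]
Raising to the $p$-th power removes the dependence on $p$. Setting $g\coloneqq\abs{f}^p$, the inequality becomes
\[
(1+r)^{Q_*-D} r^{-Q_*}\int_G g(y)(1+d(x,y)/r)^{-a}\,\dd\beta(y)\meg C\, \Mc_1 g(x),
\]
with $C$ independent of $p$ and $Z$. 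This is a standard estimate: the left side is an average of $g$ against a radially decreasing kernel. We decompose $G$ dyadically into $B(x,r)$ and the annuli $B(x,2^{j+1}r)\setminus B(x,2^j r)$ for $j\in\N$. On the $j$-th annulus the kernel is at most $2^{-ja}$, and the integral of $g$ over $B(x,2^{j+1}r)$ is at most $\beta(B(e,2^{j+1}r))\,\Mc_1 g(x)$ by left invariance of $d$ and of $\beta$. Hence
\[
\int_G g(y)(1+d(x,y)/r)^{-a}\,\dd\beta(y)\meg \Mc_1 g(x)\sum_{j\Meg 0} 2^{-(j-1)_+ a}\,\beta(B(e,2^{j+1}r))
\]
up to harmless constants (the term $j=0$ accounts for $B(x,2r)$).

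The remaining step, which is the only one needing care, is to bound the volume factor so that it matches the normalization $(1+r)^{Q_*-D}r^{-Q_*}$. By the Remark preceding Lemma~\ref{lem:37b}, $\beta(B(e,2^{j+1}r))\meg c\,(2^{j+1}r)^{Q_*}(1+2^{j+1}r)^{Q_G-Q_*}$. Two regimes must be handled. If $Q_G\Meg Q_*$, so that $D=Q_G$, then $(1+2^{j+1}r)^{Q_G-Q_*}\meg 2^{(j+1)(Q_G-Q_*)}(1+r)^{D-Q_*}$. If instead $Q_G<Q_*$, so that $D=Q_*$, the factor is at most $1=(1+r)^{D-Q_*}$. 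In both cases
\[
\beta(B(e,2^{j+1}r))\meg c\,2^{(j+1)D}\, r^{Q_*}(1+r)^{D-Q_*}.
\]
Multiplying by $(1+r)^{Q_*-D}r^{-Q_*}$ therefore leaves $c\sum_j 2^{(j+1)D}2^{-(j-1)_+a}$. This series converges precisely because $a>D$, which is where the hypothesis enters.

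The resulting constant $C$ depends only on $a$, $D$ and $c$. Taking the supremum over $r>0$ and then the $p$-th root gives $\Nc_{p,a/p}f\meg C^{1/p}\Mc_p f$. Since the argument only uses $\abs{f}$, it applies to any Banach space $Z$, and it holds uniformly in $p\in(0,\infty)$.
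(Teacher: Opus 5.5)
Your proof is correct and follows essentially the same route as the paper: raise to the $p$-th power, decompose dyadically around $x$, bound each piece by $\beta(B(e,2^{j+1}r))\,\Mc_p f(x)^p$ via left invariance, and use the volume estimate $\beta(B(e,\rho))\asymp \rho^{Q_*}(1+\rho)^{Q_G-Q_*}$ to absorb the normalization $(1+r)^{Q_*-D}r^{-Q_*}$, with $a>D$ ensuring convergence. The only cosmetic difference is that the paper sums over all annuli $j\in\Z$ (the small ones converging because of the factor $2^{jQ_*}$), whereas you treat the inner ball $B(x,r)$ directly with the kernel bounded by $1$.
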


\begin{proof}
	Take $C_1>0$ so that $\frac{1}{C_1} r^{Q_*}(1+r)^{Q_G-Q_*}\meg \beta(B(e,r))\meg C_1 r^{Q_*}(1+r)^{Q_G-Q_*}$ for every $r>0$. Then,
	\[
	\begin{split}
	(\Nc_{p,a/p,r} f)(x)^p&= r^{-Q_*} \int_G \frac{\abs{f(y)}^p}{(1+d(x,y)/r)^{a }}\,\dd \beta(y)\\
		&\meg  r^{-Q_*}\sum_{j\in \Z} \int_{B(x, 2^{j+1} r)\setminus B(x, 2^j r)} \frac{\abs{f(y)}^p}{(1+2^j)^{a}}\,\dd \beta(y)\\
		&\meg  r^{-Q_*}\sum_{j\in \Z}  \frac{C_1 (2^{j+1}r)^{Q_*}(1+2^{j+1}r)^{Q_G-Q_*}}{(1+2^j)^{a}} \dashint_{B(x, 2^{j+1}r)} \abs{f}^p\,\dd \beta\\
		&\meg (\Mc_p f)(x)^p (1+r)^{D-Q_*}2^{D}  C_1\sum_{j\in \Z}  2^{j Q_*}(1+2^{j})^{D-Q_*-a}  
	\end{split}
	\]
	for every $x\in G$ and for every $r>0$, so that the assertion follows.
\end{proof}
 
 The next result may seem a weaker version of Lemma~\ref{lem:39} (combined with~\cite{Singular}), but will allow us to prove several results in a more uniform way (since the analogous technical result for the spaces $\Cc^q$ does not seem to hold) and also to deal with duality more effectively.
 
 \begin{cor}\label{cor:18}
 	Take $p_0\in (0,\infty)$, $p\in (p_0,\infty)$, $q\in [p_0,\infty]$, $a>D/p_0$, $\mi$ a positive  Radon measure on $(0,+\infty)$ with bounded support,  and $\kappa\colon (0,+\infty )\to (0,+\infty)$ a $\mi$-measurable function. Then, there is a constant $C>0$ such that
 	\[
 	\norm{ (1+\kappa(t))^{(Q_*-D)/p_0} \Nc_{p_0,a,\kappa(t)}(f(t,\,\cdot\,))(x)  }_{L^{q,p}_{t,x}(\mi,\beta)}\meg C\norm{f}_{L^{q,p}(\mi,\beta)}
 	\]
 	for every Banach space $Z$ and for every $(\mi\otimes \beta)$-measurable function $f\colon (0,+\infty)\times G\to Z$.
 \end{cor}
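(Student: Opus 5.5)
The plan is to reduce the statement to a vector-valued maximal inequality via Lemma~\ref{lem:39}. Choose $a'$ with $D<a'<a p_0$; this is possible since $a>D/p_0$. Since $(1+d/r)^{-a}\meg(1+d/r)^{-a'/p_0}$, we have $\Nc_{p_0,a,r}\meg \Nc_{p_0,a'/p_0,r}$ pointwise. By the definition of $\Nc_{p_0,a'/p_0}$ as a supremum over $r>0$,
\[
(1+\kappa(t))^{(Q_*-D)/p_0}\Nc_{p_0,a,\kappa(t)}(f(t,\,\cdot\,))(x)\meg \Nc_{p_0,a'/p_0}(f(t,\,\cdot\,))(x)
\]
for every $t$ and $x$. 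Note that $\kappa$ simply disappears here. Lemma~\ref{lem:39}, applied with $p=p_0$, bounds the right-hand side by $C^{1/p_0}\Mc_{p_0}(f(t,\,\cdot\,))(x)$. Thus it suffices to prove
\[
\norm{\Mc_{p_0}(f(t,\,\cdot\,))(x)}_{L^{q,p}_{t,x}(\mi,\beta)}\meg C\norm{f}_{L^{q,p}(\mi,\beta)}.
\]

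Next, rewrite everything in terms of the scalar function $g(t,x)\coloneqq\abs{f(t,x)}^{p_0}$. By definition, $\Mc_{p_0}(f(t,\,\cdot\,))=(\Mc_1 g(t,\,\cdot\,))^{1/p_0}$, where $\Mc_1=\Mc$ is the Hardy--Littlewood maximal operator. The desired inequality then becomes the Fefferman--Stein type estimate
\[
\norm{\Mc g(t,\,\cdot\,)(x)}_{L^{q/p_0,p/p_0}_{t,x}}\meg C\norm{g}_{L^{q/p_0,p/p_0}}.
\]
Here $p/p_0\in(1,\infty)$ and $q/p_0\in[1,\infty]$. I read $L^{q,p}_{t,x}(\mi,\beta)$ as the mixed norm with the $L^q$ norm in $t$ taken inside and the $L^p$ norm in $x$ outside, as in Triebel--Lizorkin spaces. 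The estimate is then the vector-valued maximal theorem for $\Mc$ acting on $L^{p/p_0}(\beta;L^{q/p_0}(\mi))$-valued functions, namely $x\mapsto g(\,\cdot\,,x)$.

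The main obstacle is justifying this Fefferman--Stein inequality on $G$ for the full range $q/p_0\in[1,\infty]$. Since $\beta$ is doubling, $(G,d,\beta)$ is a space of homogeneous type. I would proceed by cases.
\begin{itemize}
\item For $q/p_0=\infty$: use $\sup_t\Mc g(t,\,\cdot\,)\meg\Mc(\sup_t g(t,\,\cdot\,))$ together with the scalar maximal theorem.
\item For $q/p_0=1$: use the linearization/duality argument of Fefferman--Stein, or more simply the bound $\norm{\Mc g(t,\cdot)}_{L^1_t}$ via Calderón--Zygmund vector-valued theory.
\item For $1<q/p_0<\infty$: view $\Mc$ as a vector-valued singular-integral-type operator. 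Concretely, dominate $\Mc$ by a dyadic (Christ cube) maximal operator, and apply the standard $\ell^{r}$-valued weak $(1,1)$ Calderón--Zygmund decomposition followed by Marcinkiewicz interpolation.
\end{itemize}
This is the reference point alluded to as~\cite{Singular}. Interchanging the order of the $L^q(\mi)$ and $L^p(\beta)$ norms is not needed. Measurability of $(t,x)\mapsto\Mc g(t,\,\cdot\,)(x)$ follows by restricting the supremum to rational radii via lower semicontinuity. The hypothesis that $\mi$ has bounded support plays no role beyond $\sigma$-finiteness.

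If instead $L^{q,p}$ means the Besov-type order, with the $L^p$ norm in $x$ inside, the argument is simpler. One applies the scalar maximal theorem in $L^{p/p_0}(\beta)$ for each fixed $t$ and then takes the $L^{q}(\mi)$ norm. Either way, the constant $C$ is uniform in $Z$, since only $\abs{f}$ enters.
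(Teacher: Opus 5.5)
For $q>p_0$ your argument is correct and coincides with the paper's: Lemma~\ref{lem:39} plus the vector-valued Fefferman--Stein inequality on $(G,d,\beta)$ with $q/p_0\in(1,\infty]$ and $p/p_0\in(1,\infty)$.

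The gap is the endpoint $q=p_0$. The statement explicitly allows it, and it is used later, e.g.\ in \textsc{step II} of the proof of Proposition~\ref{prop:15} with $p_0=q=1$. In that case your reduction requires $\Mc_1$ to be bounded on $L^{p/p_0}(\beta;L^1(\mi))$, and this is false, so nothing can fill in your second bullet. For a counterexample, take $G=\R$ with $\gf_1=\gf$, so that $Q_*=Q_G=D=1$, and set $\mi=\sum_{j\Meg 1}\delta_{2^{-j}}$. Let $g(2^{-j},\,\cdot\,)=\chi_{[j,j+1]}$ for $1\meg j\meg n$ and $g=0$ otherwise, and put $r=p/p_0$. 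Then $\norm{g}_{L^{1,r}(\mi,\beta)}=n^{1/r}$. On the other hand,
\[
\sum_{j=1}^n (\Mc_1\chi_{[j,j+1]})(x)\Meg \sum_{j=1}^n \frac{1}{2(\abs{x-j}+1)}\Meg c\log n
\]
for $x\in[n/4,3n/4]$, so the left-hand side is at least $c\,n^{1/r}\log n$. The information is lost exactly where you remark that `$\kappa$ simply disappears'. At $q=p_0$ you cannot replace the single scale $\kappa(t)$ by the supremum over all scales on the side of $f$.

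The missing idea is to dualize before passing to a maximal function. Since $[\Nc_{p_0,a,\kappa(t)}f(t,\,\cdot\,)]^{p_0}=\Nc_{1,ap_0,\kappa(t)}(\abs{f(t,\,\cdot\,)}^{p_0})$, the $p_0$-th power of the left-hand side is the $L^{p/p_0}(G)$-norm of
\[
x\mapsto\int (1+\kappa(t))^{Q_*-D}\,\Nc_{1,ap_0,\kappa(t)}(\abs{f(t,\,\cdot\,)}^{p_0})(x)\,\dd\mi(t).
\]
Pair this with $g$ in the unit ball of $L^{(p/p_0)'}(G)$. For each fixed $t$, the operator $\Nc_{1,ap_0,\kappa(t)}$ has the positive symmetric kernel $\kappa(t)^{-Q_*}(1+d(x,y)/\kappa(t))^{-ap_0}$. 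Tonelli's theorem therefore moves it onto $g$, and the pairing becomes
\[
\int_G\int_0^{+\infty}\abs{f(t,y)}^{p_0}(1+\kappa(t))^{Q_*-D}(\Nc_{1,ap_0,\kappa(t)}\abs{g})(y)\,\dd\mi(t)\,\dd\beta(y).
\]
Now $g$ does not depend on $t$, so the supremum over scales can be taken harmlessly: $(1+\kappa(t))^{Q_*-D}\Nc_{1,ap_0,\kappa(t)}\abs{g}\meg\Nc_{1,ap_0}\abs{g}\meg C\Mc_1\abs{g}$ by Lemma~\ref{lem:39}, since $ap_0>D$. Finally, H\"older's inequality and the scalar maximal theorem on $L^{(p/p_0)'}(G)$ bound this by $C\norm{f}_{L^{p_0,p}(\mi,\beta)}^{p_0}$. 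The scalar maximal theorem applies because $p>p_0$ gives $1<(p/p_0)'<\infty$.
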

 
 \begin{proof}
 	Observe first that, replacing $f$ with $\abs{f}$, we may assume that $Z=\C$. Then, arguing as in the proof of~\cite[Lemma 5.11]{BCP}, one may prove that the mapping $(t,x)\mapsto (1+\kappa(t))^{(Q_*-D)/p_0} \Nc_{p_0,a,\kappa(t)}(f(t,\,\cdot\,))(x) $ is $(\mi\otimes \beta)$-measurable. 
 	If $q>p_0$, then the assertion follows from Lemma~\ref{lem:39} and~\cite{Singular}. It remains to deal with the case $q=p_0$. In this case, observe that, by Tonelli's theorem,
 	\[
 	\begin{split}
 		&\norm{(1+\kappa(t))^{(Q_*-D)/p_0} \Nc_{p_0,a,\kappa(t)}(f(t,\,\cdot\,))(x)   }_{L^{p_0,p}_{t,x}(\mi,\beta)}^{p_0} =\norm{(1+\kappa(t))^{Q_*-D}[ \Nc_{1,a p_0,\kappa(t)}\abs{f(t,\,\cdot\,)}^{p_0}](x)   }_{L^{1,p/p_0}_{t,x}(\mi,\beta)}\\
			&\qquad=\sup_{g\in U}\int_G \int_0^{+\infty}\int_G(1+\kappa(t))^{Q_*-D} \kappa(t)^{-Q_*} \abs{f(t,y)}^{p_0} (1+d(x,y)/\kappa(t))^{-a p_0}\dd \beta(y)\,\dd \mi(t) \abs{g(x)}\,\dd \beta(x)\\
			&\qquad = \sup_{g\in U} \int_G \int_0^{+\infty}\abs{f(t,y)}^{p_0}(1+\kappa(t))^{Q_*-D}(\Nc_{1, a p_0,\kappa(t)} g)(y)\,\dd \mi(t)  \,\dd \beta(y)\\
			&\qquad \meg \sup_{g\in U}\int_G \int_0^{+\infty}\abs{f(t,y)}^{p_0} \,\dd \mi(t)  (\Nc_{1, a p_0 } g)(y)\,\dd \beta(y)\\
			&\qquad \meg \norm{ \abs{f}^{p_0}  }_{L^{1,p/p_0}(\mi,\beta)} \sup_{g\in U} \norm{\Nc_{1, a p_0 } g  }_{L^{(p/p_0)'}(G)}\\
			&\qquad = \norm{ f}_{L^{p_0,p}(\mi,\beta)}^{p_0} \sup_{g\in U} \norm{\Nc_{1, a p_0 } g  }_{L^{(p/p_0)'}(G)}
 	\end{split}
 	\]
 	where $U$ denotes the unit ball of $L^{(p/p_0)'}(G)$. The assertion then follows from  Lemma~\ref{lem:39} and~\cite{Singular}, since $(p/p_0)'>1$.
 \end{proof}

\begin{lem}\label{lem:33}
	Take $m\in C^\infty_c(\R)$, $p_0\in (0,1]$, $a_0>0$, and $\rho\Meg 0$. Then, there  is a constant $C>0$ such that
	\[
	\Nc_{\infty,a,r}(X f)\meg C \abs{X} r^{-\deg(X) }  \Nc_{p,a,r} f 
	\]
	for every $p\in [p_0,+\infty)$,   for every $r\in (0,\rho]$, for every $a\in (0,a_0]$, for every   Banach space $Z$,  and for every $f\in \Sr'(G;Z)$ such that $f=f*\Kc_{r^{\grado}\Lc}(m )$. 
\end{lem}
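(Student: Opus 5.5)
The plan is to prove a Peetre-type maximal inequality by a self-improving argument, using only the kernel bounds of Corollary~\ref{cor:12} with $p=\infty$. Set $K_r\coloneqq\Kc_{r^{\grado}\Lc}(m)$. Since $f=f*K_r$, we have $Xf=f*(XK_r)$, so that
\[
\abs{Xf(z)}\meg \int_G \abs{f(y)}\,\abs{XK_r(y^{-1}z)}\,\dd\beta(y).
\]
By Corollary~\ref{cor:12} with $p=\infty$, $Y=I$, $\lambda=\deg X$ and an exponent $c=N$ (large, depending only on $a_0,p_0,D$), one gets
\[
\abs{XK_r(w)}\meg C\abs{X}\,r^{-\deg(X)-Q_*}(1+\abs{w}_*/r)^{-N}
\]
uniformly in $r\in(0,\rho]$; this holds because $m\in C^\infty_c(\R)$ lies in every $B^{\infty,\infty}_s(\R)$. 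Multiply by $(1+d(x,z)/r)^{-a}$ and use $(1+d(x,z)/r)^{-a}\meg (1+d(x,y)/r)^{-a}(1+d(y,z)/r)^{a}$, absorbing the factor $(1+d(y,z)/r)^{a}\meg (1+d(y,z)/r)^{a_0}$ into the kernel decay.

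The key step is the splitting $\abs{f(y)}=\abs{f(y)}^{1-p}\abs{f(y)}^{p}$ when $p<1$. Bound
\[
\abs{f(y)}^{1-p}\meg \bigl[(\Nc_{\infty,a,r}f)(x)\bigr]^{1-p}(1+d(x,y)/r)^{a(1-p)},
\]
which leaves $\int \abs{f(y)}^p(1+d(x,y)/r)^{-ap}(\dots)\,\dd\beta(y)$. The remaining kernel factor is $r^{-Q_*}(1+d(y,z)/r)^{a_0-N}$, which is bounded by $r^{-Q_*}$. This yields
\[
\Nc_{\infty,a,r}(Xf)(x)\meg C\abs{X}\,r^{-\deg(X)}\,\bigl[\Nc_{\infty,a,r}f(x)\bigr]^{1-p}\bigl[\Nc_{p,a,r}f(x)\bigr]^{p},
\]
since $\Nc_{p,a,r}f(x)^p=r^{-Q_*}\int\abs{f}^p(1+d(x,\cdot)/r)^{-ap}$. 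Taking $X=I$ gives $\Nc_{\infty,a,r}f\meg C\,(\Nc_{\infty,a,r}f)^{1-p}(\Nc_{p,a,r}f)^p$. Provided $\Nc_{\infty,a,r}f(x)<\infty$, this gives $\Nc_{\infty,a,r}f\meg C^{1/p}\Nc_{p,a,r}f$, with $C^{1/p}\meg \max(1,C)^{1/p_0}$ uniform in $p\Meg p_0$. Plugging this back into the estimate for $Xf$ concludes the case $p<1$. For $p\Meg 1$, instead apply H\"older's inequality directly to $\int\abs{f}\,\abs{XK_r}$ with the weight $(1+d(x,y)/r)^{\pm a}$. The kernel factor is then controlled in $L^{p'}$ by Lemma~\ref{lem:37} (giving $r^{Q_*/p'}$, hence the power $r^{-Q_*/p}$ overall), uniformly for $r\meg\rho$.

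The main obstacle I expect is the a priori finiteness of $\Nc_{\infty,a,r}f(x)$: $f=f*K_r$ is smooth with at most polynomial growth, but $a\meg a_0$ may be too small to beat that growth. To handle it, I would first run the argument with an auxiliary weight, replacing $(1+d(x,y)/r)^{-a}$ by $(1+d(x,y)/r)^{-a}(1+d(x,y)/R)^{-M}$ with $M$ exceeding the growth order of $f$ and $R\Meg r$. This makes the sup finite, and the same splitting still applies because the extra factor is also submultiplicative up to absorbable kernel decay (take $N$ larger than $a_0+M$). One then obtains the inequality with constants independent of $R$ and lets $R\to\infty$ by monotone convergence. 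A secondary point is the measurability and vector-valued issue, which is trivial here since all estimates involve only $\abs{f}$, so $Z$ plays no role.
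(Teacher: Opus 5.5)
Your argument is correct, but it takes a different route from the paper.

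\textbf{The paper's route.} The paper proceeds in three steps.
\begin{enumerate}
\item It first proves the case $p=\infty$, i.e.\ $\Nc_{\infty,a,r}(Xf)\meg C\abs{X}r^{-\deg(X)}\Nc_{\infty,a,r}f$, from the weighted $L^1$ bound on $X\Kc_{r^{\grado}\Lc}(m)$ given by Corollary~\ref{cor:12}.
\item Assuming $\Nc_{\infty,a,r}f$ finite, it uses a mean-value inequality (\cite[Lemma 7.5]{Calzi2}) to bound $\abs{f(y)}$ by the $L^p$ average of $\abs{f}$ over $B(y,\eta r)$ plus $\sum_j(\eta r)^{d_j}\sup_{B(y,2\eta r)}\abs{X_j f}$. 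It controls the second term by the $p=\infty$ case and absorbs it by choosing $\eta$ small.
\item A separate and rather delicate step shows that $\Nc_{p,a,r}f\equiv+\infty$ whenever $\Nc_{\infty,a,r}f\equiv+\infty$. It passes to the infimum of the exponents $a'$ for which $\Nc_{\infty,a',r}f$ is finite, and constructs points $x_k$ near which $\abs{f}$ stays large on balls of controlled radius.
\end{enumerate}

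\textbf{What your route does differently.} Your Peetre-type splitting $\abs{f}=\abs{f}^{1-p}\abs{f}^{p}$ in the reproducing formula dispenses with the mean-value inequality and the absorption. It uses only the pointwise kernel bound from Corollary~\ref{cor:12} with $p=\infty$. For $p\Meg 1$, your H\"older argument with Lemma~\ref{lem:37} needs no finiteness at all. The regularization by $(1+d(x,y)/R)^{-M}$ is a much shorter substitute for the paper's last step.

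\textbf{A point to make explicit.} In the regularized step the kernel constant depends on $N\Meg a_0+M$, hence on the growth order of $f$. Letting $R\to\infty$ therefore only gives $\Nc_{\infty,a,r}f(x)\meg C_f\,\Nc_{p,a,r}f(x)$. This yields finiteness of $\Nc_{\infty,a,r}f(x)$ whenever $\Nc_{p,a,r}f(x)<\infty$, but not a uniform constant. The uniform constant comes from then rerunning your unweighted argument, with $N$ depending only on $a_0$.

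\textbf{What each route buys.} Yours is more elementary and avoids both the group mean-value inequality and the paper's long third step. The paper's route uses a single mechanism for all $p\in[p_0,\infty)$. It is based on the same local sub-mean-value device that is reused in the sampling estimates of Proposition~\ref{prop:10}.
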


Notice that, if  $f=f*\Kc_{r^{\grado}\Lc}(m )$, then $f$ may be identified with a function of class $C^\infty$.  We still prefer to start with the assumption that $f\in \Sr'(G;F)$ since we wish to avoid the ambiguity of the notation $f\in \Sr'(G;F)\cap C^\infty(G;F)$: namely, a tempered distribution may be representable as a smooth function even though the function itself is \emph{not} `tempered;' in other words, a smooth function may actually represent a tempered distribution of higher order (for example, $\cos(\ee^x)\ee^x$ is a smooth function on $\R$ and equals the derivative of $\sin(\ee^x)$, hence may be identified with a tempered distribution). This would create unfortunate ambiguities in the definition of convolution. Notice, though, that  in our situation $f$ is actually `tempered,' that is, has polynomial growth. Another possible route to avoid this ambiguity  would be to observe that $\Sr'(G)*\Sr(G)\subseteq \Oc_{C,L}(G)$, where $\Oc_{C,L}(G)$ denotes the dual of $\Oc'_{C,L}(G)$, which may be characterized as the space of functions with `very slow growth;' a similar result holds for spaces of vector-valued distributions. We preferred to avoid introducing an additional space only to address this issue.

We proceed as in~\cite[Lemma 3.2]{Besov} (cf.~also~\cite{BesovCorr}).

\begin{proof}
	\textsc{Step I}  By Corollary~\ref{cor:12} we may find a constant $C_1>0$ such that 
	\[
	\norm{X \Kc_{r^{\grado}\Lc }(m ) (1+\abs{\,\cdot\,}_*/r)^{a}}_{L^1(G)}\meg C_1\abs{X} r^{-\deg(X) } 
	\]
	for every $r\in (0,\rho]$, for every $a\in (0,a_0]$, and for every $X\in U_\lambda$.  
	Consequently, if we take $f\in \Sr'(G;Z)$ so that $f=f*\Kc_{r^{\grado}\Lc}(m )$ (which implies that $f\in C^\infty(G;Z)$), then 
	\[
	\begin{split}
		\abs{(X f)(y)}&\meg \int_G \abs{f(z)} \abs{X \Kc_{r^{\grado}\Lc}(m)(z^{-1}y)}\,\dd \beta(z)\\
		&\meg (\Nc_{\infty,a,r} f)(x) \int_G (1+ d(x,z)/r)^{a} \abs{X \Kc_{r^{\grado}\Lc}(m)(z^{-1}y)}\,\dd \beta(z)\\
		&\meg (\Nc_{\infty,a,r} f)(x)     (1+ d(x,y)/r)^{a}  \int_G (1+ \abs{z^{-1}y}/r)^{a} \abs{X\Kc_{r^{\grado}\Lc}(m)(z^{-1}y)}\,\dd \beta(z)\\
		&\meg (\Nc_{\infty,a,r} f)(x)   (1+ d(x,y)/r)^{a} C_1  \abs{X}   r^{-\deg(X) }
	\end{split}
	\]
	for every $x,y\in G$, so that
	\[
	\Nc_{\infty,a,r}(Xf) \meg  C_1 \abs{X} r^{-\deg(X) } \Nc_{\infty,a,r} f
	\]
	for every $r\in (0,\rho]$, for every $p\in [p_0,\infty)$, for every $a\in (0,a_0]$, and for every $X\in U_\lambda$.
	
	\textsc{Step II} Now, take $f\in \Sr'(G;Z)$ so that $f=f*\Kc_{r^{\grado}\Lc}(m)$, and assume that $(\Nc_{\infty,a,r} f)(e)$ is  finite for some $p\in [p_0,\infty)$ and for some $r\in (0,\rho]$. Observe that $(\Nc_{\infty,a,r} f)(x)\meg (1+   \abs{x}_*/r)^{a}(\Nc_{\infty,a,r} f)(e) $ for every $x\in G$, so that $\Nc_{\infty,a,r}f$ is then everywhere finite. Let $(X_j)_{j\in J}$ be an orthonormal basis of $\gf$ compatible with the filtration, and set $d_j\coloneqq \deg(X_j)$ for every $j\in J$.
	Observe that, for every $\eta\in (0,1/\rho]$ and for every $y\in G$,
	\[
	\begin{split}
		\abs{f(y)}&\meg \inf_{B(y,\eta r )} \abs{f} + \sup_{ B(y,\eta r )} \abs{f-f(y)}\\
		&\meg \left(  \dashint_{B(y,\eta r )} \abs{f}^p\,\dd \beta\right) ^{1/p} + \sum_{j\in J} (\eta r )^{d_j} \sup_{ B(y,2\eta r )} \abs{X_j f}
	\end{split}
	\]
	thanks to~\cite[Lemma 7.5]{Calzi2}. Now, observe that there is a constant $C_2>0$ such that $\frac{r'^{Q_*} }{C_2}\meg \beta(B(e,r'))\meg C_2 r'^{Q_*} $ for every $r'\in (0,1]$, so that (assuming that $\rho\Meg 1$, as we may)
	\[
	\begin{split}
		\left(  \dashint_{B(y,\eta r )} \abs{f}^p\,\dd \beta\right) ^{1/p}&\meg C_2^{1/p} (r \eta)^{-Q_*/p} 2^a \left( \int_G \abs{f(z)}^p (1+d(y,z)/r)^{-pa}\,\dd \beta(y)\right) ^{1/p} \\
		&\meg 2^a(C_2/\eta^{Q_*})^{1/p_0} (\Nc_{p,a,r} f)(y)\\
		&\meg 2^a(C_2/\eta^{Q_*})^{1/p_0} (\Nc_{p,a,r} f)(x) (1+d(x,y)/r)^a
	\end{split}
	\]
	for every $x\in G$.  In addition, by~\textsc{step I} (assuming that $\lambda \Meg d_j$ for every $j\in J$, as we may),
	\[
	\Nc_{\infty,a,r} \left( \sup_{ B(\,\cdot\,,2\eta r )} \abs{X_j f}\right) \meg (1+2\eta)^{a}\Nc_{\infty,a,r} (X_j f)\meg C_1(1+2\eta)^{a} r^{-d_j }\Nc_{\infty,a,r} f
	\]
	for every $j\in J$.
	Consequently, assuming that $1+2\eta\meg 2^{p_0/Q_*}$,
	\[
	\Nc_{\infty,a,r} f\meg 2^a (C_2/\eta^{Q_*})^{1/p_0}  \Nc_{p,a,r} f+ C_1\card(J)\eta 2^{a p_0/Q_*}\Nc_{\infty,a,r} f,
	\]
	since $\eta\meg 1$ and $d_j\Meg 1$ for every $j\in J$.
	If we take the largest $\eta\in (0,1/\rho]$ so that $1+2\eta\meg 2^{p_0/Q_*}$ and $C_1\card(J)\eta 2^{a_0p_0/Q_*} \meg 1/2$, we then infer that there is a constant $C_3>0$ such that, for every $p\in [p_0,\infty)$, for every $a\in (0,a_0]$, and for every $r\in (0,\rho]$,
	\[
	\Nc_{\infty,a,r} f\meg   C_3 \Nc_{p,a,r} f
	\]
	since $\Nc_{\infty,a,r} f$ is everywhere finite.  The assertion follows in this case by means of~\textsc{step I}.
	
	\textsc{Step III} Now, take $f\in \Sr'(G)$ so that $f=f*\Kc_{r^{\grado}\Lc}(m)$, and assume that $(\Nc_{\infty,a,r} f)(e)=+\infty$   for some $a\in (0,a_0]$ and for some $r\in (0,\rho]$, so that $(\Nc_{\infty,a,r} f)(x)=+\infty$  for every $x\in G$ by the arguments of~\textsc{step II}. It will suffice   to prove that $(\Nc_{p,a,r} f)(x)=+\infty$ for every $x\in G$ and for every $p\in [p_0,+\infty)$. 
	Let $I$ be the set of $a'\in (a,+\infty)$ such that $(\Nc_{\infty,a',r} f)(e)$ is finite, and observe that $I$ is not empty since $f$ has polynomial growth, and that $I$ is an interval  since the mapping $a'\mapsto (\Nc_{\infty,a',r} f)(e)$ is decreasing. Then, by~\textsc{step II},
	\[
	\Nc_{\infty,a',r} f \meg C_{3,a'} \Nc_{p, a', r} f 
	\]
	for every $a'\in I$, where we highlighted the dependency on $a'$ of the constant $C_3$ found therein since we have no guarantee that $a'\meg a_0$. Using the fact that, for every $x\in G$, the mapping $a'\mapsto (\Nc_{\infty,a',r} f)(x)$ is	 lower semi-continuous (and decreasing), while the mapping $a'\mapsto (\Nc_{p,a',r} f)(x)$ is right continuous (and decreasing) by the monotone convergence theorem, we see that the same inequality holds also for $a'\coloneqq\inf I$ (assuming, as we may, that the function $a\mapsto C_{3,a}$ is right-continuous and increasing). If $a'\not \in I$, then clearly $\Nc_{p, a', r} f  $ is identically equal to $+\infty$, whence the conclusion by the monotonicity of $\Nc_{p,\,\cdot\,,r}f$. We are left with the case in which $a' \in I$, so that $a<a'$. We shall prove that $\Nc_{p,a,r} f$ is identically equal to $+\infty$.
	Observe first that, since $a-2^{-k}\not \in I$ for every $k\in\N$, there is a sequence $(x_k)$ of elements of $G$ such that 
	\[
	\abs{f(x_k)}\Meg 2^k (1+\abs{x_k}_*/r)^{a'-2^{-k}}
	\] 
	for every $k\in\N$. Notice that we may also assume that  $\abs{x_k}_*\Meg k$ for every $k\in \N$. In addition, since $a'\in I$, by~\textsc{step I} there is a constant $C_4\Meg 1$ such that
	\[
	\abs{(X_j f)(x)}\meg C_4 (1+\abs{x}_*/r)^{a'}
	\]
	for every $j\in J$ and for every $x\in G$. Consequently, arguing as in~\textsc{step II} we see that
	\[
	\begin{split}
		\abs{f(x)}&\Meg \abs{f(x_k)}-\abs{f(x)-f(x_k)}\\
		&\Meg  2^k (1+\abs{x_k}_*/r)^{a'-2^{-k}}- C_4 \card(J)\delta_k (1+\abs{x_k}_*/r)^{a'}\\
		&\Meg (2^{k}-1) (1+\abs{x_k}_*/r)^{a'-2^{-k}}
	\end{split}
	\]
	for every $k\in \N$ and for every $x\in B(x_k, \delta_k)$, where $\delta_k=(C_4 \card(J))^{-1} (1+\abs{x_k}_*/r)^{-2^{-k}}<1$. Now, take $x\in G$ and observe that, for every $k\in\N$,
	\[
	\begin{split}
		& r^{-Q_*}\int_{G} \abs{f(y) }^p(1+d(x,y)/r)^{-p a}\,\dd \beta(y) \Meg  r^{-Q_*} (1+\abs{x_k}_*/r+\abs{x}_*/r+\delta_k/r)^{-p a} \int_{B(x_k,\delta_k)} \abs{f}^p\,\dd \beta\\
		&\qquad\Meg C_2   (C_4 \card(J) r)^{-Q_*} (1+\abs{x_k}_*/r)^{p a'-(Q_*+p) 2^{-k}}(1+\abs{x_k}_*/r+\abs{x}_*/r+\delta_k/r)^{-p a} (2^{k}-1)^p  ,
	\end{split}
	\]
	which diverges to $+\infty$ when $k\to \infty$. The assertion follows.
\end{proof}

 We shall now present some technical lemmas which show how one may  use the estimates provided in Corollary~\ref{cor:12} to deal with convolution kernels associated with Schwartz multipliers, and also how one may control the convolution of convolution kernels associated with the functional calculi of two distinct formally self-adjoint weighted sub-coercive operators.

\begin{lem}\label{lem:36}
	Take $\lambda, a,\rho >0$, $p_0\in (0,1]$. Then, there is a continuous norm $\nu$ on $\Sr(\R)$ such that 
	\[
	\norm{X Y^R\Kc_{t^{\grado}\Lc}(\phi)(1+\abs{\,\cdot\,}_*/t)^a}_{L^p(G)} \meg   t^{-\deg(X)-\deg(Y)+(1/p-1)Q_*} \abs{X}\abs{Y} \norm{\phi}_\nu
	\]
	for every $t\in (0,\rho]$, for every $\phi\in \Sr(\R)$, for every $p\in [p_0,\infty]$, and for every $X,Y\in U_\lambda$.
\end{lem}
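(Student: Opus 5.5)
The plan is to reduce to Corollary~\ref{cor:12} by a dyadic partition of unity on the spectral side. Corollary~\ref{cor:12} only handles compactly supported multipliers, whereas $\phi\in\Sr(\R)$ is not compactly supported. Since $\Lc$ is only bounded below, we only care about $\phi$ on $[-\omega',\infty)$ for some $\omega'$. Still, $\phi$ must be decomposed there into pieces living at increasing spectral scales.

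First fix $\psi_0\in C^\infty_c(\R)$ supported in $[-2,2]$ and equal to $1$ on $[-1,1]$. Set $\psi_k\coloneqq \psi_0(2^{-k}\,\cdot\,)-\psi_0(2^{-k+1}\,\cdot\,)$ for $k\Meg 1$, so that $\sum_k\psi_k=1$ and $\psi_k$ is supported in $\set{2^{k-1}\meg\abs{\lambda}\meg 2^{k+1}}$. Write $\phi=\sum_k \phi\psi_k$. Then
\[
\Kc_{t^{\grado}\Lc}(\phi)=\sum_k \Kc_{t^{\grado}\Lc}(\phi\psi_k)=\sum_k \Kc_{(2^{k/\grado}t)^{\grado}\Lc}(m_k),
\qquad m_k\coloneqq (\phi\psi_k)(2^{k}\,\cdot\,),
\]
where each $m_k$ is supported in $[-2,2]$. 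This rescaling is the step where care is needed: for large $k$ the effective scale $r_k\coloneqq 2^{k/\grado}t$ exceeds any fixed $\rho$, so Corollary~\ref{cor:12} does not directly apply.

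To handle this, I would avoid rescaling and apply Corollary~\ref{cor:12} directly to $\phi\psi_k$, which is supported in $[-2^{k+1},2^{k+1}]$, at scale $t\in(0,\rho]$. The constant then depends on the support bound $2^{k+1}$. Tracking the proof of Corollary~\ref{cor:12} (the map $T_r$ and $\eta=\rho'+\rho'^{\grado}\abs{\omega}$ with $\rho'=2^{k+1}$, and Step II), this constant grows at most polynomially in $2^k$, say like $2^{kN}$ with $N$ depending on $\lambda,a,p_0,\rho$. Alternatively, one can rescale and use Proposition~\ref{prop:22}, which is valid for all $r>0$, with polynomial loss in $(1+r)$ from the $Q_G$ terms and from $(1+\abs{\,\cdot\,}_*/t)^a\meg (r_k/t)^a(1+\abs{\,\cdot\,}_*/r_k)^a$, together with the derivative factors $r_k^{\deg X}$ that carry powers $2^{k\deg(X)/\grado}$. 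Either way, with $s$ fixed as $\eps+a+D(1/p_0-1/2)_+$, we get
\[
\norm{XY^R\Kc_{t^{\grado}\Lc}(\phi\psi_k)(1+\abs{\,\cdot\,}_*/t)^a}_{L^p(G)}\meg C\,2^{kN} t^{-\deg(X)-\deg(Y)+(1/p-1)Q_*}\abs{X}\abs{Y}\norm{\phi\psi_k}_{B^{\infty,\infty}_s(\R)}.
\]

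Next, $\norm{\phi\psi_k}_{B^{\infty,\infty}_s}\meg C\norm{\phi\psi_k}_{C^{\lceil s\rceil}}\meg C' 2^{-kM}\sup_{j\meg\lceil s\rceil}\norm{(1+\abs{\,\cdot\,})^{M}\phi^{(j)}}_\infty$ for any $M$. This uses the fact that the derivatives of $\psi_k$ are uniformly bounded. Choosing $M=N+1$ makes the series summable.

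For $p\Meg1$, summation uses the triangle inequality. For $p<1$, use the $p$-triangle inequality $\norm{\sum f_k}_p^p\meg\sum\norm{f_k}_p^p$; the geometric decay in $k$ still gives a convergent sum, uniformly for $p\Meg p_0$. We then define $\norm{\phi}_\nu$ as a suitable constant times $\sup_{j\meg \lceil s\rceil}\norm{(1+\abs{\,\cdot\,})^{N+1}\phi^{(j)}}_{L^\infty}$ plus $\norm{\phi}_{L^\infty}$. This is a continuous norm on $\Sr(\R)$, with the constant absorbed since the statement has no extra constant.

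The interchange of the kernel sum with $XY^R$ is justified by convergence in $\Sr'(G)$, since the functional calculus is continuous.

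The main obstacle is verifying the polynomial dependence on the support size in Corollary~\ref{cor:12}, which is not stated explicitly. I would handle it via the rescaled route through Proposition~\ref{prop:22} and Theorem~\ref{teo:7}, where every constant is explicit.
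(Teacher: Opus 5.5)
Your dyadic decomposition $\phi=\sum_k\phi\psi_k$ is the right one; it is also the paper's. However, the rescaling step contains a sign error that derails the rest of the argument. Since $(\phi\psi_k)(t^{\grado}\lambda)=m_k(2^{-k}t^{\grado}\lambda)$ with $m_k=(\phi\psi_k)(2^k\,\cdot\,)$, one has
\[
\Kc_{t^{\grado}\Lc}(\phi\psi_k)=\Kc_{(2^{-k/\grado}t)^{\grado}\Lc}(m_k),
\]
not $\Kc_{(2^{k/\grado}t)^{\grado}\Lc}(m_k)$. High spectral pieces live at \emph{smaller} spatial scales. So the effective scale $2^{-k/\grado}t$ stays in $(0,\rho]$, $m_k$ is supported in $[-2,2]$, and Corollary~\ref{cor:12} applies directly with a constant independent of $k$.

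The weight costs nothing, since $(1+\abs{x}_*/t)^a\meg(1+\abs{x}_*/(2^{-k/\grado}t))^a$. The only loss is the factor $(2^{-k/\grado})^{-\deg(X)-\deg(Y)+(1/p-1)Q_*}\meg 2^{k(2\lambda+Q_*)/\grado}$. On the other side, $\norm{m_k}_{W^{s,\infty}(\R)}\lesssim 2^{ks}\sup_{\abs{x}\Meg 2^{k-1}}\sum_{j\meg s}\abs{\phi^{(j)}(x)}$. Taking $\norm{\phi}_\nu$ proportional to $\sup_x(1+\abs{x})^{c+s}\sum_{j\meg s}\abs{\phi^{(j)}(x)}$ with $c>(2\lambda+Q_*)/\grado$ gives geometric decay. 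You then sum with the $\min(1,p)$-triangle inequality, as you planned. This is exactly the paper's proof.

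As written, though, your proof replaces this with an unproved key estimate, and neither justification you offer works. Tracking the proof of Corollary~\ref{cor:12} with support $[-2^{k+1},2^{k+1}]$ does \emph{not} give polynomial growth in $2^k$. Step~II factors $\Kc_\Lc(m(r^{\grado}\,\cdot\,))=h_{r^{\grado}}*\Kc_\Lc((\ee^{2\,\cdot\,}m)(r^{\grado}\,\cdot\,))*h_{r^{\grado}}$ and uses $\norm{\ee^{2\,\cdot\,}m}\meg C_4\norm{m}$. For $m$ supported in $[-\rho',\rho']$ one necessarily has $C_4\gtrsim \ee^{2\rho'}$ (test a bump near $\rho'$). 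With $\rho'=2^{k+1}$ this is $\ee^{2^{k+2}}$, which no Schwartz decay $2^{-kM}$ can absorb. One could rework that step, e.g.\ with $h_{r^{\grado}/\rho'}$, but that is a new argument you did not give.

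The route through Proposition~\ref{prop:22} is also not carried out. That result concerns $\Lc_\omega$ with multipliers supported in $[-1,1]$ and gives no control of $XY^R$. Converting to $\Lc$ at large scales $r$ would require absorbing a spectral shift of size about $r^{\grado}\omega$, which grows with $r$. So the per-piece bound with a factor $2^{kN}$ remains unproved, but once the sign is corrected it is not needed.
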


\begin{proof}
	Take $s\in \N$ with $s> a+D(1/p_0-1/2)_+$.
	By Corollary~\ref{cor:12}, there is $C_1>0$ so that
	\[
	\norm{X Y^R\Kc_{t^{\grado}\Lc}(\phi)(1+\abs{\,\cdot\,}_*/t)^a}_{L^p(G)} \meg C_1 t^{-\deg(X)-\deg(Y)+(1/p-1)Q_*} \abs{X}\abs{Y} \norm{\phi}_{W^{s,\infty} (\R)}
	\]
	for every $\phi\in C^\infty(\R)$ supported in $[-1,1]$, for every $t\in (0,\rho]$,  for every $p\in [p_0,\infty]$, and for every $X,Y\in U_\lambda$.  
	Now, take $\eta_0\in C^\infty_c(\R)$ so that $\chi_{[-1/2,1/2]}\meg \eta_0 \meg \chi_{[-1,1]}$, and define $\eta_j\coloneqq \eta_0(2^{-j}\,\cdot\,)-\eta_0(2^{1-j}\,\cdot\,)$ for every $j\Meg 1$, so that $\sum_{j\in \N} \eta_j=1$ locally uniformly. 
	Next, take $c>(2 \lambda+ Q_*)/\grado$ and define a continuous norm $\nu'$ on $\Sr(\R)$ so that $\norm{\phi}_{\nu'}=\sup_{\lambda\in \R}  (1+\abs{\lambda})^{c+s}\sum_{j=0}^{s}  \abs{\phi^{(j)}(\lambda)} $. Observe that there is a constant $C_2>0$ such that
	\[
	\norm{(\phi \eta_j)(2^j\,\cdot\,)}_{W^{s,\infty}(\R)} \meg C_2 2^{-jc} \norm{\phi}_{\nu'}
	\]
	for every $\phi\in \Sr(\R)$ and for every $j\in\N$. Consequently,
	\[
	\begin{split}
		&\norm{X Y^R\Kc_{t^{\grado}\Lc}(\phi \eta_j )(1+\abs{\,\cdot\,}_*/t)^a}_{L^p(G)}\\
		&\qquad  \meg \norm{X Y^R\Kc_{2^{-j}t^{\grado}\Lc}( (\phi \eta_j)(2^j\,\cdot\,) )(1+\abs{\,\cdot\,}_*/(2^{-j/\grado}t))^a}_{L^p(G)}\\
		&\qquad\meg C_1 (2^{-j/\grado}t)^{-\deg(X)-\deg(Y)+(1/p-1)Q_*} \abs{X}\abs{Y} \norm{(\phi \eta_j)(2^j\,\cdot\,)}_{W^{s,\infty}(\R)}\\
		&\qquad\meg  C_1 C_2 2^{j (-c+ (\deg(X)+\deg(Y)+(1-1/p)Q_*)/\grado )} t^{-\deg(X)-\deg(Y)+(1/p-1)Q_*} \abs{X}\abs{Y} \norm{\phi}_{\nu'}
	\end{split}
	\]
	for every  $\phi\in \Sr(\R)$, for every $t\in (0,\rho]$,  for every $p\in [p_0,\infty]$, and for every $X,Y\in U_\lambda$. Consequently,
	\[
	\begin{split}
		&\norm{X Y^R\Kc_{t^{\grado}\Lc}(\phi)(1+\abs{\,\cdot\,}_*/t)^a}_{L^p(G)}  \meg \norm*{ \sum_{j\in \N} \abs{X Y^R\Kc_{t^{\grado}\Lc}( \phi \eta_j)  (1+\abs{\,\cdot\,}_*/t)^a}^{\min(1,p)}  }_{L^{\max(1,p)}(G)}^{1/\min(1,p)}\\
		&\qquad\meg \left( \sum_{j\in \N} \norm{X Y^R\Kc_{t^{\grado}\Lc}( \phi \eta_j )(1+\abs{\,\cdot\,}_*/t)^a}_{L^p(G)}^{\min(1,p)}   \right)^{1/\min(1,p)}\\
		&\qquad\meg C_1 C_2  t^{-\deg(X)-\deg(Y)+(1/p-1)Q_*} \abs{X}\abs{Y} \norm{\phi}_{\nu'} \left(\sum_{j\in \N} 2^{j(-c+ (2 \lambda+Q_*)/\grado )\min(1,p)} \right)^{1/\min(1,p)} 
	\end{split}
	\]
	for every  $\phi\in \Sr(\R)$, for every $t\in (0,\rho]$,  for every $p\in [p_0,\infty]$, and for every $X,Y\in U_\lambda$, so that the assertion follows.
\end{proof}

\begin{deff}\label{def:20}
	Take $\lambda,\mi, a\Meg0 $, $\rho>0$, and $p\in [1,\infty]$. We say that a family $(f_t)_{t\in (0,\rho]}$ of elements of $W^{\mi,p}_\loc(G)$ is a $(\lambda,\mi,a,\rho,p)$-standard system if the following hold:
	\begin{itemize}
		\item for every $t\in (0,\rho]$ and for every $Y\in U_\mi$,
		\[
		\norm{Y f_{t} (1+\abs{\,\cdot\,}_*/t )^a  }_{L^p(G)}\meg \abs{Y} t^{-\deg(Y)-Q_*/p'};
		\]

		\item there are a basis $(X_j)_{j\in J}$ of    $U_\lambda$ and a family $(\widetilde f_{j,t})_{j\in J, t\in (0,\rho)}$ of elements of $L^{1}_\loc(G)$ such that
		\[
		f_t= \sum_{j\in J} X_j \widetilde f_{j,t}
		\]
		for every $t\in (0,\rho)$, and such that\footnote{Notice that $\deg(X_j)\meg \lambda$, and that equality need not occur. We would have therefore obtained a weaker condition if we had replaced $t^\lambda$ with the apparently more natural $t^{\deg(X_j)}$. The stronger condition is essentially needed because of the lack of homogeneity.}
		\[
		\sum_{j\in J}\norm*{  \widetilde f_{j,t} (1+\abs{\,\cdot\,}_*/t)^a  }_{L^1(G)}\meg \frac{1}{\abs{X_j}} t^{\lambda}
		\]
		for every $t\in (0,\rho)$.
	\end{itemize}
\end{deff}

\begin{lem}\label{lem:70}
	Take $m\in\N$ and  $\mi, a,\rho >0$. Then, there is a continuous norm $\nu$ on $\Sr(\R)$ such that, if $(\phi_t)_{t\in (0,\rho]}\in \Sr(G)$, $\norm{\phi_t}_\nu\meg 1$ for every $t\in (0,\rho]$, and $\phi_t^{(j)}(0)=0$ for every $j=0,\dots, m-1$, then $(\Kc_{t^\grado \Lc}(\phi_t))_{t\in (0,\rho]}$ and $(\Kc_{t^\grado \Lc}(\phi_t)^*)_{t\in (0,\rho]}$ are $(m\grado, \mi,a,\rho,p)$-standard systems for every $p\in [1,\infty]$.
	
	If, in addition, $\Lc=\sum_{j\in J} (-1)^{\dd/d_j} X_j^{2\dd/d_j}$ for some minimal basis $(X_j)_{j\in J}$, with $d_j\coloneqq \deg(X_j)$ (cf.~Proposition~\ref{prop:4}), then one may take $\nu$ so that $(\Kc_{t^\grado \Lc}(\phi_t))_{t\in (0,\rho]}$ and $(\Kc_{t^\grado \Lc}(\phi_t)^*)_{t\in (0,\rho]}$ are also $(m\dd, \mi,a,\rho,p)$-standard systems for every $p\in [1,\infty]$.
\end{lem}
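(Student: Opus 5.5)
The plan is to take $\nu$ as a finite sum of norms produced by Lemma~\ref{lem:36}, composed with a division operator on $\Sr(\R)$, and to check the two conditions of Definition~\ref{def:20} separately.

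\textbf{First condition.} I would apply Lemma~\ref{lem:36} with $\lambda=\mi$, $p_0=1$ and the given $a,\rho$, taking the right-invariant operator to be the identity. Since $(1/p-1)Q_*=-Q_*/p'$, this gives
\[
\norm{Y \Kc_{t^\grado\Lc}(\phi_t)(1+\abs{\,\cdot\,}_*/t)^a}_{L^p(G)}\meg \abs{Y} t^{-\deg(Y)-Q_*/p'}\norm{\phi_t}_{\nu_1}
\]
for every $Y\in U_\mi$, every $t\in(0,\rho]$ and every $p\in[1,\infty]$, where $\nu_1$ is the norm given by Lemma~\ref{lem:36}. So it suffices to require $\nu\Meg \nu_1$.

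For the adjoints, since $\Lc$ is formally self-adjoint one has $\Kc_{t^\grado\Lc}(\phi)^*=\Kc_{t^\grado\Lc}(\overline\phi)$. The function $\overline{\phi_t}$ has the same vanishing at $0$. Replacing $\nu$ by $\phi\mapsto\norm{\phi}_\nu+\norm{\overline\phi}_\nu$, we may therefore treat only $\Kc_{t^\grado\Lc}(\phi_t)$.

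\textbf{Second condition.} Since $\phi_t^{(j)}(0)=0$ for $j<m$, we can write $\phi_t(\lambda)=\lambda^m\psi_t(\lambda)$ with $\psi_t\in\Sr(\R)$. I expect the main (though elementary) obstacle to be a continuity estimate for this division. On $[-1,1]$, Taylor's formula with integral remainder gives $\psi_t(\lambda)=\frac{1}{(m-1)!}\int_0^1(1-s)^{m-1}\phi_t^{(m)}(s\lambda)\,\dd s$. Away from $0$, one simply divides by $\lambda^m$, using a cutoff. Together these show that every Schwartz seminorm of $\psi_t$ is bounded by a finitely many Schwartz seminorms of $\phi_t$.

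Then $\Kc_{t^\grado\Lc}(\phi_t)=t^{m\grado}\Lc^m\Kc_{t^\grado\Lc}(\psi_t)$, because the kernel associated with a multiplier $\lambda\mapsto(t^\grado\lambda)^m\psi_t(t^\grado\lambda)$ is obtained by applying $\Lc^m$ (left-invariant) to the kernel of $\psi_t(t^\grado\,\cdot\,)$.

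Next I expand $\Lc^m=\sum_j c_jX_j$ in a basis $(X_j)$ of $U_{m\grado}$, which is possible since $\Lc^m\in U_{m\grado}$. I set $\widetilde f_{j,t}\coloneqq c_j t^{m\grado}\Kc_{t^\grado\Lc}(\psi_t)$. Applying Lemma~\ref{lem:36} with $p=1$ and $X=Y=I$ bounds $\norm{\widetilde f_{j,t}(1+\abs{\,\cdot\,}_*/t)^a}_{L^1}$ by $\abs{c_j}t^{m\grado}$ times a continuous norm of $\psi_t$, hence by a continuous norm of $\phi_t$. Multiplying that norm by $\sum_j\abs{c_j}\abs{X_j}$ and adding it to $\nu$ gives the required bound $t^{m\grado}/\abs{X_j}$.

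\textbf{Second part of the statement.} Here $\grado=2\dd$. If $m=2k$ is even, then $t^{m\dd}=t^{k\grado}$, and we use $\phi_t=\lambda^k\psi_t$ with $\Lc^k\in U_{m\dd}$ exactly as above.

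If $m=2k+1$ is odd, we write $\phi_t=\lambda^{k+1}\psi_t$ and
\[
\Lc^{k+1}=\sum_{j}(-1)^{\dd/d_j}\,\Lc^kX_j^{\dd/d_j}\cdot X_j^{\dd/d_j}.
\]
The operators $\Lc^kX_j^{\dd/d_j}$ lie in $U_{m\dd}$, and we expand them in a basis of $U_{m\dd}$. The remaining factor $X_j^{\dd/d_j}$ is absorbed into $\widetilde f$, namely
\[
\widetilde f_{j,t}=\pm t^{(k+1)\grado}X_j^{\dd/d_j}\Kc_{t^\grado\Lc}(\psi_t).
\]
By Lemma~\ref{lem:36} with $p=1$ and $\deg X_j^{\dd/d_j}\meg\dd$, its weighted $L^1$ norm is $\lesssim t^{2(k+1)\dd-\dd}\norm{\psi_t}=t^{m\dd}\norm{\psi_t}$. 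Regrouping over the basis and enlarging $\nu$ as before concludes the proof.
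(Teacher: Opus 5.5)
Your proposal is correct and follows the paper's proof. As in the paper, you divide the multiplier by a power of $\lambda$ (the paper gets continuity of this division from the open mapping theorem rather than your explicit Taylor/cutoff estimate), write $\Kc_{t^{\grado}\Lc}(\phi_t)$ as a power of $\Lc$ applied to the kernel of the quotient, expand in a basis of $U_{m\grado}$, apply Lemma~\ref{lem:36}, and handle adjoints via $\Kc_{t^{\grado}\Lc}(\phi)^*=\Kc_{t^{\grado}\Lc}(\overline\phi)$.

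For the second assertion, your even/odd factorization (through $\Lc^k$, resp.\ $\Lc^k X_j^{\dd/d_j}\cdot X_j^{\dd/d_j}$) is more careful than the paper's one-line identity $\Kc_{t^{\grado}\Lc}(\phi_t)=\sum_j X_j^{\dd/d_j}[(-X_j)^{\dd/d_j}\Lc^{-m}\Kc_{t^{\grado}\Lc}(\phi_t)]$. Its right-hand side is $\Lc\,\Lc^{-m}\Kc_{t^{\grado}\Lc}(\phi_t)$, so it is valid as written only for $m=1$, whereas your version gives order $m\dd$ for every $m$.
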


\begin{proof}
	Observe first that $\Kc_{t^\grado \Lc}(\phi_t)^*=\Kc_{t^\grado \Lc}(\overline{\phi_t})$ for every $t\in (0,\rho]$ so that, if we take $\nu$ to be conjugate-invariant, then it will suffice to prove that $(\Kc_{t^\grado \Lc}(\phi_t))_{t\in (0,\rho]}$ is an $(m\grado, \mi,a,\rho,p)$-standard system. In addition, by Lemma~\ref{lem:36}, it is clear how to find $\nu$ in order that $\Kc_{\rho\Lc}(\phi_\rho)$ satisfy the required estimates (that is, the first condition in Definition~\ref{def:20}).
	Next, let $\Sr_m(\R)$ be the space of $\phi\in \Sr(\R)$ such that $\phi^{(j)}(0)=0$ for every $j=0,\dots, m-1$, and observe that the mapping $\Sr(G)\ni \phi\mapsto (\,\cdot\,)^m \phi\in \Sr_m(G)$ is an isomorphism (onto). In fact, by the open mapping theorem, it suffices to observe that it is onto, and this follows from the fact that $x^{-m} \phi(x)= \frac{1}{(m-1)!} \int_0^1 (1-y)^{m-1}\phi^{(m)}(x y)\,\dd y$ for every $x\neq 0$ and for every $\phi\in \Sr_m(\R)$. Consequently, by Lemma~\ref{lem:36} we may find a continuous semi-norm $\nu'$ on $\Sr(\R)$ such that
	\[
	\norm{\Lc^{-m} \Kc_{t^\grado \Lc}(\phi_t)(1+\abs{\,\cdot\,}_*)^a }_{L^p(G)}\meg t^{m\grado-Q_*/p'} \norm{\phi_t}_{\nu'}
	\]
	for every $t\in (0,\rho)$ and for every $p\in [1,\infty]$, where $ \Lc^{-m} \Kc_{t^\grado \Lc}(\phi_t)\coloneqq \Kc_{t^\grado \Lc}((t^{-\grado }\,\cdot\,) ^{-m} \phi_t)$ by  abuse of notation  (no abuse of notation is needed if $\Lc$ is one-to-one on $\Sr'(G)$). The first assertion follows since $\Kc_{t^\grado \Lc}(\phi_t)=\Lc^{m  } [\Lc^{-m} \Kc_{t^\grado \Lc}(\phi_t)]$ for every $t\in (0,\rho)$. The second assertion follows since $ \Kc_{t^\grado \Lc}(\phi_t)=\sum_{j\in J} X_j^{\dd/d_j} [(-X_j)^{\dd/d_j}\Lc^{-m} \Kc_{t^\grado \Lc}(\phi_t)]$ for every $t\in (0,\rho)$.
\end{proof}

\begin{lem}\label{lem:40}
	Take $\lambda ,\mi\Meg 0$, $a,\rho,\rho'>0$, and $p\in [1,\infty]$. Then, there is a constant $C>0$ such that
	\[
	\norm{ (f_t*g^*_s)  (1+\abs{\,\cdot\,}_*/(t+s))^a }_{L^p(G)}\meg C  \min\left((t/s)^\lambda, (s/t)^\mi  \right)    (t+s)^{-Q_*/p'}
	\]
	for every $t\in (0,\rho]$, for every $s\in (0,\rho]$, for every $(\lambda,\mi,a,\rho,p)$-standard system $(f_t)$, and for every $(\mi,\lambda,a,\rho',p)$-standard system $(g_s)$.
\end{lem}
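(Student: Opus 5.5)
By the symmetry $(f_t*g_s^*)^*=g_s*f_t^*$, together with $\abs{h^*(x)}=\abs{h(x^{-1})}$ and $\abs{x^{-1}}_*=\abs{x}_*$, it suffices to prove the bound $C (s/t)^\mi (t+s)^{-Q_*/p'}$ when $s\meg t$. Here the roles of $(f_t)$ and $(g_s)$ are exchanged: $(g_s)$ is a $(\mi,\lambda,a,\rho',p)$-standard system. This reduction also uses that $L^p$ norms are invariant under inversion, since $\beta$ is unimodular. Note also that when $s\meg t$ one has $t\meg t+s\meg 2t$, so the target is $C(s/t)^\mi t^{-Q_*/p'}$. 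The trivial bound (exponent $0$) is already of the required form when $s$ and $t$ are comparable, so the real content is the gain $(s/t)^\mi$ for $s\ll t$.

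\textbf{Moving the derivatives.} Use the second condition of Definition~\ref{def:20} for $(g_s)$: write $g_s=\sum_{j} X_j\widetilde g_{j,s}$ with $X_j\in U_\mi$ and $\sum_j\abs{X_j}\norm{\widetilde g_{j,s}(1+\abs{\cdot}_*/s)^a}_{L^1}\lesssim s^\mi$. Then $g_s^*=\sum_j (X_j\widetilde g_{j,s})^*$. Since $(Xu)^*$ is obtained from $u^*$ by a right-invariant operator, a formula of the type in Proposition~\ref{prop:9} gives $(X_j\widetilde g_{j,s})^*=X_j^{*R}\widetilde g_{j,s}^*$ up to a sign or a transpose. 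Applying the identity $f*(X^R h)=(Xf)*h$ recorded after Proposition~\ref{prop:9}, we get
\[
f_t*g_s^*=\sum_{j}(X_j' f_t)*\widetilde g_{j,s}^*,
\]
with $X_j'\in U_\mi$ and $\abs{X_j'}=\abs{X_j}$. Justifying this identity for $L^1_{\loc}$ functions with the stated weighted integrability can be done by a density/distributional argument.

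\textbf{The weighted Young estimate.} Next use the quasi-triangle inequality $1+\abs{xy}_*/(t+s)\meg(1+\abs{x}_*/t)(1+\abs{y}_*/s)$, which follows from $\abs{xy}_*\meg\abs{x}_*+\abs{y}_*$ and $t,s\meg t+s$. With it, Young's inequality ($L^p*L^1\to L^p$) gives
\[
\norm{(f_t*g_s^*)(1+\abs{\cdot}_*/(t+s))^a}_{L^p}\meg\sum_j\norm{X_j'f_t(1+\abs{\cdot}_*/t)^a}_{L^p}\norm{\widetilde g_{j,s}(1+\abs{\cdot}_*/s)^a}_{L^1}.
\]
The first condition on $(f_t)$ with $Y=X_j'\in U_\mi$ bounds the first factor by $\abs{X_j}t^{-\deg X_j'-Q_*/p'}$, while the decomposition bounds $\sum_j\abs{X_j}\cdot(\text{second factor})$ by $s^\mi$. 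This yields the total $\lesssim \sum_j t^{-\deg X_j}s^\mi t^{-Q_*/p'}$.

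\textbf{Main obstacle: the degree mismatch.} Here $\deg X_j\meg\mi$, possibly strictly, which is exactly the lack of homogeneity flagged in the footnote to Definition~\ref{def:20}. For $t\meg\rho$ we have $t^{-\deg X_j}\meg \max(1,\rho)^{\mi}\,t^{-\mi}$. This gives $C(s/t)^\mi t^{-Q_*/p'}\asymp C(s/t)^\mi(t+s)^{-Q_*/p'}$, with $C$ depending on $\rho,\rho',\mi,a$ and the dimension of $U_\mi$. This is precisely why the definition requires the factor $s^\mi$ rather than $s^{\deg X_j}$.

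The other delicate points are the correct form of the adjoint/right-invariant identity and the requirement that $f_t\in W^{\mi,p}_{\loc}$ makes $X_j'f_t$ meaningful. For the latter, I would first prove the estimate for compactly supported truncations and then pass to the limit.
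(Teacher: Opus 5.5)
Your proof is the paper's argument with the roles of the two systems mirrored. The paper treats $t\le s$ by decomposing $f_t=\sum_j X_j\widetilde f_{j,t}$, moving $X_j$ onto $g_s$ via $X_j^R(g_s^*)=(X_j^*g_s)^*$, and applying the weighted Young inequality together with $s^{-\deg X_j}\le\max(\rho',1)^\lambda s^{-\lambda}$. It then gets the case $s\le t$ from $(f_t*g_s^*)^*=g_s*f_t^*$, which is exactly your scheme in reverse; your $X_j'$ is $X_j^*$, since $(X\widetilde g)^*=X^{*R}\widetilde g^{\,*}$.

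What you skip is that Definition~\ref{def:20} gives the decomposition only for $t\in(0,\rho)$ and $s\in(0,\rho')$, so the endpoints need the separate treatment the paper disposes of in one line. If exactly one of $t=\rho$, $s=\rho'$ holds, decompose the other system instead; this still yields the target bound. At $(t,s)=(\rho,\rho')$ neither decomposition is available, and a Young-type bound does not follow from the single-$p$ conditions of the definition when $a$ is small. It does hold for the systems of Lemma~\ref{lem:70}, which are standard for every $p$ and hence have weighted $L^1$ bounds.
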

 
\begin{proof}
	If either $t=\rho$ or $s=\rho'$, then the assertion follows easily from Young's inequality. Assume, then, that $t<\rho$ and $s<\rho'$. Take a basis  $(X_j)_{j\in J}$ of   $U_\lambda $ and a family  $(\widetilde f_{j,t})_{j\in J, t\in (0,\rho)}$ of elements of $L^1_\loc(G)$ so that $f_t=\sum_{j\in J} X_j \widetilde f_{j,t}$ and
	\[
	\norm*{  \widetilde  f_{j,t} (1+\abs{\,\cdot\,}_*/t)^{a}}_{L^1(G)}\meg \frac{t^{\lambda}}{\abs{X_j}}
	\]
	for every $j\in J$ and for every $t\in (0,\rho)$. Observe that $\deg(X^*_j)=\deg(X_j)\meg \lambda$ and that $\abs{X^*_j}=\abs{X_j}$ for every $j\in J$.  If $t\meg s$, then
	\[
	\begin{split}
		\norm*{  (f_t*g^*_s   )(1+\abs{\,\cdot\,}_*/(t+s))^a }_{L^p(G)}&\meg \sum_{j\in J} \norm*{   [\widetilde f_{j,t} * X_j^R(g^*_s) ](1+\abs{\,\cdot\,}_*/(t+s))^a}_{L^p(G)}\\
			&\meg \sum_{j\in J} \norm*{   [\widetilde f_{j,t} * (X_j^* g_s)^* ](1+\abs{\,\cdot\,}_*/(t+s))^a}_{L^p(G)}\\
			&\meg \sum_{j\in J} \norm*{   \widetilde f_{j,t} (1+\abs{\,\cdot\,}_*/t)^a}_{L^1(G)} \norm{X_j^* g_s  (1+\abs{\,\cdot\,}_*/s)^a}_{L^p(G)}\\
			&\meg   \sum_{j\in J}t^\lambda/s^{\deg(X_j)} s^{-Q_*/p'}\\
			&\meg 2^{Q_*/p'}\card(J)\max(\rho',1)^\lambda (t/s)^\lambda (t+s)^{-Q_*/p'},
	\end{split}
	\]
	whence the result in this case. The other case follows reversing the roles of $(t,\lambda)$ and $(s,\mi)$, since $(f_t*g^*_s)^*=g_s* f_t^*$.
\end{proof}

\begin{cor}\label{cor:13}
	Take $\lambda, \mi  \Meg 0$, $a,\rho,\rho'>0$, $\phi\in C^\infty_c(\R)$, and $p\in [1,\infty]$. Then, there is a constant $C>0$ such that
	\[
	\norm{ (f_t*g^*_s)  (1+\abs{\,\cdot\,}_*/t)^a }_{L^p(G)}, \norm{ (g_s*f^*_t)  (1+\abs{\,\cdot\,}_*/t)^a }_{L^p(G)}\meg C   \min((t/s)^\lambda,(s/t)^\mi)  (t+s)^{-Q_*/p'}
	\]
	for every $t\in (0,\rho]$, for every $s\in (0,\rho]$, for every $(\lambda+a,\mi,a,\rho,p)$-standard system $(f_t)$, and for every $(\mi,\lambda+a,a,\rho',p)$-standard system $(g_s)$.
\end{cor}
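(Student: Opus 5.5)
The plan is to deduce the corollary directly from Lemma~\ref{lem:40}. The extra $a$ in the first index of the standard systems is used to absorb the loss incurred when the weight $(1+\abs{\,\cdot\,}_*/(t+s))^a$ is replaced by the larger weight $(1+\abs{\,\cdot\,}_*/t)^a$.

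First, apply Lemma~\ref{lem:40} with $\lambda$ replaced by $\lambda+a$. This is legitimate because $(f_t)$ is a $(\lambda+a,\mi,a,\rho,p)$-standard system and $(g_s)$ is a $(\mi,\lambda+a,a,\rho',p)$-standard system. We obtain a constant $C_1>0$ such that
\[
\norm{ (f_t*g^*_s)  (1+\abs{\,\cdot\,}_*/(t+s))^a }_{L^p(G)}\meg C_1  \min\left((t/s)^{\lambda+a}, (s/t)^\mi  \right)    (t+s)^{-Q_*/p'}
\]
for every $t\in(0,\rho]$ and every $s\in(0,\rho']$.

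Second, compare the two weights. Since $t\meg t+s$, one has the elementary inequality $1+\abs{x}_*/t\meg \frac{t+s}{t}\,(1+\abs{x}_*/(t+s))$, hence
\[
(1+\abs{x}_*/t)^a\meg \left(\frac{t+s}{t}\right)^a(1+\abs{x}_*/(t+s))^a .
\]
We then split into two cases.
\begin{itemize}
\item If $t\Meg s$, the factor $\left(\frac{t+s}{t}\right)^a$ is at most $2^a$. Moreover $\min((t/s)^{\lambda+a},(s/t)^\mi)=(s/t)^\mi=\min((t/s)^\lambda,(s/t)^\mi)$, so the claim follows with constant $2^aC_1$.
\item If $t\meg s$, the factor is at most $(2s/t)^a$. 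Here $\min((t/s)^{\lambda+a},(s/t)^\mi)=(t/s)^{\lambda+a}$, so the product is bounded by $2^a(t/s)^\lambda=2^a\min((t/s)^\lambda,(s/t)^\mi)$.
\end{itemize}
This gives the estimate for $f_t*g_s^*$.

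Third, handle $g_s*f_t^*$. One has $g_s*f_t^*=(f_t*g_s^*)^*$, where $h^*(x)=\overline{h(x^{-1})}$. Since $\abs{x^{-1}}_*=\abs{x}_*$ and $\beta$ is unimodular (polynomial growth), the weighted $L^p$ norm with weight $(1+\abs{\,\cdot\,}_*/t)^a$ is invariant under $h\mapsto h^*$. Hence the second estimate follows from the first.

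No step here is a real obstacle. The only point needing care is the bookkeeping of the exponents in the case $t\meg s$, which is exactly where the extra $a$ is consumed.
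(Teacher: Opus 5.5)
Your proposal is correct and follows essentially the same route as the paper. The paper applies Lemma~\ref{lem:40} with $\lambda+a$ in place of $\lambda$, compares the weights via $\max(s/t,1)(1+b/(t+s))\Meg\frac12(1+b/t)$, which is equivalent to your $(1+b/t)\meg\frac{t+s}{t}(1+b/(t+s))$, and obtains the second estimate from $(f_t*g_s^*)^*=g_s*f_t^*$, exactly as you do.
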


This result improves Lemma~\ref{lem:40} when $t\meg s$.

\begin{proof}
	This follows from Lemma~\ref{lem:40} and the elementary inequality
	\[
	\max(s/t,1) (1+b/(t+s))\Meg \frac 1 2 \max(s/t,1) (1+b/\max(t,s))\Meg \frac 1 2  (1+b \max(s/t,1) /\max(t,s))=\frac 1 2 (1+b/t)
	\]
	for every $b\Meg 0$ and for every $t,s>0$.
\end{proof}

 The next  results extend~\cite[Theorem 3.3 and Corollaries 3.4 and 3.6]{Besov}. We first introduce a suitable definition. 

\begin{deff}
	Take $\delta>0$ and $\rho>1$. We say than a family $(x_{r,j})_{(r,j)\in J}$ is a complete $(\delta,\rho)$-lattice on $G$ if the following hold:
	\begin{enumerate}
		\item[(1)] $J$ is a Borel subset of $(0,+\infty)\times \N$ and the mapping $(r,j)\mapsto x_{r,j}$ is Borel-measurable; 
		
		\item[(2)] for every $r>0$, the balls $B(x_{r,j}, r\delta)$, $(r,j)\in J$, are pairwise disjoint;
		
		\item[(3)] for every $r>0$, the balls $B(x_{r,j}, r \rho \delta)$, $(r,j)\in J$, cover $G$.
	\end{enumerate}
\end{deff}
 
\begin{lem}\label{lem:38}
	Take $\delta>0$ and $\rho>2$. Then, there is a complete $(\delta,\rho)$-lattice $(x_{r,j})_{(r,j)\in J}$ on $G$. If $G$ is not compact, we may take $J=(0,+\infty)\times \N$.
\end{lem}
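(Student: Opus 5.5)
The plan is to build, for each fixed $r>0$, a maximal $r\delta$-separated family via Zorn's lemma, and then make the choice in $r$ measurable by using only countably many distinct lattices.

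First, fix $r>0$. Among all subsets $S\subseteq G$ such that $d(x,y)\Meg 2r\delta$ for all distinct $x,y\in S$, choose a maximal one by Zorn's lemma. Since $d$ is left-invariant and balls have positive measure, and $\beta(B(x,r\delta))=\beta(B(e,r\delta))>0$, such an $S$ is countable. Indeed, $G$ is $\sigma$-compact, and a compact set can meet only finitely many pairwise disjoint balls of a fixed radius, by doubling (or simply by the volume bound on a slightly larger compact set). The balls $B(x,r\delta)$, $x\in S$, are pairwise disjoint by the triangle inequality. By maximality, every $y\in G$ satisfies $d(y,x)<2r\delta$ for some $x\in S$. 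Since $\rho>2$, the balls $B(x,r\rho\delta)$ cover $G$.

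Second, handle measurability. Instead of choosing arbitrarily for each $r$, use dyadic blocks. For each $k\in\Z$, take a maximal $2^{k+1}\delta$-separated set $S_k$; this is the choice above with radius $r_k=2^k$. For $r\in(2^{k},2^{k+1}]$ the balls $B(x,r\delta)$, $x\in S_k$, are disjoint, since $2r\delta\meg 2^{k+2}\delta$. That bound is too weak, however, so one should instead separate $S_k$ at scale $2\cdot 2^{k+1}\delta$. Then covering only holds with radius $2\cdot 2^{k+1}\delta\meg 4r\delta$, and we need $\rho>2$, not $\rho>4$.

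To fix this, replace the dyadic ratio $2$ by a ratio $\theta>1$ close to $1$, with $2\theta<\rho$. Take $S_k$ maximal $2\theta^{k+1}\delta$-separated. Then for $r\in(\theta^k,\theta^{k+1}]$ disjointness holds, since $2r\delta\meg 2\theta^{k+1}\delta$. Covering holds at radius $2\theta^{k+1}\delta<2\theta r\delta<\rho r\delta$. Enumerate $S_k=\{x_{k,j}\}_{j\in J_k}$ with $J_k\subseteq\N$. Set $J=\bigcup_k (\theta^k,\theta^{k+1}]\times J_k$ and $x_{r,j}=x_{k,j}$. Then $J$ is Borel, and the map is Borel because it is piecewise constant on countably many Borel sets.

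Finally, consider the case where $G$ is not compact. Each $S_k$ is then infinite, since otherwise finitely many bounded balls would cover $G$. Bounded sets are relatively compact here: $d$ induces the manifold topology and is complete by left invariance. So we may enumerate each $S_k$ by all of $\N$, which gives $J=(0,+\infty)\times\N$. The main point requiring care is this last completeness fact, that closed $d$-balls are compact. I expect it to follow from the equivalence of the control modulus with the connected-moduli structure noted after Definition~\ref{def:3}.
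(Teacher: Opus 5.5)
Your final construction (maximal $2\theta^{k+1}\delta$-separated sets $S_k$ with $1<\theta<\rho/2$, held constant for $r\in(\theta^k,\theta^{k+1}]$) is the paper's proof. The paper simply takes $\theta=\rho/2$, and covering then follows from $\rho r\delta>\rho\theta^{k}\delta=2\theta^{k+1}\delta$; you can drop the abandoned dyadic attempt entirely.

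One justification in the non-compact case needs fixing. Completeness of $d$ does not make balls relatively compact: the complete left-invariant metric $\min(d,1)$ is a counterexample. Properness is nonetheless easy to prove. A curve of content $\varepsilon$ has length at most $C\varepsilon$ for the left-invariant Riemannian metric defined by the scalar product on $\gf$, so $d$-balls lie in Riemannian balls, which are relatively compact by Hopf--Rinow. The paper uses this fact tacitly when it asserts $J_\ell=\N$.
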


Obviously, when $G$ is compact we cannot take $J=(0,+\infty)\times \N$.

\begin{proof}
	Set $a\coloneqq \rho/2$. For every $\ell\in \Z$, let $(y_{\ell,j})_{j\in J_\ell}$ be a maximal $(2a^\ell\delta)$-separated family of elements of $G$, so that the $B(y_{\ell,j},a^\ell\delta)$, $j\in\N$ are pairwise disjoint while the $B(y_{\ell,j},2a^\ell\delta)$, $j\in J_\ell$, cover $G$. We may assume that $J_\ell$ is an initial segment of $\N$, so that $J_\ell=\N$ if $G$ is not compact. Set $J\coloneqq \bigcup_{\ell\in \Z} ( (a^{\ell-1}, a^\ell]\times J_\ell)$ and define $x_{r,j}\coloneqq y_{\ell,j}$ for every $r\in (a^{\ell-1}, a^\ell]$, for every $\ell\in\Z$, and for every $j\in J_\ell$. Take $\ell\in\Z$ and $r\in (a^{\ell-1},a^\ell]$. Then, the balls $B(x_{r,j},r\delta)\subseteq B(y_{\ell,j}, a^\ell \delta)$, $j\in J_\ell$, are pairwise disjoint. In addition, the $B(x_{r,j}, r\rho\delta )\supseteq B(y_{\ell,j}, 2 a^\ell \delta)$, $j\in J_\ell$, cover $G$. The assertion follows.
\end{proof}

\begin{deff}
	Take $m\in C^\infty_c(\R)$, a Banach space $Z$, and $q\in (0,\infty]$. We define $\Oc(m,\Lc;\mi,Z)$ as the space of $(f_r)_{r>0}\in \Sr'(G;Z)^{(0,+\infty)}$ such that $f_r=f_r*\Kc_{r^\grado\Lc}(m )$ for every $r>0$ (so that each $f_r$ may be identified with a function of class $C^\infty$) and such that the mapping $(0,+\infty)\ni r\mapsto f_r\in \Sr'(G;Z)$ is $\mi$-measurable.
\end{deff}

Notice that the mapping $(0,+\infty)\ni r \mapsto f_r\in C^\infty(G;Z)$ is then $\mi$-measurable, since it is equal to the mapping $r\mapsto f_r*\Kc_{\rho^\grado\Lc}(\widetilde m )$ on $(0,\rho]$ for every $\rho>0$, where $\widetilde m \in C^\infty_c(\R)$ is equal to $1$ on the convex envelope of $\Set{0}\cup\supp m$.

\begin{prop}\label{prop:10}
	Take $m\in C^\infty_c(\R)$, $p,q\in (0,\infty]$, $\rho>2$, $\delta_+>0$, and $\kappa>0$.  Then, there are $C,\delta_->0$ such that the following hold. For every Banach space $Z$, 
	for every complete $(\delta,\rho)$-lattice  $(x_{r,j})_{(r,j)\in J}$ on $G$, for every positive Radon measure $\mi$ on $(0,\kappa]$, and for every $f=(f_r)\in \Oc(m,\Lc;\mi,Z)$,
	\begin{equation}\label{eq:4}
		\norm{f}_{L^{q,p}(\mi,\beta;Z)}\meg  \norm*{ \chi_J(r,j)\chi_{B(x_{r,j}, \rho r \delta)}(x)\max_{ \overline B(x_{r,j}, \rho r \delta)} \abs{f_r}}_{L^{q,q,p}_{r,j,x}(\mi, \N,\beta)}\meg C \norm{f}_{L^{q,p}(\mi,\beta;Z)},
	\end{equation}
	(with some abuse of notation)
	provided that $\delta\in (0, \delta_+]$, and
	\begin{equation}\label{eq:6}
		\frac{1}{2^{1/p_0}} \norm{f}_{L^{q,p}(\mi,\beta;Z)}\meg   \norm*{ \chi_J(r,j) \chi_{B(x_{r,j}, \rho r \delta)}(x)\min_{ \overline  B(x_{r,j},  \rho r\delta)} \abs{f_r}}_{L^{q,q,p}_{r,j,x}(\mi, \N,\beta)}\meg C \norm{f}_{L^{q,p}(\mi,\beta;Z)},
	\end{equation}
	(with some abuse of notation) provided that $\delta\in (0,\delta_-]$ and $f\in L^{q,p}(\mi,\beta;Z)$.
\end{prop}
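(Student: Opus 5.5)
The plan is to reduce everything to pointwise oscillation estimates for the band-limited functions $f_r$, following \cite[Theorem 3.3]{Besov}, with Lemma~\ref{lem:33} and Corollary~\ref{cor:18} as the main tools. For each fixed $r$, the balls $B(x_{r,j},\rho r\delta)$ cover $G$. By doubling and the disjointness of the balls $B(x_{r,j},r\delta)$, they have bounded overlap $N$, uniformly in $r\in(0,\kappa]$ and $\delta\meg\delta_+$. The left inequality in~\eqref{eq:4} follows at once: for $x\in G$, pick $j$ with $x\in B(x_{r,j},\rho r\delta)$, so that $\abs{f_r(x)}\meg \max_{\overline B(x_{r,j},\rho r\delta)}\abs{f_r}$. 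Taking the $\ell^q$ sum over $j$ (which dominates the single term), then the $L^p$ norm in $x$, then the $L^q(\mi)$ norm in $r$ gives the claim. The order of the mixed norm will need to be read as in the statement, with the abuse of notation meaning the $j$-sum is inside the $x$-integral. Bounded overlap makes the $\ell^q$ sum over $j$ comparable to a single term, up to $N^{1/q}$.

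For the right inequality in~\eqref{eq:4}, fix $a>D/p_0$ with $p_0<\min(p,q,1)$. For $y\in \overline B(x_{r,j},\rho r\delta)$ and $x\in B(x_{r,j},\rho r\delta)$ we have $d(x,y)\meg 2\rho r\delta_+$. Hence
\[
\abs{f_r(y)}\meg (1+2\rho\delta_+)^{a}\,(\Nc_{\infty,a,r}f_r)(x)\meg C\,(\Nc_{p_0,a,r}f_r)(x),
\]
where the second step is Lemma~\ref{lem:33} with $X=I$, valid since $r\meg\kappa$ and $f_r=f_r*\Kc_{r^\grado\Lc}(m)$. Bounded overlap then bounds the middle quantity by $C N^{1/q}\norm{\Nc_{p_0,a,r}f_r(x)}_{L^{q,p}_{r,x}(\mi,\beta)}$. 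Corollary~\ref{cor:18} with $\kappa(t)=t$ bounds this by $C\norm{f}_{L^{q,p}}$; the factor $(1+r)^{(Q_*-D)/p_0}$ is harmless because $r\meg\kappa$. If $p$ or $q$ is $\infty$, or $p\meg p_0$ in the wrong order, I would handle it by choosing $p_0$ smaller, or by using the trivial $L^\infty$ bound directly when $p=\infty$.

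For~\eqref{eq:6}, the right inequality is immediate from~\eqref{eq:4}, since min is at most max. The left inequality is the main obstacle; it is where $\delta_-$ and the hypothesis $f\in L^{q,p}$ enter, the latter to absorb a term. For $y\in\overline B(x_{r,j},\rho r\delta)$ and $x$ in the same ball,
\[
\abs{f_r(x)}\meg \abs{f_r(y)}+\sup_{B(x,2\rho r\delta)}\abs{f_r-f_r(x)}\meg \abs{f_r(y)}+ \sum_{k}(2\rho r\delta)^{d_k}\sup_{B(x,4\rho r\delta)}\abs{X_kf_r},
\]
by~\cite[Lemma 7.5]{Calzi2}, with $(X_k)$ an orthonormal basis compatible with the filtration. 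Lemma~\ref{lem:33} gives $\sup_{B(x,4\rho r\delta)}\abs{X_kf_r}\meg C r^{-d_k}(\Nc_{p_0,a,r}f_r)(x)$. Since $d_k\Meg1$ and $\delta\meg1$, the error term is at most $C'\delta(\Nc_{p_0,a,r}f_r)(x)$. Taking the min over $y$ and the mixed norms (using the $p_0$-triangle inequality, which produces the constant $2^{1/p_0}$), together with Corollary~\ref{cor:18}, yields
\[
\norm{f}\meg 2^{1/p_0-1}\bigl(\text{middle}+C''\delta\norm{f}\bigr).
\]
Choosing $\delta_-$ so that $C''\delta_-2^{1/p_0-1}\meg 1/2$ and absorbing the last term, which is finite because $f\in L^{q,p}$, gives the claim with constant $2^{1/p_0}$ up to normalization. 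The delicate points are keeping the constants uniform in $r$, the lattice and $Z$, and the measurability of the middle expressions; for the latter I would rely on the Borel measurability of $(r,j)\mapsto x_{r,j}$ and the continuity of $(r,x)\mapsto f_r(x)$.
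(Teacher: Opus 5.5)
Your proof follows the paper's route. The upper bound in \eqref{eq:4} comes from Lemma~\ref{lem:33}, the bounded overlap of the balls $B(x_{r,j},\rho r\delta)$, and Corollary~\ref{cor:18}. The lower bound in \eqref{eq:6} comes from \cite[Lemma 7.5]{Calzi2}, Lemma~\ref{lem:33} applied to the $X_k f_r$, Corollary~\ref{cor:18}, and absorption for $\delta\meg\delta_-$. For the upper bound, your direct estimate $\abs{f_r(y)}\meg (1+d(x,y)/r)^a(\Nc_{\infty,a,r}f_r)(x)$ is a little cleaner than the paper's detour through the mean value inequality.

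The step that does not work is your claim that $p=\infty$ is handled by a trivial $L^\infty$ bound. That is true only if also $q=\infty$. For $p=\infty>q$, Corollary~\ref{cor:18} is unavailable, since it requires $p<\infty$, and the right inequality in \eqref{eq:4} is in fact false.

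Here is a counterexample. On $G=\R$ with $\Lc=-\frac{\dd^2}{\dd x^2}$, take $\mi=\sum_{k\Meg 1}\delta_{2^{-k}}$ and $f_r=\psi(\,\cdot\,/r)$. Here $\psi(u)=u\chi(u)$ with $\hat\chi\in C^\infty_c(\R)$ and $\chi(0)\neq 0$, so that $f_r=f_r*\Kc_{r^2\Lc}(m)$ for a suitable $m\in C^\infty_c(\R)$.

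Since $\abs{\psi(u)}\lesssim\min(\abs{u},\abs{u}^{-1})$, one has $\sup_x\sum_k\abs{\psi(2^kx)}^q<\infty$, so the left-hand side is finite. On the other hand, when $2^k\abs{x}$ is small enough, every ball of radius $\rho2^{-k}\delta$ containing $x$ contains a point $y$ with $\abs{\psi(2^ky)}\Meg c>0$, where $c$ depends only on $\rho\delta$ and $\psi$. So the middle term at $x$ is of order at least $\log(1/\abs{x})^{1/q}$ as $x\to 0$, and is not essentially bounded.

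The paper's proof has the same gap, since it also invokes Corollary~\ref{cor:18} for every $p$. The statement should be read with $p<\infty$ or $q=\infty$. That appears to be all that is used later, since $F^{\infty,q}$ is treated through Proposition~\ref{prop:10bis}.

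Finally, $r\mapsto f_r(x)$ need not be continuous; only $r\mapsto f_r\in C^\infty(G;Z)$ is $\mi$-measurable. So the measurability of $r\mapsto\max_{\overline B(x_{r,j},\rho r\delta)}\abs{f_r}$ needs an argument such as the paper's Step~I. That argument reduces to a countable dense subset of $G$, with a double approximation because $\overline B(x,s)$ need not be the closure of $B(x,s)$.
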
 
 
Notice that the above maxima and minima exist since $f_r$ is of class $C^\infty$ and since the closed balls $\overline B(x,r)$ are compact.

\begin{proof}
	For simplicity, we shall present the arguments below as if $J=(0,+\infty)\times \N$. The general case only requires some additional restrictions on the range of $(r,j)$ and is therefore essentially identical but substantially more cumbersome. Take $p_0\in (0,\min(1,p,q))$ and $a>D/p_0$.
	 
	\textsc{Step I}	Let us first prove that the mappings $r\mapsto \max_{  \overline B(x_{r,j},\rho r \delta)} \abs{f_r}$ and $r\mapsto \min_{\overline  B(x_{r,j},\rho r \delta)} \abs{f_r}$ are $\mi$-measurable for every $j\in \N$. 
	Observe first that, if $E$ denotes a countable dense subset of $G$, then $\max_{ \overline B(x_{j,\rho r \delta})} \abs{f_r}=\inf_{n\in\N}\sup_{E\cap B(x_{r,j},\rho r \delta+2^{-n})} \abs{f_r}$ and $\min_{\overline  B(x_{j,\rho r \delta})} \abs{f_r}=\sup_{n\in\N}\inf_{E\cap B(x_{r,j},\rho r \delta+2^{-n})} \abs{f_r}$.\footnote{Notice that, since we do not know if $\overline B(x,r)$ is the closure of $B(x,r)$ in full generality, we have to perform a double approximation in order to reduce to the dense set $E$. Actually, the above mentioned fact is known to be true in the sub-Riemannian case, that is, when $\dd=1$ (cf.~\cite[Proposition 2.2]{Nicola}), but the proof of the cited result cannot be extended to the general case because of the lack of geodesics.} It will then suffice to show that the mapping $r\mapsto \chi_{B(x_{r,j},\rho r \delta)}(x) \abs{f_r(x)}$ is $\mi$-measurable for every $x\in E$ and for every $j\in \N$. Then, observe that this mapping is the product of the $\mi$-measurable mapping $r\mapsto \abs{f_r(x)}$ and of the  composite of the Borel-measurable mapping $r\mapsto (x_{r,j},r)$ and the lower semi-continuous (hence Borel-measurable) mapping $(y,r)\mapsto \chi_{B(x,\rho r \delta)}(y) $. The assertion follows.
	
	Now observe that, since the $B(x_{r,j},\rho r \delta)$ cover $G$,
	\[
	\begin{split}
		\norm{f}_{L^{q,p}(\mi,\beta;Z)}&\meg \norm*{ \chi_{B(x_{r,j}, \rho r \delta)}(x) \max_{ \overline B(x_{r,j},\rho r  \delta)} \abs{f_r} }_{L^{q,q,p}_{r,j,x}(\mi,\N,\beta)}
	\end{split}
	\]
	whence the left inequality in~\eqref{eq:4}. In order to prove the right inequality in~\eqref{eq:4}, we may assume that $f\in L^{q,p}(\mi,\beta;Z)$. 
	For every $j\in J$ and for every $r\in (0,\kappa]$, take $x'_{r,j}\in \overline B(x_{r,j},\rho r \delta)$ so that $\abs{f_r(x'_{r,j})}=\max_{\overline B(x_{r,j},\rho r \delta)} \abs{f_r}$, and observe that, by~\cite[Lemma 7.5]{Calzi2} and Lemma~\ref{lem:33}, there is a constant $C_1>0$ such that
	\[
	\begin{split}
		\abs{f_r(x'_{r,j})}&\meg \abs{f_r(x)}+ \sum_{k\in K} \min((2\rho r  \delta)^{d_k}, 2\rho  r\delta) \max_{\overline B(x_{r,j},2\rho r \delta)} \abs{X_k f_r}\\
		&\meg  \abs{f_r(x)}+ \sum_{k\in K} (2\rho r  \delta)^{d_k} (1+2 \rho \delta)^a\Nc_{\infty,a,r} (X_k f_r)(x)\\
		&\meg \abs{f_r(x)}+ C_1 \card(K)  (\Nc_{p_0,a,r} f_r)(x)
	\end{split}
	\]
	for every $x\in B(x_j, \rho r \delta)$, $(X_k)_{k\in K}$ is a basis of $\gf$ compatible with the filtration, and  $d_k=\deg(X_k)$ for every $k\in K$. In addition, notice that  there is a constant $C_2>0$ such that $\sum_{j} \chi_{B(x_{r,j},\rho r \delta)}\meg C_2$ on $G$ for every $r\in (0,\kappa]$ (cf.~\cite[Lemma 4.3]{Calzi2}). Consequently,
	\[
	\begin{split}
		  \norm*{ \chi_{B(x_j,\rho r \delta)}(x)\max_{\overline B(x_{r,j},\rho r \delta)}\abs{f} }_{L^{q,q,p}_{r,j,x}(\mi,\N,\beta)}^{p_0}  &\meg C_2^{p_0/q} \norm{f}_{L^{q,p}(\mi,\beta;Z)}^{p_0}\\
			&\qquad+  (C_2^{1/q} C_1 \card(K)  )^{p_0 } \norm{(\Nc_{p_0,a,r} f_r)(x)}_{L^{ q , p }_{r,x}(\mi,\beta)}^{p_0},
	\end{split}
	\]
	whence the right inequality in~\eqref{eq:4} thanks to Corollary~\ref{cor:18}.
	
	\textsc{Step II} By~\textsc{step I}, we may reduce to proving the left inequality in~\eqref{eq:6}.  Arguing as above and choosing   $x''_{r,j}\in \overline B(x_{r,j}, \rho r \delta)$ so that $\abs{f_r(x''_{r,j})}=\min_{\overline B(x_{r,j},\rho r \delta)} \abs{f_r}$, we see that there is $C_1'>0$ such that
	\[
	\begin{split}
		\abs{f_r(x)}&\meg \abs{f_r(x''_{r,j})}+ \sum_{k\in K} \min((2\rho r  \delta)^{d_k}, 2\rho r \delta) \max_{\overline B(x_{r,j},2\rho r \delta)} \abs{X_k f_r}\\
		&\meg \abs{f_r(x''_{r,j})}+ C_1'  \card(K)  \delta_- (\Nc_{p_0,a,r} f_r)(x)
	\end{split}
	\]
	for every $x\in B(x_{r,j}, \rho r \delta)$, provided that $\delta\meg \delta_-\meg 1$ (so that $\delta^{d_k}\meg \delta_-$ for every $k\in K$). This inequality may in turn be used to deduce that
	\[
	\norm{f}_{L^{q,p}(\nu,\beta;Z)}^{p_0}\meg   \norm*{  \chi_{B(x_{r,j},\rho r \delta)} \min_{\overline B(x_{r,j},\rho r \delta)} \abs{f_r} }_{L^{q,q,p}_{r,j,x}(\mi,\N,\beta)}^{p_0}+(C_1' \card(K) \delta_-)^{p_0} \norm{(\Nc_{p_0,a,r} f_r)(x)}_{L^{q,p}_{r,x}(\mi,\beta)}^{p_0}.
	\]
	By Corollary~\ref{cor:18} again,  there is a constant $C_3>0$ such that $\norm{(\Nc_{p_0,a,r} f_r)(x)}_{L^{q,p}_{r,x}(\mi,\beta)}\meg C_3\norm{f}_{L^{q,p}(\mi,\beta;Z)}$. Since $f\in L^{q,p}(\mi,\beta;Z)$, if we take $\delta_-\meg 1$ so that $(C'_1 C_3 \card(K) \delta_-)^{p_0}\meg 1/2$, we then infer that
	\[
	\norm{f}_{L^{q,p} (\mi,\beta;Z)}\meg   2^{1/p_0}   \norm*{  \chi_{B(x_{r,j},\rho r \delta)} \min_{\overline B(x_{r,j},\rho r \delta)}(x)\abs{f_r} }_{L^{q,q,p}_{r,j,x}(\mi,\N,\beta)},
	\]
	whence the conclusion.
\end{proof}

\begin{cor}\label{cor:17}
	Take $m\in C^\infty_c(\R)$, $p\in (0,\infty]$, $\rho>2$, $\delta_+>0$, and a  positive Radon measure $\mi$ on $(0,+\infty)$ with bounded support.  Then, there are $C,\delta_->0$ such that the following hold. For every Banach space $Z$, for every $q\in (0,\infty]$, for every $r\in (0,1]$, for every complete $(\delta,\rho)$-lattice  $(x_{r,j})_{(r,j)\in J}$ on $G$, and for every $f=(f_r)\in \Oc(m,\Lc;\mi,Z)$,
	\begin{equation}\label{eq:4b}
		\frac{1}{C}\norm{f}_{L^{p,q}(\beta,\mi;Z)}\meg  \delta^{Q_*/p}\norm*{\chi_J(r,j) r^{Q_*/p}  \max_{ \overline B(x_{r,j}, \rho r \delta)} \abs{f_r}}_{L^{p,q}_{j,r}(\N,\mi)}\meg C \norm{f}_{L^{p,q}(\beta,\mi;Z)}
	\end{equation}
	(with some abuse of notation) provided that $\delta\in (0, \delta_+]$, and
	\begin{equation}\label{eq:6b}
		\frac{1}{C} \norm{f}_{L^{p,q}(\beta,\mi;Z)}\meg \delta^{Q_*/p}\norm*{ \chi_J(r,j)r^{Q_*/p}  \min_{ \overline B(x_{r,j}, \rho r \delta)} \abs{f_r}}_{L^{p,q}_{j,r}(\N,\mi)}\meg C \norm{f}_{L^{p,q}(\beta,\mi;Z)}
	\end{equation}
	(with some abuse of notation) provided that $\delta\in (0,\delta_-]$ and $f_r\in L^{p}(\beta;Z)$ for $\mi$-almost every $r>0$.
\end{cor}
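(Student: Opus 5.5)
We follow the proof of Proposition~\ref{prop:10} with the roles of the $r$-variable and the $x$-variable swapped. The norm is now $L^{p,q}(\beta,\mi)$, with the $L^q(\mi)$-norm taken inside. The key observation is that for fixed $r$ the balls $B(x_{r,j},\rho r\delta)$ have comparable measures $\asymp (r\delta)^{Q_*}$, since $r\delta\meg \delta_+$ is bounded. The factor $\delta^{Q_*/p} r^{Q_*/p}$ is exactly the normalisation $\beta(B(x_{r,j},\rho r\delta))^{1/p}$, up to constants depending on $\rho,\delta_+$ and the support of $\mi$. Thus the middle expressions in~\eqref{eq:4b} and~\eqref{eq:6b} are comparable to
\[
\norm*{\chi_J(r,j)\chi_{B(x_{r,j},\rho r\delta)}(x)\max_{\overline B(x_{r,j},\rho r\delta)}\abs{f_r}}_{L^{p,q,q}_{x,r,j}(\beta,\mi,\N)}
\]
(resp.\ with $\min$). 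The factor $\chi_{B(x_{r,j},\rho r\delta)}(x)$ converts the $\ell^p_j$ sum into an integral over $G$. This uses the bounded overlap $\sum_j\chi_{B(x_{r,j},\rho r\delta)}\meg C_2$ (cf.~\cite[Lemma 4.3]{Calzi2}) and the disjointness of the $B(x_{r,j},r\delta)$, which let us pass between $\ell^p_j$ and $L^p_x$ at fixed $r$. The step that does not transfer directly is that the $L^q_r$-norm sits between the $L^p$-norm in $x$ and the $\ell$-norm in $j$. I would handle this by noting that, for $x$ fixed, at most $C_2$ indices $j$ contribute, so $\ell^q_j$ and $\ell^p_j$ are comparable pointwise in $(x,r)$. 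Measurability in $r$ is handled exactly as in \textsc{Step I} of Proposition~\ref{prop:10}.

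Next, the left inequality in~\eqref{eq:4b} is immediate because the balls cover $G$. For the right inequality I would use the same pointwise estimate as in Proposition~\ref{prop:10}, coming from~\cite[Lemma 7.5]{Calzi2} and Lemma~\ref{lem:33}:
\[
\max_{\overline B(x_{r,j},\rho r\delta)}\abs{f_r}\meg\abs{f_r(x)}+C_1\card(K)(\Nc_{p_0,a,r}f_r)(x)
\]
for $x\in B(x_{r,j},\rho r\delta)$, with $p_0<\min(1,p,q)$ and $a>D/p_0$. Taking the $L^{p,q}(\beta,\mi)$ quasi-norm then reduces matters to bounding $\norm{(\Nc_{p_0,a,r}f_r)(x)}_{L^{p,q}_{x,r}(\beta,\mi)}$ by $\norm{f}_{L^{p,q}(\beta,\mi;Z)}$. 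This is the Besov-type analogue of Corollary~\ref{cor:18}. Since $r$ is confined to a bounded set, the factor $(1+r)^{(Q_*-D)/p_0}$ is harmless. For fixed $r$, Lemma~\ref{lem:39} together with the $L^{p/p_0}$-boundedness of the maximal operator (as $p/p_0>1$) gives $\norm{\Nc_{p_0,a,r}f_r}_{L^p}\meg C\norm{f_r}_{L^p}$. When $p=\infty$ the same bound holds trivially, because $\Nc_{p_0,a,r}$ is controlled by $\norm{f_r}_\infty$ and $a p_0>D$.

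This is where I expect the main obstacle. We need $L^q_r(L^p_x)$-type control, but the norm is $L^p_x(L^q_r)$ with the mixed order reversed. If the intended order is $L^{p,q}(\beta,\mi)=L^p(\beta;L^q(\mi))$, I would follow Corollary~\ref{cor:18}. For $q>p_0$ one uses the vector-valued Fefferman--Stein inequality~\cite{Singular} applied to $\abs{f_r}^{p_0}$ in $L^{p/p_0}(\ell^{q/p_0})$. For $q=p_0$ one uses the duality/Tonelli argument given there, which transfers verbatim after swapping the order of integration.

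Finally, for~\eqref{eq:6b} the right inequality follows from the one for the maximum. For the left inequality I would argue as in \textsc{Step II} of Proposition~\ref{prop:10}. Shrinking $\delta\meg\delta_-$ yields
\[
\abs{f_r(x)}\meg\min_{\overline B(x_{r,j},\rho r\delta)}\abs{f_r}+C\delta_-(\Nc_{p_0,a,r}f_r)(x).
\]
The previous bound on $\Nc$ then lets us absorb the error term into the left-hand side. This absorption is legitimate because the assumption $f_r\in L^p$ guarantees that the norm of $f$ is finite, which is why that hypothesis appears in the statement.
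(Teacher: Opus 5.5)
Your proposal rests on a misreading of the mixed-norm notation, and this creates a spurious obstacle along with some genuine errors. Throughout the paper the first exponent is the \emph{innermost} one. For instance, in the proof of Corollary~\ref{cor:18}, $L^{1,p/p_0}_{t,x}(\mi,\beta)$ is an $L^{p/p_0}_x$-norm of an $L^1_t$-integral, and the $b^{p,q}_\alpha$ norms are $\ell^{p,q}_{k,j}$ norms, i.e.\ $\ell^q_j(\ell^p_k)$. Hence $\norm{f}_{L^{p,q}(\beta,\mi;Z)}=\norm{\norm{f_r}_{L^p(\beta;Z)}}_{L^q_r(\mi)}$. The middle term in \eqref{eq:4b} is the $L^q_r(\mi)$-norm of $(\delta r)^{Q_*/p}\norm{\max_{\overline B(x_{r,j},\rho r\delta)}\abs{f_r}}_{\ell^p_j(J_r)}$. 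This is the Besov-type statement used in Proposition~\ref{prop:14}, with the $L^q(\mi)$-norm \emph{outside}. With your reading the discretised term would be $\ell^p_j(L^q_r)$, which is not a sensible object, since for fixed $j$ the centres $x_{r,j}$ are unrelated as $r$ varies.

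So there is no interchange of $L^q_r$ with the $x$- and $j$-norms to perform. The route you propose for it proves an estimate for a different norm: Fefferman--Stein in $L^{p/p_0}(\ell^{q/p_0})$, or the Tonelli--duality argument of Corollary~\ref{cor:18}, with $p_0<\min(1,p,q)$. It also would not give the statement as formulated, for two reasons.
\begin{itemize}
\item $C$ and $\delta_-$ must be independent of $q\in(0,\infty]$. Any choice $p_0<\min(1,p,q)$, $a>D/p_0$ yields constants, and an absorption threshold $\delta_-$, that degenerate as $q\to 0$.
\item Your justification of the absorption in \eqref{eq:6b} is incorrect. Knowing $f_r\in L^p(\beta;Z)$ for $\mi$-almost every $r$ does \emph{not} make $\norm{f}_{L^{p,q}(\beta,\mi;Z)}$ finite, so you cannot absorb in the full mixed norm.
\end{itemize}

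What is needed is only a single-slice statement. For each $r\in\supp\mi$, with constants independent of $r$, $\delta$ and $q$, one needs $\norm{f_r}_{L^p}\asymp(\delta r)^{Q_*/p}\norm{\max_{\overline B(x_{r,j},\rho r\delta)}\abs{f_r}}_{\ell^p_j(J_r)}$ for $\delta\meg\delta_+$. The same must hold with $\min$ for $\delta\meg\delta_-$ whenever $f_r\in L^p$. The paper gets this in three steps.
\begin{itemize}
\item It applies Proposition~\ref{prop:10} with $\mi=\delta_r$ and $q=\infty$. There the finiteness needed for absorption is exactly $f_r\in L^p$.
\item It converts $\norm{\sup_j\chi_{B(x_{r,j},\rho r\delta)}\max\abs{f_r}}_{L^p}$ into $(\rho r\delta)^{Q_*/p}$ times an $\ell^p_j$-norm, using bounded overlap and $\beta(B(e,\rho r\delta))\asymp(\rho r\delta)^{Q_*}$.
\item Only then does it take the $L^q(\mi)$ quasi-norm.
\end{itemize}
The per-$r$ ingredients you already list are precisely this argument: the mean-value estimate combined with Lemma~\ref{lem:33}, and $\norm{\Nc_{p_0,a,r}f_r}_{L^p}\meg C\norm{f_r}_{L^p}$ from Lemma~\ref{lem:39}, now with $p_0<\min(1,p)$ only. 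Drop the $L^{p,q,q}_{x,r,j}$ reformulation and the Corollary~\ref{cor:18} step altogether.
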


\begin{proof}
	Apply Proposition~\ref{prop:10}  with $\mi=\delta_r$, $r>0$, and $q=\infty$, and observe that there is a constant $C'>0$ such that $\frac{1}{C'} (\rho r \delta)^{Q_*}\meg \beta(B(e,\rho r \delta))\meg C' (\rho r \delta)^{Q_*} $ for every $r\in \Supp{\mi}$. 
	This shows that, for every $r\in \supp \mi$,
	\[
	\frac{1}{C}\norm{f_r}_{L^p (\beta;Z)}\meg  (\delta r)^{Q_*/p}\norm*{    \max_{ \overline B(x_{r,j}, \rho r \delta)} \abs{f_r}}_{\ell^{p }_{j}(J_r)}\meg C \norm{f_r}_{L^{p }(\beta;Z )}
	\]
	and, if $f_r\in L^p(\beta;Z)$,
	\[
	\frac{1}{C}\norm{f_r}_{L^p (\beta;Z)}\meg  (\delta r)^{Q_*/p}\norm*{    \min_{ \overline B(x_{r,j}, \rho r \delta)} \abs{f_r}}_{\ell^{p }_{j}(J_r)}\meg C \norm{f_r}_{L^{p }(\beta;Z )},
	\]
	where $J_r\coloneqq \Set{j\in\N\colon (r,j)\in J}$.
	Applying the $L^q(\mi)$ quasi-norm leads to the conclusion.
\end{proof}

\begin{cor}\label{cor:2}
	Take $m\in C^\infty_c(\R)$, $p_1,p_2,q\in (0,\infty]$ with $p_1\meg p_2$, and a positive Radon measure $\mi$ on $(0,+\infty)$ with bounded support. Then there is a constant $C>0$ such that 
	\[
	\norm{f}_{L^{q,p_2}(\mi,\beta;Z)}\meg C  \norm{r^{(1/p_2-1/p_1)Q_*} f_r(x)}_{L^{q,p_1}_{r,x}(\mi,\beta;Z)} 
	\]
	for every Banach space $Z$,  
	 and for every $f=(f_r)\in \Sr'(G)$ with $f\in \Oc(m,\Lc;\mi)$.
\end{cor}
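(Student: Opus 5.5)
The plan is to work at each fixed scale $r$ and reduce to a Bernstein/Nikolskii-type inequality
\[
\norm{f_r}_{L^{p_2}(\beta;Z)}\meg C r^{(1/p_2-1/p_1)Q_*}\norm{f_r}_{L^{p_1}(\beta;Z)},
\]
with $C$ uniform for $r\in\supp\mi$. Applying the $L^q(\mi)$ quasi-norm in $r$ then gives the statement at once. Since $\mi$ has bounded support, we may fix $\kappa\Meg 1$ with $\supp\mi\subseteq(0,\kappa]$, and we only need $r\in(0,\kappa]$. The key identity is $f_r=f_r*\Kc_{r^\grado\Lc}(m)$, which makes each $f_r$ a smooth function on which the maximal estimates of Lemma~\ref{lem:33} and the lattice discretization of Corollary~\ref{cor:17} are available.

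The route I would take is through Corollary~\ref{cor:17} applied with $\mi=\delta_r$. Fix $\rho>2$ and $\delta=\delta_+=1$, and take a complete $(\delta,\rho)$-lattice from Lemma~\ref{lem:38}. The right inequality in~\eqref{eq:4b} with exponent $p_1$ gives
\[
r^{Q_*/p_1}\norm{M_{r,j}}_{\ell^{p_1}_j}\meg C\norm{f_r}_{L^{p_1}},
\qquad M_{r,j}\coloneqq \max_{\overline B(x_{r,j},\rho r\delta)}\abs{f_r}.
\]
The left inequality in~\eqref{eq:4b} with exponent $p_2$ gives
\[
\norm{f_r}_{L^{p_2}}\meg C r^{Q_*/p_2}\norm{M_{r,j}}_{\ell^{p_2}_j}.
\]
Both inequalities involve the same maxima on the same lattice, since the lattice and the maxima do not depend on the exponent. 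Combining them with the elementary embedding $\ell^{p_1}\subseteq\ell^{p_2}$ (norm $\meg1$ because $p_1\meg p_2$) yields the claimed per-scale inequality. The constant $C$ depends on $p_1,p_2$ but not on $r\in(0,\kappa]$, since Corollary~\ref{cor:17} is uniform over measures supported in a fixed bounded set. The restriction $r\in(0,1]$ that appears in its statement is harmless: either the proof (via Proposition~\ref{prop:10} with $\kappa$ arbitrary and the volume comparison $\beta(B(e,\rho r\delta))\asymp r^{Q_*}$ for $r\meg\kappa$) works verbatim, or one simply replaces $1$ by $\kappa$.

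An alternative proof of the per-scale inequality, which avoids the lattice altogether, goes through Lemma~\ref{lem:33}. Take $p=p_1$ if $p_1<\infty$ (otherwise $p_2=\infty$ and there is nothing to prove), with some $a>D/p_1$. Since $(1+d(x,y)/r)^{-a}\meg1$, we have $\Nc_{p_1,a,r}f_r\meg r^{-Q_*/p_1}\norm{f_r}_{L^{p_1}}$, so Lemma~\ref{lem:33} with $X=I$ gives
\[
\norm{f_r}_{L^\infty}\meg C r^{-Q_*/p_1}\norm{f_r}_{L^{p_1}}.
\]
Interpolating, $\norm{f_r}_{L^{p_2}}\meg\norm{f_r}_{L^\infty}^{1-p_1/p_2}\norm{f_r}_{L^{p_1}}^{p_1/p_2}$, which gives exactly the factor $r^{Q_*(1/p_2-1/p_1)}$. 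This second argument is simpler, and I would present it as the main proof.

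Measurability of $r\mapsto\norm{f_r}_{L^{p_i}}$ follows as in Corollary~\ref{cor:18}, or from the measurability of $r\mapsto f_r\in C^\infty(G;Z)$ remarked after the definition of $\Oc(m,\Lc;\mi,Z)$. The main obstacle I expect is the uniformity of the constant in Lemma~\ref{lem:33} for $r$ up to $\kappa$. That lemma is stated for $r\in(0,\rho]$ with arbitrary $\rho$, so taking its $\rho$ equal to $\kappa$ resolves this. The only remaining point is to check that the normalization $r^{-Q_*/p}$ in $\Nc$ (rather than $\beta(B(e,r))^{-1/p}$) causes no loss for bounded $r$, and it does not.
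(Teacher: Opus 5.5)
Your per-scale estimate $\norm{f_r}_{L^{p_2}(\beta;Z)}\meg C r^{(1/p_2-1/p_1)Q_*}\norm{f_r}_{L^{p_1}(\beta;Z)}$, uniform for $r\in(0,\kappa]$, is correct. Deriving it from Lemma~\ref{lem:33} with $X=I$ plus $\norm{g}_{L^{p_2}}\meg\norm{g}_{L^\infty}^{1-p_1/p_2}\norm{g}_{L^{p_1}}^{p_1/p_2}$ is simpler than the lattice comparison behind~\eqref{eq:4} that the paper invokes.

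The gap is the last step, ``applying the $L^q(\mi)$ quasi-norm in $r$.'' In this paper mixed norms are written from the inside out: $\norm{f}_{L^{q,p}(\mi,\beta)}=\norm{\norm{f_r(x)}_{L^q_r(\mi)}}_{L^p_x(G)}$, with the $L^q(\mi)$ norm \emph{inside}. This is how it is used in Proposition~\ref{prop:10}, Corollary~\ref{cor:18} and Lemma~\ref{lem:35}; the Besov order is written $L^{p,q}(\beta,\mi)$, as in Corollary~\ref{cor:17}. Either of your routes only gives $\norm{\norm{f_r}_{L^{p_2}}}_{L^q_r(\mi)}\meg C\norm{r^{(1/p_2-1/p_1)Q_*}\norm{f_r}_{L^{p_1}}}_{L^q_r(\mi)}$. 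Passing from this to the stated inequality by Minkowski's inequality needs $q\meg p_2$ on the left and $q\Meg p_1$ on the right. For example, for $q=\infty$ you control $\sup_r\norm{f_r}_{L^{p_2}}$, whereas the statement asks for $\norm{\sup_r\abs{f_r}}_{L^{p_2}}$. So your proof is complete only when $\mi$ is a point mass (the only case used, in Corollary~\ref{cor:4}) or when $p_1\meg q\meg p_2$.

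For $p_1<p_2<\infty$ and $q\notin[p_1,p_2]$ you need an argument that keeps $L^q(\mi)$ inside. One way:
split $(0,\kappa]$ into $I_\ell\coloneqq(2^{-\ell-1}\kappa,2^{-\ell}\kappa]$ and set $F_\ell(x)\coloneqq\norm{\chi_{I_\ell}(r)f_r(x)}_{L^q_r(\mi)}$.
Lemma~\ref{lem:33} with $p_0<\min(p_1,q)$ and Minkowski's integral inequality (as $q\Meg p_0$) give $F_\ell\meg C\Nc_{p_0,a,2^{-\ell}\kappa}F_\ell$.
Hence, by H\"older and Lemma~\ref{lem:37}, $\norm{F_\ell}_{L^\infty}\meg C(2^{-\ell}\kappa)^{-Q_*/p_1}\norm{F_\ell}_{L^{p_1}}$.
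Moreover $(2^{-\ell}\kappa)^{(1/p_2-1/p_1)Q_*}F_\ell\meg \norm{r^{(1/p_2-1/p_1)Q_*}f_r(\,\cdot\,)}_{L^q_r(\mi)}$ pointwise.
The Jawerth-type splitting in the proof of Proposition~\ref{prop:13b} then concludes: treat the blocks $\ell\meg N$ with the $L^\infty$ bounds, the blocks $\ell>N$ with the pointwise bound and a geometric factor, and choose $N$ according to the level of the distribution function. This is also what the paper's ``compare the discretized norms'' really requires when $\mi$ is not a point mass; it is not just $\ell^{p_1}\subseteq\ell^{p_2}$.

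When $p_2=\infty$ and $q<p_1$, no argument can close the case, because the stated inequality is false. Take $G=\R^n$, $\Lc=-\Delta$, $m=1$ on $[-1,1]$ and $\mi=\sum_{j\in\N}\delta_{2^{-j}}$. Set $f_{2^{-j}}=\phi(2^j\,\cdot\,)$ for $j\meg M$ and $f_r=0$ otherwise, where $\widehat\phi$ is supported in the unit ball and $\phi(0)=1$. Then the left-hand side is at least $(M+1)^{1/q}$, while the right-hand side is $O(M^{1/p_1})$.
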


\begin{proof}
	It suffices to apply Proposition~\ref{prop:10}  and to compare the `discretized' norms in~\eqref{eq:4}.  
\end{proof}

\begin{cor}\label{cor:4}
	Take a formally self-adjoint weighted subcoercive operator $\Lc'$ on $G$ with  degree $\grado'$, $m_1,m_2\in C^\infty_c(\R)$, $\rho>0$. Take $\alpha_1,\alpha_2>0$  and $p_1,p_2,p_3\in (0,\infty]$ such that   $p_1,p_2\meg p_3$  and $\frac{1}{p_1'}+\frac{1}{p_2'}\meg \frac{1}{p_3'}$.  Then there is a constant $C>0$ such that 
	\[
	\norm{\abs{f_1}^{\alpha_1}* \abs{f_2^*}^{\alpha_2}}_{L^{p_3}(G)} \meg C 
	 r^{(1+1/p_3-1/p_1-1/p_2)Q_*} \norm{\abs{f_1}^{\alpha_1}}_{L^{p_1}(G)} \norm{\abs{f_2}^{\alpha_2}}_{L^{p_2}(G)} 
	\]
	for every $r\in (0,\rho]$, for every two Banach spaces $Z_1,Z_2$, and for every $f_j\in \Sr'(G;Z_j)$, $j=1,2$, such that $f_1=f_1*\Kc_{\Lc}(m_1(r^{\grado}\,\cdot\,))$ and $f_2=f_2*\Kc_{\Lc'}(m_2(r^{\grado'}\,\cdot\,))$. 
\end{cor}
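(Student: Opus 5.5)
The plan is to reduce the statement to Young's inequality by a Nikol'skii-type change of exponents, in the form of Corollary~\ref{cor:2} applied to each factor separately. Two preliminary observations come first. Corollary~\ref{cor:2} holds verbatim for the formally self-adjoint weighted subcoercive operator $\Lc'$ as well; it is stated for $\Lc$, but its proof uses only the general theory developed so far. Powers commute with the norms in the sense that $\norm{\abs{f}^{\alpha}}_{L^{p}}=\norm{f}_{L^{\alpha p}}^{\alpha}$. Applying Corollary~\ref{cor:2} with $\mi=\delta_r$ to $f_1$ (resp.\ $f_2$), with exponents $\alpha_1 p_1\meg \alpha_1\tilde p_1$ (resp.\ $\alpha_2p_2\meg \alpha_2\tilde p_2$), we get, uniformly in $r\in(0,\rho]$,
\[
\norm{\abs{f_j}^{\alpha_j}}_{L^{\tilde p_j}(G)}\meg C\, r^{(1/\tilde p_j-1/p_j)Q_*}\norm{\abs{f_j}^{\alpha_j}}_{L^{p_j}(G)},\qquad \tilde p_j\Meg p_j .
\]
Moreover $\norm{\abs{f_2^*}^{\alpha_2}}_{L^{\tilde p_2}}=\norm{\abs{f_2}^{\alpha_2}}_{L^{\tilde p_2}}$, since $\beta$ is bi-invariant (polynomial growth, so $G$ is unimodular) and inversion therefore preserves $\beta$.

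Assume first $p_3\Meg 1$. I will choose $\tilde p_1,\tilde p_2\in[1,\infty]$ with $\tilde p_j\Meg p_j$ and $\frac1{\tilde p_1}+\frac1{\tilde p_2}=1+\frac1{p_3}$, which is exactly Young's condition $\frac1{\tilde p_1'}+\frac1{\tilde p_2'}=\frac1{p_3'}$. This amounts to finding $s_j\in[0,\min(1,1/p_j)]$ with $s_1+s_2=1+1/p_3$. Such $s_j$ exist because the hypothesis gives $1/p_1+1/p_2\Meg 1+1/p_3$, while $2\Meg 1+1/p_3$; combined with $1/p_j\Meg 1/p_3$ and $1\Meg 1/p_3$, this yields $\min(1,1/p_1)+\min(1,1/p_2)\Meg 1+1/p_3$. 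Young's inequality applied to the positive functions $\abs{f_1}^{\alpha_1}$ and $\abs{f_2^*}^{\alpha_2}$, followed by the displayed estimate, gives the bound with power $r^{(1/\tilde p_1+1/\tilde p_2-1/p_1-1/p_2)Q_*}=r^{(1+1/p_3-1/p_1-1/p_2)Q_*}$, as required.

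The main obstacle is the case $p_3<1$, where Young's inequality is unavailable; there the hypothesis forces $p_1,p_2<1$. In this case I will discretize instead. Take a complete $(\delta,\rho')$-lattice at scale $r$ (Lemma~\ref{lem:38}), and set $M_j\coloneqq\max_{\overline B(x_{r,j},\rho' r\delta)}\abs{f_1}^{\alpha_1}$ and $N_k\coloneqq\max_{\overline B(x_{r,k},\rho' r\delta)}\abs{f_2}^{\alpha_2}$. Then pointwise
\[
\abs{f_1}^{\alpha_1}*\abs{f_2^*}^{\alpha_2}\meg C r^{Q_*}\sum_{j,k} M_jN_k\,\chi_{B(x_{r,j},\rho' r\delta)B(x_{r,k},\rho' r\delta)^{-1}}.
\]
Each set on the right is contained in a ball of radius comparable to $r$, of measure $\asymp r^{Q_*}$; here I will use the left-invariance of $d$ and Remark~\ref{oss:2}-type control only at scale $r\meg\rho$. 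By the $p_3$-triangle inequality,
\[
\norm{\abs{f_1}^{\alpha_1}*\abs{f_2^*}^{\alpha_2}}_{L^{p_3}}^{p_3}\lesssim r^{Q_*(p_3+1)}\sum_{j,k}M_j^{p_3}N_k^{p_3}=r^{Q_*(p_3+1)}\norm{M}_{\ell^{p_3}}^{p_3}\norm{N}_{\ell^{p_3}}^{p_3}.
\]
Since $p_j\meg p_3$, we have $\ell^{p_j}\subseteq\ell^{p_3}$ contractively. Corollary~\ref{cor:17}, applied with $\mi=\delta_r$, gives $\norm{M}_{\ell^{p_1}}\lesssim r^{-Q_*/p_1}\norm{\abs{f_1}^{\alpha_1}}_{L^{p_1}}$ and similarly for $N$ (again using $\abs{f}^{\alpha}$ versus $f$ with exponent $\alpha p$). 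Collecting the powers gives $r^{(1+1/p_3-1/p_1-1/p_2)Q_*}$.

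Actually, this discrete argument works for all exponents once one replaces $\ell^{p_3}$ by a discrete Young inequality on the lattice, with overlaps bounded by the finite overlap constant as in~\cite[Lemma 4.3]{Calzi2}. The delicate points are the uniform control of the overlap of the products of balls, and checking that the constant in Corollary~\ref{cor:17} is uniform for $r\in(0,\rho]$. The latter holds because $\mi=\delta_r$ has support in $(0,\rho]$, and Corollary~\ref{cor:17} is uniform for $r$ in such a range after rescaling $\rho$.
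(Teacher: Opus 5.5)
Your proof is correct and follows the paper's route. For $p_3\Meg 1$ you use Corollary~\ref{cor:2} and then Young's inequality. For $p_3<1$ you discretize at scale $r$ and combine the $p_3$-triangle inequality with Corollary~\ref{cor:17}; the paper first raises $p_1,p_2$ to $p_3$ via Corollary~\ref{cor:2}, which is equivalent to your use of $\ell^{p_j}\subseteq\ell^{p_3}$.

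One justification needs fixing. The sets $B(x_{r,j},s)B(x_{r,k},s)^{-1}\subseteq x_{r,j}B(e,2s)x_{r,k}^{-1}$ (with $s=\rho' r\delta$) are in general \emph{not} contained in balls of radius comparable to $r$: conjugation by $x_{r,k}$ distorts balls, and Remark~\ref{oss:2} gives no control uniform in $k$. The bound you actually need is $\beta(x_{r,j}B(e,2s)x_{r,k}^{-1})=\beta(B(e,2s))\lesssim r^{Q_*}$. It follows at once from the bi-invariance of $\beta$, which you already invoked; the paper uses unimodularity the same way in its change of variables $x\mapsto x_jx^{-1}$.
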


\begin{proof}
	By means of Corollary~\ref{cor:2} and Young's inequality, we may reduce to the case $p_1=p_2=p_3\in [p_0,1]$. We then simply write $p$ instead of $p_1,p_2,p_3$. Then, take $C_1>0$ as in Corollary~\ref{cor:17} (for $\mi=\delta_r$, $q=\infty$ and  $p=\alpha_1p_1$, and also for $\mi=\delta_r$, $q=\infty$, $p=\alpha_2p_2$, and $\Lc$ replaced by $\Lc'$), and select a maximal $r/2$-separated family $(x_j)_{j\in J}$ of elements of $G$. In addition, take $C_2>0$ so that $\beta(B(e,r'))\meg C_2r'^{Q_*}$ for every $r'\in (0,\rho]$. Then,
	\[
	\begin{split}
		\norm{\abs{f_1}^{\alpha_1}* \abs{f_2^*}^{\alpha_2}}_{L^p(G)}^p
			&= \int_G   \left( \int_G \abs{f_1(y) }^{\alpha_1} \abs{f_2(x^{-1}y)}^{\alpha_2} \,\dd \beta(y) \right)^p\,\dd \beta(x)\\
			&\meg (C_2 r^{Q_*} )^{p} \int_G \left( \sum_{j\in J} \max_{\overline B(x_{j},r)}\abs{f_1 }^{\alpha_1} \max_{\overline B(x^{-1} x_{j}, r) }\abs{f_2}^{\alpha_2}\right)^p\,\dd \beta(x)\\
			&\meg (C_2 r^{Q_*} )^{p}\sum_{j\in J}   \max_{\overline B(x_{j},r)}\abs{f_1 }^{p\alpha_1}  \int_G \max_{\overline B(x^{-1} x_{j}, r ) }\abs{f_2}^{p\alpha_2}\,\dd \beta(x)\\
			&=(C_2 r^{Q_*} )^{p}\sum_{j\in J}   \max_{\overline B(x_{j},r)}\abs{f_1 }^{p\alpha_1}  \int_G \max_{\overline B(x , r) }\abs{f_2}^{p\alpha_2}\,\dd \beta(x)\\
			&\meg (C_2 r^{Q_*} )^{1+p}  r^{-Q_*}C_1^{p \alpha_1} \norm{f_1}^{p\alpha_1}_{L^{p \alpha_1}(\beta;Z_1)} \sum_{j'\in J}\max_{\overline B(x_{j'} , 2 r) }\abs{f_2}^{p\alpha_2}\\
			&\meg (C_2 r^{Q_*} )^{1+p}  r^{-2Q_*}C_1^{p (\alpha_1+\alpha_2)}  \norm{\abs{f_1}^{\alpha_1}}_{L^p(G)}^{p }  \norm{\abs{f_2}^{\alpha_2}}_{L^p(G)}^{p } 
	\end{split}
	\]
	The assertion follows.
\end{proof}

\begin{cor}\label{cor:5}
	Take a formally self-adjoint weighted subcoercive operator $\Lc'$ on $G$ with  degree $\grado'$, and take $m_1\in C^\infty_c(\R)$, $\rho >0$, $p\in (0,\infty]$, and $s> D(1/p-1/2)$.
	Then there is a constant $C>0$ such that
	\[
	\norm*{ \abs{f}*\abs{\Kc_{r^{\grado'}\Lc'}(m_{2} )}}_{L^p(G)}\meg C    \norm{m_{2 }}_{B^{\infty,\infty}_s(\R)}  \norm{f}_{L^{ p}( \beta)}
	\]
	for every  $m_2\in B^{\infty,\infty}_s(\R)$ supported in $[-\rho ,\rho ]$,   for every $r\in (0,\rho ]$, for every Banach space $Z$, and for every $f\in L^{ p}( \beta;Z)$ with  $f=f*\Kc_{r^\grado\Lc}(m_1)$.
\end{cor}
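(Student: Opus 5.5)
We split according to $p\Meg 1$ or $p<1$. Set $k\coloneqq\Kc_{r^{\grado'}\Lc'}(m_2)$. If $p\in[1,\infty]$, Young's inequality gives $\norm{\abs{f}*\abs{k}}_{L^p(G)}\meg \norm{f}_{L^p(\beta;Z)}\norm{k}_{L^1(G)}$. Corollary~\ref{cor:12}, applied to $\Lc'$ with exponent $1$, $c=0$, $X=Y=I$ and $\eps>0$ small enough that $s\Meg \eps+D(1-1/2)_+$ fails to be needed (for $p\Meg1$ we only need $s>0$, which holds since $s>D(1/p-1/2)$ forces nothing when $p\Meg 2$; in that range we use Proposition~\ref{prop:22} with $p=1$, or reduce to $p=2$ if $s$ is too small), yields $\norm{k}_{L^1}\meg C\norm{m_2}_{B^{\infty,\infty}_s}$ uniformly in $r\in(0,\rho]$. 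For $p\in[1,2)$ the condition $s>D(1/p-1/2)$ matches the hypothesis of Corollary~\ref{cor:12} when that corollary is used with exponent $p$. So I would instead follow the $p<1$ scheme below uniformly, which avoids the $L^1$ kernel bound altogether.

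\emph{General scheme.} Fix $p_0\meg\min(p,1)$. The key point is that $f$ is band-limited at scale $r$, so $\abs f$ is essentially constant on balls of radius $\approx r$. Take a maximal $r$-separated family $(x_j)$. Then
\[
(\abs f*\abs k)(x)\meg \sum_j \max_{\overline B(x_j,r)}\abs f\int_{B(x_j,r)}\abs{k(y^{-1}x)}\,\dd\beta(y)\meg \sum_j \max_{\overline B(x_j,r)}\abs f\, \beta(B(e,r))\,\sup_{B(x_j,r)}\abs{k(\,\cdot\,^{-1}x)} .
\]
Rather than using pointwise suprema of $k$, which would demand regularity of $k$, I would keep the integral and use the $p$-triangle inequality when $p\meg1$:
\[
\norm{\abs f*\abs k}_{L^p}^p\meg \sum_j \max_{\overline B(x_j,r)}\abs f^p\,\norm*{\int_{B(x_j,r)}\abs{k(y^{-1}\,\cdot\,)}\,\dd\beta(y)}_{L^p}^p .
\]
By Hölder (since $p\meg1$, the $L^p$ norm of $\chi_{B}\abs{k}$ controls things up to weights), the inner norm is bounded by $\beta(B(e,r))\,\norm{\sup_{y\in B(e,r)}\abs{k(y\,\cdot\,)}}_{L^p}$. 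To avoid suprema, I would bound it instead by $\beta(B(e,r))^{1/p}\norm{k(1+\abs{\cdot}_*/r)^{c}}_{L^p}\cdot$ a constant, via $\int_{B}\abs{k(y^{-1}x)}\dd y\meg \beta(B)^{1-1/p}(\int_B\abs{k}^p)^{1/p}$ — wait, that is the wrong direction for $p<1$. The fix is to first replace $k$ by $k*\Kc_{r^{\grado'}\Lc'}(\widetilde m)$ with $\widetilde m=1$ on $[-\rho,\rho]$, so that $k$ is itself band-limited. Then Lemma~\ref{lem:33} (for $\Lc'$) gives $\sup_{B(x,r)}\abs k\meg C\,\Nc_{p,a,r}k(x)$. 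The inner norm is then $\meg C\beta(B(e,r))\,\norm{\Nc_{p,a,r}k}_{L^p}\meg C' r^{Q_*(1-1/p)}\norm{k(1+\abs\cdot_*/r)^{a}}_{L^p}$. Corollary~\ref{cor:12} (exponent $p$, $c=a$, with $s\Meg\eps+a+D(1/p-1/2)_+$) bounds this by $C\norm{m_2}_{B^{\infty,\infty}_s}$, since the powers $r^{(1/p-1)Q_*}$ cancel.

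Finally, Corollary~\ref{cor:17} gives $r^{Q_*}\sum_j\max_{\overline B(x_j,r)}\abs f^p\meg C\norm f_{L^p}^p$, using $f=f*\Kc_{r^\grado\Lc}(m_1)$. For $p>1$ the same route works with the $\ell^p$ sum replaced by Young/Minkowski in $\ell^1$. The main obstacle is the loss in the regularity requirement. We need $a>D/p$ for Lemma~\ref{lem:33}, plus the requirement of Corollary~\ref{cor:12}, and this seems to demand $s>2D/p$ or so, rather than $s>D(1/p-1/2)$. To reach the sharp threshold, I would avoid weights. Using Lemma~\ref{lem:33} with $a$ small is not allowed, since $\Nc_{p,a,r}$ requires summability. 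Instead I would estimate $\norm{\sup_{B(\cdot,r)}\abs k}_{L^p}\meg C\norm{\Nc_{p,a,r}k}_{L^p}$ by Corollary~\ref{cor:18}-type boundedness with $a>D/p_0$. Since $\Nc$ is bounded on $L^p$ for $p>p_0$, we get $\norm{\Nc_{p_0,a,r}k}_{L^p}\meg C\norm{k}_{L^p}$, and then Corollary~\ref{cor:12} is needed only with $c=0$, requiring exactly $s>D(1/p-1/2)$.
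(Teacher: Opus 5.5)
For $p\meg 1$ your final scheme is the paper's argument. The paper cites Corollary~\ref{cor:4}, whose proof is exactly your discretization over an $r$-separated family followed by the $p$-triangle inequality (with $p_1=p_2=p_3=p$). It then uses Corollary~\ref{cor:12} with $c=0$, i.e.\ $\norm{k}_{L^p(G)}\meg C r^{(1/p-1)Q_*}\norm{m_2}_{B^{\infty,\infty}_s(\R)}$ for $s>D(1/p-1/2)$.

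Two slips need repair. First, as $y$ runs over $B(x_j,r)$, the point $y^{-1}x$ runs over the right translate $B(e,r)x_j^{-1}x$. That is not the ball $B(x_j^{-1}x,r)=x_j^{-1}x\,B(e,r)$ of the left-invariant distance, so Lemma~\ref{lem:33} does not control $\sup_{y\in B(x_j,r)}\abs{k(y^{-1}x)}$. Write $\abs{k(y^{-1}x)}=\abs{k^*(x^{-1}y)}$ instead, use $x^{-1}B(x_j,r)=B(x^{-1}x_j,r)$, and note that $k^*=\Kc_{r^{\grado'}\Lc'}(\overline{m_2})$ is again band-limited; this is why Corollary~\ref{cor:4} is phrased with $\abs{f_2^*}$. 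Second, by Tonelli $\norm{\Nc_{p,a,r}k}_{L^p(G)}\meg C\norm{k}_{L^p(G)}$ for $a>D/p$ and $r\meg\rho$, with no factor $r^{-Q_*/p}$. Hence the inner norm is $\lesssim r^{Q_*}\norm{k}_{L^p(G)}\lesssim r^{Q_*/p}\norm{m_2}_{B^{\infty,\infty}_s(\R)}$, and this $r^{Q_*/p}$ is what cancels the $r^{-Q_*/p}$ produced by Corollary~\ref{cor:17}. In your bookkeeping the powers cancel inside the kernel estimate, leaving an uncompensated $r^{-Q_*/p}$.

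The genuine gap is $p>1$, and it cannot be closed. Without the $p$-triangle inequality, Young/Minkowski brings back exactly $\norm{k}_{L^1(G)}$. An estimate $\norm{\abs f*\abs k}_{L^p}\lesssim r^{\gamma}\norm{f}_{L^p}\norm{k}_{L^p}$ fails for $p>1$ in general (already on $\R^n$), so invoking Corollary~\ref{cor:12} at exponent $p$, with $c=0$, does not lower the threshold.

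Correspondingly, in Corollary~\ref{cor:4} with $p_1=p_3=p>1$ the constraint $1/p_1'+1/p_2'\meg 1/p_3'$ forces $p_2\meg 1$. So the paper's one-line proof gives the case $p>1$ only for $s>D/2$, via Young plus Corollary~\ref{cor:12} at exponent $1$ (your abandoned first idea). In other words, it proves the statement under $s>D(1/\min(1,p)-1/2)$, which is all Theorem~\ref{teo:17} uses.

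As written, the statement is in fact false for $p>1$. Take $G=\R^2$, $\Lc=\Lc'=-\Delta$, $p=2$, $r=1$, and $m_2(\lambda)=\eta(\lambda)(1-\lambda)_+^{1/4}\in B^{\infty,\infty}_{1/4}(\R)$, so that $s=1/4>0=D(1/2-1/2)$. Its Bochner--Riesz kernel $k$ is not integrable. Choose $m_1=1$ on $\supp\phi$ with $\phi(0)\neq 0$. The admissible functions $f_L=\chi_{B(0,L)}*\Kc_{\Lc}(\phi)$ satisfy $\abs{f_L}\Meg\abs{\phi(0)}/2$ on $B(0,L/2)$ for large $L$. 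Hence $\norm{\abs{f_L}*\abs k}_{L^2}/\norm{f_L}_{L^2}\gtrsim\norm{k\chi_{B(0,L/4)}}_{L^1}\to\infty$. For $p=\infty$ the stated hypothesis even admits $s<0$.
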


\begin{proof}
	This follows from Corollaries~\ref{cor:4} and~\ref{cor:12}.
\end{proof}

\begin{lem}\label{lem:34}
	Take a formally self-adjoint weighted subcoercive operator $\Lc'$ on $G$ with  degree $\grado'$, and take $m_1\in C^\infty_c(\R)$, $\rho >0$, $p_0\in (0,\infty]$, $s> D(1/p_0+1/2)$, and a positive Radon measure $\mi$ on $(0,+\infty)$ with bounded support.
	Then, there is a constant $C>0$ such that
	\[
	\norm*{ (\abs{f_r}*\abs{\Kc_{r^{\grado'}\Lc'}(m_{2,r} )})(x) }_{L^{q,p}_{r,x}(\mi,\beta)}\meg C   \norm*{\norm{m_{2,r}}_{B^{\infty,\infty}_s(\R)}}_{L^\infty_r(\mi)} \norm{f}_{L^{q,p}(\mi,\beta)}
	\]
	for every $p,q\in [p_0,\infty]$, for every $\mi$-measurable family $(m_{2,r})$ of elements of  $ B^{\infty,\infty}_s(\R)$ supported in $[-\rho ,\rho ]$,    for every Banach space $Z$, and for every $f\in   \Oc(m_1, \Lc; \mi,Z)$.
\end{lem}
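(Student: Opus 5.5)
The strategy is to dominate the convolution pointwise by the maximal function $\Nc_{p_1,a,r}$ with $p_1<\min(p,q)$, and then apply Corollary~\ref{cor:18} to pass to the mixed $L^{q,p}(\mi,\beta)$ norm.

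\textbf{Step 1 (choice of parameters).} Fix $p_1\in(0,\min(1,p_0))$ close enough to $p_0$ and $a>D/p_1$ such that
\[
s> D(1/p_1-1/2)_+ + a + \eps
\]
for some $\eps>0$. This is possible because $s>D(1/p_0+1/2)$: indeed $D/p_0+D/2$ exceeds $D(1/p_0-1/2)+D/p_0$, so there is room for $a\approx D/p_0$.

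\textbf{Step 2 (pointwise domination).} Write $K_r\coloneqq\Kc_{r^{\grado'}\Lc'}(m_{2,r})$. For every $x$,
\[
(\abs{f_r}*\abs{K_r})(x)=\int_G \abs{f_r(y)}\abs{K_r(y^{-1}x)}\,\dd\beta(y)\meg \Nc_{\infty,a,r}(f_r)(x)\int_G (1+\abs{z}_*/r)^{a}\abs{K_r(z)}\,\dd\beta(z),
\]
using $d(x,y)=\abs{y^{-1}x}_*$. By Corollary~\ref{cor:12} with $p=1$, $c=a$, $X=Y=I$, applied to $\Lc'$ (its proof only uses properties shared by any formally self-adjoint weighted subcoercive operator), the last integral is bounded by $C\norm{m_{2,r}}_{B^{\infty,\infty}_s}$, uniformly in $r\in\supp\mi\subseteq(0,\rho']$. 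Then Lemma~\ref{lem:33}, with $X=I$ and the hypothesis $f_r=f_r*\Kc_{r^\grado\Lc}(m_1)$, gives $\Nc_{\infty,a,r}f_r\meg C\,\Nc_{p_1,a,r}f_r$, again uniformly for $r$ in the bounded support. Since on that support $(1+r)^{(Q_*-D)/p_1}$ is bounded above and below, we may insert this factor freely.

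\textbf{Step 3 (mixed norm).} Apply Corollary~\ref{cor:18} with its $p_0$ replaced by $p_1<\min(p,q)$ and $\kappa(r)=r$. This yields
\[
\norm{\Nc_{p_1,a,r}f_r(x)}_{L^{q,p}_{r,x}(\mi,\beta)}\meg C\norm{f}_{L^{q,p}(\mi,\beta)},
\]
which is the claim.

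\textbf{Main obstacle.} The difficulty is that Corollary~\ref{cor:18} requires $p<\infty$, while $p=\infty$ is allowed here. For $p=\infty$ I would bypass it: $\Nc_{p_1,a,r}f_r\meg C\norm{f_r}_{L^\infty}$ directly, since $a p_1>D$ and $r$ is bounded. Alternatively, Lemma~\ref{lem:33} can be used with $\Nc_{\infty,a,r}f_r\meg\norm{f_r}_\infty$, and no kernel weight is needed in that case, because $\norm{K_r}_{L^1}$ is controlled by Corollary~\ref{cor:12}. The range $q=\infty$ with $p<\infty$ is covered by Corollary~\ref{cor:18}, and measurability in $r$ follows as in that corollary. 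The other step requiring care is checking that the constants of Lemma~\ref{lem:33} and Corollary~\ref{cor:12} are uniform over $p,q\in[p_0,\infty]$. This holds because $p_1$ and $a$ are fixed once, depending only on $p_0$.
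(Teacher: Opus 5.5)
Your overall route is the paper's: dominate the convolution by $\Nc_{\infty,a,r}f_r$ times the weighted $L^1$ norm of the kernel (Corollary~\ref{cor:12}), then apply Lemma~\ref{lem:33}, then Corollary~\ref{cor:18}. However, Step~1 cannot be carried out as written. Since you impose $p_1<1$ and $a>D/p_1$, your condition forces $s>2D/p_1-D/2$. This exceeds $D(1/p_0+1/2)$ for every $p_0$: when $p_0\meg 1$ because $p_1<p_0$, and when $p_0>1$ because $p_1<1$. So for $s$ near the threshold no admissible $p_1$ exists. Your justification silently replaces $p_1$ by $p_0$; it is valid only for $p_0>1$, which is incompatible with $p_1<1$. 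Neither restriction is needed. Step~2 uses Corollary~\ref{cor:12} with kernel exponent $1$, which only requires $s\Meg\eps+a+D/2$. Lemma~\ref{lem:33} accepts any exponent above a fixed lower bound in $(0,1]$. The correct choice, as in the paper, is $p_1\in(0,p_0)$ with $s>D(1/p_1+1/2)$, and then $D/p_1<a<s-D/2$.

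Your treatment of $p=\infty$ is a genuine error when $q<\infty$. The quasi-norm $L^{q,\infty}_{r,x}(\mi,\beta)$ takes the $L^q(\mi)$ norm in $r$ first and then the essential supremum in $x$. The bound $\Nc_{p_1,a,r}f_r\meg C\norm{f_r}_{L^\infty(G)}$ only controls the left-hand side by $\norm{\norm{f_r}_{L^\infty(G)}}_{L^q_r(\mi)}$, which can be infinite while $\norm{f}_{L^{q,\infty}(\mi,\beta)}$ is finite. In fact no repair is possible there; here is a counterexample. Take $G=\R$, $\Lc=\Lc'=-\frac{\dd^2}{\dd x^2}$, $\mi=\sum_{j\Meg 1}\delta_{2^{-j}}$, and $m_{2,r}=m_2\neq 0$ fixed. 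Let $f_{2^{-j}}(y)=\phi(2^jy-3/2)$, where $\phi(y)=(y+3/2)\phi_0(y)$, $\phi_0\neq0$ has compactly supported Fourier transform, and $m_1=1$ on the relevant spectral interval. One checks that $\sup_x\sum_j\abs{f_{2^{-j}}(x)}^q<\infty$. On the other hand, $(\abs{f_{2^{-j}}}*\abs{\Kc_{2^{-2j}\Lc}(m_2)})(x)=g(2^jx)$ with $g$ continuous and $g(0)>0$. Hence the first $J$ terms are all $\Meg g(0)/2$ on a neighbourhood of $0$, for every $J$, and the left-hand side is infinite. This is the classical failure of the Fefferman--Stein inequality in $L^\infty(\ell^q)$.

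The paper's proof also ends with Corollary~\ref{cor:18}, which requires $p<\infty$, so it too covers only $p<\infty$; that is the only case actually used. The case $F^{\infty,q}$ is handled through the spaces $\Cc^q$ in Lemma~\ref{lem:34b}. Only $p=q=\infty$ is trivially true, via $\norm{\Kc_{r^{\grado'}\Lc'}(m_{2,r})}_{L^1(G)}$.
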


\begin{proof}
	Take $p_1\in (0,p_0)$ so that $s> D(1/p_1+1/2)$, and take $a>D/p_1$.
	Set $\psi_r\coloneqq \Kc_{r^{\grado'}\Lc'}(\overline{m_{2,r}} )$ for every $r>0$. Let us prove that the mapping $(r,x)\mapsto (\abs{f_r}*\abs{\psi_r^*})(x)$ is $(\mi\otimes \beta)$-measurable. To this aim, it will suffice to show that the mappings $(r,x)\mapsto ((\chi_{B(0,2^k)}\abs{f_r})*\abs{\psi_r^*})(x)$ are $(\mi\otimes \beta)$-measurable for every $k\in\N$. This in turn follows from Young's inequality, since the mappings $(0,+\infty)\ni r\mapsto \chi_{B(0,2^k)}\abs{f_r} \in L^2(G)$ and $(0,+\infty)\ni r\mapsto \abs{\psi_r^*}\in L^2(G)$ are $\mi$-measurable, thanks to Corollary~\ref{cor:12}.
	
	Then, observe that by Corollary~\ref{cor:12} there is a constant $C_1>0$ such that  
	\[
	\begin{split}
	(\abs{f_r}*\abs{\psi_r^*})(x)&=\int_G \abs{f_r(z) \psi_r(x^{-1}z) }\,\dd \beta(z)\\
		&\meg (\Nc_{\infty,a,r} f_r)(x) \int_G \abs{\psi_r(x^{-1}z)}(1+\abs{x^{-1}z}_*/r)^{a}\,\dd \beta(z)\\
		&\meg C_1 (\Nc_{\infty,a,r} f_r)(x)  \norm{m_{2,r}}_{B^{\infty,\infty}_s(\R)}
	\end{split}
	\]
	for every $r>0$ and for every $x\in G$, so that
	\[
	\norm{(\abs{f_r}*\abs{\psi_r^*})(x)}_{L^q_r(\mi)} \meg C_1  \norm*{\norm{m_{2,r}}_{B^{\infty,\infty}_s(\R)}}_{L^\infty_r(\mi)} \norm{(\Nc_{\infty, D/p_1,r^{\grado}} f_r)(x)  }_{L^q_r(\mi)}
	\]
	for every $x\in G$. Next,  observe that by Lemma~\ref{lem:33} there is a constant $C_2>0$ such that
	\[
	 \Nc_{\infty,a,r^{\grado}} f_r \meg C_2  \Nc_{p_1,a,r^{\grado}} f_r 
	\]
	for every  $r>0$. Consequently,
	\[
	\norm{(\Nc_{\infty, a,r^{\grado}} f_r)(x)  }_{L^{q,p}_{(r,x)}(\mi,\beta)}\meg C_2 \norm{ (\Nc_{p_1,a,r^{\grado}} f_r )(x)}_{L^{q,p}_{(r,x)}(\mi,\beta)} ,
	\]
	so that the assertion follows from Corollary~\ref{cor:18}.
\end{proof}

\begin{lem}\label{lem:25d}
	Take $\eta,\delta>0$, $\eps\in (0,1)$, and $q\in (0,\infty]$. Then, there is a constant $C>0$ such that
	\[
	\norm*{\sum_{j\in \N} \eps^{\delta(j-j')_+ + \eta (j'-j)_+} \abs{a_j}}_{\ell^q_{j'}(\N)}\meg C \norm{a_j }_{\ell^q_j(\N)}
	\]
	for every $(a_j)\in \C^{\N}$.  
\end{lem}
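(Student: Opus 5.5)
The plan is to view the left-hand side as a discrete convolution of $(\abs{a_j})$ with the kernel $k(n)\coloneqq \eps^{\delta n_+ +\eta (-n)_+}$, $n\in\Z$. Indeed, for $j,j'\in\N$ the summand is $k(j-j')\abs{a_j}$. Extending $a$ by $0$ to $\Z$, we have pointwise
\[
\sum_{j\in\N} k(j-j')\abs{a_j}=(k*\abs{a})(j'),
\]
so it suffices to bound $\norm{k*\abs a}_{\ell^q(\Z)}$. The kernel is summable, since
\[
\sum_{n\in\Z} k(n)=\frac{1}{1-\eps^\delta}+\frac{1}{1-\eps^\eta}-1<\infty,
\]
because $\eps\in(0,1)$ and $\delta,\eta>0$. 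I would then split into two cases according to whether $q\Meg 1$ or $q<1$.

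If $q\in[1,\infty]$, Young's inequality on $\Z$ gives
\[
\norm{k*\abs a}_{\ell^q}\meg \norm{k}_{\ell^1}\norm{a}_{\ell^q}.
\]
Here one may take $C=\norm k_{\ell^1}$.

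If $q\in(0,1)$, I would use the $q$-triangle inequality $(\sum_j b_j)^q\meg \sum_j b_j^q$ for nonnegative $b_j$. This yields
\[
\sum_{j'} \Bigl(\sum_j k(j-j')\abs{a_j}\Bigr)^q\meg \sum_j \abs{a_j}^q\sum_{j'}k(j-j')^q\meg \norm{k^q}_{\ell^1(\Z)}\norm{a}_{\ell^q}^q.
\]
The middle step is an exchange of sums, justified by Tonelli. Moreover $k^q(n)=(\eps^q)^{\delta n_++\eta(-n)_+}$ is again summable because $\eps^q\in(0,1)$. So the bound holds with $C=\norm{k^q}_{\ell^1}^{1/q}$.

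There is no real obstacle. The only points needing care are handling $q<1$ via the $q$-triangle inequality in place of Young's inequality, and noting that the constant depends only on $\eta,\delta,\eps,q$ and not on $(a_j)$. If $\norm{a}_{\ell^q}=\infty$ the statement is trivial.
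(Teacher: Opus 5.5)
Your proof is correct. For $q<1$ it is the paper's argument: the paper uses the same $q$-triangle inequality to reduce to the case $q=1$ with $\delta,\eta$ replaced by $q\delta,q\eta$, which is exactly your bound by $\norm{k^q}_{\ell^1(\Z)}$.

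For $q\Meg 1$ you take a different route. The paper does not use the convolution structure. It embeds the sequence in $(0,+\infty)$ through the measure $\sum_j\delta_{\eps^j}$ and the function $\sum_j \chi_{\{\eps^j\}}a_j$. It then dominates the kernel by $2^{\delta+\eta}\eps^{\delta j}\eps^{\eta j'}/(\eps^j+\eps^{j'})^{\delta+\eta}$ and invokes the Schur-type Lemma~\ref{lem:25} for measures in $\Mc_\Car$.

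You instead notice that the kernel depends only on $j-j'$ and apply Young's inequality on $\Z$. This makes your argument self-contained, with explicit constants $\norm{k}_{\ell^1}$ and $\norm{k^q}_{\ell^1}^{1/q}$. The paper's route, by contrast, makes the discrete estimate a special case of the continuous one it needs for general $\mi,\nu\in\Mc_\Car$, parallel to Lemma~\ref{lem:25c} for the $\Cc^q$ spaces. The price is that it relies on Lemma~\ref{lem:25}, which is only cited here.

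One cosmetic slip: $\sum_j k(j-j')\abs{a_j}$ is the convolution of $\abs a$ with the reflected kernel $k(-\,\cdot\,)$, not with $k$. This is harmless, since both kernels have the same $\ell^1$ norm.
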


\begin{proof}
	If $q\Meg 1$, the assertion follows from Lemma~\ref{lem:25}, applied to the measure $\sum_{j\in \N} \delta_{\eps^j}$ and to the function $\sum_j \chi_{\eps^j} a_j$, since $\eps^{\delta(j-j')_+ + \eta (j'-j)_+}\meg 2^{\delta+\eta} \frac{\eps^{\delta j}\eps^{j'\eta}}{(\eps^j+\eps^{j'})^{\eta+\delta}}$ for every $j,j'\in\N$. The case $q<1$ follows from the case $q=1$ (with $\eta,\delta$ replaced by $q\eta,q\delta$, respectively), since
	\[
	\begin{split}
		\norm*{\sum_{j\in \N} \eps^{\delta(j-j')_+ + \eta (j'-j)_+} \abs{a_j}}_{\ell^q_{j'}(\N)}&\meg \norm*{\sum_{j\in \N} \eps^{q\delta(j-j')_+ + q\eta (j'-j)_+} \abs{a_j}^q}_{\ell^1_{j'}(\N)}^{1/q}\\
		&\meg C^{1/q} \norm{\abs{a_j}^q }_{\ell^1_j(\N)}^{1/q}\\
		&= C^{1/q} \norm{a_j}_{\ell^q_j(\N)}.
	\end{split}
	\]
	The assertion follows.
\end{proof}

\subsection{The Spaces $\Cc^q(\mi,a)$}

In order to deal with the Triebel--Lizorkin spaces $F^{\infty,q}_\alpha(G)$, we shall need to introduce the spaces $\Cc^q(\mi,a)$ in order to replace the spaces $L^{q,\infty}(\mi,\beta)$. We follow the presentation of~\cite{Calzi3}, which, in turn, follows~\cite{FrazierJawerth,Rychkov}. We shall nonetheless need to modify several results in order to match the current setting.

\begin{deff}
	Take $q\in(0,\infty]$, $a>0$, a Banach space $Z$, and a positive measure $\mi$ on $(0,+\infty)$ with bounded support. Set $c\coloneqq \max \supp \mi$. Then, we denote with $\Cc^q(\mi,a;Z)$ the space of $(\mi\otimes \beta)$-measurable functions $f\colon (0,+\infty)\times G\to Z$ such that 
	\[
	\sup_{(t,x)\in (0,c]\times G}\norm*{t^{-aQ_*/q} \chi_{(0,t]\times B(x,t^a)} f }_{L^q(\mi\otimes \beta;Z)} 
	\]
	is finite, 	endowed with the corresponding topology. We shall simply write $\Cc^q(\mi;Z)$ instead of $\Cc^q(\mi,1;Z)$ and we shall omit $Z$ when it is $\C$.
\end{deff}

Notice that all the statement below hold for functions with values in a general Banach space. Since, however, this is obvious except for Proposition~\ref{prop:10bis} (since it suffices to apply the scalar statement to $\abs{f}$), we shall only consider the general case in Proposition~\ref{prop:10bis}.

\begin{lem}\label{lem:2c} 
	Take $ p_0\in (0,\infty]$, $a >0$, $b>D/p_0$, $c>1$,  a positive Radon measure $\mi$ on $(0,+\infty)$ with bounded support,  and a  function $\kappa\colon (0,+\infty)\to (0,+\infty)$ such that $\kappa(t)\in (0,c t^{a}]\cup [1/c,+\infty)$ for every $t>0$. 
	Then, there is   $C>0$ such that, for every $q\in [p_0,\infty]$ and for every $(\mi \otimes \beta)$-measurable function $f\colon (0,+\infty) \times G\to \C$,
	\[
	\begin{split}
		\norm{ (1+\kappa(t))^{-b+Q_*/p_0}[\Nc_{p_0,b,\kappa(t)}  f(t,\,\cdot\,)](x) }_{\Cc^q_{(t,x)}(\mi,a)} \meg C  \norm{  f }_{\Cc^q(\mi,a)} .
	\end{split}
	\]
\end{lem}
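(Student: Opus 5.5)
We first reduce to $p_0<\infty$ and, replacing $f$ by $\abs{f}^{p_0}$, $q$ by $q/p_0$, and $b$ by $bp_0$, to the case $p_0=1$. Indeed, $(\Nc_{p_0,b,r}f)^{p_0}=\Nc_{1,bp_0,r}\abs{f}^{p_0}$ and $\norm{g}_{\Cc^q(\mi,a)}^{p_0}=\norm{\abs{g}^{p_0}}_{\Cc^{q/p_0}(\mi,a)}$. Thus we may assume $p_0=1$, $b>D$, $q\in[1,\infty]$ and $f\Meg 0$. Measurability of the left-hand side is handled as in Corollary~\ref{cor:18}.

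Fix $(t_0,x_0)\in(0,c_\mi]\times G$, where $c_\mi=\max\supp\mi$, and set $R=t_0^a$. Decompose $f=f\chi_{(0,t_0]\times B(x_0,2R)}+\sum_{k\Meg1} f\chi_{(0,t_0]\times (B(x_0,2^{k+1}R)\setminus B(x_0,2^kR))}=:f_0+\sum_k f_k$. For $f_0$ we use that $\Nc_{1,b,r}$ is dominated by $\Mc_1$ (Lemma~\ref{lem:39}). For $q>1$ we then combine the vector-valued Fefferman--Stein inequality~\cite{Singular} in $L^{q,q}(\mi,\beta)=L^q(\mi\otimes\beta)$ with $\beta(B(x_0,2R))\asymp R^{Q_*}$ for $R\meg c_\mi^a$. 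This bounds the $L^q$ norm on $(0,t_0]\times B(x_0,R)$ by $C t_0^{aQ_*/q}\norm{f}_{\Cc^q(\mi,a)}$, since $f_0$ lies in a bounded number of ``tents'' of size $t_0$. When $q=1$ we instead argue by duality exactly as in the proof of Corollary~\ref{cor:18} (Tonelli, moving $\Nc_{1,b,\kappa(t)}$ onto $\chi_{B(x_0,R)}$), so only $L^\infty$ bounds for $\Nc_{1,b}\chi_{B(x_0,R)}$ are needed.

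For the tail pieces $f_k$, with $x\in B(x_0,R)$ and $y\in \supp f_k(t,\cdot)$, we have $d(x,y)\Meg 2^{k-1}R$. We then split according to the dichotomy on $\kappa$. If $\kappa(t)\meg c t^a\meg cR$, then
\[
(1+d(x,y)/\kappa(t))^{-b}\meg C 2^{-kb}(\kappa(t)/R)^{b},
\]
and $\kappa(t)^{-Q_*}$ is compensated by $(\kappa(t)/R)^{b}$ because $b>Q_*$. We get $[\Nc_{1,b,\kappa(t)}f_k(t,\cdot)](x)\meg C2^{-kb}R^{-Q_*}\int_{B(x_0,2^{k+1}R)} f(t,y)\,\dd\beta(y)$. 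The $L^q$ norm in $(t,x)$ over $(0,t_0]\times B(x_0,R)$ is then estimated by Hölder ($q\Meg1$), covering $B(x_0,2^{k+1}R)$ by $\lesssim 2^{kD}$ balls of radius $R$. This requires a doubling count using $\beta(B(e,sR))\meg C(1+s)^D\beta(B(e,R))$, and is where $b>D$, not just $b>Q_*$, is used. It gives a bound $C2^{-k(b-D)}t_0^{aQ_*/q}\norm{f}_{\Cc^q}$, which is summable in $k$. If instead $\kappa(t)\Meg 1/c$, the weight $(1+\kappa(t))^{-b+Q_*}$ enters. We then bound $\kappa(t)^{-Q_*}(1+\kappa(t))^{-b+Q_*}(1+d/\kappa(t))^{-b}\meg C(1+d)^{-b}$ uniformly, and cover $G$ by unit-scale (rather than $R$-scale) balls, again with $b>D$ absorbing the volume growth $(1+s)^{D}$.

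The main obstacle is the second regime, together with the mismatch between the $t$-dependent radius $\kappa(t)$ and the fixed tent radius $R=t_0^a$ when $\kappa(t)\Meg1/c$ but $R\ll1$. There $\Nc_{1,b,\kappa(t)}$ averages over scale $\gtrsim1$ while the $\Cc^q$ norm only controls $R$-scale tents. We plan to handle this by noting that $\int_{B(y,1)}f(t,\cdot)$ is controlled via the $\Cc^q$ condition at scale $\max(t,\text{const})$, i.e.\ using tents $(0,c_\mi]\times B(y,c_\mi^a)$ together with Hölder in $t$ on the bounded support of $\mi$. We then accept a loss of the factor $R^{Q_*/q}$, which is harmless since it is the normalization itself. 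If this fails we will fall back on proving the bound only after restricting to $\kappa(t)\meg ct^a$ and treating the complementary part by the trivial $L^\infty$-type estimate above.
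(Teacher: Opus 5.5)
Your proof is correct, but it reduces in the opposite direction from the paper's.

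\emph{The paper's route.} It keeps $p_0$ and lowers $q$. By H\"older's inequality in $y$ and Lemma~\ref{lem:37}, $(1+\kappa(t))^{-b+Q_*/p_0}\Nc_{p_0,b,\kappa(t)}g$ is dominated by $(1+\kappa(t))^{-b'+Q_*/q}\Nc_{q,b',\kappa(t)}g$, with $b'=D/q+\eps/2>D/q$ and $\eps=b-D/p_0$. So one may assume $q=p_0$. Taking $q$-th powers then leaves only the linear case $p_0=q=1$. That case is handled as in \cite[Lemma 3.5]{Calzi3}, by Tonelli and a discretization of the kernel into ball indicators, and the whole estimate becomes an $L^1$, Carleson-type computation.

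\emph{Your route.} You reduce only to $p_0=1$ and keep $q\in[1,\infty]$. This forces a separate local/tail analysis. For the local piece you need the $L^q$-boundedness of $\Mc_1$, via Lemma~\ref{lem:39} and $(1+\kappa)^{-b+Q_*}\meg(1+\kappa)^{Q_*-D}$. The scalar Hardy--Littlewood theorem at each fixed $t$ suffices here, since the norm is $L^q(\mi\otimes\beta)$, so Fefferman--Stein is not needed. The tails are handled by Minkowski/H\"older plus covering counts. Your version is more self-contained; the paper's reduction is shorter. In both, $b>D$ is used at the same point: summing dyadic annuli against the volume growth $(1+s)^D$.

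\emph{Your second regime.} It needs no fallback. For $x\in B(x_0,R)$, the bound $C(1+d(x_0,y))^{-b}$ does not depend on $x$. Integrating over $B(x_0,R)$ therefore yields exactly $\beta(B(x_0,R))^{1/q}\asymp t_0^{aQ_*/q}$, which is the normalization and not a loss. The remaining factor $\norm{\int f(t,y)(1+d(x_0,y))^{-b}\,\dd\beta(y)}_{L^q_t(\mi)}$ is handled as follows: Minkowski's inequality, then H\"older in $y$ on balls of radius $c_\mi^a$, using the largest tents $(0,c_\mi]\times B(y,c_\mi^a)$ (which contain every $t\meg t_0$), then summation over annuli. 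No H\"older in $t$ is required.
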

 
\begin{proof} 
	Observe first that we may reduce to the case $q=p_0$, since by H\"older's inequality and Lemma~\ref{lem:37} there is a constant $C_1>0$ such that
	\[
	\begin{split}
		&(1+\kappa(t))^{-b+Q_*/p_0} (\Nc_{p_0,b,\kappa(t)} g)(x)=(1+\kappa(t))^{-b-Q_*/p_0} \kappa(t)^{-Q_*/p_0}\norm{g (1+\abs{x^{-1}\,\cdot\,}_*/\kappa(t))^{-b} }_{L^{p_0}(G)}\\
			&\qquad\meg (1+\kappa(t))^{-b+Q_*/p_0} \kappa(t)^{-Q_*/p_0}\norm{g (1+\abs{x^{-1}\,\cdot\,}_*/\kappa(t))^{-D/q-\eps/2} }_{L^{q}(G)} \norm{(1+\abs{\,\cdot\,}_*/\kappa(t))^{-b+D/q+\eps/2}}_{L^{s}(G)}\\
			&\qquad\meg C_1 (1+\kappa(t))^{-b+Q_*/p_0} \kappa(t)^{Q_*(1/q-1/p_0)} (\Nc_{q,D/q+\eps/2,\kappa(t)} g)(x) \kappa(t)^{Q_*(1/p_0-1/q)}(1+\kappa(t))^{(D-Q_*)(1/p_0-1/q)}\\
			&\qquad= C_1(1+\kappa(t))^{-D/q-\eps/2+Q_*/q} (\Nc_{q,D/q+\eps/2,\kappa(t)}  g)(x)
	\end{split}
	\]
	where $s\in (0,\infty]$ is such that $\frac{1}{s}=\frac{1}{p_0}-\frac{1}{q}$, since $b-D/q-\eps/2>D(1/p_0-1/q)$,
	for every $\beta$-measurable function $g\colon G\to \C$, where $\eps=b-D/p_0$.
	The proof is then essentially identical to the proof of~\cite[Lemma 3.5]{Calzi3}. We only need to replace $\ee^{-\omega \kappa(t)}T_{b,\kappa(t),d}$ with $ (1+\kappa(t))^{-b+Q_*/q}\Nc_{q,b,\kappa(t)}$ and~\cite[Lemma 2.11]{Calzi3} with Lemma~\ref{lem:37}, observe that 
	\[
	[\Nc_{q,b,\kappa(t)} f(t,\,\cdot\,)]^q= \Nc_{1,b q,\kappa(t)}(\abs{f(t,\,\cdot\,)}^q)
	\]
	when $q<\infty$, and perform a `discretization' of $ (1+\kappa(t))^{-b+Q_*/q}\kappa(t)^{-Q_*}(1+d(\,\cdot\,,y)/\kappa(t))^{-bq} $ instead of $\ee^{-\omega \kappa(t)}p_{b,\kappa(t),d}$.
\end{proof}
 
\begin{lem}\label{lem:25c}
	Take $q\in (0,\infty]$, $\eps\in (0,1)$, $ c>1$, $a,  \eta,\delta>0$,  $b>D/q$, and $\nu\in \Mc_\Car$. Set $\mi\coloneqq \sum_{j=1}^\infty \delta_{\eps^j}$. Then, there is a constant $C>0$ such that 
	\[
	\norm*{  \int_0^{+\infty} \frac{s^\delta t^\eta}{(s+t)^{\delta+\eta}} (\Nc_{q,b,c s^a}f_s)(x)\,\dd \mi(s)}_{\Cc^q_{(t,x)}(\nu,a)}\meg  C \norm{   f_t(x)  }_{\Cc^q_{(t,x)}(\mi,a)}
	\]
	for every  $(f_t)\in L^1_\loc(G)^{(0,+\infty)}$.
\end{lem}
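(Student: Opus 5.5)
We first dispose of the maximal operator $\Nc_{q,b,cs^a}$ by Lemma~\ref{lem:2c}, and then treat the remaining convolution in the scale parameter by a Lemma~\ref{lem:25d}-type summation at the level of the local quantities defining $\Cc^q$.

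\textsc{Step I (removing $\Nc$).} The function $\kappa(s)\coloneqq c s^a$ satisfies $\kappa(s)\in(0,cs^a]$. Since $\mi$ is supported in $(0,\eps]$, the factor $(1+\kappa(s))^{-b+Q_*/q}$ is bounded above and below on $\supp\mi$. Lemma~\ref{lem:2c}, applied with $p_0=q$, therefore gives
\[
\norm{(\Nc_{q,b,cs^a}f_s)(x)}_{\Cc^q_{(s,x)}(\mi,a)}\meg C\norm{f}_{\Cc^q(\mi,a)}.
\]
Set $g_j\coloneqq \Nc_{q,b,c\eps^{ja}}f_{\eps^j}$. It then suffices to show that the discrete operator
\[
T g(t,x)\coloneqq\sum_j \frac{\eps^{j\delta}t^\eta}{(\eps^j+t)^{\delta+\eta}}\,g_j(x)
\]
maps $\Cc^q(\mi,a)$ into $\Cc^q(\nu,a)$. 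The catch is that Lemma~\ref{lem:2c} only controls $g$ in $\Cc^q$, whereas we also need the pointwise smoothing of $g_j$ at scale $\eps^{ja}$. This smoothing is used in Step III, so I would keep the information that $g_j(x)\lesssim g_j(y)$-type bounds fail but $\Nc$ is stable: $\Nc_{q,b,r}h(x)\meg(1+d(x,y)/r)^b\Nc_{q,b,r}h(y)$.

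\textsc{Step II (reduction to boxes).} Fix $(t_0,x_0)$ with $t_0\meg \max\supp\nu$ and the box $(0,t_0]\times B(x_0,t_0^a)$. Let $k_0$ satisfy $\eps^{k_0}\approx t_0$, which is harmless since $\nu\in\Mc_\Car$. Split the sum over $j$ into the part $j\Meg k_0$ (fine scales, $\eps^j\lesssim t_0$) and the part $j<k_0$ (coarse scales).

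For $j\Meg k_0$, restricted to the box, we use $\min(1,\cdot)$-type kernel bounds and a Lemma~\ref{lem:25d}-style weighted $\ell^q$ summation in the discretized $t$ variable. Here $\nu((r,2r])\meg C$ lets us replace $\nu$ by a discrete measure. This bounds the contribution by $\norm{\chi_{(0,t_0]\times B(x_0,t_0^a)}g}_{L^q(\mi\otimes\beta)}$, which is at most $t_0^{aQ_*/q}\norm{g}_{\Cc^q}$. The case $q<1$ is handled via the $q$-triangle inequality, exactly as in the proof of Lemma~\ref{lem:25d}.

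\textsc{Step III (coarse scales, main obstacle).} For $j<k_0$ the scale $\eps^{ja}$ exceeds the ball radius $t_0^a$, and the $\Cc^q$ norm gives no direct control of $g_j$ on the small ball $B(x_0,t_0^a)$. This is where the $\Nc$ structure is needed. Since $g_j=\Nc_{q,b,c\eps^{ja}}f_{\eps^j}$ is essentially constant on balls of radius $\eps^{ja}$, we bound
\[
\sup_{B(x_0,t_0^a)} g_j\lesssim \Nc_{q,b,c\eps^{ja}}f_{\eps^j}(x_0).
\]
Decomposing $G$ into annuli around $x_0$ at scale $\eps^{ja}$ and using $b>D/q$ together with the $\Cc^q$ bound on boxes of size $\eps^{j}$, we expect to obtain $g_j(x_0)\lesssim\norm{f}_{\Cc^q(\mi,a)}$. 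Only the single level $j$ of $\mi$ is used here, and we multiply by $\mi(\{\eps^j\})^{-1/q}=1$; this is the same discretization argument as in Lemma~\ref{lem:2c}.

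Integrating over the box then yields
\[
t_0^{-aQ_*/q}\beta(B(x_0,t_0^a))^{1/q}\,\norm{t\mapsto\textstyle\sum_{j<k_0}(t/\eps^j)^\eta}_{L^q((0,t_0],\nu)}\lesssim\norm{f}_{\Cc^q(\mi,a)}.
\]
The geometric decay $(t/\eps^j)^\eta$ for $t\meg t_0\lesssim\eps^j$ makes the sum over $j$ converge, and the Carleson property of $\nu$ controls the $L^q(\nu)$ norm. The delicate point I expect is precisely the uniform pointwise bound on $g_j(x_0)$ at coarse scales in Step III, including the non-homogeneous volume factor $(1+r)^{D-Q_*}$. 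Since all relevant scales are bounded, this factor should be absorbed into constants.
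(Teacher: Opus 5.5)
Your proposal is correct and follows essentially the paper's argument. You apply Lemma~\ref{lem:2c} to pass from $\Nc_{q,b,cs^a}f_s$ back to $f$, split the scales of $\mi$ into those below and above the box size (as in \cite[Lemma 4.3]{Calzi3}), and control the coarse scales through the near-constancy $\Nc_{q,b,r}h(x)\meg(1+d(x,y)/r)^b\,\Nc_{q,b,r}h(y)$ on balls of radius comparable to $s^a$; the only cosmetic differences are that you bound $\sup_x g_j$ by an annular decomposition instead of comparing with the $L^q$ norm on $B(x',s^a)$, and that you treat $q<1$ inside the fine-scale estimate instead of reducing globally to $q=1$ via $(\Nc_{q,b,r}f)^q=\Nc_{1,bq,r}\abs{f}^q$ and the discreteness of $\mi$.
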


\begin{proof}
	Assume first that $q\Meg 1$. Then, the proof similar to that of~\cite[Lemma 4.3]{Calzi3}, with $T_{b,\kappa(s),d}$ replaced by $ \Nc_{q,b,c s^a}$ and $d$ replaced by $1/a$. The only difference arises when trying to estimate $\norm{\chi_{B(x',t'^a)} \Nc_{q,b,c s^a} f_s}_{L^q(G)}$ with $\norm{   f_t(x)  }_{\Cc^q_{(t,x)}(\mi,a)}$ for $x'\in G$, $s\in \supp \mi\cap [t'+\infty)$, and $t'\in (0,c']$, where $c'=\max \supp(\mi+\nu)$. In fact, in this case one should observe that $(\Nc_{q,b,c s^a} f_s)(x)\meg (1+d(x,y)/(cs^a))^b(\Nc_{q,b,c s^a} f_s)(y)\meg 3^b(\Nc_{q,b,c s^a} f_s)(y)$ for every $x,y\in B(x',t'^a)$, so that there is a constant $C_1>0$ such that
	\[
	\norm{\chi_{B(x',t'^a)} \Nc_{q,b,c s^a} f_s}_{L^q(G)}\meg C_1 \norm{\chi_{B(x',s^a)} \Nc_{q,b,c s^a} f_s}_{L^q(G)}\meg C_1 \norm{  \Nc_{q,b,c t^a} f_t(x)  }_{\Cc^q_{(t,x)}(\mi,a)},
	\]
	so that the desired estimate follows by means of Lemma~\ref{lem:2c}.
	The case $q<1$ follows from the case $q=1$, with $\delta,\eta$ replaced by $q\delta,q\eta$, respectively, since 
	\[
	\Big(\int_0^{+\infty} \frac{s^\delta t^\eta}{(s+t)^{\delta+\eta}} (\Nc_{q,b,c s^a}f_s)(x)\,\dd \mi(s)\Big)^q\meg \int_0^{+\infty} \frac{s^{q\delta} t^{q\eta}}{(s+t)^{q\delta+q\eta}} (\Nc_{1,b q,c s^a}\abs{f_s}^q)(x)\,\dd \mi(s)
	\]
	thanks to the our choice of $\mi$.
\end{proof}

\begin{lem}\label{lem:34b}
	Take a formally self-adjoint weighted subcoercive operator $\Lc'$ on $G$ with  degree $\grado'$, and take $m_1\in C^\infty_c(\R)$, $\rho >0$, $p_0\in (0,\infty)$, $s> D(1/p_0+1/2)$, and a positive Radon measure $\mi$ on $(0,+\infty)$ with bounded support.
	Then there is a constant $C>0$ such that
	\[
	 \norm{(\abs{f_r}*\abs{\Kc_{r^{\grado'}\Lc'}(m_{2,r} )})(x)}_{\Cc^q_{(r,x)}(\mi,1/\grado')} \meg C   \norm*{\norm{m_{2,r}}_{B^{\infty,\infty}_s(\R)}}_{L^\infty_r(\mi)} \norm{f_r(x)}_{\Cc^{q}_{(r,x)}(\mi,1/\grado)}
	\]
	for every $\mi$-measurable family $(m_{2,r})$ of elements of  $ B^{\infty,\infty}_s(\R)$ supported in $[-\rho ,\rho ]$,  for every $ q\in [p_0,\infty]$, and for every $f\in   \Oc(m_1, \Lc; \mi)$.
\end{lem}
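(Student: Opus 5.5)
The plan is to follow the proof of Lemma~\ref{lem:34}, replacing the mixed norm $L^{q,p}(\mi,\beta)$ with $\Cc^q$ and Corollary~\ref{cor:18} with Lemma~\ref{lem:2c}. We may assume $Z=\C$ by replacing $f_r$ with $\abs{f_r}$. Fix $p_1\in(0,p_0)$ with $s>D(1/p_1+1/2)$ and $a>D/p_1$. Set $\psi_r\coloneqq \Kc_{r^{\grado'}\Lc'}(\overline{m_{2,r}})$. The $(\mi\otimes\beta)$-measurability of $(r,x)\mapsto(\abs{f_r}*\abs{\psi_r^*})(x)$ is checked exactly as in Lemma~\ref{lem:34}: truncate $f_r$ to balls and use Young's inequality together with the $\mi$-measurability into $L^2(G)$ given by Corollary~\ref{cor:12}.

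For the pointwise estimate, I would write
\[
(\abs{f_r}*\abs{\psi_r^*})(x)\meg (\Nc_{\infty,a,r}f_r)(x)\int_G\abs{\psi_r(z)}(1+\abs{z}_*/r)^a\,\dd\beta(z).
\]
By Corollary~\ref{cor:12}, applied to $\Lc'$ with $p=1$, $c=a$, and scale $r$ (recall $r$ ranges in the bounded set $\supp\mi$), the integral is at most $C_1\norm{m_{2,r}}_{B^{\infty,\infty}_s(\R)}$. The regularity required there is $s>a+\eps$, and I would arrange this by taking $a$ close to $D/p_1$; I expect the hypothesis $s>D(1/p_0+1/2)$ to leave room for this. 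Here one has to be careful with the scales: $\psi_r$ lives at scale $r$ for $\Lc'$, and $f_r$ at scale $r$ for $\Lc$. Then Lemma~\ref{lem:33}, applicable since $f_r=f_r*\Kc_{r^\grado\Lc}(m_1)$, gives $\Nc_{\infty,a,r}f_r\meg C_2\Nc_{p_1,a,r}f_r$ uniformly for $r\in\supp\mi$.

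This reduces the claim to
\[
\norm{(\Nc_{p_1,a,r}f_r)(x)}_{\Cc^q_{(r,x)}(\mi,1/\grado')}\meg C\norm{f_r(x)}_{\Cc^q_{(r,x)}(\mi,1/\grado)} .
\]
I would obtain this from Lemma~\ref{lem:2c} with $p_0$ replaced by $p_1$, $b=a>D/p_1$, and $\kappa(r)=r$. The factor $(1+\kappa)^{-b+Q_*/p_1}$ is bounded below on the bounded support of $\mi$, so it can be dropped.

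The main obstacle is the mismatch of the exponents in $\Cc^q(\mi,1/\grado')$ and $\Cc^q(\mi,1/\grado)$: the tents $(0,t]\times B(x,t^{1/\grado'})$ and $(0,t]\times B(x,t^{1/\grado})$ differ, and Lemma~\ref{lem:2c} needs $\kappa(r)\meg c\,r^{a'}$ with $a'$ the tent exponent. I would first try to interpret the statement with the natural convention that $r$ is the spatial scale on both sides, in which case the two exponents agree after reparametrization. Failing that, I would compare the two $\Cc^q$ quasi-norms directly. A tent of one type is covered by boundedly many tents of the other type, with comparable normalizations, up to constants depending only on $\supp\mi$. Since $\kappa(r)=r$ dominates neither power in general, I would combine this covering with the translation estimate $(\Nc_{q,b,\kappa}g)(x)\meg(1+d(x,y)/\kappa)^b(\Nc_{q,b,\kappa}g)(y)$, as in the proof of Lemma~\ref{lem:25c}, to absorb the discrepancy between the ball radii before invoking Lemma~\ref{lem:2c}.
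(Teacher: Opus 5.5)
Your core argument is the paper's proof: rerun Lemma~\ref{lem:34} with Corollary~\ref{cor:18} replaced by Lemma~\ref{lem:2c}. That means the pointwise bound via Corollary~\ref{cor:12}, then Lemma~\ref{lem:33}, then Lemma~\ref{lem:2c} with $\kappa(r)=r$.

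Two small corrections:
\begin{itemize}
\item Corollary~\ref{cor:12} with $p=1$ and $c=a$ requires $s\Meg \eps+a+D/2$, because the term $D(1/p-1/2)_+$ does not vanish at $p=1$; it is not $s>a+\eps$. The hypothesis $s>D(1/p_0+1/2)$ provides exactly this, once $p_1<p_0$ and $a>D/p_1$ are chosen close enough.
\item You should not replace $f_r$ by $\abs{f_r}$ at the outset. Lemma~\ref{lem:33} needs $f_r=f_r*\Kc_{r^{\grado}\Lc}(m_1)$, and that lemma is stated for Banach-valued $f$ anyway.
\end{itemize}

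The paper does not address the exponent mismatch, and your first reading is the right one. Take each $\Cc^q$ in the time variable of its own operator; then in the spatial variable $r$ both sides become $\Cc^q(\mi,1)$. Under this reading the one-line proof is complete, using Lemma~\ref{lem:2c} with tent exponent $1$ and $\kappa(r)=r$. This also covers the use in Theorem~\ref{teo:17}, where $\Lc'=\Lc$.

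The hypothesis of Lemma~\ref{lem:2c} is not where the difficulty lies. On the bounded set $\supp\mi$ one has $r\meg c\,r^{1/\grado}$ and $r\meg c\,r^{1/\grado'}$, and being dominated by $t^a$ is exactly what that lemma requires. So your remark that $\kappa(r)=r$ dominates neither power is beside the point. The real issue is that the lemma returns the same tent exponent on both sides.

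Your fallback cannot work, because tents compare in one direction only.
\begin{itemize}
\item One has $(0,t]\times B(x,t^{a_1})\subseteq (0,t^{a_1/a_2}]\times B(x,t^{a_1})$ with the same normalization $t^{-a_1Q_*/q}$. Hence $\norm{\,\cdot\,}_{\Cc^q(\mi,a_1)}\meg C\norm{\,\cdot\,}_{\Cc^q(\mi,a_2)}$ for $a_1\meg a_2$, with $C$ depending on $\supp\mi$.
\item This gives the literal same-variable statement only when $\grado'\Meg\grado$.
\item For $\grado'<\grado$ that statement is false. Take $\mi=\sum_j\delta_{2^{-j}}$, $f_r=0$ for $r\neq r_0$, and $f_{r_0}$ a bump of height $\approx 1$ and width $\approx r_0$, so that $\abs{f_{r_0}}*\abs{\Kc_{r_0^{\grado'}\Lc'}(m_2)}$ is again comparable to such a bump.
\item Then the left side is $\gtrsim r_0^{(1-1/\grado')Q_*/q}$ and the right side is $\lesssim r_0^{(1-1/\grado)Q_*/q}$. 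Their ratio blows up as $r_0\to0$.
\end{itemize}
No covering or translation argument can bridge this, so drop the fallback and commit to the first reading.
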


\begin{proof}
	It suffices to repeat the proof of Lemma~\ref{lem:34}, replacing Corollary~\ref{cor:18} with Lemma~\ref{lem:2c}.
\end{proof}

\begin{prop}\label{prop:10bis}
	Take $m\in C^\infty_c(\R)$, $q \in (0,\infty]$, $\rho>2$, $\delta_+>0$, and $\kappa>0$.  Then, there are $C,\delta_->0$ such that the following hold. For every   complete $(\delta,\rho)$-lattice  $(x_{r,j})_{(r,j)\in J}$ on $G$, for every positive Radon measure $\mi$ on $(0,\kappa]$, for every Banach space $Z$, and for every $f=(f_r)\in \Oc(m,\Lc;\mi,Z)$, setting $J_r\coloneqq \Set{j\in \N\colon (r,j)\in J}$ for every $r>0$,
	\begin{equation*} 
		\norm{f}_{\Cc^{q}(\mi;Z)}\meg  \norm*{\norm*{\chi_{B(x_{r,j}, \rho r \delta)}(x)\max_{ \overline B(x_{r,j}, \rho r \delta)} \abs{f_r}}_{\ell^q_j(J_r)}}_{\Cc^{q}_{r,x}(\mi)}\meg C \norm{f}_{\Cc^{q}(\mi;Z)},
	\end{equation*}
	provided that $\delta\in (0, \delta_+]$, and
	\begin{equation*}
		\frac{1}{2} \norm{f}_{\Cc^{q}(\mi;Z)}\meg  \norm*{\norm*{ \chi_{B(x_{r,j}, \rho r \delta)}(x)\min_{ \overline B(x_{r,j}, \rho r \delta)} \abs{f_r}}_{\ell^q_j(J_r)}}_{\Cc^{q}_{r,x}(\mi)}\meg C \norm{f}_{\Cc^{q}(\mi;Z)},
	\end{equation*}
	provided that $\delta\in (0,\delta_-]$ and $f\in \Cc^{q}(\mi;Z)$.
\end{prop}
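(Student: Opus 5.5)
The plan is to follow the proof of Proposition~\ref{prop:10} step by step, with the mixed norm $L^{q,p}(\mi,\beta)$ replaced by the Carleson-type norm $\Cc^q(\mi)$. Lemma~\ref{lem:2c} takes the place of Corollary~\ref{cor:18} as the bound for the maximal function $\Nc$. First I reduce to $Z=\C$ by replacing $f_r$ with $\abs{f_r}$; everything below only uses $\abs{f_r}$ and the pointwise derivative bounds, so this is harmless. The measurability of $(r,x)\mapsto \norm{\chi_{B(x_{r,j},\rho r\delta)}(x)\max_{\overline B(x_{r,j},\rho r\delta)}\abs{f_r}}_{\ell^q_j(J_r)}$ (and of the analogous expression with $\min$) is handled exactly as in \textsc{step I} of Proposition~\ref{prop:10}: approximate through a countable dense set $E$, then take a countable sum, or supremum, over $j$.

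\textsc{Step 1: left inequalities and the upper bound.} The balls $B(x_{r,j},\rho r\delta)$, $j\in J_r$, cover $G$, so pointwise $\abs{f_r(x)}\meg\norm{\chi_{B(x_{r,j},\rho r\delta)}(x)\max\abs{f_r}}_{\ell^q_j}$. Since the $\Cc^q$ quasi-norm is monotone, this gives the first left inequality. For the upper bound, fix $p_0<\min(1,q)$ and $a>D/p_0$. By~\cite[Lemma 7.5]{Calzi2} and Lemma~\ref{lem:33}, exactly as in Proposition~\ref{prop:10},
\[
\max_{\overline B(x_{r,j},\rho r\delta)}\abs{f_r}\meg \abs{f_r(x)}+C_1(\Nc_{p_0,a,r}f_r)(x)
\]
for $x\in B(x_{r,j},\rho r\delta)$. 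The overlap number of the balls is bounded by $C_2$ (cf.~\cite[Lemma 4.3]{Calzi2}), so the $\ell^q_j$ norm is at most $C_2^{1/q}\bigl(\abs{f_r(x)}+C_1\Nc_{p_0,a,r}f_r(x)\bigr)$. Taking the $\Cc^q(\mi)$ quasi-norm, I would then apply Lemma~\ref{lem:2c} with $\kappa(t)=t$ and the exponent $a=1$ of that lemma. Since $\mi$ is supported in $(0,\kappa]$, the factor $(1+\kappa(t))^{-b+Q_*/p_0}$ is bounded below there, which yields the right inequality.

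\textsc{Step 2: the min inequality.} Choose $x''_{r,j}$ attaining the minimum. As in \textsc{step II} of Proposition~\ref{prop:10}, for $\delta\meg\delta_-\meg1$ and $x\in B(x_{r,j},\rho r\delta)$ we get
\[
\abs{f_r(x)}\meg \min_{\overline B(x_{r,j},\rho r\delta)}\abs{f_r}+C_1'\delta_-(\Nc_{p_0,a,r}f_r)(x).
\]
Hence, by the $p_0$-triangle inequality for the quasi-norm $\Cc^q$ (it is a $\min(1,q)$-norm),
\[
\norm{f}^{p_0}_{\Cc^q(\mi)}\meg \norm{\cdots\min\cdots}^{p_0}_{\Cc^q}+(C_1'\delta_-)^{p_0}\norm{\Nc_{p_0,a,r}f_r}^{p_0}_{\Cc^q}.
\]
Lemma~\ref{lem:2c} bounds the last term by $C_3\norm{f}_{\Cc^q}$. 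Since $f\in\Cc^q(\mi)$ is finite, choosing $\delta_-$ small lets the last term be absorbed into the left-hand side. The right inequality for the $\min$ follows from Step 1, because $\min\meg\max$.

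\textsc{Main obstacle.} The delicate point is the constant $\frac12$ in the statement, instead of the $2^{-1/p_0}$ obtained in Proposition~\ref{prop:10}. With a $p_0$-triangle inequality the absorption gives $2^{-1/p_0}$. To reach $\frac12$, I would first try to prove the pointwise inequality $\abs{f_r}\meg\min+\eta\,\Nc$ and then use that $\norm{\,\cdot\,}_{\Cc^q}$ is genuinely subadditive when $q\Meg1$. For $q<1$, I would apply the same absorption to $\abs{f_r}^q$ and the $\Cc^1$ norm, using the identity $(\Nc_{q,b}g)^q=\Nc_{1,bq}\abs{g}^q$. Then only the sum of the $q$-th powers is needed, and the constant becomes $2^{-1/q}$ or better, adjusted by taking $\delta_-$ so small that the absorbed term is at most $(1-2^{-q})$ times the whole. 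The other step needing care is verifying that Lemma~\ref{lem:2c} does apply with $\kappa(t)=t$, i.e.\ that the condition $\kappa(t)\in(0,ct^a]\cup[1/c,\infty)$ holds; this is immediate with $a=1$ and $c\Meg1$.
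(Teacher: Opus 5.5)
Your proposal is correct and is the paper's own argument: the paper's proof of Proposition~\ref{prop:10bis} consists exactly of rerunning the proof of Proposition~\ref{prop:10} with Lemma~\ref{lem:2c} (here with $\kappa(t)=t$ and exponent $1$) in place of Corollary~\ref{cor:18}. The constant $\frac12$ is not a real obstacle and needs no detour through $q$-th powers: keep the same $p_0$-triangle inequality and take $\delta_-$ so small that $(C_1'C_3\delta_-)^{p_0}\meg 1-2^{-p_0}$, which gives $2^{-p_0}\norm{f}_{\Cc^q(\mi)}^{p_0}$ bounded by the $p_0$-th power of the ``min'' quasi-norm directly.
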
 

\begin{proof}
	It suffices to repeat the proof of Proposition~\ref{prop:10}, replacing  Corollary~\ref{cor:18} with Lemma~\ref{lem:2c}.
\end{proof}

\begin{deff}\label{def:1}
	Take $q\in (0,\infty]$, $a>0$, $\eta\in (0,1)$,  and a positive measure $\mi$ on $(0,+\infty)$. Define 
	\[
	(m^q_{\eta,a,t} f)(x)\coloneqq \sup\Set{\eps>0\colon  \beta \big(\Set{y\in B(x,t^a) \colon  \norm{  \chi_{(0,t]} f(\,\cdot\,,x) }_{L^q(  \mi)}>\eps  }\big)> \eta\beta(B(e,t^a))}
	\]
	and
	\[
	(m^q_{\eta,a} f)(x)\coloneqq \sup_{t\in (0,c]} (m^q_{\eta,a,t} f )(x)
	\]
	for every $(\mi\otimes\beta)$-measurable function $f\colon (0,+\infty)\times G\to \C$ and for every $x\in G$, where $c\coloneqq \sup \supp \mi$.
\end{deff}

Cf.~\cite[Remark 3.7]{Calzi3} for a discussion of the measurability of $m^q_{\eta,a,t}$ and $m^q_{\eta,a}$.
We now repeat the statement of~\cite[Lemma 3.8]{Calzi3}. We shall  provide later a suitable `converse' for this setting, thus replacing~\cite[Lemma 3.9]{Calzi3}.

\begin{lem}\label{lem:59b}
	Take $p\in (0,\infty)$, $q\in (0,\infty]$, $a>0$, $\eta\in (0,1)$,    and a positive measure $\mi$  on $(0,+\infty)$ with bounded support. Then there is a constant $C>0$ such that for every $(  \mi \otimes \beta)$-measurable function $f\colon (0,+\infty) \times G\to \C$ there is a $(\mi\otimes \beta)$-measurable subset $E$ of $(0,+\infty)\times G$ such that 
	\[
	\beta(\Set{y\in B(x,t^a)\colon (t,y)\in E})\Meg (1-\eta) \beta(B(e,t^a))
	\]
	for every $x\in G$ and for  every $t\in (0,\max \supp \mi]$, and such that
	\[
	\norm{\chi_E f}_{L^{q,p}(\mi,\beta)}\meg \norm{m^q_{\eta,a} f}_{L^p(G)}\meg C\norm{f}_{L^{q,p}(\mi,\beta)}
	\]
	and
	\[
	\frac{1}{C}\norm{\chi_E f}_{L^{q,\infty}(\mi,\beta)}\meg \norm{m^q_{\eta,a} f}_{L^\infty(G)}\meg C\norm{f}_{\Cc^{q}(\mi,a)}
	\]
\end{lem}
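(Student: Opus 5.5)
We take for $E$ the set where $f$ does not exceed the maximal function,
\[
E\coloneqq\Set{(t,y)\in (0,+\infty)\times G\colon \norm{\chi_{(0,t]}f(\,\cdot\,,y)}_{L^q(\mi)}\meg (m^q_{\eta,a}f)(y)},
\]
restricted to $t\in(0,c]$ with $c=\max\supp\mi$. (We read the definition of $m^q_{\eta,a,t}$ with the inner function evaluated at $y$, which is evidently intended.) The plan is to check the density property of $E$, deduce the two left inequalities pointwise, and prove the two right inequalities by comparison with Hardy--Littlewood-type maximal functions, as in~\cite[Lemma 3.8]{Calzi3}. Measurability of $E$ follows from the measurability of $m^q_{\eta,a}f$ (cf.~\cite[Remark 3.7]{Calzi3}) and of $(t,y)\mapsto\norm{\chi_{(0,t]}f(\,\cdot\,,y)}_{L^q(\mi)}$.

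\emph{Density of $E$.} Fix $x\in G$ and $t\in(0,c]$, and set $\eps_0\coloneqq(m^q_{\eta,a,t}f)(x)$. By definition, the set of $y\in B(x,t^a)$ with $\norm{\chi_{(0,t]}f(\,\cdot\,,y)}_{L^q(\mi)}>\eps_0$ has measure at most $\eta\beta(B(e,t^a))$. For $y\in B(x,t^a)$ we have $x\in B(y,t^a)$, so the same supremum evaluated at $y$ should dominate $\eps_0$. Strictly, the definition centres the ball at the base point, so I would instead compare $m^q_{\eta,a,t}f$ at $x$ and at $y$ with a slightly enlarged ball and a slightly smaller $\eta$. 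This costs a harmless change of constants, and one can run the whole argument with a parameter $\eta'<\eta$ if needed; this bookkeeping is the main obstacle. Granting it, $\beta(\Set{y\in B(x,t^a)\colon (t,y)\in E})\Meg(1-\eta)\beta(B(e,t^a))$.

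\emph{Left inequalities.} For $(t,y)\in E$ we have $\abs{f(t,y)}\chi_{(0,t]}\meg$ the local $L^q$ norm, and this is bounded by $(m^q_{\eta,a}f)(y)$. Taking $t=c$ gives $\norm{\chi_E f(\,\cdot\,,y)}_{L^q(\mi)}\meg(m^q_{\eta,a}f)(y)$ up to the nested-cutoff issue. To make this precise, I would define $E$ through the sets $E_t$ and use the monotonicity in $t$ of $\norm{\chi_{(0,t]}f(\,\cdot\,,y)}_{L^q(\mi)}$. Taking $L^p$ and $L^\infty$ norms in $y$ then gives both left inequalities, the second with $C=1$.

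\emph{Right inequalities.} If $(m^q_{\eta,a,t}f)(x)>\eps$, then $\eps^q\eta\beta(B(e,t^a))<\int_{B(x,t^a)}\norm{\chi_{(0,t]}f(\,\cdot\,,y)}_{L^q(\mi)}^q\,\dd\beta(y)$. This yields two bounds.
\begin{itemize}
\item With $F(y)\coloneqq\norm{f(\,\cdot\,,y)}_{L^q(\mi)}$, it gives $m^q_{\eta,a}f\meg\eta^{-1/r}\Mc_r F$ for any $r\in(0,p)$. Here we use the Chebyshev form on the set of $y$ with $F>\eps$, so any exponent $r$ works, and doubling of $\beta$ controls the change of centre. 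The Hardy--Littlewood maximal theorem for $\Mc_r$ on the doubling space $(G,d,\beta)$ with $p/r>1$ then gives $\norm{m^q_{\eta,a}f}_{L^p}\meg C\norm{f}_{L^{q,p}(\mi,\beta)}$.
\item With $r=q$, the same inequality gives $\eps^q\meg\eta^{-1}t^{-aQ_*}\beta(B(e,t^a))^{-1}t^{aQ_*}\norm{\chi_{(0,t]\times B(x,t^a)}f}^q_{L^q}$. Since $\beta(B(e,t^a))\asymp t^{aQ_*}$ for $t\meg c$, this is $\meg C\norm{f}^q_{\Cc^q(\mi,a)}$, which gives the $L^\infty$ bound.
\end{itemize}
When $q=\infty$, the integral is replaced by an essential supremum, and the same argument applies directly.
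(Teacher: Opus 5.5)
\textbf{The density of $E$ is a real gap.} The paper gives no proof of this lemma; it quotes \cite[Lemma 3.8]{Calzi3}. So your argument has to stand on its own.

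Several parts of it are fine. Chebyshev on $B(x,t^a)$ gives $m^q_{\eta,a}f\meg \eta^{-1/r}\Mc_r F$, and with $r=q$ it gives the $\Cc^q(\mi,a)$ bound. The pointwise bound $\norm{\chi_E(\,\cdot\,,y)f(\,\cdot\,,y)}_{L^q(\mi)}\meg (m^q_{\eta,a}f)(y)$ via monotonicity in $t$ also holds.

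The density of $E$, which you grant, is not bookkeeping. With the centred definition of $m^q_{\eta,a}$, in general no set satisfying that pointwise bound has density $1-\eta$. Here is a counterexample:
\begin{itemize}
\item Take $G=\R$ with $d(x,y)=\abs{x-y}$, $a=1$, $\mi=\delta_1$, $\eta=1/5$, and $f(t,y)=\chi_A(y)$ with $A=(-0.9,-0.6)\cup(0.6,0.9)$.
\item For $t<1$ everything vanishes. For $y\in A$ one has $\beta(A\cap B(y,1))=0.3\meg 2\eta=\eta\beta(B(e,1))$, so $(m^q_{\eta,1}f)(y)=0<1=\abs{f(1,y)}$.
\item Hence $(1,y)\notin E$ for every $y\in A$, and the slice of $E$ at $t=1$ meets $B(0,1)$ in a set of measure $1.4<1.6=(1-\eta)\beta(B(e,1))$.
\end{itemize}
The failing step is ``$x\in B(y,t^a)$, so the supremum at $y$ dominates $\eps_0$'': $(m^q_{\eta,a,t}f)(y)$ is computed on $B(y,t^a)$, not on $B(x,t^a)$.

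Your repair with $\eta'<\eta$ and $B(y,2t^a)\supseteq B(x,t^a)$ does not give the statement either. The lower bound then involves $m^q_{\eta',a}f\Meg m^q_{\eta,a}f$, which is weaker than what is claimed. Also, for $t>2^{-1/a}c$ the enlarged scale $2^{1/a}t$ lies outside the range $(0,c]$ of the supremum defining $m^q_{\eta',a}$.

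\textbf{A fix.} Take the supremum over centres inside the threshold, at the same scale. Set $\widetilde m_t(y)\coloneqq \sup_{x\in B(y,t^a)}(m^q_{\eta,a,t}f)(x)$ and $E\coloneqq\{(t,y)\colon t\meg c,\ \norm{\chi_{(0,t]}f(\,\cdot\,,y)}_{L^q(\mi)}\meg \widetilde m_t(y)\}$.
\begin{itemize}
\item \emph{Density.} Since $d$ is symmetric, for every $x$ the set of $y\in B(x,t^a)$ with $(t,y)\notin E$ is contained in $\{y\in B(x,t^a)\colon \norm{\chi_{(0,t]}f(\,\cdot\,,y)}_{L^q(\mi)}>(m^q_{\eta,a,t}f)(x)\}$. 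By continuity from below of $\beta$ this set has measure at most $\eta\beta(B(e,t^a))$, and it is empty if $(m^q_{\eta,a,t}f)(x)=+\infty$. So the density holds with exactly $1-\eta$.
\item \emph{Bounds.} Your monotonicity argument bounds $\norm{\chi_E(\,\cdot\,,y)f(\,\cdot\,,y)}_{L^q(\mi)}$ by $\widetilde M(y)\coloneqq\sup_{t\meg c}\widetilde m_t(y)$. Both of your upper estimates hold for $\widetilde M$ too, using $B(x,t^a)\subseteq B(y,2t^a)$ and doubling.
\item \emph{$L^\infty$ chain.} Each $m^q_{\eta,a,t}f$ is lower semicontinuous (Fatou applied to $x\mapsto\beta(F\cap B(x,r))$), and Haar measure charges every nonempty open set. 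So $\norm{\widetilde M}_{L^\infty(G)}=\sup_G m^q_{\eta,a}f=\norm{m^q_{\eta,a}f}_{L^\infty(G)}$, and the $L^\infty$ chain holds exactly as stated.
\item \emph{$L^p$ chain.} Here the middle term becomes $\norm{\widetilde M}_{L^p(G)}$. Equivalently, your proof is complete verbatim if $m^q_{\eta,a}$ is read with balls of radius $t^a$ containing the point rather than centred at it.
\end{itemize}
This is all the paper uses later: Lemma~\ref{lem:62} and Proposition~\ref{prop:15} invoke only the density property and the $L^\infty$ chain.
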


\section{Besov and Triebel-Lizorkin Spaces}\label{sec:3}

Before proving the equivalence theorem between the formulations with the heat semigroups and with Littlewood--Paley decompositions, it will be convenient to (re-)define Besov and Triebel--Lizorkin spaces using Littlewood--Paley decompositions and establish their first properties. Since \emph{a posteriori} these spaces will coincide with the ones previously defined for general Lie groups, we shall \emph{not} introduce different notation (except for the fact that we shall no longer put emphasis on the underlying measure, since in this case there is no ambiguity). We shall nonetheless never use the results previously proved with the heat semigroups until we establish the relevant equivalence theorem.

\emph{From now on,  functions and distributions denoted by $f$ and $g$ will be assumed to take values in a fixed Banach space $Z$, unless otherwise specified. We shall generally \emph{not} highlight $Z$ in the name of the spaces we shall consider. } Obviously, the previous convention does not apply to functions of the form $\Kc_\Lc(\phi)$ or, more generally, to auxiliary functions $\psi$ or $H$. In this way, we shall deal with vector-valued Besov and Triebel--Lizorkin spaces without making the notation heavier; on the contrary, this should highlight that (most of the time) no extra effort is  needed to deal with the general case, at least for our current purposes. Considering vector-valued Triebel--Lizorkin spaces will make the comparison with local Hardy and bmo spaces more natural in~\cite{CalziRizzo} (when $Z$ is a Hilbert space). Similar extensions also apply to~\cite{BCP,Calzi2,Calzi3}, with minor exceptions.

In the following lemmas, we shall (partially) extend~\cite[Theorem 1]{Triebel2} to our setting. Notice, though, that since we lack a workable analogue of the Fourier transform, our results will be necessarily weaker.
We shall first state a definition to simplify the notation.

\begin{deff}
	Take $\eps\in (0,1)$ and $q\in (0,\infty]$. Then, we define $\Cc^q(\eps)$ as the space of $(f_j)\in L^1_\loc(G)^{\N}$ such that  
	\[
	\sup_{N\in \N, x\in G} \eps^{-N Q_*/q}\norm{  \chi_{[N,+\infty)\times B(x,\eps^{N})}(j,x') f_j(x') }_{L^q_{(j,x')}(\N\times G)}
	\]
	is finite, endowed with the corresponding quasi-norm. 
\end{deff}

\begin{oss}
	Take $\eps\in (0,1)$,  $a>0$, and $q\in (0,\infty]$. Take $(f_j)\in L^1_\loc(G)^\N$ and set $\mi\coloneqq \sum_{j\in\N} \delta_{\eps^{aj}}$. Define $g\colon (0,+\infty)\times G\to \C$ so that $g(t,x)=f_j(x)$ if $t= \eps^{a j}$, while $g(t,x)=0$ otherwise. Then,
	\[
	\eps^{a Q_*/q}\norm{g}_{\Cc^q(\mi,a)}\meg \norm{f_j(x)}_{\Cc^q_{(j,x)}(\eps^a)}\meg \norm{g}_{\Cc^q(\mi,a)}.
	\]
\end{oss}

Consequently, we may apply the results proved for the spaces $\Cc^q(\mi,a)$ also to the spaces $\Cc^q(\eps^a)$. Notice that this correspondence allows to get rid of the parameter $a$.
 
\begin{deff}\label{def:12}
	Take $\eps\in (0,1)$ and a formally self-adjoint weighted subcoercive operator $\Lc'$ on $G$. We define $\Psi_\eps(\Lc')$ the set of $(\psi_j)\in \Sr(G)^\N$ such that there is some bounded sequence $(\phi_j)$ of elements of $ C^\infty_c(\R)$ such that 
	\[
	0\not \in \overline{\bigcup_{j\Meg 1}\supp \phi_j }, \qquad 
	\inf_{\lambda\in \R} \sum_{j\in\N} \abs{\phi_j(\eps^{j }\lambda)}>0,
	\]
	and
	\[
	\psi_j= \Kc_{\eps^{j }\Lc'}(\phi_j )
	\]
	for every $j\in\N$.
	We define $\widetilde \Psi_\eps(\Lc')$ as the set of $(\psi_j)\in \Psi_\eps(\Lc')$ such that $\sum_{j\in\N} \psi_j=\delta_e$ in $\Sr'(G)$; equivalently, with the above notation, $\sum_{j\in\N} \phi_j(\eps^j\,\cdot\,)=1$ on $\sigma(\Lc')$.
	
	We shall simply write $\Psi_\eps$ and $\widetilde \Psi_\eps$ instead of $\Psi_\eps(\Lc)$ and $\widetilde \Psi_\eps(\Lc)$, respectively.
\end{deff}
 
\begin{oss}
	Take $\eps\in (0,1)$, a formally self-adjoint weighted subcoercive operator $\Lc'$ on $G$, $(\psi_j)\in \Psi_\eps(\Lc')$, and $\alpha\in \Z$. With some abuse of notation, we shall simply write $\Lc^\alpha \psi_j$ instead of $\Kc_{\Lc'}((\,\cdot\,)^\alpha \phi_j(\eps^j\,\cdot\,))$ for every $j\Meg 1$, where the $\phi_j$ are chosen as in Definition~\ref{def:12}. When $\alpha\in\N$, this is consistent with the usual notation, whereas when $\alpha\in -\N$, this leads to the useful equality $\psi_j= \Lc^{-\alpha} (\Lc^\alpha \psi_j)$ (for $j\Meg 1$).
\end{oss}

\begin{lem}\label{lem:35}
	Take $p,q\in (0,\infty]$, with $p<\infty$, $\alpha\in \R$, $\eps\in (0,1)$ $a>D/\min(p,q)$, $m_0,m_1\in \R$ so that $m_0<\alpha/\grado<m_1$.  
	Take $\mi\in \Mc_\Car$, a $\mi$-measurable 
	family $(\psi_{1,t})_{t\Meg 0}$ of elements of $\Sr(G)$ and $h_0 , h_1\in C^\infty_c(\R)$ such that  $\chi_{[-1,1]}\meg h_0\meg \chi_{[-1/\eps,1/\eps]}$, and $\chi_{[-1,1]\setminus [-\eps,\eps]}\meg h_1\meg \chi_{[-1/\eps,1/\eps]\setminus [-\eps^2,\eps^2]}$, and define $H_{k,j}\coloneqq\Kc_{\eps^j\Lc}(h_k )$ for $k=0,1$ and $j\in\Z$.  Assume that there is a constant $C>0$ such that 
	\begin{equation}\label{eq:10}
		\int_G \abs{(H_{0,0}*\psi_{1,t})(x)} (1+\abs{x}_*)^a\,\dd \beta(x)\meg C  t^{m_1}
	\end{equation}
	for every $t> 0$,
	\begin{equation}\label{eq:11}
		\int_G \abs{(H_{1, j}*\psi_{1,t})(x)} (1+\abs{x}_*/\eps^{j/\grado})^a\,\dd \beta(x)\meg C(t/\eps^j)^{m_1}
	\end{equation}
	for every $0<t\meg \eps^j$, $j\Meg 1$, 
	\begin{equation}\label{eq:12}
		\int_G \abs{(H_{1,j}*\psi_{1,t})(x)} (1+\abs{x}_*/\eps^{j/\grado})^a\,\dd \beta(x)\meg C (t/\eps^j)^{m_0}
	\end{equation}
	for every  $t\Meg \eps^j$, $j\Meg 1$, and
	\begin{equation}\label{eq:13}
		\int_G \abs{(H_{1,j}*\psi_{1,0})(x)} (1+\abs{x}_*/\eps^{j/\grado})^a\,\dd \beta(x)\meg C \eps^{-jm_0}
	\end{equation}
	for every $j\Meg 1$.
	In addition, take $(\psi_{2,j})\in \Psi_\eps$. 
	
	Then, there is a constant $C'>0$ such that
	\[
	\norm{f*\psi_{1,0}}_{L^p(G)}+\norm*{\norm{t^{-\alpha/\grado}   (f*\psi_{1,t})(x)}_{L^q_t(\mi )}}_{L^p_x(G)}\meg C' \norm*{\norm{\eps^{-j\alpha/\grado}  (f*\psi_{2,j})(x)}_{\ell^q_j(\N)}}_{L^p_x(G)}
	\]
	and
	\[
	\norm{f*\psi_{1,0}}_{L^\infty(G)}+ \norm{t^{-\alpha/\grado}   (f*\psi_{1,t})(x)}_{\Cc^q_{(t,x)}(\mi,1/\grado )} \meg C' \norm{\eps^{-j\alpha/\grado}  (f*\psi_{2,j})(x)}_{\Cc^q_{(j,x)}( \eps^{1/\grado}) }
	\]
	for  every $f\in \Ss'(G)$.
\end{lem}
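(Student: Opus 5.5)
We follow the classical scheme of \cite[Theorem 1]{Triebel2}, with the Fourier transform replaced by the functional calculus of $\Lc$. Write $(\psi_{2,j})\in\Psi_\eps$ as $\psi_{2,j}=\Kc_{\eps^j\Lc}(\phi_j)$. The first step is to build a reproducing family. Since $\inf_\lambda\sum_j\abs{\phi_j(\eps^j\lambda)}>0$ and the $\phi_j$ are bounded in $C^\infty_c(\R)$ with supports (for $j\Meg1$) away from $0$, we can set
\[
\widetilde\phi_j(\eps^j\lambda)\coloneqq \overline{\phi_j(\eps^j\lambda)}\Big/\sum_k\abs{\phi_k(\eps^k\lambda)}^2 .
\]
Only finitely many $k$ with $\abs{k-j}$ bounded contribute on the support, so $(\widetilde\phi_j)$ is again a bounded family in $C^\infty_c(\R)$ with the same support properties. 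Putting $\widetilde\psi_{2,j}\coloneqq\Kc_{\eps^j\Lc}(\widetilde\phi_j)$, we get $\sum_j\psi_{2,j}*\widetilde\psi_{2,j}=\delta_e$ in $\Sr'(G)$, hence $f=\sum_j f*\psi_{2,j}*\widetilde\psi_{2,j}$ in $\Sr'(G)$.

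The second step is to localize with $h_0,h_1$. The support of $\widetilde\phi_0$ lies in some $[-\rho,\rho]$, and for $j\Meg1$ the supports of $\widetilde\phi_j(\eps^j\cdot)$ sit in dyadic annuli. Up to a bounded shift of indices (and finitely many $H$'s if needed), $\widetilde\psi_{2,j}=\widetilde\psi_{2,j}*H_{1,j'}$ for $\abs{j-j'}\meg N$, and $\widetilde\psi_{2,0}=\widetilde\psi_{2,0}*H_{0,0}$. Then
\[
f*\psi_{1,t}=\sum_j (f*\psi_{2,j})*\widetilde\psi_{2,j}*H_{k(j),j}*\psi_{1,t},
\]
with $k(0)=0$ and $k(j)=1$ otherwise. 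By~\eqref{eq:10}--\eqref{eq:13} and Young's inequality (combined with the kernel bounds of Corollary~\ref{cor:12} for $\widetilde\psi_{2,j}$), each kernel $K_{j,t}\coloneqq\widetilde\psi_{2,j}*H_{k(j),j}*\psi_{1,t}$ satisfies
\[
\int_G\abs{K_{j,t}}(1+\abs{\cdot}_*/\eps^{j/\grado})^a\,\dd\beta\lesssim \min\big((t/\eps^j)^{m_1},(t/\eps^j)^{m_0}\big).
\]
The case $t=0$ is handled by~\eqref{eq:13}, and the $j=0$ term by~\eqref{eq:10}.

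The third step is the pointwise estimate. Set $g_j\coloneqq f*\psi_{2,j}\in\Oc(\phi_j(\eps^{j}\cdot)\,;\dots)$, which is smooth. Using Peetre-type domination,
\[
\abs{g_j*K_{j,t}}(x)\meg \big(\Nc_{\infty,a,\eps^{j/\grado}}g_j\big)(x)\int\abs{K_{j,t}}(1+\abs{\cdot}_*/\eps^{j/\grado})^a\,\dd\beta .
\]
Lemma~\ref{lem:33} then gives $\Nc_{\infty,a,\eps^{j/\grado}}g_j\lesssim\Nc_{r_0,a,\eps^{j/\grado}}g_j$ for some $r_0<\min(p,q)$ with $a>D/r_0$; we choose $r_0$ in this way, which is possible since $a>D/\min(p,q)$. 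Multiplying by $t^{-\alpha/\grado}$ gives the weight
\[
t^{-\alpha/\grado}\min\big((t/\eps^j)^{m_1},(t/\eps^j)^{m_0}\big)=\eps^{-j\alpha/\grado}\min\big((t/\eps^j)^{m_1-\alpha/\grado},(t/\eps^j)^{m_0-\alpha/\grado}\big),
\]
which is of the form required by Lemma~\ref{lem:25} (and Lemma~\ref{lem:25d} in the discrete form), because $m_0<\alpha/\grado<m_1$.

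The final step sums. For $q\Meg1$ apply Lemma~\ref{lem:25} with $\nu=\sum_j\delta_{\eps^j}$ and $\mi\in\Mc_\Car$. For $q<1$, first raise to the power $q$ as in Lemma~\ref{lem:25d}, which requires that $\Nc$ commute with $q$-th powers (it does, after passing to $\Nc_{1,aq}$ of $\abs{g}^q$). Then take $L^p_x$ and use Corollary~\ref{cor:18} to remove $\Nc_{r_0,a,\eps^{j/\grado}}$. The $L^\infty$/$\Cc^q$ version goes the same way, replacing Lemma~\ref{lem:25} and Corollary~\ref{cor:18} with Lemma~\ref{lem:25c} and Lemma~\ref{lem:2c}. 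The term $f*\psi_{1,0}$ is treated as the case $t=0$, with~\eqref{eq:13} giving the geometric decay $\eps^{-jm_0}\cdot\eps^{j\alpha/\grado}$, which is summable.

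I expect the main obstacle to be the interchange of the pointwise maximal bound with the continuous $t$-integral when $q<1$. There $(\sum)^q\meg\sum(\cdot)^q$ is not available inside $\int\dd\mi(t)$, so one must first estimate the $\ell^q_j$ sum for each fixed $t$ and only then integrate against $\mi\in\Mc_\Car$, using that $\mi((r,2r])$ is bounded. A second, smaller, point is justifying convergence of the reproducing series in $\Sr'$ and the associativity of the convolutions, which follows from the mapping properties of the convolution between $\Sr'(G)$ and $\Oc'_{C,L}(G)$ and from Proposition~\ref{prop:22}.
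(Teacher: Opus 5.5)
Your proposal is correct and is essentially the paper's own argument. The dual family is exactly the one of Lemma~\ref{lem:41}; hypotheses~\eqref{eq:10}--\eqref{eq:13} are transferred to the dual kernels via Corollary~\ref{cor:12} and Young's inequality; the pointwise bound uses $\Nc_{\infty,a,\eps^{j/\grado}}\lesssim\Nc_{r_0,a,\eps^{j/\grado}}$ from Lemma~\ref{lem:33}; and the summation uses Lemma~\ref{lem:25d} (or Lemma~\ref{lem:25}) with Corollary~\ref{cor:18}, resp.\ Lemmas~\ref{lem:25c} and~\ref{lem:2c} in the $\Cc^q$ case.

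The $q<1$ step you flag as the main obstacle is routine: apply $(\sum_j c_j)^q\meg\sum_j c_j^q$ at each fixed $t$, then integrate using $\mi((r,2r])\meg C$. The point you pass over too quickly is $\norm{f*\psi_{1,0}}_{L^\infty(G)}$. There the summable factor $\eps^{j(\alpha/\grado-m_0)}$ only appears after the single-scale bound $\sup_x\Nc_{q,a,\eps^{j/\grado}}(f*\psi_{2,j})(x)\lesssim\eps^{j\alpha/\grado}\norm{\eps^{-j'\alpha/\grado}(f*\psi_{2,j'})(x')}_{\Cc^q_{(j',x')}(\eps^{1/\grado})}$. The paper derives this from the local constancy of $\Nc_{q,a,\eps^{j/\grado}}$ on balls of radius $\eps^{j/\grado}$ together with Lemma~\ref{lem:2c}.
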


This result (partially) extends one half of~\cite[Theorem 1]{Triebel2}. In the cited result, the $\psi_{1,t}$ are defined by means of their Fourier transform, which is required to be of class $C^\infty$. In addition, the Fourier transforms of the $\psi_{1,t}$, $t>0$, are required to be dilates of one another, so that condition~\eqref{eq:10} may be simply dropped, whereas conditions~\eqref{eq:11},~\eqref{eq:12}, and~\eqref{eq:13} may be considerably simplified, and only required to hold for $t=1$ (say). Another difference occurs when comparing condition~\eqref{eq:11} with~\cite[(16)]{Triebel2}, since in the cited result one actually requires something which could be rephrased, in our notation,  as
\begin{equation}\label{eq:11b}
	\int_G \abs{(\Lc^{-m_1} \psi_{1,1})(x)}(1+\abs{x}_*)^a\,\dd \beta(x) <\infty
\end{equation}
(where $\Lc$ must be interpreted as the usual Laplacian on $G=\R^n$). On the one hand, it is shown in the proof of~\cite[Theorem 1]{Triebel2} that this condition implies~\eqref{eq:11}. On the other hand, in our context condition~\eqref{eq:11b} need not even make sense, so that it seems more natural to replace it with~\eqref{eq:11}. Notice that, in doing so, we also allow for more general families $(\psi_{1,t})$, so that when $G=\R^n$ our result is neither more nor less general than~\cite[Theorem 1]{Triebel2}.

Next, we would like to stress the fact that we impose that the $\psi_{1,t}$ should be elements of $\Oc'_{C,L}(G)\cap \Oc'_{C,R}(G)$, whereas~\cite[Theorem 1]{Triebel2} allows them to be much less regular.  One the one hand, contrary to what is done in~\cite[Theorem 1]{Triebel2}, we need a result that applies to \emph{every} $f\in \Sr'(G)$, in order to establish the equality of the Triebel--Lizorkin spaces defined by different quasi-norms, and not only the equivalence of different quasi-norms on (already defined) Triebel--Lizorkin spaces. On the other hand, without the Fourier transform, we are not able to perform the extremely delicate verifications that would allow to check that the convolutions $f*\psi_{1,t}$ make sense when $f\in F^{p,q}_\alpha(\R^n)$ and $\psi_{1,t}$ is only required to satisfy the assumptions of~\cite[Theorem 1]{Triebel2}.

Finally, let us comment briefly on the fact that the second assertion seems asymmetrical, since the term $\norm{f*\psi_{1,0}}_{L^\infty(G)}$  in the left hand side  seems to have no analogue in the right hand side. This is only apparent, as the following remark shows. We kept the term $\norm{f*\psi_{1,0}}_{L^\infty(G)}$  on the left hand side since it may provide a stronger estimate in full generality.

\begin{oss} \label{oss:8}
	Take $q_0\in (0,1]$, $\eps\in (0,1)$, $t_0\in (0,+\infty)$, and $\phi\in C^\infty_c(\R)$. Set $\psi\coloneqq \Kc_\Lc(\phi)$. Then, there is a constant $C>1$ such that
	\[
	\frac{1}{C} \norm{f*\psi}_{L^\infty(G)}\meg t^{-Q_*/q}\sup_{x\in G}\norm{\chi_{B(x,t)} f*\psi}_{L^q(G)}\meg C\norm{f*\psi}_{L^\infty(G)}
	\]
	for every $q\in [q_0,\infty]$ and for every $t\in (0,t_0]$.
\end{oss}

\begin{proof}
	The second inequality is clear. Concerning the first one, take $a>D/p_0$ and observe that, by Lemma~\ref{lem:33}, there is a constant $C_1>0$ such that
	\[
	\abs{(f*\psi)(x)}\meg \Nc_{\infty,a,t_0}(f*\psi)(x)\meg C_1 \Nc_{q_0,a,t_0}(f*\psi)(x)\meg C_1 2^a \Nc_{q_0,a,t_0}(f*\psi)(y)
	\]
	for every $x\in G$ and for every $y\in B(e,t)$, so that there is a constant $C_2>0$ such that
	\[
	\begin{split}
		\norm{f*\psi}_{L^\infty(G)} &=\sup_{x\in G}\abs{(f*\psi)(x)}  \meg C_2 t^{-Q_*/q}\sup_{x\in G}\norm{\chi_{B(x,t)}  \Nc_{q_0,a,t_0}(f*\psi)}_{L^q(G)}.
	\end{split}
	\]
	The assertion then follows by means of Lemma~\ref{lem:2c} (applied with $\mi=\delta_t$).
\end{proof}

\begin{lem}\label{lem:41}
	Take $\eps\in (0,1)$, a formally self-adjoint weighted subcoercive operator $\Lc'$ on $G$, and  $(\psi_{j})\in \Psi_\eps(\Lc')$ (resp.\ $(\psi_j)\in \widetilde \Psi_\eps(\Lc')$). Then, there is a bounded family $(\eta_j)_{j\in \N}$ in $C^\infty_c(\R)$ such that
	\[
	\delta_e=  \sum_{j=0}^\infty \psi_{j}*\Kc_{\eps^j\Lc'}(\eta_j ) \qquad \text{(resp.\ $\delta_e=  \sum_{j=0}^\infty \psi_{j}$)}
	\]
	in $\Oc'_{C,L}(G)$  and in $\Oc'_{C,R}(G)$.  
\end{lem}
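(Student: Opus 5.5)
The idea is to invert the symbol $\Phi(\lambda)\coloneqq\sum_{j}\abs{\phi_j(\eps^j\lambda)}^2$ on the spectrum of $\Lc'$, and then transfer the resulting spectral identity to $\Oc'_{C,L}(G)\cap\Oc'_{C,R}(G)$. Let $(\phi_j)$ be the bounded sequence in $C^\infty_c(\R)$ from Definition~\ref{def:12}, with $\psi_j=\Kc_{\eps^j\Lc'}(\phi_j)$. For the tilde case there is nothing to construct: one only has to show that $\sum_j\psi_j$ converges in $\Oc'_{C,L}$ and $\Oc'_{C,R}$, and since we already know the limit is $\delta_e$ in $\Sr'(G)$, that limit must be $\delta_e$. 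So the plan is to prove convergence for general series $\sum_j\Kc_{\eps^j\Lc'}(\theta_j)$ with $(\theta_j)$ bounded in $C^\infty_c(\R)$ and supports bounded away from $0$ for $j\Meg1$, and then produce suitable $\theta_j$.

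\textbf{Construction.} The functions $\lambda\mapsto\phi_j(\eps^j\lambda)$ are supported in $\{\abs\lambda\meg R\eps^{-j}\}$, and for $j\Meg1$ in $\{\abs\lambda\Meg r\eps^{-j}\}$, with $r,R$ independent of $j$. Hence for each $\lambda$ only boundedly many terms are nonzero, so $\Phi$ is smooth. Using $\inf\sum_j\abs{\phi_j(\eps^j\cdot)}>0$ together with this bounded overlap and Cauchy--Schwarz, one gets $\Phi\Meg c>0$.

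Set $\eta_j(\lambda)\coloneqq\overline{\phi_j(\lambda)}/\Phi(\eps^{-j}\lambda)$, so that $\sum_j\phi_j(\eps^j\cdot)\eta_j(\eps^j\cdot)=1$. Then $\eta_j$ is supported in $\supp\phi_j$. Moreover, on $\supp\phi_j$ the function $\Phi(\eps^{-j}\cdot)$ equals a sum of boundedly many terms $\abs{\phi_k(\eps^{k-j}\cdot)}^2$ with $\abs{k-j}$ bounded. These are bounded in $C^\infty$ on the compact set $\supp\phi_j$, and bounded below by $c$. Consequently $(\eta_j)$ is bounded in $C^\infty_c(\R)$, and $\theta_j\coloneqq\phi_j\eta_j$ is a family of the admissible type.

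\textbf{Convergence.} By Lemma~\ref{lem:70}, applied to $\Lc'$ with $\theta_j$ viewed as functions vanishing to order $m$ at $0$ for $j\Meg1$, the family $(\Kc_{t^{\grado'}\Lc'}(\theta_j))$ with $t=\eps^{j/\grado'}$ is an $(m\grado',\mi,a,\rho,p)$-standard system uniformly. In particular $\Kc_{\eps^j\Lc'}(\theta_j)=\Lc'^m\,g_j$, where $g_j$ has weighted $L^1$ norms $\norm{Xg_j(1+\abs{\cdot}_*)^a}_{L^1}\lesssim\eps^{jm-\deg(X)/\grado'}$. Right derivatives $Y^R$ are handled in the same way, via Lemma~\ref{lem:36}.

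Testing against bounded sets of $\Dc(G)$ translated by $x$ and weighted by $(1+\abs x_*)^k$, i.e.\ checking the $\Oc'_{C,L}$ and $\Oc'_{C,R}$ seminorms, gives bounds of the form $\sum_j\eps^{j(m-N)}$ for suitable fixed $N$ depending on the order of the test. This sum is finite once $m$ is large. Hence $\sum_j\psi_j*\Kc_{\eps^j\Lc'}(\eta_j)=\sum_j\Kc_{\eps^j\Lc'}(\theta_j)$ converges absolutely in both spaces.

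\textbf{Identification of the limit.} In $\Sr'(G)$ the sum equals $\Kc_{\Lc'}(1)=\delta_e$. Indeed, for $\phi\in\Sr(G)$ the partial sums applied to $\phi$ equal $\Theta_N(\Lc')\phi$, where $\Theta_N(\lambda)=\sum_{j\meg N}\theta_j(\eps^j\lambda)$. These converge to $\phi$ in $L^2$ by the spectral theorem, since $\Theta_N\to1$ boundedly on $\sigma(\Lc')$. The continuous inclusions into $\Dc'(G)$ then identify the limit.

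The main obstacle is the uniform seminorm estimate in the convergence step, i.e.\ controlling $\Oc'_{C}$ seminorms through the weighted $L^1$ bounds of Lemma~\ref{lem:36} and Lemma~\ref{lem:70}. I would handle it by reducing the $\Oc'_{C,L}$ seminorms to the quantities $\norm{XY^R u\,(1+\abs{\cdot}_*)^k}_{L^1}$, with finitely many $X,Y$.
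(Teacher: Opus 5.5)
Your proof is correct. Your construction of the $\eta_j$, including $\Phi\Meg c>0$ via bounded overlap, is exactly the paper's. The difference is in how you obtain convergence in $\Oc'_{C,L}(G)$ and $\Oc'_{C,R}(G)$.

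The paper first reduces the general case to the $\widetilde\Psi_\eps(\Lc')$ case via $\Kc_{\eps^j\Lc'}(\phi_j\eta_j)$. It then uses $u\mapsto u^*$ to deal only with $\Oc'_{C,R}(G)$, and it never estimates individual terms. The partial sums are $\sum_{j\meg k}\psi_j=\Kc_{\eps^k\Lc'}(h_k)$ with $h_k=\sum_{j\meg k}\phi_j(\eps^{j-k}\,\cdot\,)$ bounded in $C^\infty_c(\R)$: each $h_k$ equals $1$ near $0$ and has a fixed support. So Corollary~\ref{cor:12} shows they act equicontinuously on $\Sr(G)$, i.e.\ form a bounded subset of $\Oc'_{C,R}(G)$. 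Convergence in $\Sr'(G)$, from the spectral theorem, then upgrades to convergence in $\Oc'_{C,R}(G)$, because bounded subsets of that space carry the topology induced by $\Dc'(G)$.

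That route needs only size estimates for a single rescaled symbol. It does lean on structural properties of $\Oc'_{C,R}(G)$ that are quoted without proof. Also, the assertion $h_k=1$ near $0$ uses $\sum_j\phi_j(\eps^j\,\cdot\,)=1$ on all of $\R$. This holds after the reduction, and can be arranged in general by modifying the $\phi_j$ off $\sigma(\Lc')$.

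You instead use cancellation term by term. For $j\Meg 1$, $\Kc_{\eps^j\Lc'}(\theta_j)$ is $(\Lc')^m$ applied to a function whose weighted $L^1$ norms are $O(\eps^{jm})$. This gives absolutely, geometrically convergent series in the defining seminorms. Convergence then follows from completeness, or from lower semicontinuity of these seminorms under $\Dc'(G)$-convergence once the $\Dc'(G)$-limit is identified. Your route is more elementary and quantitative, and in the $\widetilde\Psi_\eps(\Lc')$ case it only needs the identity on $\sigma(\Lc')$.

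Two small remarks. First, left-invariant operators commute with $L_x$, so you can move $(\Lc')^m$ (as its transpose) entirely onto the test function. This gives the bound $\eps^{jm}$ with no loss, so any $m\Meg 1$ suffices. Second, for $\Oc'_{C,R}(G)$ you should use the right-invariant factorization $\Kc_{\eps^j\Lc'}(\theta_j)=(\Lc'^{R})^m g_j$. This is valid because $\Lc'\Kc_{\Lc'}(\mu)=\Lc'^{R}\Kc_{\Lc'}(\mu)$. Alternatively, simply pass through $u\mapsto u^*$ as the paper does.
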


\begin{proof} 
	Take $(\phi_j)$ as in Definition~\ref{def:12}.
	Set $\phi\coloneqq \sum_{j\in \N} \abs{\phi_j(\eps^j\,\cdot\,)}^2$, and observe that $\inf_\R \phi>0$ because of the support restrictions on the $\phi_j$.
	 Define  $\eta_j\coloneqq \frac{  \overline{\phi_j} }{\phi(\eps^{-j}\,\cdot\,)}$ for every $j\in\N$. Since $\inf_\R \phi>0$ and since the $\phi(\eps^{-j}\,\cdot\,)$, $j\Meg 1$, are uniformly bounded in $C^\infty(\R\setminus \Set{0})$, it is readily seen that the $\eta_j$, $j\in \N$, are bounded in $C^\infty_c(\R)$. In addition, $\sum_{j\in\N} (\eta_j \phi_j)(\eps^j\,\cdot\,)= 1$ on $\R$. It is then easily seen that $(\Kc_{\eps^j \Lc'}(\phi_j\eta_j))\in \widetilde \Psi_\eps(\Lc')$, so that we may reduce to the case in which $(\psi_j)\in \widetilde\Psi_\eps(\Lc')$.

	Observe that $(\psi_j^*)\in \widetilde \Psi_\eps(\Lc')$ since $ \psi_{j}^*= \Kc_{\eps^j\Lc'}( \overline{\phi_j }) $. Since $u\mapsto u^*$ induces an isomorphism of $\Oc'_{C,R}(G)$ onto $\Oc_{C,L}'(G)$, it will therefore suffice to show that $\delta_e=  \sum_{j=0}^\infty \psi_{j} $ in $\Oc'_{C,R}(G)$. Next, notice that, if $h\in L^2(G)$, then $\sum_{j\in \N} h*\psi_{j} =\sum_{j\in \N} \phi_j(\eps^j {\Lc'}) h=h$ in the weak topology $\sigma(L^2(G),L^2(G))$, hence in $\Sr'(G)$. In order to conclude, it will therefore suffice to show that, if $B$ is a bounded subset of $\Sr(G)$, then the $\sum_{j=0}^k h*\psi_{j} $, $h\in B$, $k\in\N$, are uniformly bounded in $\Sr(G)$. This will follow  from Corollary~\ref{cor:12} if we show that the
	\[
	h_k\coloneqq\sum_{j=0}^k  \phi_j  (\eps^{j-k}\,\cdot\,),
	\] 
	$k\in\N$, are uniformly bounded in $C^\infty_c(\R)$.  
	Then, take $N\in\N$ so that $\supp \phi_0 \subseteq [-\eps^{-N},\eps^{-N}]$ and $\supp \phi_j \subseteq [-\eps^{-N},\eps^{-N}]\setminus (-\eps^{N},\eps^{N})$ for every $j\Meg 1$, and observe that $h_k=1$ on $ [-\eps^{N},\eps^{ N}]$, and that $h_k$ is supported in $ [-\eps^{-N},\eps^{-N}]$ for every $k\in\N$, so that $h^{(\ell)}_k=\sum_{j=k-2N+1}^k \eps^{\ell(j-k)} \phi_j  ^{(\ell)}(\eps^{j-k}\,\cdot\,)$ for every $\ell\Meg 1$. The assertion follows.
\end{proof}

\begin{proof}[Proof of Lemma~\ref{lem:35}.]
	Observe first that the mapping $[0,+\infty)\ni t\mapsto f*\psi_{1,t}\in C^\infty(G)$ is $\mi$-measurable, so that the mapping $(t,x)\mapsto (f*\psi_{1,t})(x)$ is $(\mi\otimes \beta)$-measurable for every $f\in \Sr'(G)$.
	
	\textsc{Step I} Take $(\eta_j)$ as in Lemma~\ref{lem:41} and set $\psi_{3,j}\coloneqq \Kc_{\eps^j\Lc}(\eta_j)$ for every $j\in\N$, so that
	\[
	f*\psi_{1,t} = \sum_{j=0}^\infty f*\psi_{2,j}*\psi_{3,j}*\psi_{1,t}
	\]
	(with convergence in $\Sr'(G)$ and in $C^\infty(G)$) for every $t\Meg 0$. Take $N\in\N$ and a bounded sequence $(\phi_j)$ of elements of $C^\infty_c(\R)$ such that $\psi_{2,j}=\Kc_{\eps^j \Lc}(\phi_j)$ and  $\Supp {\phi_j}\subseteq (-\eps^{-N},\eps^{-N})$ for every $j\in\N$, and such that $\Supp{\phi_j}\subseteq (-\eps^{-N},\eps^{-N})\setminus [-\eps^N, \eps^N]$ for every $j\Meg 1$. We may assume that the same restrictions on the support hold also for the $\eta_j$. In addition, by means of Corollary~\ref{cor:12} and Young's inequality, we may assume that conditions~\eqref{eq:10} to~\eqref{eq:13} hold with $H_{0,j}$ and $H_{1,j}$ replaced by $\psi_{3,j}$. We may also assume that $\int_G \abs{(\psi_{3,0}*\psi_{1,0})(x)} (1+\abs{x}_*)^a\,\dd \beta(x)\meg C $, since $\psi_{3,0}*\psi_{1,0}\in \Sr(G)$.
	Then, take $j\in\N$, $t\in (0,\eps^j]$,  
	and take $p_0\in (0,\min(p,q)]$ so that $a>D/p_0$. Observe that, by Lemma~\ref{lem:33}, there is a constant $C_1>0$ such that
	\[
	\Nc_{\infty, a,\eps^{j/\grado}} g\meg C_1 \Nc_{p_0, a,\eps^{j/\grado}} g
	\]
	for every $j\in\N$ and for every $g\in \Sr'(G)$  such that $g=g*H_{0,N+j}$. Observe that, by~\eqref{eq:10} and~\eqref{eq:11} (and the above remarks),  
	\[
	\begin{split}
		\abs{(f*\psi_{2,j}*\psi_{3,j}*\psi_{1,t})(x)}&= \abs{((f* \psi_{2,j})*(\psi_{3,j}*\psi_{1,t}))(x)}\\
		&\meg  \int_{G}  \abs{ (f* \psi_{2,j})(y) } \abs{(\psi_{3,j}*\psi_{1,t})(y^{-1}x)}\,\dd \beta(y)\\
		&\meg C_1  \Nc_{p_0,a,\eps^{j/\grado}}(f* \psi_{2,j})(x)\int_G (1+\abs{y^{-1}x}_*/\eps^{j/\grado})^a\abs{(\psi_{3,j}*\psi_{1,t})(y^{-1}x)}\,\dd \beta(y)\\
		&\meg C C_1(t/\eps^j)^{m_1}\Nc_{p_0,a,\eps^{j/\grado}}(f* \psi_{2,j})(x)
	\end{split}
	\]
	for every $x\in G$.  Consequently, 
	\[
	\begin{split}
		&\norm*{ t^{-\alpha/\grado} \sum_{\eps^j \Meg t}  \abs{(f*\psi_{2,j}*\psi_{3,j}*\psi_{1,t})(x)} }_{L^q_t(\mi)}  \meg C C_1\norm*{ t^{-\alpha/\grado} \sum_{\eps^j \Meg t} (t/\eps^j)^{ m_1} \Nc_{p_0,a,\eps^{j/\grado}}(f* \psi_{2,j})(x) }_{L^{q}_t(\mi)}\\
		&\qquad\meg C C_1\norm*{ \sum_{\eps^j \Meg t} (t/\eps^j)^{m_1-\alpha/\grado} \eps^{-j  \alpha /\grado} \Nc_{p_0,a,\eps^{j/\grado}}(f* \psi_{2,j})(x) }_{L^{q}_t(\mi)}.
	\end{split}
	\]
	Then, by means of Lemma~\ref{lem:25d} we see that there is a constant $C_2>0$ such that
	\[
	\norm*{ t^{-\alpha/\grado} \sum_{\eps^j \Meg t}  \abs{(f*\psi_{2,j}*\psi_{3,j}*\psi_{1,t})(x)} }_{L^q_t(\mi)}\meg C_2 \norm*{   \eps^{-j \alpha /\grado}\Nc_{p_0,a,\eps^{j/\grado}}(f* \psi_{2,j})(x) }_{\ell^{q}_j(\N)}.
	\]
	Consequently, by means of Corollary~\ref{cor:18}, we see that there is a constant $C_3>0$ such that
	\[
	\begin{split}
		\norm*{\norm*{ t^{-\alpha/\grado} \sum_{\eps^j \Meg t}  \abs{(f*\psi_{2,j}*\psi_{3,j}*\psi_{1,t})(x)} }_{L^q_t(\mi)}}_{L^p_x(G)} 
		&\meg C_3\norm*{\norm*{   \eps^{-j\alpha /\grado}  (f* \psi_{2,j})(x)  }_{\ell^{q }_j(\N)}}_{L^{p }_x(G)}.
	\end{split}
	\] 
	
	Now,  take   $t > \eps^j$ with $t\meg \rho\coloneqq \sup \supp \mi$. If $j\Meg 1$, then~\eqref{eq:12} (and the above remarks) shows that 
	\[
	\begin{split}
		\abs{(f*\psi_{2,j}*\psi_{3,j}*\psi_{1,t})(x)} 
		&\meg  \int_{G}  \abs{ (f* \psi_{2,j})(y) } \abs{(\psi_{3,j}*\psi_{1,t})(y^{-1}x)}\,\dd \beta(y)\\
		&\meg C_1 \Nc_{p_0,a,\eps^{j/\grado}}(f* \psi_{2,j})(x)\int_G (1+\abs{y^{-1}x}_*/\eps^{j/\grado})^a \abs{(\psi_{3,j}*\psi_{1,t})(y^{-1}x)}\,\dd \beta(y)\\
		&\meg (t/\eps^j)^{m_0} C C_1\Nc_{p_0,a,\eps^{j/\grado}}(f* \psi_{2,j})(x)
	\end{split}
	\]
	for every $x\in G$. Analogously, if $j=0$, then~\eqref{eq:10} (and the above remarks) shows that,   
	\[
	\begin{split}
		\abs{(f*\psi_{2,0}*\psi_{3,0}*\psi_{1,t})(x)} 
		&\meg C_1\Nc_{p_0,a,1}(f* \psi_{2,0})(x)\int_G (1+\abs{y}_*)^a \abs{(\psi_{3,0}*\psi_{1,t})(y)}\,\dd \beta(y)\\
		&\meg   C t^{m_1} C_1 \Nc_{p_0,a,1}(f* \psi_{2,0})(x)\\
		&\meg C C_1 \rho^{(m_1)_+ +(m_0)_-} t^{m_0} \Nc_{p_0,a,1}(f* \psi_{2,0})(x)
	\end{split}
	\]
	for every $x\in G$, since $1\meg t \meg \rho$.  
	Then, by means of Lemma~\ref{lem:25d} we see that there is a constant $C_4>0$ such that
	\[
	\begin{split}
		&\norm*{ t^{-\alpha/\grado} \sum_{ \eps^j < t}   \abs{(f*\psi_{2,j}*\psi_{3,j}*\psi_{1,t})(x)} }_{L^q_t(\mi)} \\
		&\qquad  \meg C C_1 \rho^{(m_1)_++(m_0)_-}\norm*{ t^{-\alpha/\grado} \sum_{ \eps^j < t}  (t/\eps^j)^{ m_0} \Nc_{p_0,a,\eps^{j/\grado}}(f* \psi_{2,j})(x) }_{L^{q}_t(\mi)}\\
		&\qquad\meg C C_1  \rho^{(m_1)_++(m_0)_-}\norm*{  \sum_{ \eps^j < t}  (\eps^j/t)^{\alpha/\grado-m_0} \eps^{-j\alpha /\grado}  \Nc_{p_0,a,\eps^{j/\grado}}(f* \psi_{2,j})(x) }_{L^{q}_t(\mi)}\\
		&\qquad\meg C_4\norm*{  \eps^{-j\alpha/\grado}  \Nc_{p_0,a,\eps^{j/\grado}}(f* \psi_{2,j})(x)  }_{\ell^{q}_j(\N )}.
	\end{split}
	\] 
	One may then conclude as before that there is a constant $C_5>0$ such that
	\[
	\norm*{\norm*{ t^{-\alpha/\grado} \sum_{ \eps^j < t }  \abs{(f*\psi_{2,j}*\psi_{3,j}*\psi_{1,t})(x)} }_{L^q_t(\mi)}}_{L^p_x(G)}\meg C_5\norm*{\norm*{  \eps^{-j\alpha/\grado}    (f* \psi_{2,j})(x) }_{\ell^{q }_j(\N )}}_{L^p_x(G)} .
	\]
	It remains to deal with the term $f*\psi_{1,0}$. However, the sum $\sum_{j\in \N} f*\psi_{2,j}*\psi_{3,j}*\psi_{1,0}$ may be dealt with as in the above computations, using~\eqref{eq:10} and~\eqref{eq:13} (instead of~\eqref{eq:12}), so that the first assertion follows.
	
	\textsc{Step II} In order to prove the second assertion, it essentially suffices to replace  Lemma~\ref{lem:25d} and Corollary~\ref{cor:18} with Lemmas~\ref{lem:25c} and~\ref{lem:2c}, respectively: the arguments of~\textsc{step I} proceed essentially unchanged for $j\Meg 1$, whereas the term corresponding to $j=0$ is even easier to deal with (using Remark~\ref{oss:8}). Nonetheless, the estimation of $\norm{f*\psi_{1,0}}_{L^\infty(G)}$ has to be modified as follows.  
	Observe first that, using~\eqref{eq:10} (and the fact that $\psi_{3,0}*\psi_{1,0}\in \Ss(G)$), we see that there is a constant $C_6>0$ such that
	\[
	\abs{f*\psi_{2,j}*\psi_{3,j}*\psi_{1,0}}\meg   C_6 \eps^{-j m_0}\Nc_{q,a,\eps^{j/\grado}}(f*\psi_{2,j})
	\]
	for every $j\in\N$. Then, observe that
	\[
	\Nc_{q,a,\eps^{j/\grado}}(f*\psi_{2,j})(x)\meg 2^a \Nc_{q,a,\eps^{j/\grado}}(f*\psi_{2,j})(y)
	\]
	for every $x,y $ with $d(x,y)<\eps^{j/\grado}$   and for every $j\in \N$. Consequently, there is a constant $C_7>0$ such that, for every $x\in G$,
	\[
	\begin{split}
	\Nc_{q,a,\eps^{j/\grado}}(f*\psi_{2,j})(x)&\meg C_7\eps^{-j Q_*/(q\grado)}\norm{\chi_{B(x,\eps^{j/\grado})}\Nc_{q,a,\eps^{j/\grado}}(f*\psi_{2,j})}_{L^q(G)}\\
		&\meg C_7 \eps^{j\alpha/\grado}\norm{\eps^{-j'\alpha/\grado} (f*\psi_{2,j'}) (x')}_{\Cc^q_{(j',x')}(\eps^{1/\grado})}
	\end{split}
	\]
	for every $j\Meg 1$, while $\Nc_{q,a,1}(f*\psi_{2,0})(x)\meg C_7 \norm{f*\psi_{2,0}}_{L^\infty(G)}$. This leads to the conclusion since $\alpha/\grado>m_0$, by assumption.
\end{proof}

The following result is an analogue of Lemma~\ref{lem:35} for Besov quasi-norms. As in the classical case (cf.~\cite[Theorem 3]{Triebel2}), one may impose weaker conditions on the $\psi_{1,t}$. 
The proof of   is similar to that of Lemma~\ref{lem:35} (in fact, slightly simpler since one may replace the use of maximal functions with Corollary~\ref{cor:4}), and is therefore omitted.

\begin{lem}\label{lem:35bis}
	Take $p,q\in (0,\infty]$,   $\alpha\in \R$, $\eps\in (0,1)$, $m_0,m_1\in \R$ so that $m_0<\alpha/\grado<m_1$.  
	Take $\mi\in \Mc_\Car$, a $\mi$-measurable family $(\psi_{1,t})_{t\Meg 0}$ of elements of $\Sr(G)$, and define $H_{k,j}$ and $\psi_{2,j}$ as in Lemma~\ref{lem:35}.  
	Assume that there is a constant $C>0$ such that  
	\begin{equation*}
		\norm{H_{0,0}*\psi_{1,t}}_{L^{\min(1,p)}(G)} \meg C  t^{m_1}
	\end{equation*}
	for every $t> 0$,
	\begin{equation*}
		\norm{H_{1, j}*\psi_{1,t} }_{L^{\min(1,p)}(G)}\meg C(t/\eps^j)^{m_1 } \eps^{j(1/p-1)_+Q_*/\grado}
	\end{equation*}
	for every $0<t\meg \eps^j$, $j\Meg 1$, 
	\begin{equation*}
		\norm{H_{1,j}*\psi_{1,t} }_{L^{\min(1,p)}(G)}\meg C (t/\eps^j)^{m_0 } \eps^{j(1/p-1)_+Q_*/\grado}
	\end{equation*}
	for every  $t\Meg \eps^j$, $j\Meg 1$, and
	\begin{equation*}
		\norm{H_{1,j}*\psi_{1,0}}_{L^{\min(1,p)}(G)}\meg C \eps^{-jm_0+j(1/p-1)_+ Q_*/\grado}
	\end{equation*}
	for every $j\Meg 1$.

	Then, there is a constant $C'>0$ such that
	\[
	\norm{f*\psi_{1,0}}_{L^p(G)}+\norm*{t^{-\alpha/\grado}\norm{   f*\psi_{1,t}}_{L^p (G)}}_{L^q_t(\mi )}\meg C' \norm*{\eps^{-j\alpha/\grado} \norm{ f*\psi_{2,j}}_{L^p(G)}}_{\ell^q_j(\N)}
	\]
	for  every $f\in \Sr'(G)$.
\end{lem}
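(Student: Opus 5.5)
We follow the scheme of the proof of Lemma~\ref{lem:35}, replacing the pointwise maximal-function control by $L^p$ bounds at fixed scale from Corollary~\ref{cor:4}. First, take $(\eta_j)$ as in Lemma~\ref{lem:41} and set $\psi_{3,j}\coloneqq \Kc_{\eps^j\Lc}(\eta_j)$. Then
\[
f*\psi_{1,t}=\sum_{j\in\N} f*\psi_{2,j}*\psi_{3,j}*\psi_{1,t}
\]
in $\Sr'(G)$ for every $t\Meg 0$. As in \textsc{step I} of that proof, Corollary~\ref{cor:12} and Young's inequality (for $p\Meg 1$), or Corollary~\ref{cor:4} (for $p<1$), show that we may replace $H_{0,0}$ and $H_{1,j}$ by $\psi_{3,0}$ and $\psi_{3,j}$ in the four hypotheses. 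Here we use that $\psi_{3,j}=\psi_{3,j}*H_{k,j+N'}$ for a suitable fixed $N'$, thanks to the support properties of the $\eta_j$.

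Second, we estimate each term in $L^p$. Set $\tilde p\coloneqq\min(1,p)$.
\begin{itemize}
\item If $p\Meg 1$, Young's inequality gives
\[
\norm{f*\psi_{2,j}*\psi_{3,j}*\psi_{1,t}}_{L^p}\meg \norm{f*\psi_{2,j}}_{L^p}\norm{\psi_{3,j}*\psi_{1,t}}_{L^1}.
\]
\item If $p<1$, apply Corollary~\ref{cor:4} with $p_1=p_2=p_3=p$ and $\alpha_1=\alpha_2=1$ at scale $r=\eps^{j/\grado}$. The two factors are $f*\psi_{2,j}$ (spectrally localized by $H_{0,j+N}$) and $\psi_{3,j}*\psi_{1,t}$, which equals $H_{0,j+N}*\psi_{3,j}*\psi_{1,t}$ and so is also spectrally localized. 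This gives
\[
\norm{f*\psi_{2,j}*\psi_{3,j}*\psi_{1,t}}_{L^p}\meg C\eps^{j(1-1/p)Q_*/\grado}\norm{f*\psi_{2,j}}_{L^p}\norm{\psi_{3,j}*\psi_{1,t}}_{L^p}.
\]
\end{itemize}
The factor $\eps^{j(1/p-1)_+Q_*/\grado}$ in the hypotheses is designed to cancel exactly this power. In both cases we therefore get
\[
\norm{f*\psi_{2,j}*\psi_{3,j}*\psi_{1,t}}_{L^p}\meg C\,\min\big((t/\eps^j)^{m_1},(t/\eps^j)^{m_0}\big)\norm{f*\psi_{2,j}}_{L^p}
\]
for $j\Meg1$, and analogous bounds with $\eps^{-jm_0}$ for $t=0$. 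For $j=0$ we use the first hypothesis and $1\meg t\meg\rho$ when $t>1$, as in the scalar lemma.

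Third, we sum over $j$. Since $\norm{\cdot}_{L^p}^{\tilde p}$ is subadditive,
\[
\norm{f*\psi_{1,t}}_{L^p}^{\tilde p}\meg\sum_j\norm{f*\psi_{2,j}*\psi_{3,j}*\psi_{1,t}}_{L^p}^{\tilde p}.
\]
Multiply by $t^{-\alpha/\grado}$ and use $m_0<\alpha/\grado<m_1$. This yields
\[
t^{-\alpha\tilde p/\grado}\norm{f*\psi_{1,t}}_{L^p}^{\tilde p}\meg C\sum_j \kappa(t,\eps^j)^{\tilde p}\,\big(\eps^{-j\alpha/\grado}\norm{f*\psi_{2,j}}_{L^p}\big)^{\tilde p},
\]
with the kernel $\kappa(t,s)=(t/s)^{m_1-\alpha/\grado}$ for $t\meg s$ and $\kappa(t,s)=(s/t)^{\alpha/\grado-m_0}$ for $t>s$. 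This is the kernel of Lemma~\ref{lem:25}. Because $\mi\in\Mc_\Car$, Lemma~\ref{lem:25} (applied with exponent $q/\tilde p$ when this is at least $1$) or the direct argument of Lemma~\ref{lem:25d} (when $q<\tilde p$, raising to the power $q$) gives the $L^q(\mi)$ bound. The $t=0$ term is handled the same way, using $\alpha/\grado>m_0$ to sum $\eps^{j(\alpha/\grado-m_0)}$.

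The main obstacle is the second step for $p<1$. We must ensure that Corollary~\ref{cor:4} applies to $\psi_{3,j}*\psi_{1,t}$, which requires verifying that it is spectrally localized at scale $\eps^j$ with bounded multipliers. We must also check that the constant is uniform in $j$ and $t$, which holds since $r=\eps^{j/\grado}\meg1$. I would first check this localization using $\psi_{3,j}=H_{0,j-N}*\psi_{3,j}$.
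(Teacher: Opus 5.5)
Your proposal is correct and is essentially the argument the paper has in mind when it omits this proof. You repeat the proof of Lemma~\ref{lem:35}, replace the maximal-function estimates by the fixed-scale bound of Corollary~\ref{cor:4}, applied with $f_2^*=\psi_{3,j}*\psi_{1,t}$ localized on the left as it should be, and then sum over $j$ via Lemmas~\ref{lem:25} and~\ref{lem:25d}.

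The only inaccuracy is the identity $\psi_{3,j}=\psi_{3,j}*H_{1,j+N'}$ for a single index. It fails because $h_1(\eps^{j'}\,\cdot\,)$ equals $1$ only on an annulus of ratio $1/\eps$, while $\eta_j$ is supported on a wider annulus. The transfer of the hypotheses to $\psi_{3,j}$ therefore needs a finite combination of the neighbouring $H_{1,j'}$ (and $H_{0,0}$ for small $j$) with a bounded family of multipliers, which costs only a constant depending on $N$.
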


In the following result we show how one may construct the $(\psi_{1,t})$ of Lemma~\ref{lem:35} using the functional calculus associated with some formally self-adjoint weighted subcoercive operator. Since we wish to prove the independence of Besov and Triebel--Lizorkin spaces from the chosen operator $\Lc$, we shall actually consider the functional calculus associated with a different operator $\Lc'$.

\begin{cor}\label{cor:11}
	Keep the notation of Lemma~\ref{lem:35}, and assume that $m_1\in \N$. Let $\Lc'$ be a formally self-adjoint weighted subcoercive operator of degree $\grado'$, and let $(\eta_t)_{t\Meg 0}$ be a bounded Borel measurable family of elements of $\Sr(\R)$ with $\eta_t^{(j)}(0)=0$ for $j=0,\dots, m_1-1$ and for every $t>0$. 
	Then, the conclusion of Lemmas~\ref{lem:35} and~\ref{lem:35bis} hold  choosing $\psi_{1,0}=\Kc_{\Lc'}(\eta_0)$, $\psi_{1,t}=\Kc_{t^{\grado'/\grado}\Lc'}(\eta_t )$ for $t\in \Supp \mi$, and $\psi_{1,t}=0$ otherwise.
\end{cor}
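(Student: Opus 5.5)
The task is to verify the hypotheses of Lemmas~\ref{lem:35} and~\ref{lem:35bis}, that is, estimates~\eqref{eq:10}--\eqref{eq:13} and their $L^{\min(1,p)}$ analogues, for the family $\psi_{1,t}=\Kc_{t^{\grado'/\grado}\Lc'}(\eta_t)$ and $\psi_{1,0}=\Kc_{\Lc'}(\eta_0)$. Measurability of $t\mapsto \psi_{1,t}\in\Sr(G)$ follows from the Borel measurability of $(\eta_t)$ and the continuity of $\Kc_{\Lc'}\colon\Sr(\R)\to\Sr(G)$ (Proposition~\ref{prop:22}). The plan is to write each convolution $H_{k,j}*\psi_{1,t}$ as $f_r*g_s^*$ for two standard systems and then apply Lemma~\ref{lem:40} and Corollary~\ref{cor:13}.

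I would set $r=\eps^{j/\grado}$ and $s=t^{1/\grado}$. Then $H_{1,j}=\Kc_{r^\grado\Lc}(h_1)$, and $h_1$ vanishes near $0$, so it vanishes to every order there. By Lemma~\ref{lem:70}, after normalization the family $(H_{1,j})$ is a $(M\grado,\lambda',a,1,p)$-standard system for $M$ as large as we like and any $\lambda'$. Similarly $\psi_{1,t}=\Kc_{s^{\grado'}\Lc'}(\eta_t)$ with $\eta_t$ bounded in $\Sr(\R)$ and vanishing to order $m_1$ at $0$, so Lemma~\ref{lem:70} (applied to $\Lc'$) makes $(\psi_{1,t}^*)$ a standard system with cancellation of order $m_1\grado'$ in the $U$-degree scale. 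The degree scale for $G$ is the same for both operators, so the $s$-parameter is the distance scale, and cancellation of order $m_1\grado'$ gives $(s/r)^{m_1\grado'}$. However, the required gain is $(t/\eps^j)^{m_1}=(s/r)^{m_1\grado}$. I therefore expect to have to rescale: use $s=t^{1/\grado'}$ in the definition so that $\Kc_{t\Lc'}(\eta_t)$ gives $(t/\eps^{j\grado'/\grado})^{m_1}$. This bookkeeping between the exponents $\grado$ and $\grado'$ is the point to check carefully. Possibly the cleanest route is to note that $t^{\grado'/\grado}\Lc'=(t^{1/\grado})^{\grado'}\Lc'$, so the distance scale is $t^{1/\grado}$, matching $r=\eps^{j/\grado}$. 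Then cancellation of order $m_1\grado'$ in degree gives $(s/r)^{m_1\grado'}$ only if $\grado'\Meg\grado$.

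To avoid this issue I would exploit the freedom in $m_1$ (Lemma~\ref{lem:35} only needs $m_1>\alpha/\grado$) and the fact that Lemma~\ref{lem:40} gives $\min((r/s)^{\lambda},(s/r)^{\mi})$ with $\lambda$ arbitrary from $h_1$. The $m_0$ side, $t\Meg\eps^j$, is handled by the arbitrarily large cancellation of $h_1$: we get $(r/s)^{M}\meg (t/\eps^j)^{m_0}$ for $t\meg\rho$ once $M$ is large, which gives~\eqref{eq:12} and~\eqref{eq:13}. For~\eqref{eq:11} we need exactly the cancellation of $\eta_t$. Here I would factor $\eta_t(\lambda)=\lambda^{m_1}\widetilde\eta_t(\lambda)$ (as in the proof of Lemma~\ref{lem:70}) and write $\psi_{1,t}=t^{m_1\grado'/\grado}\Lc'^{m_1}\Kc_{t^{\grado'/\grado}\Lc'}(\widetilde\eta_t)$. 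I would then move $\Lc'^{m_1}$ onto $H_{1,j}$, using $\Lc'^{m_1}\in U_{m_1\grado'}$ and $\abs{\Lc'^{m_1}H_{1,j}}$-type bounds from Corollary~\ref{cor:12}, which give $r^{-m_1\grado'}$. This yields $(s/r)^{m_1\grado'}=(t/\eps^j)^{m_1\grado'/\grado}$. With the normalization $\psi_{1,t}=\Kc_{t^{\grado'/\grado}\Lc'}$ this is precisely $(t/\eps^j)^{m_1}$ after matching: $t^{\grado'/\grado}$ is the spectral scale, so $\Lc'$ eigenvalue scale $\sim t^{-\grado'/\grado}$, while $\Lc'^{m_1}$ acting on $H_{1,j}$ costs $\eps^{-j m_1\grado'/\grado}$, giving $(t/\eps^j)^{m_1\grado'/\grado}$. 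The main obstacle is exactly this exponent: I expect that the intended statement is, up to the normalization, consistent with it, and I would adjust by interpreting $t$ in Lemma~\ref{lem:35} via $\mi$ pushed forward appropriately. The weights $(1+\abs{x}_*/\eps^{j/\grado})^a$ are absorbed by Corollary~\ref{cor:13}.

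Finally,~\eqref{eq:10} follows from Young's inequality, with $H_{0,0}*\psi_{1,t}$ controlled in weighted $L^1$ via Lemma~\ref{lem:36}. For small $t$ one uses the same factorization, moving $\Lc'^{m_1}$ onto $H_{0,0}$, which gives $t^{m_1}$. The $L^{\min(1,p)}$ versions for Lemma~\ref{lem:35bis} are identical, using Lemma~\ref{lem:36} and Corollary~\ref{cor:12} with $p<1$; these produce the extra factor $\eps^{j(1/p-1)_+Q_*/\grado}$ through the exponent $(1/p-1)Q_*$.
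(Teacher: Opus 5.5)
\paragraph{Verdict.} Your route is the paper's: Lemma~\ref{lem:70} makes $(H_{1,j})$ (scale $r=\eps^{j/\grado}$) and $(\psi_{1,t}^*)$ (scale $s=t^{1/\grado}$, since $t^{\grado'/\grado}\Lc'=s^{\grado'}\Lc'$) into standard systems, and Lemma~\ref{lem:40} with Corollary~\ref{cor:13} gives \eqref{eq:10}--\eqref{eq:13}. Your treatment of the $m_0$-side, \eqref{eq:12}--\eqref{eq:13}, via the arbitrarily high cancellation of $h_1$, is fine. There is, however, a genuine gap on the $m_1$-side.

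\paragraph{The gap.} The function $\eta_t$ vanishes to order $m_1$ as a function of $\Lc'$. So the cancellation of $\psi_{1,t}$ has $U$-degree $m_1\grado'$, and for $s\meg r$ you get
\[
(s/r)^{m_1\grado'}=(t/\eps^j)^{m_1\grado'/\grado}
\]
in \eqref{eq:11}, and $t^{m_1\grado'/\grado}$ in \eqref{eq:10}. Writing $\eta_t(\lambda)=\lambda^{m_1}\widetilde\eta_t(\lambda)$ and moving $(\Lc')^{m_1}$ onto $H_{1,j}$ (or onto $H_{0,0}$) is exactly the mechanism inside Lemma~\ref{lem:70}. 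It yields the same exponent, as your own computation shows. Your sentence ``this is precisely $(t/\eps^j)^{m_1}$ after matching'' is false unless $\grado'=\grado$.

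Pushing $\mi$ forward cannot repair this. The weight $t^{-\alpha/\grado}$ is transported along with $\mi$, and the condition that actually matters, $m_1\grado'>\alpha$, is invariant under reparametrization.

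\paragraph{How to close it.} Say explicitly that the argument proves \eqref{eq:10}--\eqref{eq:11} with $m_1\grado'/\grado$ in place of $m_1$. This uses only that $t\meg\eps^j$ in \eqref{eq:11} and that $t$ ranges over the bounded set $\supp\mi$ in \eqref{eq:10}. Then apply Lemma~\ref{lem:35} (and Lemma~\ref{lem:35bis}) with this exponent, which requires $m_1\grado'>\alpha$.

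\begin{itemize}
\item If $\grado'\Meg\grado$, this is automatic from $m_1>\alpha/\grado$. This covers $\Lc'=\Lc$, as in Corollary~\ref{cor:15}.
\item If $\grado'<\grado$, the bound you are after is unavailable, and the conclusion as literally stated fails. Take $\R^n$, $\Lc=\Delta^2$, $\Lc'=\Delta$, $\alpha=3$, $m_1=1$, $\eta_t(\lambda)=\lambda\ee^{-\lambda^2}$, and $\mi=\sum_{j\Meg 1}\delta_{2^{-4j}}$. At $t=2^{-4j}$, the Fourier multiplier of $t^{-\alpha/\grado}\psi_{1,t}$ is, up to harmless constants, $2^{j}\abs{\xi}^2\ee^{-2^{-4j}\abs{\xi}^4}$. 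So the left-hand side is infinite for any nonzero Schwartz $f$ with Fourier transform supported in $\{1\meg\abs{\xi}\meg 2\}$, while the right-hand side is finite.
\end{itemize}

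The paper's one-line proof passes over the same point. It is harmless in Corollary~\ref{cor:16}, because there the $\eta_t$ with $t>0$ vanish near $0$, hence to every order.

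\paragraph{A smaller point.} Lemma~\ref{lem:40} is stated only for exponents $\Meg 1$, and Young's inequality fails below $1$. So for Lemma~\ref{lem:35bis} with $p<1$ you should derive the $L^p$ bounds from the weighted $L^1$ bounds, with weight $(1+\abs{x}_*/\eps^{j/\grado})^a$ and $a$ large, using H\"older's inequality and Lemma~\ref{lem:37}. This is what produces the factor $\eps^{j(1/p-1)Q_*/\grado}$. Lemma~\ref{lem:36} and Corollary~\ref{cor:12} only control single kernels, not the convolution $H_{1,j}*\psi_{1,t}$.
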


\begin{proof}
	It will suffice to show that~\eqref{eq:10} to~\eqref{eq:13} are satisfied with this choice of $(\psi_{1,t})$. This follows from Lemmas~\ref{lem:70} and~\ref{lem:40},  and Corollary~\ref{cor:13}.
\end{proof}

\begin{cor}\label{cor:16}
	Take $p,q\in (0,\infty]$, with $p<\infty$, $\alpha\in \R$, $\eps,\eps'\in (0,1)$, a formally self-adjoint weighted subcoercive operator $\Lc'$ with degree $\grado'$ on $G$, $(\psi_j)\in \Psi_\eps(\Lc)$, and $(\psi'_j)\in \Psi_{\eps'}(\Lc')$.  Then, there is a constant $C>0$ such that
	\[
	\norm*{\eps'^{-j\alpha/\grado'} \norm{ f*\psi'_j}_{L^p(G)}}_{\ell^q_j(\N)}\meg C \norm*{\eps^{-j\alpha/\grado} \norm{ f*\psi_j}_{L^p(G)}}_{\ell^q_j(\N)},
	\]
	\[
	 \norm*{\norm{\eps'^{-j\alpha/\grado'}  (f*\psi'_j)(x)}_{\ell^q_j(\N)}}_{L^p_x(G)}\meg C \norm*{\norm{\eps^{-j\alpha/\grado}  (f*\psi_j)(x)}_{\ell^q_j(\N)}}_{L^p_x(G)},
	\]
	and
	\[
	\begin{split}
	& \norm{\eps'^{-j\alpha/\grado'}  (f*\psi'_j)(x)}_{\Cc^q_{(j,x)}(\eps'^{1/\grado'})} \meg  C \norm{\eps^{-j\alpha/\grado}  (f*\psi_j)(x)}_{\Cc^q_{(j,x)}(\eps^{1/\grado})}
	\end{split}
	\]
	for  every $f\in \Sr'(G)$.
\end{cor}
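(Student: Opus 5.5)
The plan is to derive Corollary~\ref{cor:16} from Lemmas~\ref{lem:35} and~\ref{lem:35bis}, through Corollary~\ref{cor:11}. We read $(\psi_j)\in\Psi_\eps(\Lc)$ as the reference decomposition $(\psi_{2,j})$, and we encode $(\psi'_j)$ as a family $(\psi_{1,t})$ indexed by a discrete measure.

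\textbf{Encoding $(\psi'_j)$.} Write $\psi'_j=\Kc_{\eps'^j\Lc'}(\phi'_j)$, with $(\phi'_j)$ bounded in $C^\infty_c(\R)$ and $0\notin\overline{\bigcup_{j\Meg1}\supp\phi'_j}$. Set
\[
\mi\coloneqq\sum_{j\Meg1}\delta_{t_j},\qquad t_j\coloneqq\eps'^{j\grado/\grado'},
\]
so that $t_j^{\grado'/\grado}=\eps'^j$. Then $\mi\in\Mc_\Car$: its support is bounded, and each dyadic interval $(r,2r]$ contains at most a bounded number of the $t_j$. Take $\psi_{1,0}\coloneqq\psi'_0$, $\psi_{1,t_j}\coloneqq\psi'_j$, and $\psi_{1,t}\coloneqq0$ otherwise. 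In the notation of Corollary~\ref{cor:11} this means $\eta_0=\phi'_0$ and $\eta_{t_j}=\phi'_j$. The family $(\eta_t)$ is bounded in $\Sr(\R)$. Since $\phi'_j$ vanishes near $0$ for $j\Meg1$, all its derivatives vanish at $0$, so we may choose $m_1\in\N$ as large as we like. We pick $m_0<\alpha/\grado<m_1$.

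\textbf{Conditions on $m_0$.} Corollary~\ref{cor:11} only asks for vanishing at $0$, which gives~\eqref{eq:11}. The bounds~\eqref{eq:12} and~\eqref{eq:13}, with a given $m_0$, require decay of $H_{1,j}*\psi_{1,t}$ when $t\Meg\eps^j$. I would obtain this by writing $\psi_{1,t}$ as a function of $\Lc'$ in the form $\Lc'^{-M}$ applied to a standard system, and $H_{1,j}$ likewise as an $\Lc^{M}$-type standard system. These are available because $h_1$ vanishes near $0$ and because $\phi'_j$ has compact support, so that the relevant quotient is still Schwartz. Lemma~\ref{lem:40} and Corollary~\ref{cor:13}, applied to two standard systems of orders $(\lambda,\mi)$, give $\min((t/s)^\lambda,(s/t)^\mi)$ with $\mi$ arbitrary. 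This yields~\eqref{eq:12} and~\eqref{eq:13} for any $m_0$. This is exactly the content Corollary~\ref{cor:11} claims, so I take it as given, extending it if necessary to the larger class of $\eta_t$ that vanish near $0$.

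\textbf{Applying the lemmas.} Lemma~\ref{lem:35} then bounds $\norm{f*\psi'_0}_{L^p}$ plus $\norm{\norm{t^{-\alpha/\grado}(f*\psi_{1,t})(x)}_{L^q_t(\mi)}}_{L^p_x}$. Here $t_j^{-\alpha/\grado}=\eps'^{-j\alpha/\grado'}$, so the $L^q(\mi)$ norm is precisely the $\ell^q_j$ norm over $j\Meg1$; adding the $j=0$ term gives the Triebel--Lizorkin inequality. The Besov inequality follows in the same way from Lemma~\ref{lem:35bis}.

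\textbf{The $\Cc^q$ case.} This is the main technical point, since the index parameter changes. We need
\[
\norm{\cdot}_{\Cc^q_{(t,x)}(\mi,1/\grado)}\asymp\norm{\cdot}_{\Cc^q_{(j,x)}(\eps'^{1/\grado'})}.
\]
The remark preceding Definition~\ref{def:12} gives this equivalence, with the choice $a=1/\grado$ and base $\eps'^{\grado/\grado'}$, since $t_j^{1/\grado}=\eps'^{j/\grado'}$. The $j=0$ term is controlled by $\norm{f*\psi_{1,0}}_{L^\infty}$ through Remark~\ref{oss:8}. Hence the second inequality of Lemma~\ref{lem:35} gives the third estimate.
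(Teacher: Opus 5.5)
Your proposal is correct and is essentially the paper's proof, which simply invokes Corollary~\ref{cor:11} with $\mi=\sum_{j\Meg 1}\delta_{\eps'^{j\grado/\grado'}}$; this is exactly your encoding $\eta_0=\phi'_0$, $\eta_{t_j}=\phi'_j$. Your extra discussion of why \eqref{eq:12}--\eqref{eq:13} hold for arbitrary $m_0$ is already covered by Corollary~\ref{cor:11} and can be dropped: there the decay comes from the vanishing of $h_1$ near $0$ together with the smoothness of $\psi_{1,t}$, via Lemma~\ref{lem:40}.
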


\begin{proof}
	The assertion follows by means of  Corollary~\ref{cor:11} (with $\mi=\sum_{j=1}^\infty \delta_{\eps'^{j\grado/\grado'}}$).
\end{proof}

\begin{deff}\label{def:10}
	Take $p,q\in (0,\infty]$ and $\alpha\in \R$. Take $\eps\in (0,1)$ and $(\psi_{j})\in \Psi_\eps$. Then, we define $B^{p,q}_\alpha(G)$ as the space of $f\in \Sr'(G)$ such that
	\[
	\norm*{\eps^{-j\alpha/\grado} \norm{ f*\psi_{j}}_{L^p(G)}}_{\ell^q_j(\N)}<\infty,
	\]
	endowed with the corresponding quasi-norm. We define $\mathring B^{p,q}_\alpha(G)$ as the closure of $\Sr(G)$ in $B^{p,q}_\alpha(G)$.
	
	If  $p<\infty$, then we define $F^{p,q}_\alpha(G)$ as the space of $f\in \Sr'(G)$ such that
	\[
	\norm*{ \norm{\eps^{-j\alpha/\grado} (f*\psi_{j})(x)}_{\ell^q_j(\N)}}_{L^p_x(G)}<\infty,
	\]
	endowed with the corresponding quasi-norm. We define $F^{\infty,q}_\alpha(G)$ as the space of $f\in \Sr'(G)$ such that
	\[
	  \norm{\eps^{-j\alpha/\grado} (f*\psi_{j})(x)}_{\Cc^q_{(j,x)}(\eps^{1/\grado})}<\infty,
	\]
	endowed with the corresponding quasi-norm.
	We define $\mathring F^{p,q}_\alpha(G)$ as the closure of $\Sr(G)$ in $F^{p,q}_\alpha(G)$.
\end{deff}

Notice that Corollary~\ref{cor:16} shows that the spaces $B^{p,q}_\alpha(G)$ and $F^{p,q}_\alpha(G)$ are well defined and do not depend on the choice of the formally self-adjoint weighted subcoercive operator $\Lc$.
 
We now pass to the generalization of the second half of~\cite[Theorem 1]{Triebel2}. We shall only consider the case $q\Meg 1$ and $p>1$, since it is simpler to formulate and prove, and meets our immediate needs. Besides that,  for more general $p$ and $q$, we would not be able to prove the analogous result for every $f\in \Sr'(G)$, but only for $f\in F^{p,q}_\alpha(G)$.

\begin{lem}\label{lem:42}
	Take $p,q\in [1,\infty]$, $p\neq 1,\infty$, $\alpha\in \R$, and $\eps\in (0,1)$. Take a positive Radon measure $\nu$ on $(0,+\infty)$ such that there are $N\in\Z$ and $c>0$ such that $\nu((\eps^{j+1},\eps^j])\Meg c$ for every $j\Meg N+1$. Define $H_{0,j}$ and $H_{1,j}$ as in Lemma~\ref{lem:35}. 
	In addition, take a $\nu$-measurable family $(\psi_{1,t})_{t\Meg 0}$ of elements of $\Sr(G)$, and assume that there is a family $(\psi_{2,t})_{t\Meg 0}$ of elements of $L^\infty(G)$ such that $H_{1,N+j}=\psi_{1,t}*\psi_{2,t}$ for every $j\Meg 1$ and for every $t\in (\eps^{j+N+1},\eps^{j+N}]$, and such that $H_{0,N}=\psi_{1,0}*\psi_{2,0}$.
	Assume that there are $a>D$ and $C>0$ such that 
	\[
	\norm{\psi_{2,t}(1+\abs{\,\cdot\,}_*/t^{1/\grado})^a}_{L^\infty(G)}\meg C t^{- Q_*/\grado}
	\]
	for every $t>0$.
	
	Then, there is a constant $C'>0$ such that
	\[ 
	\norm{f}_{F^{p,q}_\alpha(G)}\meg C'\norm*{\norm{t^{-\alpha/\grado}( f*\psi_{1,t})(x) }_{L^q_t(\nu)}}_{L^p_x(G)} 
	\]
	and
	\[
	\norm{f}_{F^{\infty,q}_\alpha(G)}\meg C'\norm{f*\psi_{1,t}}_{L^\infty(G)}+C'\norm{t^{-\alpha/\grado}( f*\psi_{1,t})(x) }_{\Cc^q_{(t,x)}(\nu,1/\grado)} 
	\]
	for every $f\in \Sr'(G)$.
\end{lem}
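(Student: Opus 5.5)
The plan is to reduce the left-hand side, via Corollary~\ref{cor:16}, to a discrete quasi-norm built from the kernels $H_{0,N}$ and $H_{1,N+j}$. Each discrete term is then rewritten through the factorization $H_{1,N+j}=\psi_{1,t}*\psi_{2,t}$ and dominated by a maximal function of $f*\psi_{1,t}$. Finally, the maximal function is removed by Corollary~\ref{cor:18} (for $p<\infty$) or Lemma~\ref{lem:2c} (for $p=\infty$).

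\emph{Choice of the discrete system.} Set $\psi'_0\coloneqq H_{0,N}=\Kc_{\eps^N\Lc}(h_0)$ and $\psi'_j\coloneqq H_{1,N+j}=\Kc_{\eps^{N+j}\Lc}(h_1)$ for $j\Meg 1$. Then $(\psi'_j)$ belongs to $\Psi_\eps$ up to an index shift by $N$. Indeed, the multipliers are $h_0(\eps^N\,\cdot\,)$ and $h_1$, which form a bounded family in $C^\infty_c(\R)$. Moreover, $0\notin\supp h_1$. Finally, $\sum_j\abs{h(\eps^j\lambda)}\Meg 1$ everywhere: $h_0(\eps^N\lambda)=1$ for $\abs{\lambda}\meg\eps^{-N}$, and $h_1(\eps^{N+j}\lambda)=1$ for $\eps^{1-N-j}\meg\abs{\lambda}\meg\eps^{-N-j}$. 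By Corollary~\ref{cor:16}, $\norm{f}_{F^{p,q}_\alpha}$ (resp.\ $\norm{f}_{F^{\infty,q}_\alpha}$) is therefore controlled by the corresponding discrete quasi-norm with these $\psi'_j$. The shift by $N$ only introduces the constant $\eps^{-N\alpha/\grado}$.

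\emph{Pointwise domination.} Fix $j\Meg1$ and $t\in I_j\coloneqq(\eps^{j+N+1},\eps^{j+N}]$. By the factorization and the assumed bound on $\psi_{2,t}$,
\[
\abs{(f*\psi'_j)(x)}\meg\int_G\abs{(f*\psi_{1,t})(y)}\,\abs{\psi_{2,t}(y^{-1}x)}\,\dd\beta(y)\meg C\,\Nc_{1,a,t^{1/\grado}}(f*\psi_{1,t})(x).
\]
Since $\nu(I_j)\Meg c$ and $q\Meg1$, raising to the power $q$ and averaging over $I_j$ with respect to $\nu$ gives
\[
\eps^{-j\alpha/\grado}\abs{(f*\psi'_j)(x)}\meg C''\,\big\|\chi_{I_j}(t)\,t^{-\alpha/\grado}\,\Nc_{1,a,t^{1/\grado}}(f*\psi_{1,t})(x)\big\|_{L^q_t(\nu)},
\]
because $t^{-\alpha/\grado}\asymp\eps^{-j\alpha/\grado}$ on $I_j$. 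Summing over $j$ in $\ell^q$ recombines the intervals $I_j$ into $(0,\eps^{N+1}]$. On this bounded range the factor $(1+\kappa(t))^{(Q_*-D)}$ appearing in Corollary~\ref{cor:18} is harmless. Corollary~\ref{cor:18}, applied with $p_0=1<p$, $q\Meg 1$, $a>D$, $\kappa(t)=t^{1/\grado}$, and $\nu$ restricted to $(0,\eps^{N+1}]$, then bounds the $L^p_x\ell^q_j$ quasi-norm by $\norm{\norm{t^{-\alpha/\grado}(f*\psi_{1,t})(x)}_{L^q_t(\nu)}}_{L^p_x}$. This is where $p\neq1$ is needed. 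For the term $j=0$, the same estimate gives $\abs{f*\psi'_0}\meg C\,\Nc_{1,a,\eps^{N/\grado}}(f*\psi_{1,0})\meg C'\Mc_1(f*\psi_{1,0})$ by Lemma~\ref{lem:39}. The Hardy--Littlewood maximal theorem for $p>1$ on the doubling space $G$ then finishes, provided the left-hand side is read as containing $\norm{f*\psi_{1,0}}_{L^p}$, as in Lemma~\ref{lem:35}; otherwise one absorbs it using $\nu(\{0\})$.

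\emph{The case $p=\infty$.} Here the $j=0$ term is handled by Young's inequality: $\norm{f*\psi'_0}_{L^\infty}\meg\norm{\psi_{2,0}}_{L^1}\norm{f*\psi_{1,0}}_{L^\infty}$, and $\psi_{2,0}\in L^1$ by the decay assumption with $a>D$. For $j\Meg1$, the same pointwise averaging is applied inside each Carleson box $[M,\infty)\times B(x,\eps^{M/\grado})$. The discrete box corresponds to $(0,\eps^{M+N}]\times B(x,\eps^{M/\grado})$ in the continuous variable. Its radius is comparable to $t^{1/\grado}$ at the top scale, so by the Remark following the definition of $\Cc^q(\eps)$ the discrete $\Cc^q(\eps^{1/\grado})$ quasi-norm is dominated by $\norm{t^{-\alpha/\grado}\Nc_{1,a,t^{1/\grado}}(f*\psi_{1,t})(x)}_{\Cc^q_{(t,x)}(\nu,1/\grado)}$. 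Lemma~\ref{lem:2c}, with $p_0=1$, $b=a>D$ and $\kappa(t)=t^{1/\grado}$, then removes the maximal function.

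I expect this last step to be the main obstacle. One has to match the discrete boxes with the continuous ones for $\nu$ with the correct normalizations, and check that the factor $t^{-\alpha/\grado}$ commutes with $\Nc$ up to constants on each $I_j$. One must also verify $(\nu\otimes\beta)$-measurability of $(t,x)\mapsto(f*\psi_{1,t})(x)$, which follows as in the proof of Lemma~\ref{lem:35}.
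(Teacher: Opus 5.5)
Your proposal is correct and follows essentially the same route as the paper. Like the paper, you take $H_{0,N}$ and $H_{1,N+j}$ as the discrete system in $\Psi_\eps$ and use the factorization to get $\abs{f*H_{1,N+j}}\meg C\,\Nc_{1,a,t^{1/\grado}}(f*\psi_{1,t})$ for $t\in(\eps^{j+N+1},\eps^{j+N}]$. You then average over these intervals using $\nu\Meg c$ there, and remove the maximal operator with Corollary~\ref{cor:18} (resp.\ Lemma~\ref{lem:2c} when $p=\infty$).

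The only deviation is the $j=0$ term: the paper handles it for every $p$ by Young's inequality, $\norm{f*H_{0,N}}_{L^p(G)}\meg\norm{\psi_{2,0}}_{L^1(G)}\norm{f*\psi_{1,0}}_{L^p(G)}$, rather than through $\Mc_1$. As you observed, the term $\norm{f*\psi_{1,0}}_{L^p(G)}$ must appear in the first inequality. It belongs on the right-hand side, not the left, and it cannot be absorbed via $\nu(\{0\})$, since $\nu$ lives on $(0,+\infty)$. Finally, both your argument and the paper's tacitly use an integrability bound on $\psi_{2,0}$, which the hypothesis, stated only for $t>0$, does not literally provide.
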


Notice that the rather awkward requirement on the existence of the $(\psi_{2,j})$ is the only replacement we found for the Tauberian conditions which appear in~\cite[Theorem 1]{Triebel2}. This requirement can be easily met for our immediate needs; nonetheless, it substantially hinders any further applications of results of this kind (which, in the classical case, lead for instance to an efficient proof of the characterization by differences).

\begin{proof} 
	\textsc{Step I} Set $\psi_{3,0}\coloneqq H_{0,N}$ and $\psi_{3,j}\coloneqq H_{1,N+j}$ for every $j\Meg 1$, and observe that  $(\psi_{3,j})\in \Psi_\eps$.
	Then, observe  that
	\[
	f*\psi_{3,0}=  f*\psi_{1,0}*\psi_{2,0} ,
	\]
	so that
	\[
	\norm{f*\psi_{3,0}}_{L^p(G)}\meg \norm{f*\psi_{1,0}}_{L^p(G)} \norm{\psi_{2,0} }_{L^1(G)}
	\]
	by Young's inequality.  
	Next, for $j\Meg 1$ and $t\in (\eps^{j+N+1},\eps^{j+N}]$,
	\[
	f*\psi_{3,j}= f*\psi_{1,t}*\psi_{2,t} ,
	\]
	so that 
	\[
	\begin{split}
		\abs{f*\psi_{3,j}}&\meg C t^{-Q_* /\grado}\abs{f*\psi_{1,t}}*(1+\abs{\,\cdot\,}_*/t^{1/\grado})^{-a} \\ 
			&=C  \Nc_{1,a,t^{1/\grado}}(f*\psi_{1,t}).
	\end{split}
	\]
	Consequently,
	\[
	\begin{split}
	\norm*{\eps^{-j \alpha/\grado}(f*\psi_{3,j})(x)}_{\ell^q_j(\N^*)}&\meg C C_1 c^{-1/q} \norm*{\eps^{-j \alpha/\grado}\norm{ \Nc_{1,a,t^{1/\grado}}(f*\psi_{1,t})(x) }_{L^q_t( (\eps^{j+1+N},\eps^{j+N}], \nu)}  }_{\ell^q_j(\N^*)}\\
		&\meg  C C_1 c^{-1/q}\eps^{ N\alpha/\grado-\alpha_-/\grado} \norm{ t^{-\alpha/\grado} \Nc_{1,a,t^{1/\grado}}(f*\psi_{1,t})(x) }_{L^q_t(\nu)} .
	\end{split}
	\]
	Then, Corollary~\ref{cor:18} shows that there is a constant $C_2>0$ such that
	\[
	\norm*{\norm*{\eps^{-j \alpha/\grado}(f*\psi_{3,j})(x)}_{\ell^q_j(\N^*)}}_{L^p_x(G)}\meg C_2 \norm*{\norm{t^{-\alpha/\grado}( f*\psi_{1,t})(x) }_{L^q_t(\nu)}}_{L^p_x(G)} ,
	\] 
	whence the conclusion. 
	
	\textsc{Step II} The proof of the second assertion proceeds in a similar way, replacing Corollary~\ref{cor:18} with Lemma~\ref{lem:2c}.
\end{proof}

\begin{lem}\label{lem:42bis}
	Take $p,q\in [1,\infty]$,   $\alpha\in \R$, and $\eps\in (0,1)$. Take a positive Radon measure $\nu$ on $(0,+\infty)$ such that there are $N\in\Z$ and $c>0$ such that $\nu((\eps^{j+1},\eps^j])\Meg c$ for every $j\Meg N+1$. Define $H_{0,j}$ and $H_{1,j}$ as in Lemma~\ref{lem:35}. 
	In addition, take a Borel measurable family $(\psi_{1,t})_{t\Meg 0}$ of elements of $\Sr(G)$, and assume that there is a family $(\psi_{2,t})_{t\Meg 0}$ of elements of $\Sr(G)$ such that $H_{1,j+N}=\psi_{1,t}*\psi_{2,t}$ for every $j\Meg 1$ and for every $t\in (\eps^{j+N+1},\eps^{j+N}]$, and such that $H_{0,N}=\psi_{1,0}*\psi_{2,0}$.
	Assume that there is $C>0$ such that $\norm{\psi_{2,t} }_{L^1(G)}\meg C$ for every $t>0$.
	
	Then, there is a constant $C'>0$ such that
	\[ 
	\norm{f}_{B^{p,q}_\alpha(G)}\meg C\norm*{t^{-\alpha/\grado}\norm{ f*\psi_{1,t} }_{L^p (G)}  }_{L^q_t(\nu)}
	\]
	for every $f\in \Sr'(G)$.
\end{lem}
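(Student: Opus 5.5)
We will argue exactly as in \textsc{step I} of the proof of Lemma~\ref{lem:42}, but replacing the pointwise (maximal function) estimates with Young's inequality, which is available here since $p\Meg 1$. Define $\psi_{3,0}\coloneqq H_{0,N}$ and $\psi_{3,j}\coloneqq H_{1,N+j}$ for $j\Meg 1$, and check that $(\psi_{3,j})\in\Psi_\eps$. Indeed, $h_0(\eps^N\,\cdot\,)$ and $h_1(\eps^{N}\,\cdot\,)$ are fixed elements of $C^\infty_c(\R)$, $h_1$ vanishes near $0$, and $\sum_j \abs{h_1(\eps^{j}\lambda)}\Meg 1$ away from $[-1,1]$ while $h_0=1$ on $[-1,1]$. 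By Definition~\ref{def:10} and Corollary~\ref{cor:16}, $\norm{f}_{B^{p,q}_\alpha(G)}$ is then equivalent to $\norm{\eps^{-j\alpha/\grado}\norm{f*\psi_{3,j}}_{L^p(G)}}_{\ell^q_j(\N)}$ for every $f\in\Sr'(G)$. It therefore suffices to bound the latter quantity.

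For $j=0$ we use $f*\psi_{3,0}=f*\psi_{1,0}*\psi_{2,0}$, so Young's inequality gives $\norm{f*\psi_{3,0}}_{L^p}\meg C\norm{f*\psi_{1,0}}_{L^p}$. The associativity of the convolution is legitimate, since $\psi_{1,0},\psi_{2,0}\in\Sr(G)$ and $f\in\Sr'(G)$. The term $f*\psi_{1,0}$ is understood as part of the right-hand side, as in Lemma~\ref{lem:42}: either $0\in\supp\nu$ with an atom, or one adds $\norm{f*\psi_{1,0}}_{L^p}$ explicitly. For $j\Meg 1$ and $t\in(\eps^{j+N+1},\eps^{j+N}]$ we write $f*\psi_{3,j}=(f*\psi_{1,t})*\psi_{2,t}$, so that
\[
\norm{f*\psi_{3,j}}_{L^p(G)}\meg C\norm{f*\psi_{1,t}}_{L^p(G)}
\]
uniformly in $t$. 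We then average in $t$ over $(\eps^{j+N+1},\eps^{j+N}]$ with respect to $\nu$, using $\nu((\eps^{j+N+1},\eps^{j+N}])\Meg c$. For $q<\infty$ we take the $L^q(\nu)$ mean; for $q=\infty$ we take the essential supremum. This yields
\[
\norm{f*\psi_{3,j}}_{L^p}\meg C c^{-1/q}\norm{\norm{f*\psi_{1,t}}_{L^p}}_{L^q_t((\eps^{j+N+1},\eps^{j+N}],\nu)}.
\]
On that interval $\eps^{-j\alpha/\grado}\meg \eps^{N\alpha/\grado-\abs{\alpha}/\grado}t^{-\alpha/\grado}$. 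Since the intervals are pairwise disjoint, summing the $q$-th powers over $j\Meg1$ gives
\[
\norm{\eps^{-j\alpha/\grado}\norm{f*\psi_{3,j}}_{L^p}}_{\ell^q_j(\N^*)}\meg C'\norm{t^{-\alpha/\grado}\norm{f*\psi_{1,t}}_{L^p}}_{L^q_t(\nu)}.
\]
Combining this with the $j=0$ term concludes the proof.

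The only delicate point is measurability and the meaning of $f*\psi_{1,t}*\psi_{2,t}$ when $\norm{f*\psi_{1,t}}_{L^p}$ is finite. Measurability in $t$ follows from the Borel measurability of $t\mapsto\psi_{1,t}$ and the continuity of $\Sr(G)\ni\psi\mapsto f*\psi\in C^\infty(G)$. Associativity follows from the $\Sr'*\Oc'_C$ associativity stated earlier. If the right-hand side is infinite there is nothing to prove, so no additional obstacle arises. In contrast with Lemma~\ref{lem:42}, no decay of $\psi_{2,t}$ beyond the uniform $L^1$ bound is needed, precisely because Young's inequality applies directly to the $L^p$ norm.
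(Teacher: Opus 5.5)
Your proposal is correct and is the argument the paper intends; the paper omits the proof, saying it is Step~I of Lemma~\ref{lem:42} with the maximal-function estimates replaced by Young's inequality. As there, you set $\psi_{3,0}=H_{0,N}$ and $\psi_{3,j}=H_{1,N+j}$, bound $\norm{f*\psi_{3,j}}_{L^p(G)}\meg C\norm{f*\psi_{1,t}}_{L^p(G)}$ for $t\in(\eps^{j+N+1},\eps^{j+N}]$, and average over these disjoint intervals using $\nu((\eps^{j+N+1},\eps^{j+N}])\Meg c$. You are also right that $\norm{f*\psi_{1,0}}_{L^p(G)}$ must be added explicitly to the right-hand side. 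The ``atom of $\nu$ at $0$'' alternative is not available, since $\nu$ lives on $(0,+\infty)$. This matches how the paper treats the $j=0$ term in Lemma~\ref{lem:42} and the term $\norm{\ee^{-\Lc}f}_{L^p(G)}$ in Corollary~\ref{cor:15}.
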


The proof is similar to that of Lemma~\ref{lem:42} (in fact, slightly simpler since one may replace the use of maximal functions with an application of Young's inequality) and is therefore omitted.

\begin{cor}\label{cor:14}
	Keep the notation of Lemmas~\ref{lem:42} and~\ref{lem:42bis}.
	Take a bounded family $(\phi_t)_{t\Meg 0}$ of elements of $\Sr(\R)$ such that, for some $c>0$,
	\[
	\abs{\phi_0 (x)}\Meg c
	\]
	for every $x\in \R$ with $\abs{x}\meg \eps^{-N-1}$, and
	\[
	\abs{\phi_t(x)}\Meg c
	\]
	for every $x\in \R$ with $  \eps^{3}\meg \abs{x}\meg   \eps^{-1 }$ and for every $j\Meg 1$ and for every $t\in (\eps^{j+N+1},\eps^{j+N}]$.
	Then one may take $\psi_{1,0}\coloneqq \Kc_\Lc(\phi_0)$, $\psi_{1,t}\coloneqq \Kc_{t\Lc}(\phi_t)$ for $t\in (0,\eps^{N+1}]$, and $\psi_{1,t }=0$ for $t>\eps^{N+1}$ in Lemmas~\ref{lem:42} and~\ref{lem:42bis}.
\end{cor}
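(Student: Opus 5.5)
We must construct $\psi_{2,t}$ such that $H_{1,N+j}=\psi_{1,t}*\psi_{2,t}$ for $t\in(\eps^{j+N+1},\eps^{j+N}]$, $H_{0,N}=\psi_{1,0}*\psi_{2,0}$, with the decay bounds required in Lemmas~\ref{lem:42} and~\ref{lem:42bis}. The natural choice is through the functional calculus. Since $H_{1,N+j}=\Kc_{\eps^{N+j}\Lc}(h_1)$ and $\psi_{1,t}=\Kc_{t\Lc}(\phi_t)$, we set
\[
\psi_{2,t}\coloneqq \Kc_{\Lc}\bigl(\theta_t\bigr),\qquad \theta_t(\lambda)\coloneqq \frac{h_1(\eps^{N+j}\lambda)}{\phi_t(t\lambda)},
\]
and $\psi_{2,0}\coloneqq \Kc_\Lc(h_0(\eps^N\,\cdot\,)/\phi_0)$. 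Multiplicativity of the functional calculus then gives the factorisations.

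The first step is to check that these quotients are well defined and uniformly controlled. Write $u=t/\eps^{N+j}\in(\eps,1]$. On the support of $h_1(\eps^{N+j}\,\cdot\,)$ we have $\eps^2\meg\abs{\eps^{N+j}\lambda}\meg\eps^{-1}$, so $\abs{t\lambda}=u\abs{\eps^{N+j}\lambda}\in[\eps^3,\eps^{-1}]$. There $\abs{\phi_t(t\lambda)}\Meg c$ by hypothesis. Similarly, on the support of $h_0(\eps^N\,\cdot\,)$ we have $\abs{\lambda}\meg\eps^{-N-1}$, so $\abs{\phi_0}\Meg c$ there.

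We then rewrite $\theta_t(\lambda)=m_t(t\lambda)$ with $m_t(\mu)\coloneqq h_1(\mu/u)/\phi_t(\mu)$. Each $m_t$ is supported in $[-\eps^{-1},\eps^{-1}]$. Since $(\phi_t)$ is bounded in $\Sr(\R)$, $\abs{\phi_t}\Meg c$ on the support, and $u\in(\eps,1]$, the family $(m_t)$ is bounded in $C^\infty_c(\R)$, hence in every $B^{\infty,\infty}_s(\R)$. Thus $\psi_{2,t}=\Kc_{t\Lc}(m_t)$ is a dilated kernel with uniformly bounded compactly supported multiplier.

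The second step applies Corollary~\ref{cor:12} with $r=t^{1/\grado}$, $X=Y=I$, and $\rho$ chosen so that $r\meg\rho$ on the relevant range of $t$, which is bounded by $\eps^{N+1}$. Taking $p=\infty$ and $c=a$ for a fixed $a>D$ gives
\[
\norm{\psi_{2,t}(1+\abs{\,\cdot\,}_*/t^{1/\grado})^a}_{L^\infty(G)}\meg C t^{-Q_*/\grado},
\]
which is the hypothesis of Lemma~\ref{lem:42}. Taking $p=1$ gives $\norm{\psi_{2,t}}_{L^1(G)}\meg C$, which is the hypothesis of Lemma~\ref{lem:42bis}. The case $t=0$ is a single Schwartz kernel, handled by Proposition~\ref{prop:22}; one may also set $\psi_{2,t}$ arbitrary (say zero) outside the relevant intervals.

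Finally, the $\psi_{2,t}$ belong to $\Sr(G)$ by Proposition~\ref{prop:22}. The $\psi_{1,t}$ are Borel measurable in $t$ because $(\phi_t)$ is a Borel family. The identities $H=\psi_1*\psi_2$ hold in $\Sr'(G)$ because $\Kc_\Lc$ is multiplicative on bounded multipliers. The main obstacle is the uniform smoothness bound for $m_t$: it depends on the support matching above, which places $t\lambda$ exactly in the annulus $[\eps^3,\eps^{-1}]$ where $\phi_t$ is bounded below. Getting this matching right is what fixes the range $t\in(\eps^{j+N+1},\eps^{j+N}]$ stated in the hypotheses.
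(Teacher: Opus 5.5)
Your proof is correct and is essentially the paper's: the paper takes $\psi_{2,t}=\Kc_{t\Lc}\bigl(h_1(\eps^{j+N}t^{-1}\,\cdot\,)/\phi_t\bigr)$, which is exactly your $\Kc_{t\Lc}(m_t)$, and concludes with Corollary~\ref{cor:12}; you additionally spell out the support matching and the uniform $C^\infty_c$ bounds on $m_t$ that the paper leaves implicit. Your choice $\psi_{2,0}=\Kc_\Lc(h_0(\eps^{N}\,\cdot\,)/\phi_0)$ is the consistent one. The paper's $h_0(\eps^{N+1}\,\cdot\,)$ appears to be a misprint, since it would produce $H_{0,N+1}$ rather than $H_{0,N}$, and it would need $\abs{\phi_0}\Meg c$ on the larger set $\abs{x}\meg\eps^{-N-2}$.
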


\begin{proof}
	It suffices to set $\psi_{2,t}\coloneqq \Kc_{t\Lc}( h_1(\eps^{j+N}/t\,\cdot\,) /\phi_t   )$ for $j\Meg 1$ and $t\in (\eps^{j+N+1},\eps^{j+N}]$, and $\psi_{2,0}\coloneqq \Kc_\Lc(h_0(\eps^{N+1}\,\cdot\,)/\phi_0 )$,   and to apply Corollary~\ref{cor:12}.
\end{proof}

\begin{cor}\label{cor:15}
	Take $p,q\in [1,\infty]$, $\alpha\in \R$, $m\in \N$ with $m>\alpha/\grado$, and $\mi\in \Mc_\Samp$.  Then, there is a constant $C>0$ such that
	\[
	\frac{1}{C}\norm{f}_{B^{p,q}_\alpha(G)}\meg\norm{\ee^{-\Lc}f}_{L^p(G)}+ \norm*{t^{-\alpha/\grado}\norm{  (t\Lc)^m\ee^{-t \Lc}f  }_{L^p (G)}}_{L^q_t(\mi)}\meg C \norm{f}_{B^{p,q}_\alpha(G)},
	\]
	\[
	\frac{1}{C}\norm{f}_{F^{\infty,q}_\alpha(G)}\meg\norm{\ee^{-\Lc}f}_{L^\infty(G)}+ \norm{ t^{-\alpha/\grado} ((t\Lc)^m\ee^{-t \Lc})(x) }_{\Cc^q_{(t,x)}(\mi,1/\grado)}\meg C \norm{f}_{F^{\infty,q}_\alpha(G)}
	\]
	and, if $p\neq 1,\infty$,
	\[
	\frac{1}{C}\norm{f}_{F^{p,q}_\alpha(G)}\meg \norm{\ee^{-\Lc}f}_{L^p(G)}+\norm*{\norm{t^{-\alpha/\grado} ((t\Lc)^m\ee^{-t \Lc})(x) }_{L^q_t(\mi)}}_{L^p_x(G)}\meg C \norm{f}_{F^{p,q}_\alpha(G)}
	\]
	for every $f\in \Sr'(G)$.
\end{cor}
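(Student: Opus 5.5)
Both inequalities will come from the general comparison lemmas already proved, applied with one family built from the heat semigroup and one Littlewood--Paley family. For the upper bounds I will use Corollary~\ref{cor:11} (which feeds Lemmas~\ref{lem:35} and~\ref{lem:35bis}). There I take $\Lc'=\Lc$, so $\grado'=\grado$, and $\mi$ the given sampling measure. I set $\eta_0(\lambda)\coloneqq \ee^{-\lambda}$ and $\eta_t(\lambda)\coloneqq \lambda^m\ee^{-\lambda}$ for $t>0$, so that $\psi_{1,0}=\Kc_\Lc(\eta_0)$ and $\psi_{1,t}=\Kc_{t\Lc}(\eta_t)$ is the kernel of $(t\Lc)^m\ee^{-t\Lc}$. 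I choose $m_1\coloneqq m$, which lies in $\N$ and satisfies $m_1>\alpha/\grado$, and any $m_0<\alpha/\grado$. The $\eta_t$ vanish at $0$ to order $m$.

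The first obstacle is that $\lambda\mapsto \ee^{-\lambda}$ is not a Schwartz function on $\R$. It is, however, on $\sigma(\Lc)\subseteq[-\omega_0,+\infty)$. I will therefore replace $\eta_t$ by $\chi\,\eta_t$, where $\chi\in C^\infty(\R)$ equals $1$ on $[-\omega_0,\infty)$ and $0$ near $-\infty$. This does not change the kernels, and the family stays bounded in $\Sr(\R)$ for $t\in\supp\mi\subseteq(0,\max\supp\mi]$; it is bounded because it does not depend on $t$. If $\Lc$ is not positive, the factor $\lambda^m$ still vanishes at $0$ to order $m$, so this causes no problem. Corollary~\ref{cor:11} then gives the right-hand inequalities in all three cases, for every $p,q$ (with $p<\infty$ for $F$ and via the $\Cc^q$ version for $p=\infty$). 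Here $\norm{\ee^{-\Lc}f}$ is the term $\norm{f*\psi_{1,0}}$. The Carleson condition $\mi\in\Mc_\Car$ is exactly what Lemmas~\ref{lem:35} and~\ref{lem:35bis} require.

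For the lower bounds I will use Corollary~\ref{cor:14}, which feeds Lemmas~\ref{lem:42} and~\ref{lem:42bis}, with $\nu\coloneqq\mi$. The reverse Carleson condition gives $\mi((\eps' r,r])\Meg C$ for small $r$. Grouping consecutive dyadic-type intervals, I choose $\eps\in(0,1)$, a small power of $\eps'$, and $N$ large so that $\nu((\eps^{j+1},\eps^j])\Meg c$ for every $j\Meg N+1$. I take $\phi_0(\lambda)=\ee^{-\lambda}$ (cut off as before) and $\phi_t(\lambda)=\lambda^m\ee^{-\lambda}$. Then $\abs{\phi_t}\Meg c$ on $\eps^3\meg\abs\lambda\meg\eps^{-1}$, at least for $\lambda$ in the spectrum, which is the only place it matters.

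The delicate points are the following. First, $\phi_t$ vanishes at $0$ and may vanish on the negative part of $\sigma(\Lc)$, so the lower bound on $\abs{\phi_t}$ must only be checked on $\sigma(\Lc)\cap\{\eps^3\meg\abs\lambda\meg\eps^{-1}\}$. If $\Lc$ is not positive, I will first replace $\Lc$ by $\Lc_\omega$ for large $\omega$, which does not affect the spaces, and absorb the difference $\ee^{-t\omega}$. Second, $\phi_0(\lambda)=\ee^{-\lambda}$ is bounded below only on bounded sets, but that is all that is required, namely on $\abs\lambda\meg\eps^{-N-1}$. Third, the heat parametrisation uses $t\Lc$ rather than $t^\grado\Lc$, which matches the exponent $t^{-\alpha/\grado}$ and the scale $t^{1/\grado}$ in the statement.

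Lemmas~\ref{lem:42} and~\ref{lem:42bis} require $p,q\Meg1$, and Lemma~\ref{lem:42} requires $p\neq1,\infty$ for the $F^{p,q}$ case. This is why these restrictions appear in the statement. With all this in place, the lemmas give the left-hand inequalities, and their $\Cc^q$ variant handles $F^{\infty,q}_\alpha$. I expect verifying the hypotheses on $\nu$ and the spectral lower bound for $\phi_t$ to be the main technical obstacle; everything else is bookkeeping.
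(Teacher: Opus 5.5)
Your proposal is correct and is the paper's argument: the right-hand inequalities come from Corollary~\ref{cor:11} with $\Lc'=\Lc$, $m_1=m$, $\eta_0=\ee^{-\lambda}$ and $\eta_t=\lambda^m\ee^{-\lambda}$ (cut off far to the left). The left-hand inequalities come from Corollary~\ref{cor:14} with $\nu=\mi$, taking $\eps$ to be the reverse Carleson parameter.

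One small correction: the detour through $\Lc_\omega$ is unnecessary, and ``absorbing $\ee^{-t\omega}$'' would not be accurate, since $(t\Lc_\omega)^m\neq(t\Lc)^m$. It suffices to take the cutoff equal to $1$ on a half-line $[-M,+\infty)$ with $M$ large enough. Then the kernels are unchanged for $t$ in the support of $\mi$, and $\abs{\lambda}^m\ee^{-\lambda}$ and $\ee^{-\lambda}$ are bounded below on the sets Corollary~\ref{cor:14} requires, for all real $\lambda$ including negative ones.
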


Notice that this result does \emph{not} prove that the spaces $B^{p,q}_\alpha(G)$ and $F^{p,q}_\alpha(G)$ defined in Definition~\ref{def:10} are the same as the spaces $B^{p,q}_\alpha(\beta)$ and $F^{p,q}_\alpha(\beta)$ defined in~\cite{BCP}, since in this result we only consider $f$ in the space $\Sr'(G)$ instead of the strictly larger space $\Sc'(G)$ defined in~\cite{BCP}.  Notice, in addition, that we could \emph{not} allow $f\in \Sc'(G)$ in the above statement, since $f*\psi_{j}$ would not be well defined, in general, for $(\psi_{j})\in \Psi_\eps$, $\eps\in (0,1)$. Actual equality follows from the fact that   the spaces defined in~\cite{BCP} are contained in $\Lc_\omega^{-\alpha-1}L^p(G)\subseteq \Sr'(G)$ by~\cite[Theorem 7.9]{BCP} for a sufficiently large $\omega\in\R$.

\begin{proof}
	The assertion follows from Corollaries~\ref{cor:11} and~\ref{cor:14}. 
\end{proof}

\begin{prop}\label{prop:20}
	Take $p,q\in (0,\infty)$ and $\alpha\in \R$. Then, $ B^{p,q}_\alpha(G)=\mathring B^{p,q}_\alpha(G)$ and $F^{p,q}_\alpha(G)=\mathring F^{p,q}_\alpha(G)$.
\end{prop}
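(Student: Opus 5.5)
We prove density of $\Sr(G)$ in two stages: first we approximate by finitely many Littlewood--Paley pieces, then we truncate these pieces in space. Fix $\eps\in(0,1)$ and $(\psi_j)\in\widetilde\Psi_\eps$, with $\psi_j=\Kc_{\eps^j\Lc}(\phi_j)$. Choose a bounded family $(\eta_j)$ in $C^\infty_c(\R)$ such that $\eta_j\phi_j=\phi_j$, with supports satisfying the same restrictions (away from $0$ for $j\Meg1$). Set $\tilde\psi_j\coloneqq\Kc_{\eps^j\Lc}(\eta_j)$, so that $f*\psi_j=f*\psi_j*\tilde\psi_j$.

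\textbf{Step 1: approximation by finitely many pieces.} For $f\in B^{p,q}_\alpha$ (resp.\ $F^{p,q}_\alpha$), put $f_N\coloneqq\sum_{j\meg N}f*\psi_j$. By Lemma~\ref{lem:41}, $f=\lim_N f_N$ in $\Sr'(G)$. Because of the support restrictions, $(f-f_N)*\psi_k=\sum_{j>N,\,\abs{j-k}\meg M}f*\psi_j*\psi_k$ for a fixed $M$, and this vanishes unless $k>N-M$. Each term is estimated by $\Nc_{p_0,a,\eps^{k/\grado}}(f*\psi_j)$, using Lemma~\ref{lem:33} together with the $L^1$ bounds of Corollary~\ref{cor:12} for $\psi_k$ with weight $(1+\abs{\,\cdot\,}_*/\eps^{k/\grado})^a$; the scales $\eps^{j/\grado}$ and $\eps^{k/\grado}$ are comparable since $\abs{j-k}\meg M$. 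Corollary~\ref{cor:18} for $F$, or Corollary~\ref{cor:5} for $B$, then gives
\[
\norm{f-f_N}\lesssim\norm*{\norm{\eps^{-j\alpha/\grado}\chi_{\{j>N-M\}}(f*\psi_j)(x)}_{\ell^q_j}}_{L^p_x},
\]
and the analogous bound in the Besov case. The right-hand side tends to $0$ by dominated convergence, since $p,q<\infty$.

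\textbf{Step 2: each piece lies in the closure of $\Sr(G)$.} Fix $g\coloneqq f*\psi_j$. Then $g\in L^p(G)$ (by the $k=j$ term and Proposition~\ref{prop:10} or Corollary~\ref{cor:17}), $g$ is smooth, and $g=g*\tilde\psi_j$. Take cut-offs $\chi_R\in C^\infty_c(G)$ equal to $1$ on $B(e,R)$, with $X\chi_R$ uniformly bounded for $X\in U_\lambda$. Then $u_R\coloneqq(\chi_Rg)*\tilde\psi_j$ belongs to $\Sr(G)$ by Theorem~\ref{teo:8bis}(4), since $\chi_R g\in C^\infty_c$ and $\tilde\psi_j\in\Sr$. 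We have $g-u_R=((1-\chi_R)g)*\tilde\psi_j$, and we must show $\norm{((1-\chi_R)g)*\tilde\psi_j}_{B^{p,q}_\alpha}\to0$ (resp.\ in $F$). Only finitely many $k$, namely those near $j$, contribute, because $\tilde\psi_j*\psi_k=0$ otherwise. Consequently the quasi-norm reduces to a finite sum of terms $\norm{((1-\chi_R)g)*(\tilde\psi_j*\psi_k)}_{L^p}$, and for these the vector-valued structure in $\ell^q$ is irrelevant.

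\textbf{Main obstacle: the case $p<1$.} For $p\Meg1$, Young's inequality and $\norm{(1-\chi_R)g}_{L^p}\to0$ conclude Step 2. For $p<1$, Young's inequality is unavailable and $(1-\chi_R)g$ is no longer band-limited, so Corollary~\ref{cor:4} does not apply directly. Here we argue pointwise instead:
\[
\abs{((1-\chi_R)g)*K}(x)\meg\Nc_{\infty,a,r}\big((1-\chi_R)g\big)(x)\,\norm{K(1+\abs{\,\cdot\,}_*/r)^a}_{L^1},
\]
with $K=\tilde\psi_j*\psi_k$ and $r=\eps^{j/\grado}$. Moreover, $\Nc_{\infty,a,r}((1-\chi_R)g)\meg\Nc_{\infty,a,r}(g)$, which by Lemma~\ref{lem:33} is at most $C\,\Nc_{p_0,a,r}g\in L^p$, the last membership by Lemma~\ref{lem:39} and the maximal theorem, choosing $p_0<p$ and $a>D/p_0$. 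For each fixed $x$, the quantity $\Nc_{\infty,a,r}((1-\chi_R)g)(x)$ tends to $0$ as $R\to\infty$: indeed $g$ grows at most polynomially, and since $\Nc_{\infty,a',r}g(x)$ is finite for some $a'<a$, the supremum of $\abs{g(y)}(1+d(x,y)/r)^{-a}$ over $y\notin B(e,R)$ vanishes. Dominated convergence then yields $g-u_R\to0$ in the quasi-norm. Combining Steps 1 and 2, with the $p_0$-triangle inequality, gives $f\in\mathring B^{p,q}_\alpha$ (resp.\ $\mathring F^{p,q}_\alpha$).
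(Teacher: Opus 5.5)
Your proof is correct and follows the paper's argument. First you approximate $f$ by Littlewood--Paley partial sums (the paper uses $f*\psi'_N$ with $\psi_j=\psi'_j-\psi'_{j-1}$), via Corollary~\ref{cor:12}, Lemma~\ref{lem:33}, Corollary~\ref{cor:18} and dominated convergence. Then you truncate each band-limited piece in space as $(\chi g)*\tilde\psi\in\Sr(G)$. The only variations are in Step 2. For $p<1$ the paper dominates by $\abs{f*\psi'_j}*\abs{\Kc_{\eps^j\Lc}(\eta)}\in L^p$ using Corollary~\ref{cor:4}, whereas you use the pointwise bound via $\Nc_{\infty,a,r}$ together with Lemmas~\ref{lem:33} and~\ref{lem:39}. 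You also estimate the finitely many terms $k$ directly, where the paper notes that $L^p(G)$ and $F^{p,q}_\alpha(G)$ induce the same topology on band-limited distributions. Both variations amount to the same estimates.
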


\begin{proof}
	Take $f\in F^{p,q}_\alpha(G)$, $\eps\in (0,1)$, and $\phi\in C^\infty_c(\R)$ so that $\chi_{[-1,1]}\meg \phi\meg \chi_{[-1/\eps,1/\eps]}$. Define $\psi'_j\coloneqq \Kc_{\eps^j\Lc}(\phi)$ for every $j\in\N$, and define $\psi_0\coloneqq \psi'_0$ and $\psi_j\coloneqq \psi'_j-\psi'_{j-1}$ for every $j\Meg 1$, so that $(\psi_j)\in \widetilde \Psi_\eps$.
	Then, Lemma~\ref{lem:41} shows that $f=\lim\limits_{j\to \infty} f*\psi'_j$ in $\Sr'(G)$ and in $C^\infty(G)$. Let us prove that one has convergence in $F^{p,q}_\alpha(G)$. To this aim,  take $p_0<\min(p,q)$ and observe that $f*\psi'_j*\psi_{j'}=0$ if $j'\Meg j+2 $, whereas 
	\[
	f*\psi'_j*\psi_{j'}=\sum_{k=(j'-1)_+}^{\min(j,j'+1)} f*\psi_k*\psi_{j'} 
	\]
	if $j'\meg j+1$. Consequently, by Corollary~\ref{cor:12} we may find a constant  $C>0$ such that 
	\[
	\abs{f*\psi'_j*\psi_{j'}}\meg C \sum_{k=(j'-1)_+}^{j'+1} \Nc_{\infty, 2 D/p_0,\eps^{j'}}(f*\psi_k)
	\]
	for every $j,j'\in \N$. Since $\norm*{ \sum_{k=(j'-1)_+}^{j'+1} \Nc_{\infty, 2 D/p_0,\eps^{j'}}(f*\psi_k)(x) }_{L^{q,p}_{j',x}(\N,G)}$ is finite by Corollary~\ref{cor:18}, 
	the dominated convergence theorem ensures  that $f=\lim\limits_{j\to \infty} f*\psi'_j$ in $F^{p,q}_\alpha(G)$, whence our claim. Now, observe that there is $\eta\in C_c^\infty(\R)$ so that $\psi'_j=\psi'_j*\Kc_{\eps^j \Lc}(\eta)$ for every $j\in\N$, and that $F^{p,q}_\alpha(G)$ and $L^p(G)$ induce the same topology on the space $V_j\coloneqq \Sr'(G)*\Kc_{\eps^j \Lc}(\eta)$. Then, take $\tau_k\in C^\infty_c(G)$ so that $\chi_{B(e,k)}\meg \tau_k \meg \chi_{B(e,k+1)}$ for every $k\in\N$, and observe that we may reduce to proving that $[(f*\psi'_j)\tau_k]*\Kc_{\eps^j \Lc}(\eta)$ converges to $f*\psi'_j$ in the topology of $L^p(G)$. Observe first that it clearly converges in $\Sr'(G)$ and in $C^\infty(G)$. In addition,
	\[
	\abs{[(f*\psi'_j) \tau_k]*\Kc_{\eps^j\Lc}(\phi)}\meg \abs{f*\psi'_j}*\abs{\Kc_{\eps^j \Lc}(\phi)}\in L^p(G)
	\]
	for every $k\in\N$, thanks to Corollary~\ref{cor:4}. We may therefore conclude by dominated convergence. The assertion concerning Besov spaces is proved similarly.
\end{proof}

\begin{prop}\label{prop:20b}
	Take $p\in (0,\infty]$, $q\in (0,\infty)$, and $\alpha\in \R$. Then, $B^{p,q}_\alpha(G)\cap W^{\infty,p}(G)$ is dense in $B^{p,q}_\alpha(G)$.
	
	In particular, if $G$ is compact, then $ B^{p,q}_\alpha(G)=\mathring B^{p,q}_\alpha(G)$.
\end{prop}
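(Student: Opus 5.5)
We mimic the first half of the proof of Proposition~\ref{prop:20}: we approximate $f$ by its truncated Littlewood--Paley sums, which are smooth with all derivatives in $L^p$.

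Fix $\eps\in(0,1)$ and $\phi\in C^\infty_c(\R)$ with $\chi_{[-1,1]}\meg\phi\meg\chi_{[-1/\eps,1/\eps]}$. Set $\psi'_j\coloneqq\Kc_{\eps^j\Lc}(\phi)$, $\psi_0\coloneqq\psi'_0$ and $\psi_j\coloneqq\psi'_j-\psi'_{j-1}$ for $j\Meg1$, so that $(\psi_j)\in\widetilde\Psi_\eps$. For $f\in B^{p,q}_\alpha(G)$, Lemma~\ref{lem:41} gives $f*\psi'_j\to f$ in $\Sr'(G)$.

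\textsc{Step 1: the approximants lie in $W^{\infty,p}(G)$.} Choose $\eta\in C^\infty_c(\R)$ equal to $1$ on $\supp\phi$. Then $f*\psi'_j=(f*\psi'_j)*\Kc_{\eps^j\Lc}(\eta)$, and
\[
f*\psi'_j=\sum_{k\meg j}f*\psi_k\in L^p(G)
\]
is a finite sum. For $X\in U(G)$ we have $X(g*\Kc_{\eps^j\Lc}(\eta))=g*X\Kc_{\eps^j\Lc}(\eta)$. If $p\Meg1$, Young's inequality together with Corollary~\ref{cor:12} (the $L^1$ bound on $X\Kc_{\eps^j\Lc}(\eta)$) shows that $X(f*\psi'_j)\in L^p(G)$. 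If $p<1$, we use instead Lemma~\ref{lem:33}, or equivalently Corollary~\ref{cor:5}, whose argument via Corollary~\ref{cor:4} and discretization controls $\norm{X(g*\Kc_{\eps^j\Lc}(\eta))}_{L^p}$ by $\norm{g}_{L^p}$ for $g$ in the band-limited class $g=g*\Kc_{\eps^{j}\Lc}(\eta')$. Hence $f*\psi'_j\in W^{\infty,p}(G)\cap B^{p,q}_\alpha(G)$; membership in $B^{p,q}_\alpha$ follows from the tail estimate in Step 2.

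\textsc{Step 2: convergence in $B^{p,q}_\alpha$.} The same support computation as in Proposition~\ref{prop:20} applies. One has $(f-f*\psi'_j)*\psi_{j'}=0$ for $j'\meg j-1$. For $j'\Meg j$, $(f-f*\psi'_j)*\psi_{j'}$ is a combination of at most three terms $f*\psi_k*\Kc(\cdot)$ with $|k-j'|\meg1$, where the multipliers are uniformly bounded in $C^\infty_c$. By Corollary~\ref{cor:5} (with $\Lc'=\Lc$), uniformly in $j'$,
\[
\norm{(f-f*\psi'_j)*\psi_{j'}}_{L^p}\meg C\sum_{|k-j'|\meg1}\norm{f*\psi_k}_{L^p}.
\]
Therefore
\[
\norm{f-f*\psi'_j}_{B^{p,q}_\alpha}\meg C\norm*{\eps^{-j'\alpha/\grado}\norm{f*\psi_{j'}}_{L^p}}_{\ell^q_{j'}(j'\Meg j-1)},
\]
which tends to $0$ as $j\to\infty$ because $q<\infty$. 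This proves the density statement. Note that, unlike in Proposition~\ref{prop:20}, $p=\infty$ is allowed, since no truncation in space is needed.

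\textsc{Step 3: the compact case.} If $G$ is compact, $W^{\infty,p}(G)\subseteq C^\infty(G)=\Sr(G)$. For $p<1$ we use the smoothness directly: $f*\psi'_j=f*\psi'_j*\Kc_{\eps^j\Lc}(\eta)$ is $C^\infty$ because $\Kc_{\eps^j\Lc}(\eta)\in\Sr(G)$ by Proposition~\ref{prop:22}. Hence the approximants lie in $\Sr(G)$, and $B^{p,q}_\alpha=\mathring B^{p,q}_\alpha$.

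\textbf{Main obstacle.} The delicate step is the uniform $L^p$ bound for $p<1$, both for derivatives in Step 1 and in Step 2, since Young's inequality fails there. This is exactly what Corollaries~\ref{cor:4} and~\ref{cor:5} provide for band-limited functions, so we must make sure every function being convolved is of the form $g=g*\Kc_{\eps^{k}\Lc}(m_1)$ with $m_1$ fixed.
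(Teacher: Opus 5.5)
Your proposal is correct and follows essentially the paper's route: approximate $f$ by $f*\psi'_j$, observe that $(f-f*\psi'_j)*\psi_{j'}$ vanishes for $j'\meg j-1$ and is otherwise controlled by the neighbouring blocks $f*\psi_k$, $\abs{k-j'}\meg 1$, and conclude using $q<\infty$. The paper phrases this last step as dominated convergence, deferring the case $p<\infty$ to the proof of Proposition~\ref{prop:20} and using Young's inequality with $\sup_j\norm{\psi'_j}_{L^1(G)}<\infty$ for $p=\infty$, whereas you give an explicit tail bound via Corollary~\ref{cor:5} that covers all $p$ at once. You also check that the approximants lie in $W^{\infty,p}(G)$, which the paper leaves implicit.
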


\begin{proof}
	Take $f\in B^{p,q}_\alpha(G)$, and take $\eps,(\psi'_j)$, and $(\psi_j)$ as in the proof of Proposition~\ref{prop:20}. It will suffice to show that $f=\lim\limits_{j\to \infty} f*\psi'_j$ in $B^{p,q}_\alpha(G)$. The case $p<\infty$ has already been established in the proof of Proposition~\ref{prop:20}. Then, assume that $p=\infty$, and observe that $f*\psi'_j*\psi_{j'}=f*\psi_{j'}$ whenever $j\Meg j'+1$. It then follows that $f*\psi'_j*\psi_{j'}$ converges to $f*\psi_{j'}$ in $L^\infty(G)$ for every $j'\in\N$. Since, in addition, $\norm{f*\psi'_j*\psi_{j'}-f*\psi_{j'}}_{L^\infty(G)}\meg C \norm{f*\psi_{j'}}_{L^\infty(G)}$ for every $j,j'\in\N$, where $C=1+\sup_{j\in\N} \norm{\psi'_j}_{L^1(G)}$ is finite by Corollary~\ref{cor:12}, the conclusion follows from the dominated convergence theorem.
\end{proof}
  
\begin{teo} \label{teo:17}
	Take $p,q\in (0,+\infty]$, $\alpha,\alpha'\in \R$, and $s>D(1/\min(p,q)+1/2)$. Take $\omega\in \R$ so that $\sigma(\Lc_\omega)\subseteq (0,+\infty)$. Then,  the $(1+\abs{\gamma})^{-s}\Lc_\omega^{(\alpha-\alpha')/\grado+i\gamma }$, $\gamma\in \R$, induce equicontinuous isomorphisms of $B^{p,q}_\alpha(G)$ onto $B^{p,q}_{\alpha'}(G)$ and  of $F^{p,q}_\alpha(G)$ onto $F^{p,q}_{\alpha'}(G)$.
\end{teo}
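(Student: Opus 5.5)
Set $\beta_0\coloneqq(\alpha-\alpha')/\grado$ and $T_\gamma\coloneqq(1+\abs{\gamma})^{-s}\Lc_\omega^{\beta_0+i\gamma}$. The plan is to prove, uniformly in $\gamma$, the estimate $\norm{T_\gamma f}_{B^{p,q}_{\alpha'}}\meg C\norm{f}_{B^{p,q}_\alpha}$ and its Triebel--Lizorkin analogue. Since $T_\gamma^{-1}=(1+\abs{\gamma})^{s}\Lc_\omega^{-\beta_0-i\gamma}$, the isomorphism assertion then follows from the same estimate applied with the roles of $\alpha$ and $\alpha'$ exchanged and $\gamma$ replaced by $-\gamma$. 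Since $\sigma(\Lc_\omega)\subseteq(0,+\infty)$, the spectrum is contained in $[c_0,+\infty)$ for some $c_0>0$. Hence $m_\gamma(\lambda)\coloneqq(\lambda+\omega)^{\beta_0+i\gamma}$ may be modified on $(-\infty,c_0/2)$ so that it becomes smooth with polynomial growth, and $\Kc_\Lc(m_\gamma)$ is then a well-defined element of $\Oc'_{C,L}(G)\cap\Oc'_{C,R}(G)$. Consequently $T_\gamma$ acts on $\Sr'(G)$ by convolution and commutes with the convolutions by the $\psi_j$ below.

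Fix $\eps\in(0,1)$ and $(\psi_j)\in\widetilde\Psi_\eps$ built as in the proof of Proposition~\ref{prop:20}, with $\psi_j=\Kc_{\eps^j\Lc}(\phi_j)$, and choose $\eta\in C^\infty_c(\R)$ with $\eta=1$ near $\Supp{\phi_j}$ for all $j$ ($\eta$ vanishing near $0$ for $j\Meg1$). Then $T_\gamma f*\psi_j=f*\psi_j*\Kc_{\eps^j\Lc}(\eta\, m_{\gamma}(\eps^{-j}\,\cdot\,))$. On $\Supp{\eta}$ one has $m_\gamma(\eps^{-j}\lambda)=\eps^{-j\beta_0}\,(\lambda+\eps^j\omega)^{\beta_0+i\gamma}\eps^{-ij\gamma}$. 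Writing $\theta_{j,\gamma}(\lambda)\coloneqq(1+\abs{\gamma})^{-s}\eta(\lambda)(\lambda+\eps^j\omega)^{\beta_0+i\gamma}$ for $j\Meg1$ (with the obvious modification for $j=0$), we get
\[
T_\gamma f*\psi_j=\eps^{-j\beta_0}\,\eps^{-ij\gamma}\,f*\psi_j*\Kc_{\eps^j\Lc}(\theta_{j,\gamma}),
\]
and $\eps^{-j\alpha'/\grado}\eps^{-j\beta_0}=\eps^{-j\alpha/\grado}$, which is exactly the shift of regularity. The key multiplier bound is $\sup_{j,\gamma}\norm{\theta_{j,\gamma}}_{B^{\infty,\infty}_s(\R)}<\infty$. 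It follows from interpolation between $C^k$ norms, because $k$ derivatives of $(\lambda+\eps^j\omega)^{i\gamma}$ on a compact set away from $0$ grow like $(1+\abs{\gamma})^k$, so the factor $(1+\abs{\gamma})^{-s}$ compensates exactly. The supports of the $\theta_{j,\gamma}$ all lie in a fixed compact set, and for $j\Meg1$ they stay away from $0$ uniformly in $j$, since $\eps^j\omega\to0$.

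Now apply Lemma~\ref{lem:34} with $\Lc'=\Lc$, $m_1=\eta$, $\mi=\sum_j\delta_{\eps^{j/\grado}}$ and the family $m_{2,r}=\theta_{j,\gamma}$. The hypothesis $s>D(1/\min(p,q)+1/2)$ is exactly the one required there with $p_0=\min(p,q)$. Applied to $f_r=\eps^{-j\alpha/\grado}f*\psi_j$, the lemma yields the $L^p(\ell^q)$ bound for $F^{p,q}$ with $p<\infty$, with the pointwise estimate $\abs{g*\Kc(\theta)}\meg\abs{g}*\abs{\Kc(\theta)}$ absorbing the unimodular factor. The case $F^{\infty,q}$ follows in the same way from Lemma~\ref{lem:34b}. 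For $B^{p,q}$, I would use Corollary~\ref{cor:5} for each fixed $j$ (or Lemma~\ref{lem:34} with $q=\infty$ and $\mi=\delta_r$), which gives $\norm{T_\gamma f*\psi_j}_{L^p}\meg C\eps^{-j\beta_0}\norm{f*\psi_j}_{L^p}$ uniformly in $j$ and $\gamma$, and then take $\ell^q$ norms.

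The main obstacle is the uniform multiplier estimate, and in particular the term $j=0$. For that term, $\Supp{\phi_0}$ contains a neighbourhood of $0$, where $(\lambda+\omega)^{\beta_0+i\gamma}$ is smooth only on $\sigma(\Lc)\subseteq(-\omega+c_0,+\infty)$. I would handle it by first modifying $\phi_0$ and $\eta$ to vanish on $(-\infty,-\omega+c_0/2)$. This changes nothing, since only the values on $\sigma(\Lc)$ matter for $\Kc_\Lc$, and afterwards $\theta_{0,\gamma}$ is smooth with the same $(1+\abs{\gamma})^{s}$ derivative growth. A secondary point is justifying that $T_\gamma$ commutes with convolution by $\psi_j$ on $\Sr'(G)$; this follows from the associativity property (6) of $\Oc'_{C,R}$ and $\Oc'_{C,L}$.
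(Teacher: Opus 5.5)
Your proposal is correct and is essentially the paper's proof. You reduce to one-sided boundedness via $\Lc_\omega^{(\alpha-\alpha')/\grado}\Lc_\omega^{(\alpha'-\alpha)/\grado}=I$, and then bound each Littlewood--Paley piece uniformly in $j$ and $\gamma$ using Corollary~\ref{cor:5} for the Besov spaces and Lemmas~\ref{lem:34} and~\ref{lem:34b} for the Triebel--Lizorkin spaces. The remaining steps are details the paper leaves implicit: the rescaling of the multiplier, the fact that $(1+\abs{\gamma})^{-s}$ exactly compensates the $(1+\abs{\gamma})^{s}$ growth of the $B^{\infty,\infty}_s(\R)$ norms, and the modification off $\sigma(\Lc)$. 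That modification is needed for $j=0$ and, when $\abs{\omega}$ is large, for finitely many small $j$ as well.
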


Notice that, unlike in the analogous~\cite[Theorem 7.9]{BCP}, this time the operator $\Lc_\omega^{(\alpha-\alpha')/\grado}$  is well defined on the whole of $\Sr'(G)$, since its convolution kernel belongs to $\Ec'(G)+\Sr(G)\subseteq \Oc_{C,R}'(G)\cap \Oc_{C,L}'(G)$. 
In addition, in the case of Besov spaces, one may  take $s>D(1/p-1/2)_+$ as well. Nonetheless, the control on $s$ is unlikely to be optimal, in general.

\begin{proof}
	Observe first that $\Lc_\omega^{(\alpha-\alpha')/\grado} \Lc_\omega^{(\alpha'-\alpha)/\grado}$ is the identity operator, so that it will suffice to prove that there are continuous inclusions.  This, in turn, follows from Corollary~\ref{cor:5} and Lemmas~\ref{lem:34} and~\ref{lem:34b}.  
\end{proof}

\begin{prop}\label{prop:11}
	Take $p,q\in (0,+\infty]$, $\alpha\in \R$, and $X\in \gf$. Then, the mapping $f\mapsto X f$ induces a continuous linear mapping of $B^{p,q}_\alpha(G)$ into $B^{p,q}_{\alpha-\deg(X)}(G)$ and of $F^{p,q}_\alpha(G)$ into $F^{p,q}_{\alpha-\deg(X)}(G)$.
\end{prop}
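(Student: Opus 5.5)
The plan is to reduce the statement to estimates of the pieces $Xf*\psi_j$ of a Littlewood--Paley decomposition, using the reproducing formula of Lemma~\ref{lem:41}. Fix $\eps\in(0,1)$ and $(\psi_j)\in\widetilde\Psi_\eps$, say $\psi_j=\Kc_{\eps^j\Lc}(\phi_j)$. Take also the bounded family $(\eta_j)$ with $\eta_j=1$ on $\supp\phi_j$, where the $\eta_j$, $j\Meg1$, are supported away from $0$, and set $\widetilde\psi_j\coloneqq\Kc_{\eps^j\Lc}(\eta_j)$, so that $\psi_j=\widetilde\psi_j*\psi_j$. Since $X$ is left-invariant,
\[
(Xf)*\psi_{j'}=f*X^R\psi_{j'}=\sum_{j}f*\psi_j*\widetilde\psi_j*X^R\psi_{j'},
\]
and the spectral supports force $\psi_j*\widetilde\psi_j*X^R\psi_{j'}$ to vanish only up to the non-commutation of $X^R$ with $\Lc$. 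For this reason I would \emph{not} rely on almost-orthogonality. Instead I would write $(Xf)*\psi_{j'}=\sum_j (f*\psi_j)*K_{j,j'}$ with $K_{j,j'}\coloneqq\widetilde\psi_j*X^R\psi_{j'}$, and estimate the kernels $K_{j,j'}$ directly.

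The key kernel bound is
\[
\norm{K_{j,j'}(1+\abs{\,\cdot\,}_*/\eps^{\min(j,j')/\grado})^a}_{L^1(G)}\lesssim \eps^{-j'\deg(X)/\grado}\,\eps^{M\abs{j-j'}/\grado}
\]
for any fixed $M$, provided the multipliers are chosen with enough vanishing at $0$. I would prove it from Lemma~\ref{lem:70} and Corollary~\ref{cor:13}, writing $\widetilde\psi_j=\Kc_{\eps^j\Lc}(\eta_j)$ and $X^R\psi_{j'}=X^R\Kc_{\eps^{j'}\Lc}(\phi_{j'})$, using that $\eps^{j'\deg(X)/\grado}X^R\psi_{j'}$ behaves like a standard system by Corollary~\ref{cor:12}. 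Decay for $j'>j$ comes from the cancellations of $\psi_{j'}$, that is, from the factor $\Lc^m$ of Lemma~\ref{lem:70}. Decay for $j>j'$ comes from the cancellations of $\widetilde\psi_j$ against the smooth $X^R\psi_{j'}$, that is, from moving derivatives onto $X^R\psi_{j'}$ in Lemma~\ref{lem:40}. For the case $j=0$ or $j'=0$, with no cancellation available, a direct Young/Corollary~\ref{cor:12} bound suffices. I would fix $M>\abs{\alpha}+\deg(X)+D/\min(1,p,q)$.

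With this bound, the rest is the standard pointwise argument from Step I of the proof of Lemma~\ref{lem:35}. First,
\[
\abs{(f*\psi_j)*K_{j,j'}}\lesssim \eps^{-j'\deg(X)/\grado}\eps^{M'\abs{j-j'}/\grado}\,\Nc_{\infty,a,\eps^{j/\grado}}(f*\psi_j),
\]
where the shift from scale $\eps^{\min(j,j')/\grado}$ to $\eps^{j/\grado}$ costs a power of $\eps^{-\abs{j-j'}}$ that is absorbed by $M$. Next, Lemma~\ref{lem:33} replaces $\Nc_{\infty}$ by $\Nc_{p_0,a,\eps^{j/\grado}}$. Then Lemma~\ref{lem:25d} sums over $j$ with the weights $\eps^{-j'(\alpha-\deg X)/\grado}$. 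Finally, Corollary~\ref{cor:18} (for $F^{p,q}$, $p<\infty$) or Lemma~\ref{lem:2c} and Lemma~\ref{lem:25c} (for $F^{\infty,q}$) remove the maximal function, which gives $\norm{Xf}_{F^{p,q}_{\alpha-\deg X}}\lesssim\norm{f}_{F^{p,q}_\alpha}$. For Besov spaces the same scheme applies with Corollary~\ref{cor:4} (or Lemma~\ref{lem:33}) in place of the maximal-function step, followed by Lemma~\ref{lem:25d}.

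The main obstacle is the kernel estimate for $j>j'$, since $X^R$ does not commute with the functional calculus of $\Lc$. My first attempt would use Corollary~\ref{cor:13}: take $(f_t)=\widetilde\psi_j$ as a $(\lambda+a,\cdot)$-standard system via Lemma~\ref{lem:70}, and $g_s=(X^R\psi_{j'})^*$ suitably normalised as a standard system with $\mu$ arbitrary, using the $U_\mu$-derivative bounds of Corollary~\ref{cor:12}. A definition-level subtlety is that the standard systems in Definition~\ref{def:20} are phrased with left-invariant $Y$ but are applied through $X_j^R$ in Lemma~\ref{lem:40}; Proposition~\ref{prop:9} should reconcile the left- and right-invariant versions.
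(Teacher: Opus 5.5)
Your proposal is correct and is essentially the paper's argument. The paper proves the statement by invoking Lemmas~\ref{lem:35} and~\ref{lem:35bis} with $\mi=\sum_{j}\delta_{\eps^j}$ and $\psi_{1,\eps^j}$ a suitably normalized derivative of $H_{1,j}$ along $X$; the hypotheses \eqref{eq:10}--\eqref{eq:13}, which the paper leaves implicit, are exactly your two-sided weighted $L^1$ bounds on $K_{j,j'}$, obtained from Lemmas~\ref{lem:70},~\ref{lem:40} and Corollary~\ref{cor:13} much as in Corollary~\ref{cor:11}, so you could simply cite Lemma~\ref{lem:35} rather than re-run its Step~I. (One small imprecision: when exactly one of $j,j'$ is $0$, a plain Young bound gives no decay in $\abs{j-j'}$, so there you still use the cancellation of the factor with nonzero index, as your two regimes already do, and only $j=j'=0$ is a direct bound.)
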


\begin{proof}
	This follows from Lemmas~\ref{lem:35bis} and~\ref{lem:35}, choosing  $\mi =\sum_{j\in\N} \delta_{\eps^j}$, $\psi_{1,\eps^j}= \eps^{-j\deg(X)/\grado}X H_{1,j}$ for $j\Meg 1$ and $\psi_{1,0}=X H_{0,0}$.
\end{proof}

\begin{teo}\label{teo:16}
	Take $p,q\in (0,+\infty]$, $\alpha\in \R$, and $h\in \N^*$. Let $(X_j)_{j\in J}$ be a minimal basis of $\gf$, and set $d_j\coloneqq \deg(X_j)$ for every $j\in J$. Take a $\phi\in C^\infty_c(\R)$ which vanishes nowhere in a neighbourhood of $0$, and take $f\in \Sr'(G)$ with $f*\Kc_\Lc(\phi)\in L^p(G)$. Then, $f\in B^{p,q}_{\alpha+h\dd}(G)$ if and only if $X_j^{h\dd/d_j} f\in B^{p,q}_{\alpha}(G)$ for every $j\in J$. Analogously, $f\in F^{p,q}_{\alpha+h\dd}(G)$ if and only if $X_j^{h\dd/d_j}f\in F^{p,q}_{\alpha}(G)$ for every $j\in J$.
\end{teo}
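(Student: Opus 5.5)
The plan is to prove the two implications separately, treating Besov and Triebel--Lizorkin spaces together since only Lemmas~\ref{lem:35} and~\ref{lem:35bis} (and Corollary~\ref{cor:16}) are needed. Write $k_j\coloneqq h\dd/d_j$, so that $X_j^{k_j}\in U_{h\dd}$ for every $j$. By Corollary~\ref{cor:16} the spaces do not depend on the chosen operator. We may therefore replace $\Lc$ by the operator $\Lc_h\coloneqq\sum_{j\in J}(X_j^{k_j})^\dag X_j^{k_j}$. Its degree is $2h\dd$, and it is a formally self-adjoint weighted subcoercive operator; for $h=1$ this is Proposition~\ref{prop:4}, and for general $h$ the same argument applies, since its principal part $\sum_j (Y_j^{k_j})^\dag Y_j^{k_j}$ is Rockland on $G_*$.

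\emph{Necessity.} Iterating Proposition~\ref{prop:11} $k_j$ times, $X_j$ maps $B^{p,q}_{\beta}$ into $B^{p,q}_{\beta-d_j}$, and likewise for $F$. Hence $X_j^{k_j}$ maps $B^{p,q}_{\alpha+h\dd}$ into $B^{p,q}_\alpha$, and similarly for $F^{p,q}$. This direction does not use the hypothesis $f*\Kc_\Lc(\phi)\in L^p$.

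\emph{Sufficiency.} Take $\eps\in(0,1)$ and $(\psi_j)\in\widetilde\Psi_\eps(\Lc_h)$ with $\psi_j=\Kc_{\eps^j\Lc_h}(\phi_j)$, and $\phi_j$ supported away from $0$ for $j\Meg1$. For $j\Meg 1$ we have $\phi_j(\lambda)=\lambda\cdot(\phi_j(\lambda)/\lambda)$, so
\[
f*\psi_j=\eps^{j}\sum_{i\in J}(X_i^{k_i}f)*\tilde\psi_{i,j},
\]
where $\tilde\psi_{i,j}\coloneqq (X_i^{k_i})^{\dag R}\,\Kc_{\eps^j\Lc_h}(\phi_j(\lambda)/\lambda)$, with the derivative moved onto the kernel as in the remarks after Proposition~\ref{prop:9}. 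Since $\deg\Lc_h=2h\dd$, the factor $\eps^{j}$ equals $(\eps^{j/(2h\dd)})^{2h\dd}$. The derivative $(X_i^{k_i})^{\dag R}$ costs $\eps^{-jh\dd/(2h\dd)}=\eps^{-j/2}$ by Corollary~\ref{cor:12}. The net gain is therefore $\eps^{j/2}=\eps^{j h\dd/\grado}$ with $\grado=2h\dd$, which is exactly the regularity shift $h\dd$.

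The family $\psi_{1,t}\coloneqq \eps^{-j/2}\tilde\psi_{i,j}$ (with $t=\eps^j$) must now be fed into Lemmas~\ref{lem:35} and~\ref{lem:35bis}, for each fixed $i$, with $\mi=\sum_{j\Meg 1}\delta_{\eps^j}$ and $\alpha$ in place of $\alpha$. The required estimates~\eqref{eq:10}--\eqref{eq:13} should follow as in Corollary~\ref{cor:11}. For the cancellation estimate~\eqref{eq:11} with arbitrary $m_1$, we write $\phi_j(\lambda)/\lambda=\lambda^{m}\cdot\phi_j(\lambda)/\lambda^{m+1}$ and use Lemma~\ref{lem:70} (the derivative $X_i^{k_i}$ only helps on the $(\,\cdot\,)^{m}$ side) together with Lemma~\ref{lem:40} and Corollary~\ref{cor:13}. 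Lemma~\ref{lem:35} then gives the bound
\[
\norm*{\eps^{-j(\alpha+h\dd)/\grado}\norm{f*\psi_j}_{L^p}}_{\ell^q_{j\Meg1}}\lesssim \sum_i\norm{X_i^{k_i}f}_{B^{p,q}_\alpha},
\]
and the analogous bound for the $F$ and $\Cc^q$ versions.

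It remains to treat the term $j=0$, that is, $\norm{f*\psi_0}_{L^p}$. Here we use the hypothesis. Since $\phi$ vanishes nowhere near $0$, we may choose $\psi_0=\Kc_{\Lc}(\phi_0)$ with $\phi_0$ supported where $\phi\neq0$; this is allowed, after the independence of the operator, by taking the decomposition with respect to $\Lc$ itself for the low-frequency piece. Then $f*\psi_0=f*\Kc_\Lc(\phi)*\Kc_\Lc(\phi_0/\phi)$ belongs to $L^p$ by Corollary~\ref{cor:5} (or Young's inequality when $p\Meg1$). For the $F^{\infty,q}$ case the same bound in $L^\infty$ suffices, thanks to Remark~\ref{oss:8}.

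The main obstacle is this mixing of operators. The high-frequency factorization is natural for $\Lc_h$, while the hypothesis is phrased with $\Lc$. To bridge this, I would allow mixed systems as in Lemma~\ref{lem:35}, where $\psi_{2,j}\in\Psi_\eps$ is with respect to one operator and $\psi_{1,t}$ with respect to another. Alternatively, I would define the norm with a low-frequency kernel $\Kc_\Lc(\phi_0)$ and high-frequency kernels from $\Lc_h$, and check that Lemma~\ref{lem:41} still provides a reproducing formula for such a mixed family.
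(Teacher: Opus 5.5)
There is a genuine gap at the low-frequency term. You flag it yourself as the main obstacle, but you do not close it, and neither of your proposed bridges works.

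A family whose low piece is $\Kc_\Lc(\phi_0)$ and whose high pieces are $\Kc_{\eps^j\Lc_h}(\phi_j)$ lies in no $\Psi_\eps(\Lc')$. Lemma~\ref{lem:41}, Lemma~\ref{lem:35} (whose $(\psi_{2,j})$ must lie in $\Psi_\eps$ of a single operator) and Corollary~\ref{cor:16} all use that every piece is a function of one self-adjoint operator. Nothing makes $\phi_0(\Lc)+\sum_{j\Meg 1}\phi_j(\eps^j\Lc_h)$ reproduce $f$.

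In fact the hypothesis cannot be transferred from an arbitrary $\Lc$ to $\Lc_h$ at all. On $\R^n$ take $h=1$, $\dd=1$, $X_j=\partial_j$, and $\Lc=-\Delta+1$, which is formally self-adjoint and weighted subcoercive. Take $\phi$ equal to $1$ near $0$ with support in $(-1/2,1/2)$. Since $\sigma(\Lc)\subseteq[1,+\infty)$, we get $\Kc_\Lc(\phi)=0$, so the hypothesis holds for every $f$. Yet $f=1$ has $\partial_j f=0$, while $1\notin F^{p,q}_{\alpha+1}(\R^n)$ for $p<\infty$, because its low-frequency piece for $-\Delta$ is a non-zero constant.

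The paper sidesteps this by assuming from the outset that $\Lc$ \emph{is} $\Lc_h$ (Proposition~\ref{prop:4}). It then takes the $\Lc_h$-system with $\psi_0=\Kc_{\Lc_h}(\phi)$, so $f*\psi_0\in L^p(G)$ is literally the assumption. You should do the same, i.e.\ read the hypothesis with respect to the operator that generates the Littlewood--Paley decomposition.

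Your high-frequency argument is correct but takes a longer road than the paper. One slip: the normalized kernel is $\eps^{j/2}\tilde\psi_{i,j}$, not $\eps^{-j/2}\tilde\psi_{i,j}$. With that fix, conditions~\eqref{eq:10}--\eqref{eq:13} follow as in Corollary~\ref{cor:11}, because $(X_i^{k_i})^{\dag R}$ commutes with the left-invariant factorization of Lemma~\ref{lem:70} and is absorbed by Lemma~\ref{lem:36}.

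The paper needs no new kernel estimates. Proposition~\ref{prop:11} gives $X_i^{2k_i}f\in F^{p,q}_{\alpha-h\dd}(G)$, hence $\Lc_h f\in F^{p,q}_{\alpha-h\dd}(G)$. The family $\psi'_0\coloneqq\psi_0$, $\psi'_j\coloneqq\eps^j\Lc_h\psi_j$ ($j\Meg 1$) is again in $\Psi_\eps$, and $f*\psi'_j=\eps^j(\Lc_h f)*\psi_j$. So Corollary~\ref{cor:16} bounds $\norm{f}_{F^{p,q}_{\alpha+h\dd}(G)}$, up to constants, by $\norm{f*\psi_0}_{L^p(G)}+\norm{\Lc_h f}_{F^{p,q}_{\alpha-h\dd}(G)}$.
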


\begin{proof}
	One implication follows from Proposition~\ref{prop:11}.
	For the other implication, observe first that we may assume that $\Lc=\sum_{j\in J} (X_j^\dag)^{h\dd/d_j} X_j^{h\dd/d_j}=\sum_{j\in J} (-1)^{h\dd/d_j} X_j^{2 h\dd/d_j}$, thanks to Proposition~\ref{prop:4}, so that $\grado=2h\dd$. Take $(\psi_j)\in \Psi_\eps$ so that $\psi_0=\Kc_\Lc(\phi)$, and observe that $(\psi'_j)\in \Psi_\eps$, where $\psi'_0=\psi_0$ and $\psi'_j= \eps^j\Lc \psi_j$ for every $j\Meg 1$.  Then, Corollary~\ref{cor:16} and Proposition~\ref{prop:11} show that there is a constant $C_1>0$ such that, setting $\ell=\min(1,p,q)$,
	\[
	\begin{split}
		\norm*{ \norm{ \eps^{-j(\alpha+h\dd)/\grado} (f*\psi_{j})(x)}_{\ell^q_j(\N^*)}  }_{L^p_x(G)}^\ell&\meg C_1\norm*{ \norm{ \eps^{-j(\alpha-h\dd)/\grado} (f*\Lc \psi_j)(x)}_{\ell^q_j(\N^*)}  }_{L^p_x(G)}^\ell\\
			&=C_1\norm*{ \norm{ \eps^{-j(\alpha-h\dd)/\grado } ((\Lc f)* \psi_j)(x)}_{\ell^q_j(\N^*)}  }_{L^p_x(G)}^\ell\\
			&\meg C_1^2\sum_{j\in J} \norm*{ X_j^{2 h \dd/d_j} f}_{F^{p,q}_{\alpha-h\dd}(G)}^\ell\\
			&\meg  C_1^3\sum_{j\in J}\norm*{ X_j^{ h \dd/d_j} f}_{F^{p,q}_{\alpha }(G)}^\ell,
	\end{split}
	\]
	so that the (second) assertion follows since $f*\psi_0\in L^p(G)$ by assumption, when $p<\infty$. The case $p=\infty$ and the first assertion are proved similarly.
\end{proof}

	\section{Discretization}\label{sec:4}
	
	In this section we deal with an analogue of the discretization discussed in~\cite{FrazierJawerth}. We emphasize the fact that our techniques do \emph{not} allow to recover the full strength of the results of~\cite{FrazierJawerth}, since in this more general context we are not able to show that Besov and Triebel--Lizorkin spaces are \emph{retracts} of suitable spaces of sequences. In fact we shall only show that they embed  naturally as closed vector subspaces of the analogous spaces of sequences (which may be considered as some sort of sampling theorems), and that there are `atomic decomposition' mappings from these spaces of sequences into Besov and Triebel--Lizorkin spaces. These weaker results are nonetheless sufficient to recover some of the basic properties of Besov and Triebel--Lizorkin spaces.
	
	We begin with introducing a `discretized' notion of lattices, and then the relevant spaces of sequences.
	
	\begin{deff}
		Take $\eps\in (0,1)$, $\delta>0$, and $R\Meg 2$. Given $K\subseteq \N\times \N$, we say that a family $(x_{j,k})\in G^{K}$ is a reduced $(\eps,\delta,R)$-lattice on $G$ if the $B(x_{j,k},\delta\eps^j)$, $j\in K_k$, are pairwise disjoint and the $B(x_{j,k}, \delta R\eps^j)$, $j\in K_k$, cover $G$ for every $k\in\N$, where $K_k\coloneqq \Set{j\in\N\colon (j,k)\in K}$.
	\end{deff}
	
	Existence of reduced $(\eps,\delta,2)$-lattices is trivial, since it suffices to take $(x_{j,k})$ so that $(x_{j,k})_k$ is a maximally $2\delta\eps^j$-separated for every $j\in \N$. If $G$ is not compact, we may therefore assume that $K=\N\times \N$.

	In the next lemma we show the existence of suitable partitions of the unity associated with reduced $(\eps,\delta, R)$-lattices.

	\begin{lem}\label{lem:43}
		Take $\eps\in (0,1)$, $\delta>0$, and $R\Meg 2$ and a reduced $(\eps,\delta,R)$-lattice $(x_{j,k})_{(j,k)\in K}$ on $G$. Then, there is a family $(\tau_{j,k})_{(j,k)\in K}$ of positive elements of $C^\infty_c(G)$ such that the following hold:
		\begin{enumerate}
			\item[\textnormal{(1)}] $\sum_{k\colon (j,k)\in K} \tau_{j,k}=1$ locally uniformly for every $j\in\N$;
			
			\item[\textnormal{(2)}] $\tau_{j,k}$ is supported in $B(x_{j,k},2\delta R\eps^j)$ for every $(j,k)\in K$;
			
			\item[\textnormal{(3)}] for every  $\lambda>0$ there is a constant $C>0$ such that
			\[
			\abs{X Y^R \tau_{j,k}} \meg C \abs{X}\abs{Y} \eps^{-j(\deg(X)+\deg(Y))}
			\] 
			for every $(j,k)\in K$.
		\end{enumerate} 
	\end{lem}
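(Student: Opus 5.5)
We use the standard bump-function construction: fix a smooth cut-off at scale one, translate it to the lattice points, rescale it to scale $\delta\eps^j$, and normalise by the sum. Here $j$ is the scale index and $k$ labels the points at that scale, so the relevant sets are $K_j\coloneqq\Set{k\colon (j,k)\in K}$. The covering and disjointness properties of a reduced lattice are then read with $j$ fixed and $k$ varying, matching conditions (1) and (2).

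\textbf{Choice of the bump.} The obstacle is that dilations are not available on $G$, so the scale-$\eps^j$ derivative bounds must come from somewhere else. I would take them from the heat kernel estimates of Theorem~\ref{teo:7}, or from Corollary~\ref{cor:12}. Fix $\phi\in C^\infty_c(\R)$ with $\phi\Meg 0$ and $\phi(0)>0$. Set $\theta_j\coloneqq \chi_{B(e,\frac32\delta R\eps^j)}*\Kc_{r_j^\grado\Lc}(\phi)$ with $r_j=c\,\eps^j$ and $c$ small. This kernel is not positive and not compactly supported, so I would instead use a regularisation with compact support and controlled derivatives.

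A cleaner route is to use \cite[Lemma 7.5]{Calzi2} and the structure of $\abs{\,\cdot\,}_*$ directly. For $r\meg 1$, define $\omega_r\coloneqq \beta(B(e,r))^{-1}\chi_{B(e,r)}$ and $\theta_{j}\coloneqq \chi_{B(e,\frac32\delta R\eps^j)}*\omega_{c\eps^j}^{*M}$, an $M$-fold convolution. This is positive, supported in $B(e,2\delta R\eps^j)$ once $c$ is small, and at least $1$ on $B(e,\delta R\eps^j)$. The derivatives $XY^R\theta_j$ are not directly controlled by this construction, however. Therefore I would replace $\omega_r$ by $\rho_r\coloneqq \abs{\eta_r}^2$, where $\eta_r$ is a smooth function supported in $B(e,r)$ obtained from a fixed $\eta\in C^\infty_c(B_{G_*}(e,1))$ pulled back through exponential coordinates of the first kind adapted to a compatible basis. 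In those coordinates the ball $B(e,r)$, for $r\meg 1$, is comparable to an anisotropic box of sides $r^{d_k}$ \cite[Corollary 6.5]{ElstRobinson}. With the choice $\eta_r(\exp(\sum t_kX_k))=\eta(r^{-d_k}t_k)$, each left- or right-invariant vector field $X_k$ acting on $\eta_r$ costs $r^{-d_k}$. The reason is that, in these coordinates, $X_k$ equals $\partial_{t_k}$ plus terms whose coefficients are polynomials in $t$ of weighted degree $\Meg$ the excess degree; this is the standard ter Elst--Robinson calculation. Iterating this gives $\abs{XY^R\eta_r}\meg C\abs{X}\abs{Y}r^{-\deg X-\deg Y}$ for $X,Y\in U_\lambda$ and $r\meg 1$. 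I expect this coordinate estimate to be the main technical step.

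\textbf{Normalisation.} Set $\widetilde\tau_{j,k}\coloneqq \theta_j(x_{j,k}^{-1}\,\cdot\,)$, where $\theta_j\coloneqq \chi_{B(e,\frac32\delta R\eps^j)}*\rho_{c\eps^j}/\norm{\rho_{c\eps^j}}_{L^1}$. Left translation commutes with $X$, and it carries $Y^R$ to another right-invariant operator $(\Ad_{x}Y)^R$. To avoid the factor $\abs{\Ad_x}$, I would put all derivatives on the translated mollifier. Since $Y^R(u*\rho)=(Y^Ru)*\rho$, I would instead rewrite the sum directly as $\widetilde\tau_{j,k}=\chi_{x_{j,k}B(e,\frac32\delta R\eps^j)}*\rho'$, with $\rho'$ the normalised mollifier. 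Then $X$ falls on $\rho'$, and $Y^R$ is handled by writing $\widetilde\tau_{j,k}=\rho''*\chi_{(\cdot)}$ with a mollification on the left as well. Each derivative thus costs $\eps^{-j\deg}$, and the $L^1$-normalised mollifier has $L^1$ norm $1$.

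Each $\widetilde\tau_{j,k}$ is positive, supported in $B(x_{j,k},2\delta R\eps^j)$, and at least $1$ on $B(x_{j,k},\delta R\eps^j)$. Its sum $S_j\coloneqq \sum_k\widetilde\tau_{j,k}\Meg 1$ has bounded overlap, uniformly in $j$, by doubling and disjointness of the $B(x_{j,k},\delta\eps^j)$ (cf.~\cite[Lemma 4.3]{Calzi2}). Hence $S_j$ satisfies the same derivative bounds. Setting $\tau_{j,k}\coloneqq\widetilde\tau_{j,k}/S_j$, the Leibniz rule and Faà di Bruno for $1/S_j$, with $S_j\Meg 1$, give (3). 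Properties (1) and (2) are then immediate.
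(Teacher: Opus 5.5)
There is a genuine gap in the step meant to deliver the right-invariant part of (3). Your handling of left-invariant $X$ is fine; the problem is that a left mollifier does not stay local at $x_{j,k}$. If $\rho''$ is supported in $B(e,c\eps^j)$, then $\supp(\rho''*\chi_{x_{j,k}E})\subseteq \supp(\rho'')\,x_{j,k}E=x_{j,k}\bigl(x_{j,k}^{-1}\supp(\rho'')\,x_{j,k}\bigr)E$. In exponential coordinates, the conjugated set $x_{j,k}^{-1}B(e,c\eps^j)x_{j,k}$ is the image of a scale-$\eps^j$ box under $\Ad_{x_{j,k}^{-1}}$. That map is not uniformly bounded in $k$ and does not preserve the filtration; on the Heisenberg group, for instance, $\Ad_{\exp(sX_1)}X_2=X_2+sT$. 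Hence this set is not contained in $B(e,C\eps^j)$ for any $C$ independent of $k$. The $\Ad_{x_{j,k}}$ factor you removed from the derivative bound therefore reappears in the support, and you lose (2) as well as $\widetilde\tau_{j,k}\Meg 1$ on $B(x_{j,k},\delta R\eps^j)$. Choosing $\rho''$ adapted to $x_{j,k}$ instead, so that the support is preserved, only moves the same factor into $\norm{Y^R\rho''}_{L^1}$, since $Y^R[\sigma(x^{-1}\,\cdot\,x)]=[(\Ad_{x^{-1}}Y)^R\sigma](x^{-1}\,\cdot\,x)$.

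This cannot be repaired: (3) with a right-invariant $Y$ and $C$ independent of $k$ is false in general. Take the Heisenberg group with $\deg X_1=\deg X_2=1$ and $T=[X_1,X_2]$ of degree $2$. In exponential coordinates $T=\partial_t$ and $X_2^R=X_2-x_1T$. Each ball $B(y,r)$ with $r\meg 1$ meets a vertical line $\ell_a=\Set{(a,b,t)\colon t\in\R}$ in a $t$-interval of length $O(r^2)$. Fix $j$ large. By (1), (2) and bounded overlap, some $\tau_{j,k}$ is $\Meg 1/N'$ at a point of $\ell_a$ and vanishes within $t$-distance $O(\eps^{2j})$. So $\abs{T\tau_{j,k}}\gtrsim\eps^{-2j}$ at some point of $\ell_a$. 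There, $\abs{X_2^R\tau_{j,k}}\Meg\abs{a}\,\abs{T\tau_{j,k}}-\abs{X_2\tau_{j,k}}\gtrsim\abs{a}\eps^{-2j}-C\eps^{-j}$, which is unbounded as $\abs{a}\to\infty$.

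The paper's proof has the same defect. It sets $\psi_{j,k}=\tau_{\delta R\eps^j}(x_{j,k}^{-1}\,\cdot\,)$, with $\tau_t$ from Corollary~\ref{cor:21}, and asserts the $XY^R$ bound for these translates. But $Y^R[\tau(x^{-1}\,\cdot\,)]=[(\Ad_{x^{-1}}Y)^R\tau](x^{-1}\,\cdot\,)$, so only the case $Y=1$ is justified.

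That case is all that is used later: the strong atoms in Theorem~\ref{teo:13} only constrain left-invariant derivatives, and $\Lc$ is left-invariant. For $Y=1$ your argument is correct and is essentially the paper's. You take a scale-$\eps^j$ bump at $e$, move it to $x_{j,k}$ by left translation (equivalently $\chi_{x_{j,k}E}*\rho'$), and divide by the bounded-overlap sum $S_j\Meg 1$. The only difference is that you build the bump yourself from exponential coordinates, whereas the paper quotes Corollary~\ref{cor:21}.
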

	  
	 We shall need the following result; cf.~\cite[???]{CalziRizzo} for a proof.
	 
	 \begin{cor}\label{cor:21}
	 	Take $\kappa_2>\kappa_1>0$. There is a family $(\tau_t)_{t>0}$ of elements of $C^\infty_c(G)$ such that the following hold:
	 	\begin{enumerate}
	 		\item[\textnormal{(1)}] $\chi_{B(e,\kappa_1 t)}\meg \tau_t\meg \chi_{B(e,\kappa_2t)}$ for every $t\in (0,1]$;
	 		
	 		\item[\textnormal{(2)}] for every $\lambda>0$ there is a constant $C_\lambda>0$ such that 
	 		\[
	 		\abs{X Y^R \tau_t}\meg C_\lambda\abs{X}\abs{Y} \min(1,t)^{-\deg(X)-\deg(Y)}
	 		\]
	 		for every $X,Y\in U_\lambda$ and $t>0$.
	 	\end{enumerate}
	 \end{cor}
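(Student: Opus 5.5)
Although the paper refers to \cite{CalziRizzo} for this, Corollary~\ref{cor:21} can be obtained by a mollification argument that uses only left-invariant convolution structure. The plan is to handle the small-scale range $t\in(0,1]$ with $\chi_{B(e,\kappa t)}*\theta_t$, where $\kappa\in(\kappa_1,\kappa_2)$ and $\theta_t$ is a normalized bump supported in a ball of radius comparable to $t$. For $t>1$ we simply set $\tau_t\coloneqq\tau_1$. Condition (1) only involves $t\in(0,1]$, and for $t>1$ the bound required in (2) is the same as at $t=1$, so nothing more is needed there.

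\textbf{Construction.} First choose $\kappa,\kappa'$ with $\kappa_1<\kappa-\kappa'<\kappa+\kappa'<\kappa_2$. Fix a positive $\theta\in C^\infty_c(G)$ supported in a small neighbourhood $V$ of $e$, with $\int\theta\,\dd\beta=1$. Since there are no dilations on $G$, we use the heat kernel to produce a rescaled approximate identity. Concretely, take $\eta\in C^\infty_c(\R)$ with $\eta(0)=1$ (or use the heat kernel $h_{t^\grado}$ directly, tempered by a cut-off). Then put $\theta_t\coloneqq\sigma_t\,\Kc_{t^\grado\Lc}(\eta)$, where $\sigma_t=\chi$-type cut-offs are obtained inductively from coverings. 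Since this route is circular, we prefer a cleaner alternative: let $\theta_t\coloneqq c_t\,\chi_{B(e,\kappa' t)}*\chi_{B(e,\kappa' t)}*\cdots$, an $M$-fold convolution of normalized indicators. This is supported in $B(e,M\kappa' t)$, so we rescale $\kappa'$ accordingly.

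The smoothness estimates come from the heat kernel rather than from the indicators. We define
\[
\tau_t\coloneqq \chi_{B(e,\kappa t)}*g_t,
\]
where $g_t$ is a smooth, positive, integral-one function supported in $B(e,\kappa' t)$ satisfying
\[
\norm{XY^Rg_t}_{L^1}\lesssim\abs X\abs Y t^{-\deg X-\deg Y}.
\]
With such a $g_t$, condition (1) follows from support considerations, using the triangle inequality for $d$. Condition (2) follows from
\[
XY^R(\chi*g_t)=Y^R\chi*Xg_t.
\]
This identity alone is not enough, because right derivatives cannot simply be moved across. We therefore symmetrize and set $\tau_t\coloneqq g_t*\chi_{B(e,\kappa t)}*g_t$. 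Then
\[
XY^R\tau_t=(Y^Rg_t)*\chi*(Xg_t),
\]
and Young's inequality together with $\norm{\chi}_{L^\infty}\le1$ gives the bound.

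\textbf{Key step: constructing $g_t$.} This is the main obstacle. Take $g_t\coloneqq \phi_t\,h_{t^\grado}/\int\phi_t h_{t^\grado}$, where $\phi_t(x)=\varphi(\abs{x}_*/t)$. However, $\abs{\cdot}_*$ is not smooth. The plan is instead to use a genuinely smooth cut-off at scale $t$, built from the homogeneous structure near $e$. Via exponential coordinates of the second kind adapted to a compatible basis $(X_j)$, define the anisotropic box
\[
\Set{\exp(x_1X_1)\cdots\exp(x_nX_n)\colon \abs{x_j}< t^{d_j}}.
\]
This box is comparable to $B(e,t)$ for $t\le1$ by \cite{ElstRobinson}. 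We then take the product bump $\prod_j\varphi(x_j/t^{d_j})$.

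The derivatives $X_k$ and $X_k^R$ of this bump, expressed in these coordinates, are polynomial-coefficient vector fields whose weighted homogeneity is at least $d_k$, as in \cite[Section 6]{ElstRobinson}. Hence each derivative costs at most $t^{-\deg}$ for $t\le1$. Normalizing by the volume $\asymp t^{Q_*}$ gives the required $L^1$ bound. Verifying this weighted-degree claim for the coordinate expressions of left- and right-invariant fields is the step I expect to require the most care. For a general $X\in U_\lambda$ one passes to products of the basis fields, since $U_\lambda$ is finite-dimensional.
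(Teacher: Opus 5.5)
The paper gives no argument for this statement; it only points to \cite{CalziRizzo}. So there is nothing to compare your route with, and judged on its own, the construction you finally settle on is correct. Suppose $g_t\Meg 0$, $\int_G g_t\,\dd\beta=1$ and $\supp g_t\subseteq B(e,\kappa't)$, and set $\tau_t\coloneqq g_t*\chi_{B(e,\kappa t)}*g_t$. Then $0\meg\tau_t\meg1$, and by symmetry and subadditivity of $\abs{\,\cdot\,}_*$, $\tau_t$ equals $1$ on $B(e,\kappa_1t)$ and vanishes off $B(e,\kappa_2t)$. Since $Y^R(u*v)=(Y^Ru)*v$ and $X(u*v)=u*(Xv)$, you get $XY^R\tau_t=(Y^Rg_t)*\chi_{B(e,\kappa t)}*(Xg_t)$. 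Hence (2) reduces to $\norm{Xg_t}_{L^1(G)}\lesssim t^{-\deg X}$ and $\norm{Y^Rg_t}_{L^1(G)}\lesssim t^{-\deg Y}$ for monomials in a compatible basis; these span each $U_\mu$, and only finitely many $\mu\meg\lambda$ occur. Setting $\tau_t\coloneqq\tau_1$ for $t>1$ is fine.

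Two corrections are needed.
\begin{itemize}
\item Because you convolve twice, the constraints are $\kappa_1+2\kappa'\meg\kappa$ and $\kappa+2\kappa'\meg\kappa_2$, not the chain in your first line.
\item Delete the detours: the cut-off heat kernels, the $M$-fold convolution of indicators (not $C^\infty$ in general), $\varphi(\abs{x}_*/t)h_{t^\grado}$, and the sentence saying smoothness comes from the heat kernel. The heat kernel plays no role in the final argument.
\end{itemize}

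The step you flag is true, but the coefficients are polynomial only for nilpotent $G$. What you actually need is a weighted vanishing order, and it follows from $[\gf_\lambda,\gf_\mu]\subseteq\gf_{\lambda+\mu}$.

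Let $\Phi(x)=\exp(x_1X_1)\cdots\exp(x_nX_n)$. Let $M(x)$ be the matrix whose $i$-th column gives the coordinates, in the basis $(X_j)$, of
$\dd L_{\Phi(x)}^{-1}\partial_{x_i}\Phi(x)=\Ad(\exp(-x_nX_n))\cdots\Ad(\exp(-x_{i+1}X_{i+1}))X_i$.
Expanding the exponentials of $\ad$, each $M_{ji}$ is an entire series whose monomials $x^a$ satisfy $\sum_\ell a_\ell d_\ell\Meg d_j-d_i$, and $M(0)=I$.

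Now put $D_t\coloneqq\mathrm{diag}(t^{d_1},\dots,t^{d_n})$, $\delta_tu\coloneqq(t^{d_1}u_1,\dots,t^{d_n}u_n)$ and $\widetilde M_t(u)\coloneqq D_t^{-1}M(\delta_tu)D_t$.
\begin{itemize}
\item The matrices $\widetilde M_t$ are bounded in $C^\infty$ on the unit box, uniformly for $t\in(0,1]$.
\item As $t\to0$ they converge to a matrix that is block lower triangular with identity diagonal blocks (indices ordered by degree). Hence $\det\widetilde M_t\Meg1/2$ for $t\meg t_0$.
\item In the coordinates $x=\delta_tu$ one has $t^{d_k}X_k=\sum_i(\widetilde M_t(u)^{-1})_{ik}\partial_{u_i}$, which has uniformly bounded smooth coefficients.
\item The same argument with $\Ad(\exp(x_1X_1))\cdots\Ad(\exp(x_{i-1}X_{i-1}))X_i$ (right trivialization) handles $X_k^R$.
\item The Haar density $\propto\abs{\det M}$ contributes $t^{d_1+\cdots+d_n}$, which cancels the normalization of $g_t$.
\end{itemize}
For $t\in[t_0,1]$, simply take $g_t\coloneqq g_{t_0}$.

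Finally, you only need one inclusion between box and ball: $\Set{\Phi(x)\colon\abs{x_j}<(ct)^{d_j}}\subseteq B(e,\kappa't)$ for some small $c>0$. This follows directly from Definition~\ref{def:3} by concatenating the $n$ one-parameter segments, so no two-sided comparison from \cite{ElstRobinson} is required.
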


	\begin{proof}[Proof of Lemma~\ref{lem:43}.]
		Take $(\tau_t)_{t>0} $ as in Corollary~\ref{cor:21} with $\kappa_1= 1$ and $\kappa_2=2$, and define 
		\[
		\psi_{j,k}= \tau_{\delta R \eps^j  }(x_{j,k}^{-1}\,\cdot\,) ,
		\]
		so that $\chi_{B(x_{j,k},\delta R\eps^j)}\meg \psi_{j,k}\meg \chi_{B(x_{j,k},2\delta R\eps^j)}$ and,  for every $\lambda>0$, there is a constant $C_{\lambda}>0$ such that  
		\[
		\norm{XY^R \psi_{j,k}}_{L^\infty(G)}\meg  C_{\lambda}\abs{X}\abs{Y} \eps^{-j(\deg(X)+\deg(Y))}
		\]
		for every $X,Y\in U_\lambda$ and for every $(j,k)\in K$.  
		
		Then, it suffices to set $\psi_j\coloneqq \sum_k \psi_{j,k}\Meg 1$ and to define $\tau_{j,k}\coloneqq \psi_{j,k}/\psi_j$. The assertion follows since by~\cite[Lemma 4.3]{Calzi2} there is a constant $N'\in\N$ such that $\sum_k \chi_{B(x_{j,k},2\delta R\eps^j)}\meg N'$ on $G$  for every $j \in \N$.
	\end{proof}
	
	\emph{As before, for simplicity we shall work below as if the families in the spaces $b^{p,q}_\alpha$ and $f^{p,q}_\alpha$ defined below were scalar, but the reader may interpret them as families with values in a more general Banach space $Z$ (as for the elements of Besov and Triebel--Lizorkin spaces) if so desired.}
	
	\begin{deff}
		Take $p,q\in (0,\infty]$, $\alpha\in \R$, a subset $K$ of $\N\times \N$ and $\eps\in (0,1)$. Then, we define  $b^{p,q}_{\alpha}(\eps)$ as the space of $\lambda\in \C^{\N\times \N}$ such that
		\[
		\norm{\eps^{j (Q_*/p-\alpha)}\lambda_{j,k}  }_{\ell^{p,q}_{k,j}(\N,\N)}<\infty,
		\]
		endowed with the corresponding quasi-norm.  We define $b^{p,q}_\alpha(\eps,K)$ as the space of $\lambda\in \C^K$ such that $\lambda^0\in b^{p,q}_\alpha(\eps)$, endowed with the corresponding quasi-norm, where $\lambda^0\in \C^{\N\times \N}$ is chosen so that $\lambda^0_{j,k}=\lambda_{j,k}$ when $(j,k)\in K$, while $\lambda^0_{j,k}=0$ otherwise. 
		
		In addition, take $\delta>0$, $R\Meg 2$, and a reduced $(\eps,\delta,R)$-lattice $(x_{j,k})_{(j,k)\in K'}$. Then, for $p\in (0,\infty)$ we define $f^{p,q}_\alpha((x_{j,k}))$ as the space of $\lambda\in \C^{K'}$ such that
		\[
		\norm{ \chi_{K'}(j,k) \eps^{-j \alpha} \lambda_{j,k} \chi_{B(x_{j,k}, \delta  \eps^j)}(x) }_{L^{q,q,p}_{j,k,x}(\N,\N,G)}<\infty,
		\]
		(with some abuse of notation) endowed with the corresponding quasi-norm. Finally, we define $f^{\infty,q}_\alpha((x_{j,k}))$ as the space of $\lambda\in \C^{K'}$ such that 
		\[
		\norm*{ \norm{ \chi_{K'}(j,k)\eps^{-j \alpha} \lambda_{j,k} \chi_{B(x_{j ,k}, \delta  \eps^{j})}(x)}_{\ell^q_k(\N)} }_{\Cc^{q}_{(j,x)}(\eps)}<\infty,
		\]
		endowed with the corresponding quasi-norm.
		
		We denote with  $\mathring b^{p,q}_\alpha(\eps)$, $\mathring b^{p,q}_\alpha(\eps,K)$, and $\mathring f^{p,q}_\alpha((x_{j,k}))$ the closures of $\C^{(\N\times \N)}$, $\C^{(K)}$, and $\C^{(K')}$ in  $  b^{p,q}_\alpha(\eps )$, $b^{p,q}_\alpha(\eps,K)$, and $f^{p,q}_\alpha((x_{j,k}))$, respectively.
	\end{deff}
	
	Observe that, even though $(x_{j,k})$ determines $\eps$ uniquely, it does not determine $\delta,R$. As Remark~\ref{oss:6} below shows, nonetheless, the space $f^{p,q}_\alpha((x_{j,k}))$ does not depend on $\delta,R$ (its norm does, though).
	
	We also observe that $b^{p,q}_\alpha(\eps,K)$ and $\mathring b^{p,q}_\alpha(\eps,K)$ are retracts of $b^{p,q}_\alpha(\eps)$ and $\mathring b^{p,q}_\alpha(\eps)$, respectively  (the retraction being the restriction mapping $b^{p,q}_\alpha(\eps)\to b^{p,q}_\alpha(\eps,K)$ and the corresponding section being the mapping $\lambda \mapsto \lambda^0$), so that basically every assertion concerning the former spaces follow from the corresponding one on the latter spaces. In other words, we shall generally prove our assertions only for the spaces $b^{p,q}_\alpha(\eps)$ and $\mathring b^{p,q}_\alpha(\eps)$. In addition, for simplicity, we shall \emph{not} write $ \chi_{K'}(j,k)$ in the quasi-norms appearing in the definition of  $f^{p,q}_\alpha((x_{j,k}))$; in other words, we shall convene to consider $0$ the terms in those quasi-norms which are not defined. 
	
	\begin{oss}\label{oss:6}
		Take $p,q\in (0,+\infty]$ with $p<\infty$, $\alpha\in \R$, $\eps\in (0,1)$, $\delta_0>0$, $R\Meg 2$, and $c>1$. Then, there is a constant $C>0$ such that
		\[
		\norm{ \eps^{-j \alpha} \lambda_{j,k} \chi_{B(x_{j,k}, c\delta \eps^j)}(x) }_{L^{q,q,p}_{j,k,x}(\N,\N,G)}\meg C \norm{ \eps^{-j \alpha} \lambda_{j,k} \chi_{B(x_{j,k}, (\delta/c) \eps^j)}(x) }_{L^{q,q,p}_{j,k,x}(\N,\N,G)}
		\]
		and 
		\[
		\norm*{ \norm{\eps^{-j \alpha} \lambda_{j,k} \chi_{B(x_{j,k},c \delta  \eps^j)}(x)}_{\ell^q_k(\N)} }_{\Cc^{q}_{(j,x)}(\eps)}\meg C\norm*{ \norm{\eps^{-j \alpha} \lambda_{j,k} \chi_{B(x_{j,k}, (\delta /c) \eps^j)}(x)}_{\ell^q_k(\N)} }_{\Cc^{q}_{(j,x)}(\eps)}
		\]
		for every reduced $(\eps,\delta,R)$-lattice $(x_{j,k})_{(j,k)\in K}$, with $\delta\in (0,\delta_0]$, and for every $\lambda\in \C^{K}$. 
	\end{oss}
	
	In particular, it does not matter which radius we take for the balls $B(x_{j,k}, \delta \eps^j)$ appearing in the definition of the spaces $f^{p,q}_\alpha((x_{j,k}))$, as long as it is comparable to $\delta \eps^j$. Taking exactly $\delta \eps^j$ makes the balls $B(x_{j,k}, \delta \eps^j)$  disjoint, so that we may replace the inner $\ell^q_k(\N)$ norm with a sum in $k$ (even without taking modules); taking $R\delta \eps^j$ makes the quasi-norm of  $f^{p,q}_\alpha((x_{j,k}))$ better adapted to (the proof of) Proposition~\ref{prop:14}.
	 
	\begin{proof}
		For simplicity, we write $c_1\coloneqq \delta/c$ and $c_2\coloneqq c\delta$, so that $0<c_1<c_2$.  
		Then, by~\cite[Lemma 4.3]{Calzi2}, we may find $N\in\N$ and, for every $j\in\N$, a partition $K_{j,1},\dots K_{j,N}$ of $K_j\coloneqq \Set{k\in \N\colon (j,k)\in K}$ such that the $B(x_{j,k}, (\delta R+c_1+c_2)\eps^j)$, $k\in K_{j,h}$, are pairwise disjoint for every $h=1,\dots, N$. Then, given $ \lambda\in \C^{K}$, define $\lambda^{(h)}$ so that $\lambda^{(h)}_{j,k}=\chi_{K_{j,h}}(k)\lambda_{j,k}$ for every $(j,k)\in K$ and for every $h=1,\dots, N$, so that
		\[
		\norm{ \eps^{-j \alpha} \lambda_{j,k} \chi_{B(x_{j,k}, c_2\eps^j)}(x) }_{L^{q,q,p}_{j,k,x}(\N,\N,G)}^\ell\meg \sum_{h=1}^N \norm{ \eps^{-j \alpha} \lambda^{(h)}_{j,k} \chi_{B(x_{j,k}, c_2\eps^j)}(x) }_{L^{q,q,p}_{j,k,x}(\N,\N,G)}^\ell
		\]
		where $\ell\coloneqq \min(p,q,1)$. 
		If we define  
		\[
		f^{(h)}(j,x)\coloneqq  \eps^{-j \alpha}\sum_{k\in K_{j,h}} \lambda_{j,k}\chi_{B(x_{j,k},c_2 \eps^j)}(x)
		\]
		and 
		\[
		g^{(h)} (j,x)\coloneqq  \eps^{-j \alpha}\sum_{k\in K_{j,h}} \lambda_{j,k}\chi_{B(x_{j,k},c_1 \eps^j)}(x)
		\]
		for every $j\in \N$ and for every $x\in G$,
		then clearly
		\[
		\abs{f^{(h)}(j,x)}=  \left( \dashint_{B(x_{j,k},c_1 \eps^j)} \abs{g ^{(h)}(j,y)}^{\ell/2}\,\dd \beta(y)\right) ^{2/\ell}
		\]
		for every $j\in \N$, for every $x\in B(x_{j,k},c_2 \eps^j)$, and for every $k\in K_{j,h}$, so that there is a constant $C_2>0$ such that
		\[
		\abs{f^{(h)}(j,x)}\meg C_2[\Nc_{\ell/2, 4D/\ell,  \eps^j} g^{(h)}(j,\,\cdot\,)] (x)
		\]
		for every $j\in \N$ and for every $x\in G$. Then, by Corollary~\ref{cor:18},  there is a constant $C_3>0$ such that
		\[
		\begin{split}
			\norm{ \eps^{-j \alpha} \lambda^{(h)}_{j,k} \chi_{B(x_{j,k}, c_2\eps^j)}(x) }_{L^{q,q,p}_{j,k,x}(\N,\N,G)}&=\norm{f^{(h)}}_{L^{q,p}(\N,G)}\\
				&\meg C_3\norm{g^{(h)}}_{L^{q,p}(\N,G)}\\
				&\meg \norm{ \eps^{-j \alpha} \lambda_{j,k} \chi_{B(x_{j,k}, c_1\eps^j)}(x) }_{L^{q,q,p}_{j,k,x}(\N,\N,G)},
		\end{split}
		\]
		so that the first assertion follows. 
		In order to prove the second assertion, it suffices to repeat the previous computations, essentially replacing  Corollary~\ref{cor:18}  with Lemma~\ref{lem:2c}.
	\end{proof}
	
	\begin{prop}\label{prop:12b}
		Take $p,q\in (0,\infty]$, $\alpha\in \R$, $\eps\in (0,1)$, $\delta_0>0$, and $R\Meg 2$. Then, there is a constant $C>0$ such that, if $p<\infty$,
		\[
		\frac{1}{C}\norm{\lambda}_{b^{p,\max(p,q)}_\alpha(\eps,K)}\meg \delta^{-Q_*/p}\norm{\lambda}_{f^{p,q}_\alpha((x_{j,k}))}\meg C  \norm{\lambda}_{b^{p,\min(p,q)}_\alpha(\eps,K)}
		\]
		and, if $p=\infty$,
		\[
		\frac{1}{C}\delta^{Q_*/q}\norm{\lambda}_{b^{\infty,\infty}_\alpha(\eps,K)}\meg \norm{\lambda}_{f^{\infty,q}_\alpha((x_{j,k}))}\meg C  \norm{\lambda}_{b^{\infty,q}_\alpha(\eps,K)},
		\]
		for every reduced $(\eps,\delta,R)$-lattice $(x_{j,k})_{(j,k)\in K}$ in $G$, with $\delta\in (0,\delta_0]$, and for every $\lambda\in \C^{K}$.
	\end{prop}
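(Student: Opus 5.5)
The plan is to reduce everything to the disjointness of the balls $B(x_{j,k},\delta\eps^j)$ for fixed $j$ and to their volume, which is comparable to $(\delta\eps^j)^{Q_*}$ uniformly for $j\in\N$ and $\delta\in(0,\delta_0]$, since $\delta\eps^j\meg\delta_0$ and $\beta(B(e,r))\asymp r^{Q_*}$ on bounded ranges of $r$. Set $g_j(x)\coloneqq \sum_k \eps^{-j\alpha}\abs{\lambda_{j,k}}\chi_{B(x_{j,k},\delta\eps^j)}(x)$. By disjointness, $\norm{\eps^{-j\alpha}\lambda_{j,k}\chi_{B(x_{j,k},\delta\eps^j)}(x)}_{\ell^q_k}=g_j(x)$ pointwise. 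Moreover,
\[
\norm{g_j}_{L^p(G)}^p=\sum_k \eps^{-j\alpha p}\abs{\lambda_{j,k}}^p\beta(B(x_{j,k},\delta\eps^j))\asymp \delta^{Q_*}\sum_k \eps^{jp(Q_*/p-\alpha)}\abs{\lambda_{j,k}}^p .
\]
Thus $\delta^{-Q_*/p}\norm{g_j}_{L^p(G)}$ is comparable to the $\ell^p_k$ norm of the $j$-th layer of $\lambda$ in $b^{p,\cdot}_\alpha$.

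\textbf{Case $p<\infty$.} Here $\norm{\lambda}_{f^{p,q}_\alpha}=\norm{\norm{g_j(x)}_{\ell^q_j}}_{L^p_x}$, and the statement reduces to the elementary mixed-norm inequalities
\[
\norm{\norm{g_j}_{L^p}}_{\ell^{\max(p,q)}_j}\meg\norm{\norm{g_j(x)}_{\ell^q_j}}_{L^p_x}\meg\norm{\norm{g_j}_{L^p}}_{\ell^{\min(p,q)}_j}.
\]
For the right inequality, use $\ell^{\min(p,q)}\subseteq\ell^q$ pointwise in $x$. Then Minkowski's inequality in $L^{p/\min(p,q)}$ applied to $\sum_j g_j^{\min(p,q)}$ (or simply the triangle inequality when $p\meg q$) gives the bound. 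For the left inequality, use $\ell^q\subseteq\ell^{\max(p,q)}$ together with the reverse Minkowski inequality. When $q\meg p$ we have $\norm{\norm{g_j}_{L^p}}_{\ell^p_j}=\norm{\norm{g_j(x)}_{\ell^p_j}}_{L^p_x}\meg\norm{\norm{g_j(x)}_{\ell^q_j}}_{L^p_x}$. When $q>p$, $\sup_j$ of $\norm{g_j}_{L^p}$ is trivially at most $\norm{\norm{g_j}_{\ell^q}}_{L^p}$, and the general case follows from Minkowski in $L^{q/p}$ applied to $\norm{\sum_j g_j^p}$, in the form $\norm{(g_j)}_{\ell^q L^p}\meg\norm{(g_j)}_{L^p\ell^q}$ for $q\Meg p$. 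Combining with the layerwise comparison above yields both inequalities, with constants depending only on the volume comparison.

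\textbf{Case $p=\infty$.} For the upper bound, fix $N$ and $x$. The quantity $\eps^{-NQ_*/q}\norm{\chi_{[N,\infty)\times B(x,\eps^N)}g}_{L^q}$ is at most $\eps^{-NQ_*/q}\big(\sum_{j\Meg N}\sum_{k:\,B(x_{j,k},\delta\eps^j)\cap B(x,\eps^N)\neq\emptyset}\eps^{-j\alpha q}\abs{\lambda_{j,k}}^q(\delta\eps^j)^{Q_*}\big)^{1/q}$. Bounding $\abs{\lambda_{j,k}}\eps^{-j\alpha}\meg \norm{\lambda}_{b^{\infty,\cdot}}$ in layer $j$, the number of relevant $k$ is $\lesssim(\eps^N/(\delta\eps^j))^{Q_*}$ by disjointness and the doubling/volume estimates (the balls lie in $B(x,2\eps^N)$ once $\delta\eps^j\meg\delta_0\eps^N$; otherwise there are boundedly many). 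This yields $\lesssim(\sum_{j\Meg N}\sup_k\eps^{-j\alpha q}\abs{\lambda_{j,k}}^q)^{1/q}\meg\norm{\lambda}_{b^{\infty,q}_\alpha}$. For the lower bound, pick $(j,k)$ and take $N=j$ and $x=x_{j,k}$. The integral then contains the single term $\eps^{-j\alpha q}\abs{\lambda_{j,k}}^q\beta(B(x_{j,k},\delta\eps^j))\gtrsim\delta^{Q_*}\eps^{jQ_*}\eps^{-j\alpha q}\abs{\lambda_{j,k}}^q$ (since $\delta\meg\delta_0$, one should shrink to $B(x,\min(1,\delta)\eps^j)$ appropriately). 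This gives $\delta^{Q_*/q}\eps^{-j\alpha}\abs{\lambda_{j,k}}\lesssim\norm{\lambda}_{f^{\infty,q}_\alpha}$.

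\textbf{Main obstacle.} I expect the main difficulty to be the counting argument in the $p=\infty$ upper bound. There the balls with $\delta\eps^j$ large compared to $\eps^N$, which can occur for $j$ close to $N$ when $\delta_0>1$, must be handled by measuring $B(x_{j,k},\delta\eps^j)\cap B(x,\eps^N)$ rather than the full ball, so that the uniformity in $\delta$ is preserved.
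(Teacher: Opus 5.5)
Your proof is correct. For $p<\infty$, and for the lower bound when $p=\infty$, it is the paper's argument: the layerwise comparison $\delta^{-Q_*/p}\norm{g_j}_{L^p(G)}\asymp\norm{\eps^{j(Q_*/p-\alpha)}\lambda_{j,k}}_{\ell^p_k}$, the two Minkowski-type exchanges of $\ell^q_j$ and $L^p_x$, and testing the $\Cc^q(\eps)$ quasi-norm at $N=j$, $x=x_{j,k}$.

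The only divergence is the upper bound for $p=\infty$, where the paper does no counting. Since $\beta(B(x,\eps^N))\meg C\eps^{NQ_*}$, the $\Cc^q(\eps)$ quasi-norm is dominated by $\norm{\norm{g_j(x)}_{\ell^q_j}}_{L^\infty_x}$. By the same Minkowski step as for $p<\infty$, this is at most $\norm{\norm{g_j}_{L^\infty(G)}}_{\ell^q_j}$, and $\norm{g_j}_{L^\infty(G)}=\sup_k\eps^{-j\alpha}\abs{\lambda_{j,k}}$. Equivalently, $\int_{B(x,\eps^N)}g_j^q\,\dd\beta\meg\norm{g_j}_{L^\infty(G)}^q\,\beta(B(x,\eps^N))$.

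Your counting argument also works, and the obstacle you anticipate does not arise. Because $j\Meg N$, you always have $\delta\eps^j\meg\delta_0\eps^N$. So every ball $B(x_{j,k},\delta\eps^j)$ meeting $B(x,\eps^N)$ lies in $B(x,(1+2\delta_0)\eps^N)$, not in $B(x,2\eps^N)$ as you wrote. Disjointness then bounds the number of such $k$ times $(\delta\eps^j)^{Q_*}$ by a constant multiple of $\eps^{NQ_*}$, uniformly in $\delta\in(0,\delta_0]$. No case split or intersection of balls is needed, and the pointwise bound $g_j\meg\norm{g_j}_{L^\infty(G)}$ makes the counting unnecessary altogether.
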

	
	Notice that the dependence on $\delta$ in the inequalities in the case $p=\infty$ cannot be improved, as one may see considering the case of $\lambda_{j,k}=\delta_{j,0}\delta_{k,0}$ (for the left inequality, assuming that $(0,0)\in K$) and the case of $\delta_{j,k}=\delta_{j,0}$ (for the right inequality).

	\begin{proof}
		For every $\lambda\in \C^{K}$ define $f_\lambda(j,x)\coloneqq \eps^{-j\alpha}\sum_{k} \chi_{B(x_{j,k},\delta   \eps^j)}(x) \lambda_{j,k}$.
		Observe first that there is a constant $C_1>0$ such that
		\[
		\frac{1}{C_1}\norm{\lambda}_{b^{p,q_1}_\alpha(\eps,K)}\meg \delta^{-Q_*/p} \norm*{f_\lambda (j,x) }_{L^{p,q_1}_{x,j}(G,\N)}\meg C_1 \norm{\lambda}_{b^{p,q_1}_\alpha(\eps,K)}
		\]
		for every $q_1\in (0,\infty]$.  
		Then, assume that $p<\infty$ and observe that, by Minkowski's integral inequality and the (contractive) inclusion $\ell^{q/p}(\N)\subseteq \ell^{\max(1,q/p)}(\N^*)$, 
			\[
			\begin{split}
					\norm*{   \norm{f_\lambda(j,x)}_{L^p_x(G)} }_{\ell^{\max(p,q)}_j(\N)}&= \norm*{    \int_G \abs{f_\lambda(j,x)}^p\,\dd \beta(x)  }_{\ell^{\max(1,q/p)}_j(\N)}^{1/p}\\
						&\meg \left(\int_G\norm {  \abs{f_\lambda(j,x)}^p }_{\ell^{\max(1,q/p)}_j(\N)}\,\dd \beta(x) \right)^{1/p}\\
						&\meg \left(\int_G\norm {  \abs{f_\lambda(j,x)}^p }_{\ell^{ q/p}_j(\N)}\,\dd \beta(x)\right)^{1/p} \\
						&= \norm*{ \norm{ f_\lambda(j,x) }_{\ell^q_j(\N)}}_{L^p_x(G)}, 
				\end{split}
			\]
			whence the left inequality in this case.
			If, otherwise, $p=\infty$, then  
			\[
			\eps^{-j\alpha}\abs{\lambda_{j,k}}=\abs{f_\lambda(j,x)}
			\]
			for every $x \in B(x_{j,k},\delta\eps^j)$. Therefore, there is a constant $C_2>0$ such that (cf.~\cite[Lemma 3.2]{Calzi3})
			\[
			\eps^{-j\alpha}\abs{\lambda_{j,k}}\meg C_2 (\delta\eps^j)^{-Q_*/q} \norm{ \chi_{B(x_{j,k},\delta \eps^j)} f_\lambda(j,\,\cdot\,)  }_{L^q(G)}\meg C_2\delta^{-Q_*/q}   \norm{f_\lambda(j,x)}_{\Cc^q_{(j,x)}(\eps)}
			\]
			for every $(j,k)\in K$, whence the  left inequality also in this case.
			
			For what concerns the right inequality, observe that we may assume that $\min(p,q)<\infty$, for otherwise the assertion is trivial. Then, by the (contractive) inclusion $\ell^{\min(p,q)}(\N)\subseteq \ell^q(\N)$ and Minkowski's integral inequality,
			\[
			\begin{split}
					\norm*{ \norm{ f_{\lambda}(j,x) }_{\ell^q_j(\N)}}_{L^p_x(G)}&\meg \norm*{ \norm{ f_{\lambda}(j,x)}_{\ell^{\min(p,q)}_j(\N)}}_{L^p_x(G)}\\
						&= \norm*{ \sum_{j\in\N}  \abs{f_{\lambda}(j,x)}^{\min(p,q)}}_{L^{p/\min(p,q)}_x(G)}^{1/\min(p,q)}\\
						&\meg  \left(\sum_{j\in\N}  \norm{f_{\lambda}(j,x)}_{L^p_x(G)}^{\min(p,q)} \right)^{1/\min(p,q)}\\
						&=\norm*{ \norm{f_{\lambda}(j,x)}_{L^p_x(G)} }_{\ell^{\min(p,q)}_j(\N)},
				\end{split}
			\]
			whence the the conclusion (observing, when $p=\infty$, that the $L^{q,\infty}(\N,G)$ quasi-norm is trivially larger than the $\Cc^q(\eps)$ quasi-norm, up to a multiplicative constant). 
	\end{proof}

	\begin{prop}\label{prop:13b}
		Take $p_1,p_2,q_1,q_2\in (0,\infty]$, $\alpha_1,\alpha_2\in \R$, $\eps\in (0,1)$, $\delta_0>0$, and $R\Meg 2$. Assume that $p_1\meg p_2$ and that $\alpha_1-Q_*/p_1\Meg \alpha_2-Q_*/p_2$.
		 Then, there is a constant $C>0$ such that, if either $q_1\meg q_2$ or  $\alpha_1-Q_*/p_1>\alpha_2-Q_*/p_2$, then
		\[ 
			\norm{\lambda}_{b^{p_2,q_2}_{\alpha_2}(\eps)}\meg C \norm{\lambda}_{b^{p_1,q_1}_{\alpha_1}(\eps)} 
		\]
		and, if either $q_1\meg q_2 $ or $p_1<p_2$ or $\alpha_1-Q_*/p_1>\alpha_2-Q_*/p_2$, then
		\[
		\norm{\lambda}_{f^{p_2,q_2}_{\alpha_2}((x_{j,k}))}\meg C\theta(\delta)  \norm{\lambda}_{f^{p_1,q_1}_{\alpha_1}((x_{j,k}))} 
		\]
		for every $\lambda\in \C^{\N\times \N}$ and 
		for every reduced $(\eps,\delta,R)$-lattice $(x_{j,k})_{(j,k)\in K}$ on $G$, with $\delta\in (0,\delta_0]$ and
		\[
		\theta(\delta)=\begin{cases}
			\delta^{-Q_*(1/q_1-1/q_2)} &\text{if $p_1=p_2=\infty$ and $q_1\meg q_2$}\\
			\delta^{-Q_*/q_1} &\text{if $p_1=p_2=\infty$ and $q_1>q_2$}\\
			\delta^{-Q_*(1/\min(1,p_1)-1/q_2)_+} &\text{if $p_1<p_2=\infty$ and $\alpha_1-Q_*/p_1=\alpha_2$}\\
			\min(\delta^{-Q_*/p_1},\delta^{-Q_*(1/\min(1,p_1)-1/q_2)_+}) & \text{if $p_1<p_2=\infty$ and $\alpha_1-Q_*/p_1>\alpha_2$}\\
			\delta^{-Q_*(p_1/p_2)(1/p_1-1/p_2)} & \text{if $p_1<p_2<\infty$}\\
			$1$ &\text{if $p_1=p_2<\infty$}
		\end{cases}
		\]
	\end{prop}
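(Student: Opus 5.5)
The plan is to treat the Besov sequence spaces directly and to reduce the Triebel--Lizorkin statement to the Besov one or to a Fefferman--Stein/Jawerth-type argument on the lattice. Throughout, set $\gamma\coloneqq(\alpha_1-Q_*/p_1)-(\alpha_2-Q_*/p_2)\Meg 0$.

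\textbf{Besov inequality.} Fix $j$. Since $p_1\meg p_2$, the contractive embedding $\ell^{p_1}_k\subseteq\ell^{p_2}_k$ gives
\[
\eps^{j(Q_*/p_2-\alpha_2)}\norm{\lambda_{j,\cdot}}_{\ell^{p_2}}\meg \eps^{j\gamma}\,\eps^{j(Q_*/p_1-\alpha_1)}\norm{\lambda_{j,\cdot}}_{\ell^{p_1}}.
\]
If $q_1\meg q_2$, we use $\eps^{j\gamma}\meg 1$ and $\ell^{q_1}\subseteq\ell^{q_2}$. If instead $\gamma>0$, the factor $\eps^{j\gamma}$ is $\ell^r$-summable for every $r$, so Hölder's inequality on $\ell^{q_2}$ against $\ell^{q_1}\subseteq\ell^\infty$ concludes.

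\textbf{Triebel--Lizorkin, $p_1=p_2=p$.} If $p<\infty$ and $q_1\meg q_2$, the claim is pointwise monotonicity in $q$, so $\theta=1$. If $\gamma>0$ (so $\alpha_1>\alpha_2$), I would bound the $\ell^{q_2}_j$ norm pointwise by $\sup_j \eps^{-j\alpha_1}\abs{\cdots}$ times $\norm{\eps^{j(\alpha_1-\alpha_2)}}_{\ell^{q_2}}$, which gives $\ell^\infty\supseteq\ell^{q_1}$. For $p=\infty$, I would compare $\Cc^{q_1}(\eps)$ and $\Cc^{q_2}(\eps)$. Here the balls $B(x_{j,k},\delta\eps^j)$ have size $\delta$ relative to the scale $\eps^j$, so the Hölder/Jensen step applied to indicator functions costs a factor $\delta^{-Q_*(1/q_1-1/q_2)}$: inside a ball $B(x,\eps^N)$ the normalized $L^{q}$ averages of $\chi_{B(x_{j,k},\delta\eps^j)}$ scale like $\delta^{Q_*/q}$. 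When $q_1>q_2$, I would pass through $b^{\infty,\infty}$ using Proposition~\ref{prop:12b}, namely $\norm{\lambda}_{b^{\infty,\infty}_{\alpha}}\lesssim\delta^{-Q_*/q_1}\norm{\lambda}_{f^{\infty,q_1}}$, and then use $b^{\infty,q_2}_{\alpha_2}\subseteq f^{\infty,q_2}_{\alpha_2}$ together with the Besov step, since the gain $\alpha_1>\alpha_2$ yields $b^{\infty,\infty}_{\alpha_1}\subseteq b^{\infty,q_2}_{\alpha_2}$.

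\textbf{Triebel--Lizorkin, $p_1<p_2$: the main obstacle.} This is the Jawerth--Franke-type step. For $p_2=\infty$, I would combine Proposition~\ref{prop:12b} ($f^{p_1,q_1}\subseteq b^{p_1,\max(p_1,q_1)}$, with constant $\delta^{-Q_*/p_1}$) with the Besov embedding into $b^{\infty,q_2}_{\alpha_2}$ and then into $f^{\infty,q_2}$. When $\gamma=0$, this route does not work, and I would instead argue directly. On each Carleson box $[N,\infty)\times B(x,\eps^N)$, the $\ell^{q_2}$ function is dominated by the sum over $j\Meg N$ of the $\lambda_{j,k}$ with $x_{j,k}$ near $x$. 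Estimating $\eps^{-j\alpha_2}\abs{\lambda_{j,k}}\meg \eps^{-j\alpha_1}\abs{\lambda_{j,k}}(\delta\eps^j)^{Q_*/p_1}\delta^{-Q_*/p_1}$ and using the $\ell^{\min(1,p_1)}\subseteq\ell^{q_2}$ embedding gives the factor $\delta^{-Q_*(1/\min(1,p_1)-1/q_2)_+}$. For $p_1<p_2<\infty$, I would follow the classical proof via distribution functions, as in the Jawerth/Franke argument. Write $\abs{\lambda_{j,k}}\eps^{j(Q_*/p_1-\alpha_1)}$ controlled pointwise by a Peetre-type maximal function, which is available here through Remark~\ref{oss:6} and Corollary~\ref{cor:18}. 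Then split the sum over $j$ at a level $J(x)$ chosen according to $\sup_j$ versus the $L^{p_1}$ control. This gives the weak-type estimate, which interpolates to the strong one, with the $\delta$ factor $\delta^{-Q_*(p_1/p_2)(1/p_1-1/p_2)}$ arising from the $\ell^\infty$ bound $\eps^{-j\alpha_2}\abs{\lambda_{j,k}}\lesssim\delta^{-Q_*/p_1}\eps^{-jQ_*/p_2}\norm{\lambda}_{f^{p_1,q_1}}$ raised to the power $1-p_1/p_2$.

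Tracking this $\delta$-dependence correctly, with the lattice balls kept of radius $\delta\eps^j$ rather than normalized to radius $\eps^j$, is the delicate part of the argument.
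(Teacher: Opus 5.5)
Your route is essentially the paper's: the same Besov argument, the strict-gap and $q_1>q_2$ cases of the Triebel--Lizorkin inequality via Proposition~\ref{prop:12b} plus the Besov embedding, an $\ell^\infty$ bound from the constancy of $f_j\coloneqq\sum_k\lambda_{j,k}\chi_{B(x_{j,k},\delta\eps^j)}$ on the lattice balls, and a Jawerth-type splitting of the $j$-sum in the critical case $p_1<p_2$, $\alpha_1-Q_*/p_1=\alpha_2-Q_*/p_2$. The genuine problem is the $\delta$-dependence you announce for $p_1<p_2<\infty$. Your $\ell^\infty$ bound $\eps^{-j\alpha_2}\abs{\lambda_{j,k}}\lesssim\delta^{-Q_*/p_1}\eps^{-jQ_*/p_2}\norm{\lambda}_{f^{p_1,q_1}_{\alpha_1}((x_{j,k}))}$ is correct, but the exponent does not follow from it, since $(\delta^{-Q_*/p_1})^{1-p_1/p_2}=\delta^{-Q_*(1/p_1-1/p_2)}$, not $\delta^{-Q_*(p_1/p_2)(1/p_1-1/p_2)}$. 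Moreover, no argument can give the stated exponent. Take $\lambda=e_{0,0}$, the example the paper uses after Proposition~\ref{prop:12b}. Then $\norm{\lambda}_{f^{p,q}_\alpha((x_{j,k}))}=\beta(B(e,\delta))^{1/p}\asymp\delta^{Q_*/p}$ for every $p<\infty$, so the ratio of the two sides is $\asymp\delta^{-Q_*(1/p_1-1/p_2)}$. This is not $O\big(\delta^{-Q_*(p_1/p_2)(1/p_1-1/p_2)}\big)$ as $\delta\to0^+$. Hence $\theta(\delta)$ is misstated in this case. The correct and sharp value is $\delta^{-Q_*(1/p_1-1/p_2)}$, which is also what the paper's own strict-gap subcase gives via Proposition~\ref{prop:12b}.

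The paper reaches the stated exponent only through a slip at exactly this step. It writes $\norm{\eps^{-j\alpha_2}f_j}_{L^\infty(G)}\meg C_1(\delta\eps^j)^{-Q_*/p_1}\norm{\eps^{-j\alpha_2}f_j}_{L^{p_1}(G)}=C_1(\delta\eps^j)^{-Q_*/p_2}\norm{\eps^{-j\alpha_1}f_j}_{L^{p_1}(G)}$, but the correct factor in the last term is $\delta^{-Q_*/p_1}\eps^{-jQ_*/p_2}$. With the threshold $t\sim\delta^{-Q_*/p_1}$ in place of $\delta^{-Q_*/p_2}$, its distribution-function computation gives $\delta^{-Q_*(1/p_1-1/p_2)}$. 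So do not try to `track $\delta$' towards the stated $\theta$; carried out correctly, your argument proves the corrected statement.

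Two smaller points. First, no weak-type estimate, interpolation, or Peetre maximal function is needed. The level-set inclusion produced by the split holds at every level $t$, so integrating against $t^{p_2-1}\,\dd t$ gives the strong bound directly. Equivalently, splitting pointwise at $J(x)$ gives $F_2\lesssim\delta^{-Q_*(1/p_1-1/p_2)}A^{1-p_1/p_2}M^{p_1/p_2}$, where $F_2\coloneqq\norm{\eps^{-j\alpha_2}f_j}_{\ell^{q_2}_j}$, $M\coloneqq\sup_j\eps^{-j\alpha_1}\abs{f_j}$ and $A\coloneqq\norm{M}_{L^{p_1}(G)}$. Second, in the case $p_1=p_2=\infty$, $q_1\meg q_2$, the step is not H\"older or Jensen, which only bound $L^{q_1}$-averages by $L^{q_2}$-averages. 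It is the interpolation $\int\abs{F}^{q_2}\meg\norm{F}_\infty^{q_2-q_1}\int\abs{F}^{q_1}$ on each Carleson box, combined with $\norm{F}_\infty\lesssim\delta^{-Q_*/q_1}\norm{F}_{\Cc^{q_1}(\eps)}$ from Proposition~\ref{prop:12b}. When $p_2=\infty$, the case $q_2\meg\min(1,p_1)$ costs nothing in $\delta$. The factor $\delta^{-Q_*(1/\min(1,p_1)-1/q_2)_+}$ comes from the subsequent passage from $\Cc^{\min(1,p_1)}$ to $\Cc^{q_2}$, not from the contractive embedding $\ell^{\min(1,p_1)}\subseteq\ell^{q_2}$.
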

	
	Notice that the dependence on $\delta$   worsens as $\delta$ decreases, so that the above result  does not clash with Proposition~\ref{prop:14}. 
	
	\begin{proof}
		\textsc{Step I} If $q_1\meg q_2$, then
		\[
		\begin{split}
			\norm{\lambda}_{b^{p_2,q_2}_{\alpha_2}(\eps)}&=\norm{ \eps^{j(Q_*/p_2-\alpha_2)} \lambda_{j,k}  }_{\ell^{p_2,q_2}_{k,j}(\N,\N)}\\
				&\meg \norm{ \eps^{j(Q_*/p_2-\alpha_2)} \lambda_{j,k}  }_{\ell^{p_1,q_1}_{k,j}(\N,\N)}\\
				&\meg  \norm{ \eps^{j(Q_*/p_1-\alpha_1)} \lambda_{j,k}  }_{\ell^{p_1,q_1}_{k,j}(\N,\N)}\\
				&=\norm{\lambda}_{b^{p_1,q_1}_{\alpha_1}(\eps)} 
		\end{split}
		\]
		by the (contractive) inclusion $\ell^{p_1,q_1}(\N,\N)\subseteq \ell^{p_2,q_2}(\N,\N)$. Then, consider the case $q_1>q_2$. By the previous arguments, we may reduce to the case $p_1=p_2$, so that $\alpha_1>\alpha_2$. Then, by H\"older's inequality,
		\[
		\begin{split}
			\norm{\lambda}_{b^{p_1,q_2}_{\alpha_2}(\eps)}&=\norm{ \eps^{j(Q_*/p_1-\alpha_2)} \lambda_{j,k}  }_{\ell^{p_1,q_2}_{k,j}(\N,\N)}\\
			&\meg \norm{ \eps^{j(Q_*/p_1-\alpha_1)} \lambda_{j,k}  }_{\ell^{p_1,q_1}_{k,j}(\N,\N)} \norm{\eps^{j( \alpha_1-\alpha_2)} }_{\ell^{q_3}_j(\N)}\\
			&=\norm{\lambda}_{b^{p_1,q_1}_{\alpha_1}(\eps)}  \norm{\eps^{j( \alpha_1-\alpha_2)} }_{\ell^{q_3}_j(\N)}
		\end{split}
		\] 
		where $q_3\in (0,\infty)$ is such that $\frac{1}{q_2}=\frac{1}{q_1}+\frac{1}{q_3}$. This completes the proof of the first assertion.
		
		\textsc{Step II} Let us now consider the second assertion. 
		We distinguish several cases. 
		If $\alpha_1-Q_*/p_1>\alpha_2-Q_*/p_2$ and $p_1<\infty$, the assertion follows by (1) and Proposition~\ref{prop:12b}, with constant controlled by $\delta^{-Q_*(1/p_1-1/p_2)}$. If $p_1=\infty$ and $q_1>q_2$, then the assertion follows again by (1) and Proposition~\ref{prop:12b}, with constant controlled by $\delta^{-Q_*/q_1}$. If, otherwise, $p_1=\infty$,  and $q_1\meg q_2$, then observe that, by Proposition~\ref{prop:12b} there is a constant $C'>0$ such that
		\[
		\norm{\eps^{-j\alpha_1}\lambda_{j,k}}_{\ell^\infty_{(j,k)}(K)}\meg C' \delta^{-Q_*/q_1}\norm*{\eps^{-j\alpha_1}\sum_{k}\eps^{-j\alpha_1}\lambda_{j,k}\chi_{B(x_{j,k},\delta \eps^j)}(x)}_{\Cc_{(j,x)}^{q_1}(\eps)},
		\]
		so that
		\[
		\begin{split}
		&\eps^{-Q_*/q_2}\norm*{\chi_{[N,+\infty)\times B(x,\eps^N)}(j,y)\sum_{k}\eps^{-j\alpha_1}\lambda_{j,k}\chi_{B(x_{j,k},\delta\eps^j)}(y)}_{L^{q_2}(\N\times G)}\\
			&\qquad\meg C'^{1-q_1/q_2} \delta^{-Q_*(1/q_1-1/q_2)}\norm*{\eps^{-j\alpha_1}\sum_{k}\eps^{-j\alpha_1}\lambda_{j,k}\chi_{B(x_{j,k},\delta \eps^j)}(x)}_{\Cc_{(j,x)}^{q_1}(\eps)}
		\end{split}
		\]
		for every $\lambda\in f^{\infty,q_1}_{\alpha_1}((x_{j,k}))$, for every $N\in\N$, and for every $x\in G$. This proves the assertion when $\alpha_1=\alpha_2$. The case $\alpha_1<\alpha_2$ follows immediately.
		
		Notice that the case in which $p_1=p_2<\infty$ and $\alpha_1=\alpha_2$ is trivial. 
		We may then reduce to the case   in which $p_1<p_2$,  $\alpha_1-Q_*/p_1=\alpha_2-Q_*/p_2$, $q_2\in (0,\min(1,p_1)]$, and $q_1=\infty$.
		Then, take $\lambda\in \C^{K}$ and define $f_j\coloneqq \sum_{k } \chi_{B(x_{j,k},\delta \eps^j)} \lambda_{j,k} $ for every $j\in\N$, in order to simplify the notation.  
		We may reduce to the case in which $ \norm {  \eps^{-j \alpha_1 }f_j(x)}_{L^{\infty,p_1}_{j,x}(\N,G)}=1$.
		Observe that there is a constant $C_1>0$ such that
		\[
		\abs{f_j(x)}\meg C_1 (\delta\eps^j)^{-Q_*/p_1}\norm{ \chi_{B(x_{j,k},\delta\eps^j)}f_j}_{L^{p_1}(G)}\meg C_1 (\delta\eps^j)^{-Q_*/p_1}\norm{  f_j}_{L^{p_1}(G)}
		\]
		for every $x\in B(x_{j,k},\delta \eps^j)$ and for every $k\in\N$, so that 
		\[
		\begin{split}
			\norm{\eps^{-j \alpha_2 }f_j}_{L^\infty(G)}& \meg C_1( \delta\eps^j)^{- Q_*/p_1} \norm{\eps^{-j \alpha_2 }f_j}_{L^{p_1}(G)}= C_1( \delta\eps^j)^{- Q_*/p_2} \norm{\eps^{-j \alpha_1 }f_j}_{L^{p_1}(G)}
		\end{split}
		\]
		for every $j\in\N$.
		Then,  for every $N\in\N$,
		\[
		\begin{split}
			\sum_{j=0}^N \abs{\eps^{-j \alpha_2 }f_j}^{q_2} &\meg C_1^{q_2} \sum_{j=0}^N  \left( (\delta\eps^j)^{- Q_*/ p_2 } \norm*{ \eps^{-j \alpha_1 } f_j(x)}_{L^{\infty,p_1}_{j,x}(\N,G)}\right) ^{q_2}\\
			&=C_1^{q_2}\sum_{j=0}^N (\delta\eps^j)^{- q_2 Q_*/  p_2} 
		\end{split}
		\]
		while  
		\[
		\sum_{j\Meg N}  \abs{\eps^{-j \alpha_2 }f_j}^{q_2}  \meg   \frac{\eps^{ N(1/p_1-1/p_2) q_2 Q_* }}{1- \eps^{ (1/p_1-1/p_2) q_2Q_* }}  \sup_{j\in \N} \abs{\eps^{-j \alpha_1 } f_j}^{q_2}
		\]
		since $\alpha_1-\alpha_2= Q_*(1/p_1-1/p_2)>0$.
		If $p_2=\infty$, then by means of H\"older's inequality this leads to
		\[
		\begin{split}
			\eps^{-N Q_* }\int_{B(x,\eps^{N})} \sum_{j\Meg N}\eps^{- q_2 j  \alpha_2 } \abs{f_j(y)}^{q_2}\,\dd \beta(y)&\meg   \frac{\eps^{ N(q_2/p_1 -1) Q_* }}{1- \eps^{ (q_2/p_1) Q_* }}  \int_{B(x,\eps^{N })} \sup_{j\in \N} \abs{\eps^{-j \alpha_1 } f_j(y)}^{q_2}\,\dd \beta(y)\\
			& \meg \frac{\eps^{ N(q_2/p_1 -1) Q_* }}{1- \eps^{ (q_2/p_1) Q_*  }}  \beta(B(e,\eps^{N}))^{1-q_2/p_1}  ,
		\end{split} 
		\]
		for every $N\in\N$ and for every $x\in G$, so that there is a constant $C_2>0$ such that $\norm{\eps^{-  j  \alpha_2 }  f_j(y)}_{\Cc^{q_2}_{(j,y)}(\eps )}\meg C_2$. The assertion then follows in this case.
		
		Then, assume that $p_2<\infty$ and set $C_3\coloneqq \max( C_1^{q_2},  1/(1- \eps^{ (1/p_1-1/p_2) q_2Q_*  }))$.
		Observe that
		\[
		\norm{\eps^{-j \alpha_2 }f_j(x) }^{p_2}_{L^{q_2,p_2}_{j,x}(\N,G)}=p_2 \int_0^\infty t^{p_2} \beta \bigg( \bigg\{\,x\in G\colon \sum_{j\in \N} \abs{ \eps^{-j \alpha_2 }f_j (x)}^{q_2}>t^{q_2} \,\bigg\} \bigg)\,\frac{\dd t}{t}.
		\]
		Since 
		\[
		\bigg\{\,x\in G\colon\sum_{j\in \N} \abs{ \eps^{-j \alpha_2 }f_j(x)}^{q_2}>t^{q_2}   \,\bigg\}\subseteq \bigg\{\,x\in G\colon  \sup_j\abs{\eps^{-j \alpha_1 }  f_j(x)}>C_3^{-1/q_2}t \,\bigg\},
		\]
		one has
		\[
		\begin{split}
			&\int_0^{ \delta^{-Q_*/p_2}} t^{p_2} \beta \bigg( \bigg\{\,x\in G\colon \sum_{j\in \N} \abs{\eps^{-j \alpha_2 }f_j(x)}^{q_2}>t^{q_2}  \,\bigg\}\bigg) \, \frac{\dd t}{t}\\
			&\qquad\meg (\max(1,\delta^{-Q_*/p_2}))^{p_2-p_1}\int_0^\infty t^{p_1} \beta \left( \Set{x\in G\colon\sup_{j\in\N}\abs{\eps^{-j \alpha_1 } f_j(x)}>C_3^{-1/q_2}t  }\right)\,\frac{\dd t}{t}\\
			&\qquad\meg   (\min(1,\delta))^{  Q_*(p_1/p_2-1)}\frac{C_3^{p_1/q_2}}{p_1} .
		\end{split}
		\]
		In addition, for every $t\Meg\delta^{-Q_*/p_2}$, let $N(t)$ be the largest integer $N$ such that
		\[
		\sum_{j=0}^N \eps^{-j q_2 Q_*/  p_2} <\frac{t^{q_2}}{2 C_3}\delta^{q_2Q_*/p_2};
		\] 
		set  $N(t)\coloneqq0$ if there is no such integer (that is, if $\frac{t^{q_2}}{2 C_3}\delta^{q_2Q_*/p_2}\meg 1$).
		Observe that there is a constant $c>0$ such that $  \eps^{-N(t) Q_*/ p_2} \Meg c t\delta^{Q_*/p_2}$: this is obvious if $\delta^{-Q_*/p_2}\meg t\meg (2 C_3)^{1/q_2}\delta^{-Q_*/p_2}$ (that is, if $N(t)=0$) and follows from the fact that $ \eps^{-N(t) q_2 Q_*/  p_2}$ is comparable to $\frac{t^{q_2}}{2 C_3}\delta^{q_2Q_*/p_2} $ otherwise. The previous computations then show that $\sum_{j\meg N(t)} \abs{ \eps^{-j \alpha_2 }f_j(x)}^{q_2}\meg t^{q_2}/2 $, so that
		\[
		\begin{split}
			\bigg\{\,x\in G\colon \sum_{j\in \N} \abs{ \eps^{-j \alpha_2 }f_j(x)}^{q_2}>t^{q_2}\,\bigg\}&\subseteq \bigg\{\,x\in G\colon \sum_{j\Meg N(t)} \abs{ \eps^{-j \alpha_2 }f_j(x)}^{q_2}>\frac{t^{q_2}}{2}\bigg\}\\
			&\subseteq \Set{x\in G\colon \sup_{j\in\N} \abs{\eps^{-j \alpha_1 }f_j(x)}^{q_2} > \frac{t^{q_2}}{2C_3} \eps^{N(t)(1/p_2-1/p_1)q_2 Q_*}  }
		\end{split}
		\]
		and 
		\[
		t^{q_2}\eps^{N(t)(1/p_2-1/p_1)q_2 Q_* } \Meg t^{q_2} (c t\delta^{Q_*/p_2})^{(p_2/p_1-1)q_2} =(c^{p_2}\delta^{Q_*})^{(1/p_1-1/p_2)q_2  } t^{q_2 p_2/p_1}
		\]
		so that 
		\[
		\begin{split}
			&\int_{\delta^{-Q_*/p_2}}^\infty t^{p_2} \beta \bigg( \bigg\{\,x\in G\colon \sum_{j\in \N} \abs{ \eps^{-j \alpha_2 }f_j(x)}^{q_2}>t^{q_2} \,\bigg\}\bigg) \, \frac{\dd t}{t}\\
			&\qquad\meg \int_{\delta^{-Q_*/p_2}}^\infty t^{p_2} \beta \bigg( \bigg\{\,x\in G\colon \sup_{j\in \N}\abs{ \eps^{-j \alpha_1}f_j(x)}>\frac{ (c^{p_2}\delta^{Q_*})^{1/p_1-1/p_2  } }{(2 C_3)^{1/q_2}} t^{ p_2/p_1}  \,\bigg\}\bigg)\,\frac{\dd t}{t}\\
			&\qquad\meg \frac{p_1}{p_2}\int_0^\infty t^{p_1} \beta \bigg( \bigg\{\,x\in G\colon \sup_{j\in \N} \abs{ \eps^{-j \alpha_1 }f_j(x)}>\frac{ (c^{p_2}\delta^{Q_*})^{1/p_1-1/p_2 } }{(2 C_3)^{1/q_2}} t  \,\bigg\}\bigg)\,\frac{\dd t}{t}\\
			&\qquad=\frac{(2C_3)^{p_1/q_2} }{p_2  (c^{p_2} \delta^{Q_*})^{1-p_1/p_2 } }.
		\end{split}
		\]
		The assertion follows since $\delta^{Q_*(p_1/p_2-1)/p_2}=\delta^{-Q_*(p_1/p_2)(1/p_1-1/p_2)}$.
	\end{proof}

	\begin{prop}\label{prop:14}
		Take $p,q\in (0,+\infty]$, $\alpha\in \R$, $\eps\in (0,1)$, $\delta_0>0$ and $R\Meg 2$.  
		Take $(\psi_{j})\in \Psi_\eps$. Then, there is a constant $C>0$ such that, for every reduced $(\eps^{1/\grado},\delta,R)$-lattice $(x_{j,k})_{(j,k)\in K}$ on $G$, with $\delta\in (0,\delta_0]$,
		if we define $U_{\delta R,\eps}$ as the set of $(z_{j,k})\in G^{K}$ such that $z_{j,k}\in \overline B(e,\delta R\eps^j)$ for every $(j,k)\in K$, and
		\[
		S_{(y_{j,k})} \colon \Sr'(G)\ni f \mapsto ((f*\psi_{2,j})(y_{j,k}))_{j,k}\in \C^{K}
		\]
		for every $(y_{j,k})\in G^K$,
		then
		\[
		\frac{1}{C} \norm{f}_{B^{p,q}_\alpha(G)}\meg \delta^{Q_*/p} \max_{z\in U_{\delta R,\eps}}\norm{ S_{(x_{j,k}z_{j,k})}f}_{b^{p,q}_\alpha(\eps^{1/\grado},K)}\meg C \norm{f}_{B^{p,q}_\alpha(G)}
		\] 
		and
		\[
		\frac{1}{C} \norm{f}_{F^{p,q}_\alpha(G)}\meg \max_{z\in U_{\delta R,\eps}}  \norm{ S_{(x_{j,k}z_{j,k})}f }_{f^{p,q}_\alpha((x_{j,k}))}\meg C \norm{f}_{F^{p,q}_\alpha(G)}
		\]
		for every $f\in\Sr'(G)$. In addition, there is $\delta_->0$ such that, if $\delta\meg \delta_-$, then
		\[
		\frac{1}{C} \norm{f}_{B^{p,q}_\alpha(G)}\meg \delta^{Q_*/p} \min_{z\in U_{\delta R,\eps}}\norm{ S_{(x_{j,k}z_{j,k})}f}_{b^{p,q}_\alpha(\eps^{1/\grado},K)}\meg C \norm{f}_{B^{p,q}_\alpha(G)}
		\] 
		for every $f\in B^{p,q}_\alpha(G)$,  and
		\[
		\frac{1}{C} \norm{f}_{F^{p,q}_\alpha(G)}\meg \min_{z\in U_{\delta R,\eps}}  \norm{ S_{(x_{j,k}z_{j,k})}f }_{f^{p,q}_\alpha((x_{j,k}))}\meg C \norm{f}_{F^{p,q}_\alpha(G)}
		\]
		for every $f\in F^{p,q}_\alpha(G)$.
		
		Finally, if $\delta \meg \delta_-$, then the mapping $S_{(x_{j,k})}$ induces isomorphisms of $\mathring B^{p,q}_\alpha(G)$ and $\mathring F^{p,q}_\alpha(G)$ onto closed vector subspaces of $\mathring b^{p,q}_\alpha(\eps^{1/\grado},K) $ and $\mathring f^{p,q}_\alpha((x_{j,k}))$, respectively. 
	\end{prop}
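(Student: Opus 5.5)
The plan is to reduce everything to the sampling estimates of Proposition~\ref{prop:10}, Corollary~\ref{cor:17} and Proposition~\ref{prop:10bis}. These will be applied to the family $f_j\coloneqq f*\psi_j$, where $\psi_j=\Kc_{\eps^j\Lc}(\phi_j)$ and we read $\psi_{2,j}$ in the statement as $\psi_j$. Choose $m\in C^\infty_c(\R)$ equal to $1$ on the convex envelope of $\Set{0}\cup\bigcup_j \supp\phi_j$ (the supports lie in a fixed compact set). Then $f_j=f_j*\Kc_{\eps^j\Lc}(m)$. Setting $r_j\coloneqq \eps^{j/\grado}$ and $\mi\coloneqq\sum_j\delta_{r_j}$, the family $(f_j)$ therefore defines an element of $\Oc(m,\Lc;\mi,Z)$, after reparametrizing $r^\grado=\eps^j$. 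Extend the reduced lattice to a complete $(\delta,R)$-lattice in the sense of Lemma~\ref{lem:38}, assigning to $r\in(\eps^{(j+1)/\grado},\eps^{j/\grado}]$ the points $x_{j,k}$. Only the values $r=r_j$ matter, since $\mi$ is carried by them.

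For every $z\in U_{\delta R,\eps}$ the point $x_{j,k}z_{j,k}$ lies in $\overline B(x_{j,k},\delta R\eps^{j/\grado})$ (the radius should be read with $\eps^{1/\grado}$). Hence $\min_{\overline B(x_{j,k},\delta R r_j)}\abs{f_j}\meg\abs{(f*\psi_j)(x_{j,k}z_{j,k})}\meg\max_{\overline B(x_{j,k},\delta R r_j)}\abs{f_j}$. The maximum and minimum over $z$ are therefore attained, by continuity and compactness, exactly at these extrema. It thus suffices to prove the two-sided estimates for the $\max$ and the $\min$ expressions.

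For the Besov case, multiply by $\eps^{-j\alpha/\grado}$ and apply Corollary~\ref{cor:17} (with $\rho=R$) for each fixed $j$. This gives $\delta^{Q_*/p}r_j^{Q_*/p}\norm{\max\abs{f_j}}_{\ell^p_k}\asymp\norm{f_j}_{L^p}$, uniformly in $j$, with $\delta\meg\delta_0$ for the maximum and $\delta\meg\delta_-$ for the minimum. Taking the $\ell^q_j$ quasi-norm and comparing with the definition of $b^{p,q}_\alpha(\eps^{1/\grado},K)$, whose weight is $\eps^{j(Q_*/p-\alpha)/\grado}$, yields both chains of inequalities. For the minimum estimate we use $f_j\in L^p$, which holds because $f\in B^{p,q}_\alpha$.

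For the Triebel--Lizorkin case with $p<\infty$, apply Proposition~\ref{prop:10} to $\eps^{-j\alpha/\grado}f_j$. The middle quantity there is $\norm{\chi_{B(x_{j,k},R\delta r_j)}(x)\max\abs{f_j}}_{L^{q,q,p}}$, which is the $f^{p,q}_\alpha$ quasi-norm computed with balls of radius $R\delta r_j$ in place of $\delta r_j$. By Remark~\ref{oss:6}, with $c=R$, these two quasi-norms are equivalent uniformly in $\delta\meg\delta_0$, which gives the claim. For $p=\infty$ the same argument runs through Proposition~\ref{prop:10bis}, again combined with Remark~\ref{oss:6}. The main technical obstacle is in these two steps. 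The lower $\min$ bounds need the a priori finiteness $f\in L^{q,p}$ (resp.\ $\Cc^q$), which is the reason the statement restricts to $f\in B^{p,q}_\alpha$ or $F^{p,q}_\alpha$. One must also check that $\delta_-$ and $C$ can be chosen uniformly in $j$ and in the lattice, which the cited results provide since their constants depend only on $m,\rho,\delta_+,\kappa$.

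Finally, for $\delta\meg\delta_-$, the two-sided estimate with $z=e$ shows that $S_{(x_{j,k})}$ is an isomorphism of $B^{p,q}_\alpha$ (resp.\ $F^{p,q}_\alpha$) onto its image, and the image is closed by completeness of the source. It remains to see that $S$ maps $\mathring B^{p,q}_\alpha$ into $\mathring b^{p,q}_\alpha$, and likewise for $F$. By continuity it is enough to check this on the dense set $\Sr(G)$. For $\phi\in\Sr(G)$, the sequence $S\phi$ is approximated in norm by its finite truncations, since $\phi*\psi_j$ is rapidly decaying in space and its size is $O(\eps^{jN})$ for every $N$, by Corollary~\ref{cor:12} and Lemma~\ref{lem:70}. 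When $q$ or $p$ is infinite, the tails in $k$ and $j$ still tend to $0$ thanks to this uniform decay. Closedness of $S(\mathring B^{p,q}_\alpha)$ in $\mathring b^{p,q}_\alpha$ then follows from the isomorphism property.
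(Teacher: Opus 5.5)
Your proposal is correct and follows the paper's proof: Corollary~\ref{cor:17} gives the Besov estimates, and Proposition~\ref{prop:10} (or Proposition~\ref{prop:10bis} when $p=\infty$) gives the Triebel--Lizorkin ones, with your appeal to Remark~\ref{oss:6} supplying the change of radius from $R\delta\eps^{j/\grado}$ to $\delta\eps^{j/\grado}$ that the paper leaves implicit. For the last assertion, the paper does not estimate tails directly: it notes that $S_{(x_{j,k})}(\Sr(G))$ lies in every $b^{p_1,q_1}_{\alpha_1}$, where finitely supported sequences are dense for $p_1,q_1<\infty$, and transfers this to $\mathring b^{p,q}_\alpha$ and $\mathring f^{p,q}_\alpha$ via Propositions~\ref{prop:12b} and~\ref{prop:13b}; your direct decay argument is an equivalent way to reach the same conclusion.
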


	\begin{proof}
		The first and third assertions are a consequence of Corollary~\ref{cor:17}, while the second and fourth assertions are a consequence of Proposition~\ref{prop:10} when $p<\infty$, and of Proposition~\ref{prop:10bis} when $p=\infty$. In order to prove the last assertion, observe first that the image of $\Sr(G)$ under $S_{(x_{j,k})}$ is contained in $b^{p_1,q_1}_{\alpha_1}(\eps,K)$ for every $p_1,q_1\in (0,+\infty]$ and for every $\alpha_1\in \R$. The conclusion follows using Propositions~\ref{prop:12b} and~\ref{prop:13b} and the fact that clearly $b^{p_1,q_1}_{\alpha_1}(\eps^{1/\grado},K)=\mathring b^{p_1,q_1}_{\alpha_1}(\eps^{1/\grado},K)$ for $p_1,q_1<\infty$. 
	\end{proof}
	
	We can now reap some results from the previous sampling theorem.

	\begin{prop}\label{prop:12}
		Take $p,q\in (0,\infty]$  and $\alpha\in \R$. Then,
		\[
		B^{p,\min(p,q)}_\alpha(G)\subseteq F^{p,q}_\alpha(G)\subseteq B^{p,\max(p,q)}_\alpha(G)
		\]
		continuously.
	\end{prop}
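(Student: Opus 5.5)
The plan is to transfer the statement to the sequence spaces through the sampling theorem, Proposition~\ref{prop:14}, and then invoke the discrete comparison of Proposition~\ref{prop:12b}.

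Fix $\eps\in(0,1)$, $(\psi_j)\in\Psi_\eps$, $R\Meg 2$ and $\delta_0>0$. By Lemma~\ref{lem:38} (or the remark following the definition of reduced lattices) we may choose a reduced $(\eps^{1/\grado},\delta,R)$-lattice $(x_{j,k})_{(j,k)\in K}$ with $\delta\in(0,\delta_0]$, say $\delta=\delta_0$. Since the inclusions are to hold for every $f\in\Sr'(G)$, I will only use the \emph{maximum} versions of the sampling inequalities in Proposition~\ref{prop:14}. These hold for arbitrary $f\in\Sr'(G)$, with no smallness condition on $\delta$ and no a priori membership of $f$ in the space.

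For the first inclusion, take $f\in B^{p,\min(p,q)}_\alpha(G)$. By Proposition~\ref{prop:14},
\[
\norm{f}_{F^{p,q}_\alpha(G)}\meg C\max_{z\in U_{\delta R,\eps}}\norm{S_{(x_{j,k}z_{j,k})}f}_{f^{p,q}_\alpha((x_{j,k}))}.
\]
For each $z$, the right inequality of Proposition~\ref{prop:12b} bounds $\norm{S_{(x_{j,k}z_{j,k})}f}_{f^{p,q}_\alpha((x_{j,k}))}$ by $C\delta^{Q_*/p}\norm{S_{(x_{j,k}z_{j,k})}f}_{b^{p,\min(p,q)}_\alpha(\eps^{1/\grado},K)}$ when $p<\infty$, and by $C\norm{S_{(x_{j,k}z_{j,k})}f}_{b^{\infty,q}_\alpha(\eps^{1/\grado},K)}$ when $p=\infty$. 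The constants are independent of $z$, so we may take the maximum over $z$. The Besov half of Proposition~\ref{prop:14}, applied with exponent $\min(p,q)$, then bounds the result by $C\norm{f}_{B^{p,\min(p,q)}_\alpha(G)}$. The second inclusion is symmetric. We bound $\norm{f}_{B^{p,\max(p,q)}_\alpha(G)}$ by $C\delta^{Q_*/p}\max_z\norm{S_{(x_{j,k}z_{j,k})}f}_{b^{p,\max(p,q)}_\alpha}$, then use the left inequality of Proposition~\ref{prop:12b} to pass to $\max_z\norm{S_{(x_{j,k}z_{j,k})}f}_{f^{p,q}_\alpha((x_{j,k}))}$, and finally use Proposition~\ref{prop:14} to reach $C\norm{f}_{F^{p,q}_\alpha(G)}$.

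The step I expect to need the most care is matching the sampling points with the lattice that defines the $f$-quasi-norm, since Proposition~\ref{prop:12b} is stated for sequences indexed by the same lattice $(x_{j,k})$. The points $x_{j,k}z_{j,k}$ lie within $\delta R\eps^{j/\grado}$ of $x_{j,k}$, and the $f$-quasi-norm is still computed with balls around $x_{j,k}$, so there is no mismatch and Proposition~\ref{prop:12b} applies verbatim to the sequence $\lambda=S_{(x_{j,k}z_{j,k})}f$.

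The second point of care is the case $p=\infty$. There Proposition~\ref{prop:12b} carries the factors $\delta^{\pm Q_*/q}$, and $\min(\infty,q)=q$ while $\max(\infty,q)=\infty$, so the claimed inclusions are $B^{\infty,q}_\alpha\subseteq F^{\infty,q}_\alpha\subseteq B^{\infty,\infty}_\alpha$. Since $\delta$ is fixed, these factors are harmless constants. Continuity of the inclusions follows because every bound is a quasi-norm inequality.
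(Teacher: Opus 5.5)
Your proposal is correct and is exactly the paper's argument: the paper's proof of this proposition is the single line ``follows from Propositions~\ref{prop:12b} and~\ref{prop:14}''. You compose the maximum version of the sampling inequalities (valid for every $f\in\Sr'(G)$) with the discrete comparison of $b$- and $f$-quasi-norms, with $\delta$ fixed so that the $\delta$-factors become harmless constants.
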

	
	\begin{proof}
		The assertion follows from Propositions~\ref{prop:12b} and~\ref{prop:14}.
	\end{proof}
	
	\begin{prop}\label{prop:13}
		Take $p_1,p_2,q_1,q_2\in (0,\infty]$ and $\alpha_1,\alpha_2\in \R$. Assume that $p_1\meg p_2$. Then, the following hold:
		\begin{enumerate}
			\item[\textnormal{(1)}] if $\alpha_1-Q_*/p_1\Meg \alpha_2-Q_*/p_2$ and either $q_1\meg q_2$ or  $\alpha_1-Q_*/p_1>\alpha_2-Q_*/p_2$, then
			\[
			B^{p_1,q_1}_{\alpha_1}(G)\subseteq B^{p_2,q_2}_{\alpha_2}(G)
			\]
			continuously;
			
			\item[\textnormal{(1$'$)}] if   $\alpha_1-Q_*/p_1\Meg \alpha_2-Q_*/p_2$ and  either $q_1\meg q_2 $ or $p_1<p_2$ or $\alpha_1-Q_*/p_1>\alpha_2-Q_*/p_2$, then
			\[
			F^{p_1,q_1}_{\alpha_1}(G)\subseteq F^{p_2,q_2}_{\alpha_2}(G)
			\]
			continuously; 
			
			\item[\textnormal{(2)}]    
			if     $\alpha_1\Meg \frac{Q_*}{p_1}-\frac{Q_*}{\max(p_2,1)} $ and either $q_1\meg \min(1,p_2)$ or $\alpha_1> \frac{Q_*}{p_1}-\frac{Q_*}{\max(p_2,1)} $, then
			\[
			B^{p_1,q_1}_{\alpha_1}(G )\subseteq   L^{p_2}(G)\cap L^{\max(p_2,1)}(G)
			\]
			continuously;
			
			\item[\textnormal{(2$'$)}] if     $\alpha_1>  \frac{Q_*}{p_1}-\frac{Q_*}{\max(p_2,1)} $,    then
			\[
			F^{p_1,q_1}_{\alpha_1}(G )\subseteq L^{p_2}(G )\cap L^{\max(p_2,1)}(G)
			\]
			continuously.  
		\end{enumerate}
	\end{prop}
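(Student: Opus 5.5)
Items (1) and (1$'$) will be obtained by transferring Proposition~\ref{prop:13b} through the sampling theorem, Proposition~\ref{prop:14}. Fix $\eps\in(0,1)$, $(\psi_j)\in\Psi_\eps$, $R\Meg 2$, and $\delta\meg\delta_-$, where $\delta_-$ is small enough to work simultaneously for both parameter sets $(p_1,q_1,\alpha_1)$ and $(p_2,q_2,\alpha_2)$. Fix a reduced $(\eps^{1/\grado},\delta,R)$-lattice $(x_{j,k})$, and write $S=S_{(x_{j,k})}$.

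For (1), take $f\in B^{p_1,q_1}_{\alpha_1}(G)$. Proposition~\ref{prop:14}, applied with the parameters $(p_1,q_1,\alpha_1)$ and the choice $z=e$, gives $\delta^{Q_*/p_1}\norm{Sf}_{b^{p_1,q_1}_{\alpha_1}}\meg C\norm{f}_{B^{p_1,q_1}_{\alpha_1}}$. Proposition~\ref{prop:13b} then bounds $\norm{Sf}_{b^{p_2,q_2}_{\alpha_2}}$ by $\norm{Sf}_{b^{p_1,q_1}_{\alpha_1}}$.

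Since the right-hand side is finite, we would like to apply the lower bound of Proposition~\ref{prop:14} for $(p_2,q_2,\alpha_2)$. However, that lower bound (the `min' version) is stated only for $f$ already in $B^{p_2,q_2}_{\alpha_2}(G)$, so we should use the `max' version instead. The max over $z\in U_{\delta R,\eps}$ of $\norm{S_{(x_{j,k}z_{j,k})}f}$ is controlled uniformly by the $p_1$-quantity: for each $z$ the shifted family $(x_{j,k}z_{j,k})$ is a lattice of comparable type, with $\delta$ replaced by a comparable constant. So we will redo the chain with the sampling points $x_{j,k}z_{j,k}$ and take the maximum. This gives
\[
\norm{f}_{B^{p_2,q_2}_{\alpha_2}}\meg C\max_z\norm{S_{(x_{j,k}z_{j,k})}f}_{b^{p_2,q_2}_{\alpha_2}}\meg C'\max_z\norm{S_{(x_{j,k}z_{j,k})}f}_{b^{p_1,q_1}_{\alpha_1}}\meg C''\norm{f}_{B^{p_1,q_1}_{\alpha_1}}.
\]
The last inequality uses the max version of Proposition~\ref{prop:14} for $(p_1,q_1,\alpha_1)$. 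Here $\delta$ is fixed, so the $\delta$-dependent constants $\theta(\delta)$ and $\delta^{Q_*/p}$ are harmless.

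The Triebel--Lizorkin case (1$'$) is identical, using the $f^{p,q}_\alpha((x_{j,k}))$ part of Propositions~\ref{prop:14} and~\ref{prop:13b}. The main technical point is that every sampled family $(x_{j,k}z_{j,k})$ must be admissible in Proposition~\ref{prop:13b}, which is stated for the spaces $f^{p,q}_\alpha((x_{j,k}))$ built on the fixed lattice. We handle this by noting that $f^{p,q}_\alpha((x_{j,k}))$ depends only on the coefficients $\lambda$ and on the lattice $(x_{j,k})$, not on where one samples. Hence the inequality of Proposition~\ref{prop:13b} applies verbatim to $\lambda=S_{(x_{j,k}z_{j,k})}f$.

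For (2) and (2$'$), we first reduce to Besov spaces. Proposition~\ref{prop:12} gives $F^{p_1,q_1}_{\alpha_1}\subseteq B^{p_1,\infty}_{\alpha_1}$. Given the strict inequality in (2$'$), this embeds into $B^{p_1,\min(1,p_2)}_{\alpha_1-\eta}$ for a small $\eta>0$ by (1), so (2$'$) follows from (2). For (2), set $r=\max(p_2,1)$. Using (1) (with $q$ lowered at the cost of a strict loss in $\alpha$ when the inequality is strict), we get $B^{p_1,q_1}_{\alpha_1}\subseteq B^{p_2,\min(1,p_2)}_{0}\cap B^{r,\min(1,p_2)}_{\alpha_1-Q_*/p_1+Q_*/r}$. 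Note that $\alpha_1-Q_*/p_1+Q_*/r\Meg 0$, and for $p_2<1$ the first exponent $\alpha_1-Q_*/p_1+Q_*/p_2\Meg Q_*(1/p_2-1)\Meg0$.

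It then suffices to show $B^{s,\min(1,s)}_0(G)\subseteq L^s(G)$ for $s\in(0,\infty]$, and for $s<1$ also $\subseteq L^1$ via $B^{s,s}_{0}\subseteq B^{1,1}_{Q_*(1-1/s)}$. Hmm—this would need negative smoothness, so instead we use the $r$-term above directly. For $f\in B^{s,\min(1,s)}_0$, take $(\psi_j)\in\widetilde\Psi_\eps$, so that $f=\sum_j f*\psi_j$ in $\Sr'(G)$ by Lemma~\ref{lem:41}. The $\min(1,s)$-triangle inequality in $L^s$ gives $\norm{\sum_j f*\psi_j}_{L^s}^{\min(1,s)}\meg\sum_j\norm{f*\psi_j}_{L^s}^{\min(1,s)}=\norm{f}^{\min(1,s)}_{B^{s,\min(1,s)}_0}$. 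The series thus converges in $L^s$, and its limit agrees with the $\Sr'$ limit when $s\Meg1$. When $s<1$, the identification of $f$ with a function is the main obstacle, since $L^s\not\subseteq\Sr'$. We handle this by combining the $L^s$ convergence with the $L^r=L^1$ convergence from the second embedding: the partial sums converge in $L^1$ to $f$, hence almost everywhere along a subsequence, and the $L^s$ limit coincides. This yields $f\in L^{p_2}\cap L^{\max(p_2,1)}$ with the desired bounds.
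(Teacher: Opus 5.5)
Your proposal is correct and follows essentially the paper's route: (1) and (1$'$) come from transferring Proposition~\ref{prop:13b} through the sampling theorem, Proposition~\ref{prop:14}, and you are right that it is the `max' version, valid for every $f\in\Sr'(G)$, that must be used at both ends of the chain; (2) reduces via (1) to the $\min(1,s)$-triangle inequality applied to $\sum_j f*\psi_j$ with $(\psi_j)\in\widetilde\Psi_\eps$, where for $p_2<1$ the limit is identified through the $L^1$-convergence coming from $B^{1,1}_0(G)$, and (2$'$) is deduced from (2) via Proposition~\ref{prop:12}. Your aside that the shifted families $(x_{j,k}z_{j,k})$ are again lattices of comparable type is unnecessary and false in general, since shifts of size up to $\delta R\eps^j$ can make points collide; it plays no role, however, because, as you then observe, the sequence spaces in Proposition~\ref{prop:14} are built on the fixed lattice $(x_{j,k})$.
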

	
	Concerning (2) and (2$'$), we observe explicitly that we wrote $L^{p_2}(\beta )\cap L^{\max(p_2,1)}(G)$ instead of $L^{p_2}(\beta )$ in order to stress the fact that the inclusions are meaningful (and actual inclusions instead of non-injective canonical mappings) only because of the stronger assumption on $\alpha_1$, which ensures the continuous inclusions in $L^{\max(p_2,1)}(G)$. 
	
	Observe that, if $f$ is a $Z$-valued Radon measure and $\abs{f}=h\cdot \beta$ for some positive $h\in L^p(G)$, with $p\in [1,\infty]$, then it is readily seen that $f\in B^{p,\infty}_0(G)$. Assertion (2) tells us that, if $f$ is a little more regular (that is, $f\in B^{p,q}_\alpha(G)$ for some $\alpha>0$ and some $q\in (0,\infty]$), then $f\in L^p(G;Z)$, that is, $f$ is actually (the equivalence class of) a measurable function. This should not come as a surprise, since one may show directly that $\Lc^{-\alpha}f$ is actually a function in $L^p(G;Z)$, at least for $\alpha>Q_*/(p\grado)$.

	\begin{proof} 
		(1) and (1$'$) follow from Propositions~\ref{prop:13b} and~\ref{prop:14}.		
		
		(2)   By (1), we may reduce to proving the first assertion for $p_1=q_1=p_2\meg 1$ and $\alpha_1=(1/p_1-1)_*Q_* $. 
		Assume first that $p_1=1$. Then, observe that 
		\[
		\begin{split}
			\norm{f}_{L^{1}(G)} &=\norm*{\sum_{j\in \N} f*\psi_j }_{L^{ 1}(G)} \meg \sum_{j\in \N} \norm{f*\psi_j}_{L^{1}(G)} 
		\end{split}
		\]
		for $(\psi_j)\in \widetilde \Psi_\eps$,\footnote{The convergence of $\sum_j \psi_j$ (to $\delta_e$) in $\Oc'_{C,L}(G)$, which is needed to ensure that $f=\sum_j f*\psi_j$ in $\Sr'(G)$, is a consequence of Lemma~\ref{lem:41}. Since the sum $\sum_j f*\psi_j$ also converges (absolutely) in $L^1(G)$, the assertion follows. } which leads to the conclusion in this case. 
		Then, assume that $p_1<1$, and observe that $B^{p_1,p_1}_{\alpha_1}(G)\subseteq B^{1,1}_0(G)$ by (1). Consequently, $B^{p_1,p_1}_{\alpha_1}(G)\subseteq L^1(G)$. In particular, if $f\in B^{p_1,p_1}_{\alpha_1}(G)$, then the above computations show that $f=\sum_{j\in \N} f*\psi_j$, with the sum converging in $L^1(G)$, in particular in measure. Then, we may essentially repeat the above computations and obtain\footnote{We observe explicitly that, when $p_1=1$, we were able to perform the analogous computations using the fact that $f=\sum_{j\in \N} f*\psi_j $ in $\Sr'(G)$ and the lower semi-continuity of the norm of $B^{1,1}_0(G)$ (extended by $+\infty$) on $\Sr'(G)$. When $p_1<1$, this is no longer the case, so that we need to have some stronger kind of convergence which is `compatible' with the space $L^{p_1}(G)$ (such as convergence in measure).}
		\[
		\begin{split}
			\norm{f}_{L^{p_1}(G)}^{p_1} &=\norm*{\sum_{j\in \N} f*\psi_j }_{L^{p_1}(G)}^{p_1}\meg \sum_{j\in \N} \norm{f*\psi_j}_{L^{p_1}(G)}^{p_1},
		\end{split}
		\]
		which leads to the conclusion.
		
		(2$'$) This follows from   (1$'$), (2), and Proposition~\ref{prop:12}.
	\end{proof}
	 
	\begin{prop}
		Take $p,q\in (0,+\infty]$ and $\alpha\in \R$. Then, $B^{p,q}_\alpha(G)$ and $F^{p,q}_\alpha(G)$ are complete and embed continuously in $\Sr'(G)$. 
	\end{prop}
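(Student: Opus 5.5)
The plan is to prove the continuous embedding into $\Sr'(G)$ first, and then deduce completeness from it by a Fatou-type argument.

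\textbf{Step 1: continuous embedding.} By Proposition~\ref{prop:12}, $F^{p,q}_\alpha(G)\subseteq B^{p,\infty}_\alpha(G)$ continuously, and $B^{p,q}_\alpha(G)\subseteq B^{p,\infty}_\alpha(G)$ trivially. Proposition~\ref{prop:13}~(1) then gives
\[
B^{p,\infty}_\alpha(G)\subseteq B^{\infty,\infty}_{\alpha-Q_*/p}(G)
\]
continuously. It therefore suffices to treat $B^{\infty,\infty}_{\alpha'}(G)$ for an arbitrary $\alpha'\in\R$. Fix $(\psi_j)\in\widetilde\Psi_\eps$, so that $f=\sum_j f*\psi_j$ in $\Sr'(G)$ by Lemma~\ref{lem:41}. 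For $\phi\in\Sr(G)$ we have
\[
\langle f,\phi\rangle=\sum_j\langle f*\psi_j,\phi\rangle=\sum_j\langle f*\psi_j, \phi*\psi_j^\vee\rangle ,
\]
where $\psi_j^\vee=\Kc_{\eps^j\Lc}(\widetilde\phi_j)$ is a suitable bounded family of reproducing kernels; for $j\Meg1$, $\widetilde\phi_j$ equals $1$ on $\supp\phi_j$ and vanishes near $0$. Hence
\[
\abs{\langle f,\phi\rangle}\meg \norm{f}_{B^{\infty,\infty}_{\alpha'}}\sum_j \eps^{j\alpha'/\grado}\norm{\phi*\check\psi^\vee_j}_{L^1(G)}.
\]
Writing $\widetilde\phi_j(\eps^j\lambda)=(\eps^j\lambda)^m\eta_j(\eps^j\lambda)$ with $(\eta_j)$ bounded, we get $\phi*\check\psi_j^\vee=\eps^{jm}(\Lc^m\phi)*\Kc_{\eps^j\Lc}(\eta_j)$. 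Corollary~\ref{cor:12} bounds the $L^1$ norms of the kernels $\Kc_{\eps^j\Lc}(\eta_j)$ uniformly, so each term is at most $C\eps^{jm}\norm{\Lc^m\phi}_{L^1}$. Choosing $m>-\alpha'/\grado$, the series is bounded by a continuous seminorm of $\phi$ in $\Sr(G)$. This gives continuity into $\Sr'(G)$ for the strong topology, since bounded subsets of $\Sr(G)$ have bounded seminorms.

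\textbf{Step 2: completeness.} Let $(f_n)$ be Cauchy in $B^{p,q}_\alpha(G)$ (resp.\ $F^{p,q}_\alpha(G)$). By Step 1 it is Cauchy in $\Sr'(G)$, which is complete, so $f_n\to f$ in $\Sr'(G)$. Consequently $f_n*\psi_j\to f*\psi_j$ pointwise on $G$ for every $j$, because $\psi_j(\,\cdot\,^{-1}x)\in\Sr(G)$.

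Fatou's lemma, applied in the iterated $\ell^q$, $L^p$ quasi-norms, gives
\[
\norm{f-f_n}\meg\liminf_{k}\norm{f_k-f_n},
\]
which tends to $0$ as $n\to\infty$. Hence $f$ belongs to the space and $f_n\to f$ in it. For $F^{\infty,q}_\alpha$ the same Fatou argument applies inside each localized quantity $\norm{\chi_{[N,\infty)\times B(x,\eps^N)}\,\cdot\,}_{L^q}$, followed by the supremum over $N$ and $x$.

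The main obstacle is the $j$-uniform $L^1$ estimate used in Step 1, namely factoring out $\eps^{jm}\Lc^m$ uniformly in $j$. This is where Corollary~\ref{cor:12} (or Lemma~\ref{lem:70}, the standard-system bound) is needed. The rest is routine.
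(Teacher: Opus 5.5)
Your proof is correct. The reduction to $B^{\infty,\infty}_{\alpha'}(G)$ through Propositions~\ref{prop:12} and~\ref{prop:13} matches the paper's. So does the completeness argument: Cauchy in the complete space $\Sr'(G)$, pointwise convergence of $f_n*\psi_j$, then Fatou (lower semicontinuity of the quasi-norm on $\Sr'(G)$), applied inside each localized quantity for $F^{\infty,q}_\alpha$.

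The two routes differ only in how $B^{\infty,\infty}_{\alpha'}(G)\hookrightarrow\Sr'(G)$ is obtained.

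The paper invokes Theorem~\ref{teo:17}. A Bessel potential $\Lc_\omega^{s}$ maps $B^{\infty,\infty}_{\alpha'}(G)$ isomorphically onto a positive-order space $B^{\infty,\infty}_{\alpha''}(G)\subseteq L^\infty(G)$. Hence $B^{\infty,\infty}_{\alpha'}(G)\subseteq\Lc_\omega^{-s}L^\infty(G)$. The latter embeds continuously in $\Sr'(G)$ because the kernel of $\Lc_\omega^{-s}$ lies in $\Oc'_{C,R}(G)\cap\Oc'_{C,L}(G)$.

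You instead test $f$ directly against $\phi\in\Sr(G)$ via the reproducing formula of Lemma~\ref{lem:41}, and move the factor $\eps^{jm}\Lc^m$ onto $\phi$. This needs only the uniform $L^1$ kernel bounds of Corollary~\ref{cor:12}, and it yields the explicit estimate $\abs{\langle f,\phi\rangle}\meg C\norm{f}_{B^{\infty,\infty}_{\alpha'}(G)}\bigl(\norm{\phi}_{L^1(G)}+\norm{\Lc^m\phi}_{L^1(G)}\bigr)$.

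What each route buys:
\begin{itemize}
\item Your route is more elementary and self-contained. It bypasses Theorem~\ref{teo:17} and the multiplier estimates it rests on (Corollary~\ref{cor:5}, Lemmas~\ref{lem:34} and~\ref{lem:34b}), and it exhibits a concrete controlling seminorm.
\item The paper's route is a one-liner once the lifting theorem is available. It also gives more structural information: every element of $B^{\infty,\infty}_{\alpha'}(G)$ is a Bessel potential of a bounded function.
\end{itemize}

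One cosmetic point. With the bilinear pairing, $(\Kc_{\Lc}(m))\check{\;}=\Kc_{\overline{\Lc}}(m)$. Your identity for $\phi*\check\psi^\vee_j$ therefore holds as written only if $\Lc$ has real coefficients, which you may assume, as the paper does in Section~\ref{sec:5}. Otherwise write $\overline{\Lc}$ there; Corollary~\ref{cor:12} applies to it equally.
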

	
	\begin{proof}
		Observe that by Proposition~\ref{prop:13} we have the continuous inclusion $B^{p,q}_\alpha(G)\subseteq B^{\infty,\infty}_{\alpha-Q_*/p}(G)$. In addition, by Theorem~\ref{teo:17} there is a continuous inclusion $B^{\infty,\infty}_{\alpha-Q_*/p}(G)\subseteq \Lc_\omega^{Q_*/p-\alpha-1} L^\infty(G)$ for every sufficiently large $\omega\in \R$. 
		Consequently, $B^{p,q}_\alpha(G)\subseteq \Sr'(G)$ continuously. If, now, $(f_j)$ is a Cauchy sequence in $B^{p,q}_\alpha(G) $, then it is a Cauchy sequence in $\Sr'(G)$, so that it has a limit $f$    in $\Sr'(G)$. Since the quasi-norm on $B^{p,q}_\alpha(G) $ (extended by $+\infty$) is lower semi-continuous on $\Sr'(G)$, it is readily seen that $f\in B^{p,q}_\alpha(G)$ and that $f_j$ converges to $f$ in $\Sr'(G)$. The second assertion is proved similarly.
	\end{proof}

	We now pass to `atomic decomposition.' The first step is to define an appropriate notion of atoms. We basically follow~\cite{Hu}. When $Z$ is a general Banach space, one may also weaken the first definition of atoms only requiring that  $a_{0,k'}, \Lc^K a_{0,k'}, b_{j,k}, \Lc^{S+K}b_{j,k}$ be $Z$-valued Radon measures and that the estimates in (2) hold for $\abs{a_{0,k'}}, \abs{\Lc^K a_{0,k'}}, \abs{b_{j,k}}, \abs{\Lc^{S+K}b_{j,k}}$ instead of $a_{0,k'}, \Lc^K a_{0,k'}, b_{j,k}, \Lc^{S+K} b_{j,k}$, respectively.
	
	\begin{deff}\label{def:11}
		Take $\eps\in (0,1)$, $\delta,N>0$, $R\Meg 2$, $K,S\in \N$, $p\in [1,\infty]$ and a reduced $(\eps ,\delta, R)$-lattice $(x_{j,k})_{(j,k)\in K}$. We say that a family $(a_{j,k})_{(j,k)\in K}$ is a system of $(K,S,N,p,\Lc)$-atoms associated with $(x_{j,k})$ if there is a family $(b_{j,k})_{(j,k)\in K, j\Meg 1}$ such that following hold:
		\begin{enumerate}
			\item $a_{0,k'},b_{j,k}\in \Sr'(G)$   and $a_{j,k}=\Lc^S b_{j,k}$  for every $(0,k'),(j,k)\in K$ with $j\Meg 1$;
			
			\item $a_{0,k'}, \Lc^K a_{0,k'}, b_{j,k}, \Lc^{S+K}b_{j,k}\in L^p_\loc(G)$,
			\[
			\norm{(1+d(\,\cdot\,,x_{0,k}) )^{N} a_{0,k}}_{L^p(G)}, \norm{(1+d(\,\cdot\,,x_{0,k}) )^{N}\Lc^K a_{0,k}}_{L^p(G)}\meg 1,
			\]
			and
			\[\norm{(1+d(\,\cdot\,,x_{j,k})/\eps^j )^{N}  b_{j,k}}_{L^p(G)}, \norm{(1+d(\,\cdot\,,x_{j,k})/\eps^j )^{N}  (\eps^{j\grado}\Lc)^{K+S}b_{j,k}}_{L^p(G)}\meg \eps^{j(S\grado+Q_*/p)}
			\]
			for every $(0,k'),(j,k)\in K$ with $j\Meg 1$.
		\end{enumerate} 
		
		We say that a family $(a_{j,k})_{(j,k)\in K}$ is a system of strong $(K,S,\Lc)$-atoms associated with $(x_{j,k})$ if there is a family $(b_{j,k})_{(j,k)\in K, j\Meg 1}$ such that following hold:
		\begin{enumerate}
			\item   $a_{0,k'},b_{j,k}\in C^\infty(G)$  and $a_{j,k}=\Lc^S b_{j,k}$   for every $(0,k'),(j,k)\in K$ with $j\Meg 1$;
			
			\item $a_{0,k'}$ is supported in $B(x_{0,k'},2\delta R)$ and $b_{j,k}$ is supported in $B(x_{j,k}, 2\delta R \eps^{j })$  for every $(0,k'),(j,k)\in K$ with $j\Meg 1$;
			
			\item  $\abs{X a_{0,k'}}\meg \abs{X}$ for every $X\in U_{K\grado}$ and  $\abs{X b_{j,k}}\meg \abs{X}\eps^{j(S\grado-\deg(X) )}$ for every $X\in U_{(K+S)\grado}$, and for every $(0,k'),(j,k)\in K$ with $j\Meg 1$.
		\end{enumerate}  
	\end{deff}
	
	When $p=1$, one may replace $L^1(G)$ with the space  $\Mc^1(G)$ of bounded measures in the above estimates.

	Observe that our definition of (strong) atoms is different from the classical one, since (strong) atoms are \emph{not} normalized using the $L^2$-norm of the ball where they are supported. This leads to different (and more symmetric) conditions in Theorems~\ref{teo:13bis} and~\ref{teo:13} below.

	\begin{oss}
		Take $c\in (0,1)$ so that  $1/c^2\Meg (1+2\delta R)^N[\eps^{-jQ_*} \beta(B(e,2 \delta R\eps^j))]^{1/p} ,\abs{\Lc^K},\abs{\Lc^{K+S}}$ for every $j\in\N$. If $(a_{j,k})$ is a  system of strong $(K,S,\Lc)$-atoms, then $(c a_{j,k})$ is a system of $(K,S,N,p,\Lc)$-atoms (with respect to any fixed reduced $(\eps,\delta,R)$-lattice on $G$).   
	\end{oss}

	\begin{teo}\label{teo:13bis}
		Take $p,q\in (0,\infty]$  and $\alpha\in \R$, and take $K,S\in \N$, $N>0$, and  $p_0\in [1,\infty]$ so that $K>(Q_*/p_0+\alpha)/\grado $, $S>   (Q_*(1/\min(1,p)-1/p_0) -\alpha) /\grado $, and $N>D/\min(1,p)$. Take $\eps\in (0,1) $, $\delta>0$ and $R\Meg 2$. Then, there is a constant $C>0$ such that, for every system of $(K,S,N,p_0,\Lc)$-atoms $(a_{j,k})$ associated with some  reduced $(\eps ,\delta,R)$-lattice $(x_{j,k})_{(j,k)\in J}$,
		\[
		\norm*{\sum_{j,k} \lambda_{j,k} a_{j,k}  }_{B^{p,q}_\alpha(G)}\meg C \norm{\lambda }_{b^{p,q}_{\alpha}(\eps,J )}
		\]
		for every $\lambda\in  b^{p,q}_{\alpha}(\eps,J )$, where the sum converges in $\Sr'(G)$  and also in $\mathring B^{p,q}_\alpha(G)$ if $\lambda\in  \mathring b^{p,q}_{\alpha}(\eps,J )$. In addition, for every $f\in B^{p,q}_\alpha(G)$ there are a system $(a_{f,j,k})_{j,k}$ of strong $(K,S,\Lc)$-atoms associated with $(x_{j,k})$ and $\lambda_f\in b^{p,q}_{\alpha}(\eps,J )$ such that 
		\[
		\norm{\lambda_f }_{b^{p,q}_{\alpha}(\eps,J )}\meg C\norm{f}_{B^{p,q}_\alpha(G)}
		\]
		and such that $f=\sum_{j,k} \lambda_{f,j,k} a_{f,j,k} $.
	\end{teo}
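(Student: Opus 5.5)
The proof splits into the synthesis estimate (atoms $\to$ distributions) and the analysis statement (distributions $\to$ strong atoms). Both reduce to almost-orthogonality estimates between Littlewood--Paley pieces and atoms, in the spirit of Lemma~\ref{lem:40} and Corollary~\ref{cor:13}.

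\textbf{Synthesis.} Fix $(\psi_{j'})\in\Psi_{\eps^{\grado}}$ so that the quasi-norm of $B^{p,q}_\alpha(G)$ is computed with $\psi_{j'}=\Kc_{\eps^{j'\grado}\Lc}(\phi_{j'})$, where $\phi_{j'}$ vanishes near $0$ for $j'\Meg1$. The core estimate is a bound on $\norm{a_{j,k}*\psi_{j'}}_{L^{p}}$ (more precisely, a pointwise bound by a translate of $(1+d(\,\cdot\,,x_{j,k})/\eps^{\min(j,j')})^{-N}$), split into two regimes.
\begin{itemize}
\item For $j'\meg j$ (smooth test function against a fine atom), write $a_{j,k}*\psi_{j'}=b_{j,k}*\Lc^S\psi_{j'}$. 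Corollary~\ref{cor:12} then gives the gain $\eps^{(j-j')S\grado}$, with the spatial decay coming from the weight $(1+d/\eps^j)^N$ on $b_{j,k}$.
\item For $j'>j$, write $a_{j,k}*\psi_{j'}=(\Lc^{K+S}b_{j,k})*\Lc^{-K}\psi_{j'}$, using the notational convention for negative powers of $\Lc$ on $\psi_{j'}$. This gives the gain $\eps^{(j'-j)K\grado}$.
\end{itemize}
Passing from the $L^{p_0}$ atom bounds to $L^p$ via Young's inequality, or Corollary~\ref{cor:4} when $p<1$, costs the factors $Q_*/p_0$ and $Q_*(1/\min(1,p)-1/p_0)$. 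This is exactly where the thresholds on $K$ and $S$ enter: the net exponents are $(K-(Q_*/p_0+\alpha)/\grado)>0$ and $(S-(Q_*(1/\min(1,p)-1/p_0)-\alpha)/\grado)>0$.

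Summing over $k$ at fixed $(j,j')$ uses the bounded overlap of the lattice balls (\cite[Lemma 4.3]{Calzi2}) together with $N>D/\min(1,p)$. When $p<1$, this goes through the $\ell^p$-subadditivity $\norm{\sum_k g_k}_{L^p}^p\meg\sum_k\norm{g_k}_{L^p}^p$, each term being handled by Corollary~\ref{cor:4}. The result is
\[
\eps^{-j'\alpha}\norm*{\sum_k\lambda_{j,k}a_{j,k}*\psi_{j'}}_{L^p}\lesssim \eps^{\theta\abs{j-j'}}\,\eps^{j(Q_*/p-\alpha)}\norm{\lambda_{j,\cdot}}_{\ell^p}
\]
for some $\theta>0$. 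Lemma~\ref{lem:25d} then sums over $j$ in $\ell^q_{j'}$. For the $j=0$ atoms, one uses $\Lc^{K}a_{0,k}$ in the same way.

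Convergence in $\Sr'(G)$ is handled by the same computations carried out in $B^{\infty,\infty}_{\alpha-Q_*/p-1}$, which embeds in $\Sr'(G)$ by Theorem~\ref{teo:17}. Convergence in $\mathring B^{p,q}_\alpha$ when $\lambda\in\mathring b^{p,q}_\alpha$ follows by applying the estimate to tails of finitely supported truncations.

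\textbf{Analysis.} Take $(\psi_j)\in\widetilde\Psi_{\eps^{\grado}}$ factored as $\psi_j=\psi_j^{(1)}*\psi_j^{(2)}$, with $\psi_j^{(2)}=\Lc^S\eta_j$ for $j\Meg1$, where $\eta_j=\Kc_{\eps^{j\grado}\Lc}((\,\cdot\,)^{-S}\phi_j)$; this uses Lemma~\ref{lem:41}. Let $(\tau_{j,k})$ be the partition of unity from Lemma~\ref{lem:43}. Then
\[
f=\sum_j (f*\psi^{(1)}_j)*\Lc^S\eta_j=\sum_{j,k}\bigl[(\tau_{j,k}(f*\psi_j^{(1)}))*\eta_j\bigr].
\]
The main obstacle is that $\eta_j$ is not compactly supported, so these pieces are not strong atoms (compact support). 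I would handle this by the usual route: first replace $\eta_j$ via a finite-propagation-type substitute. Concretely, I would approximate $\eta_j$ by $\eta_j\tau_{\eps^{j}}$ (with $\tau_t$ as in Corollary~\ref{cor:21}). The remainder is rapidly decaying in $\eps^{-j}$-scaled distance, and I would absorb it by a Neumann-series/iteration argument: the error operator has small norm on $B^{p,q}_\alpha$ once the cut-off radius is large, which can be arranged by Corollary~\ref{cor:12} and the synthesis part. After this, the pieces are supported in balls $B(x_{j,k},2\delta R\eps^j)$ up to harmless enlargement, which may need adjusting $R$.

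With compact support in hand, set $\lambda_{f,j,k}\coloneqq\max_{\overline B(x_{j,k},2\delta R\eps^j)}\abs{f*\psi^{(1)}_j}$ (times normalization), and let the $a_{f,j,k}$ be the normalized pieces. The derivative bounds in the definition of strong atoms follow from Corollary~\ref{cor:12} and Lemma~\ref{lem:43}(3). Finally, $\norm{\lambda_f}_{b^{p,q}_\alpha}\lesssim\norm f_{B^{p,q}_\alpha}$ follows from the sampling estimate in Proposition~\ref{prop:14}, equivalently the maximum version in Corollary~\ref{cor:17}.
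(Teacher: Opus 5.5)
Your synthesis estimate is sound. The two-regime bound is exactly Lemma~\ref{lem:67}, and summing over $k$ directly in $L^p$ (bounded overlap, $p$-subadditivity) before applying Lemma~\ref{lem:25d} is a legitimate, more hands-on substitute for the paper's route through Lemma~\ref{lem:68bis} and Proposition~\ref{prop:14}.

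The genuine gap is in the analysis half, and the obstacle you fight there is self-inflicted. You localize \emph{before} convolving, so your pieces $(\tau_{j,k}(f*\psi^{(1)}_j))*\eta_j$ inherit the non-compact support of $\eta_j$ (your display also drops the $\Lc^S$). The repair you sketch does not yield the statement. Truncating $\eta_j$ at radius $M\eps^j$ places the pieces in $B(x_{j,k},(2\delta R+\kappa_2 M)\eps^j)$. Strong atoms for the \emph{given} reduced $(\eps,\delta,R)$-lattice, however, must have $b_{j,k}$ supported in $B(x_{j,k},2\delta R\eps^j)$. Fitting that support forces $M\lesssim\delta R$, while making the error small forces $M$ large, so ``adjusting $R$'' really changes the theorem. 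Moreover, that the error operator is a contraction on the quasi-Banach space $B^{p,q}_\alpha(G)$ is asserted, not checked.

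The missing idea is simply to cut off \emph{after} convolving, as the paper does. Take $(\psi_j)\in\widetilde\Psi_{\eps^{\grado}}$, set $b_{f,j,k}\coloneqq\tau_{j,k}\,(f*\Lc^{-S}\psi_j)$ for $j\Meg 1$, and use $\tau_{0,k}(f*\psi_0)$ for $j=0$.
\begin{itemize}
\item By Lemma~\ref{lem:43}(2) these pieces are supported in $B(x_{j,k},2\delta R\eps^j)$, and so is $\Lc^S b_{f,j,k}$, since $\Lc^S$ is a differential operator.
\item By Lemma~\ref{lem:43}(1), $\sum_k\Lc^S b_{f,j,k}=\Lc^S(f*\Lc^{-S}\psi_j)=f*\psi_j$, so the pieces add up to $f$ by Lemma~\ref{lem:41}.
\item The derivative bounds required of strong atoms follow from the Leibniz rule, Lemma~\ref{lem:43}(3), and Lemma~\ref{lem:33} applied to the band-limited function $f*\widetilde\psi_j$, where $\widetilde\psi_j\coloneqq(\eps^{j\grado}\Lc)^{-S}\psi_j$. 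On $B(x_{j,k},2\delta R\eps^j)$ this gives $\abs{X(f*\Lc^{-S}\psi_j)}\lesssim\abs{X}\eps^{j(S\grado-\deg X)}(\Nc_{p_1,a,\eps^j}(f*\widetilde\psi_j))(x_{j,k})$, with $p_1<\min(1,p)$ and $a>D/p_1$.
\item A constant multiple of the last factor serves as $\lambda_{f,j,k}$. Corollary~\ref{cor:18} (trivially when $p=\infty$) together with Corollary~\ref{cor:16} then gives $\norm{\lambda_f}_{b^{p,q}_\alpha(\eps,J)}\lesssim\norm{f}_{B^{p,q}_\alpha(G)}$.
\end{itemize}
No truncation or Neumann series is needed.

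A smaller point: your $\Sr'(G)$-convergence argument via $B^{\infty,\infty}_{\alpha-Q_*/p-1}(G)$ fails as stated. For $p=\infty$ and non-compact $G$ (take $\lambda_{0,k}=1$) there is no decay in $k$, so norm convergence is impossible. For $p<\infty$, the $S$-threshold of the synthesis estimate for that target space exceeds the hypothesis you are given. Instead, test against $\phi\in\Sr(G)$ directly, or argue by duality through Proposition~\ref{prop:15}, as the paper does for Theorem~\ref{teo:13}.
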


	\begin{teo}\label{teo:13}
		Take $p,q\in (0,\infty]$  and $\alpha\in \R$, and take $K,S\in \N$, $N>0$, and $p_0\in [1,\infty]$ so that $K>(Q_*/p_0+\alpha) /\grado$, $S>   (Q_*(1/\min(1,p,q)-1/p_0) -\alpha)/\grado $, and $N>D/\min(1,p,q)$. Take $\eps\in (0,1) $, $\delta>0$ and $R\Meg 2$. Then, there is a constant $C>0$ such that,  for every system of $(K,S,N,p_0,\Lc)$-atoms $(a_{j,k})$ associated with some reduced $(\eps ,\delta,R)$-lattice $(x_{j,k})_{(j,k)\in J}$,  
		\[
		\norm*{\sum_{j,k} \lambda_{j,k} a_{j,k}  }_{F^{p,q}_\alpha(G)}\meg C \norm{\lambda}_{f^{p,q}_\alpha((x_{j,k}))}
		\]
		for every $\lambda\in f^{p,q}_\alpha((x_{j,k}))$, where the sum converges in $\Sr'(G)$ and also in $\mathring F^{p,q}_\alpha(G)$ if $ \lambda\in \mathring f^{p,q}_\alpha((x_{j,k}))$. In addition,  for every $f\in F^{p,q}_\alpha(G)$ there are a system $(a_{f,j,k})_{j,k}$ of strong $(K,S,\Lc)$-atoms associated with $(x_{j,k})$ and $\lambda_f\in f^{p,q}_\alpha((x_{j,k}))$ such that 
		\[
		\norm{\lambda_f }_{f^{p,q}_\alpha((x_{j,k}))}\meg C\norm{f}_{F^{p,q}_\alpha(G)}
		\]
		and such that $f=\sum_{j,k} \lambda_{f,j,k} a_{f,j,k} $.
	\end{teo}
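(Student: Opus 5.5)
The statement has two halves: synthesis (atoms with coefficients in $f^{p,q}_\alpha$ sum to an element of $F^{p,q}_\alpha$) and analysis (every $f$ admits such a decomposition with strong atoms). I would first prove the analysis half, since it only needs the sampling results of Proposition~\ref{prop:14} and the partitions of unity of Lemma~\ref{lem:43}.

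\textbf{Analysis.} Take $(\psi_j)\in\widetilde\Psi_{\eps^{\grado}}$ with $\psi_j=\Kc_{\eps^{j\grado}\Lc}(\phi_j)$, where the $\phi_j$, $j\Meg1$, vanish near $0$. Using Lemma~\ref{lem:41}, factor $\psi_j=\psi'_j*\psi''_j$, where $\psi''_j=\Kc_{\eps^{j\grado}\Lc}(\eta_j)$ and the $\eta_j$ are bounded in $C^\infty_c(\R)$. Then
\[
f=\sum_j (f*\psi'_j)*\psi''_j=\sum_{j,k}\big((f*\psi'_j)\tau_{j,k}\big)*\psi''_j ,
\]
with $\tau_{j,k}$ as in Lemma~\ref{lem:43}. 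This kernel is not compactly supported, so it does not directly give strong atoms. I therefore plan to replace $\psi''_j$ by a compactly supported kernel. Write $\eta_j(\lambda)=(\eps^{j\grado}\lambda)^S\widetilde\eta_j(\lambda)$ for $j\Meg1$. Choose $\theta\in C^\infty_c(G)$ and set $\theta_j$ equal to its dilate at scale $\eps^j$ through Corollary~\ref{cor:21}, normalized so that $\Lc^{S}\theta_j$ reproduces the relevant frequency band up to a small error. Equivalently, approximate $\Kc(\widetilde\eta_j)$ by truncations $\Kc(\widetilde\eta_j)\tau_{\eps^j}$ and absorb the error through a Neumann-series argument: the operator $T f=\sum_j f*\psi'_j*(\text{truncated kernel})$ satisfies $\norm{I-T}<1$ on $F^{p,q}_\alpha$, by Lemmas~\ref{lem:35}, \ref{lem:40} and Corollary~\ref{cor:12}, once the truncation radius is large relative to $\delta R$.

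After this replacement, $\lambda_{f,j,k}$ is taken to be a constant times $\sup_{B(x_{j,k},2\delta R\eps^j)}\abs{f*\psi'_j}$, and $a_{f,j,k}=\lambda_{f,j,k}^{-1}\Lc^S\big[((f*\psi'_j)\tau_{j,k})*\theta_j\big]$. The estimate $\norm{\lambda_f}_{f^{p,q}_\alpha}\lesssim\norm{f}_{F^{p,q}_\alpha}$ is exactly the max-sampling bound of Proposition~\ref{prop:14}. The supports of the atoms are controlled because $\theta_j$ has support of radius comparable to $\eps^j$, and the derivative bounds on them follow from Lemma~\ref{lem:43}(3). I expect the inversion step to be the main obstacle: making $I-T$ small on $F^{p,q}_\alpha$ when $\min(p,q)<1$ requires Lemma~\ref{lem:35} with care, and in particular requires the constants to be uniform in the truncation. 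If that fails, I will instead apply Lemma~\ref{lem:35} directly to $T^{-1}f$, using the continuity of $T$.

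\textbf{Synthesis.} Set $g=\sum\lambda_{j,k}a_{j,k}$ and fix $(\psi_l)\in\Psi_{\eps^\grado}$. I estimate $a_{j,k}*\psi_l$ in two regimes.
\begin{itemize}
\item For $l\Meg j$, write $a_{j,k}*\psi_l=b_{j,k}*\Lc^{S+K}\psi_l\cdot(\ldots)$ via $\Lc^{K+S}b_{j,k}$ and $\Lc^{-K}\psi_l$. This gains $\eps^{(l-j)K\grado}$ times $\eps^{-jQ_*/p_0}$ times a decaying bump $(1+d(\cdot,x_{j,k})/\eps^j)^{-N}$ normalized in $L^{p_0}$.
\item For $l<j$, use $a_{j,k}=\Lc^Sb_{j,k}$ and move $\Lc^S$ onto $\psi_l$. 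This gains $\eps^{(j-l)S\grado}$, with the bump at scale $\eps^l$.
\end{itemize}
The standard system bounds of Lemmas~\ref{lem:70} and \ref{lem:40} give these estimates, with Hölder used to pass from $L^{p_0}$ to pointwise maximal functions.

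Summing in $k$, each $\abs{g*\psi_l}$ is dominated by $\sum_j\eps^{\text{gain}}\,\Nc_{r,N,\eps^{\min(j,l)}}\big(\sum_k\lambda_{j,k}\chi_{B(x_{j,k},\delta\eps^j)}\big)$ with $r<\min(1,p,q)$. The factor $\eps^{-(l-j)Q_*/r}$ lost when passing to scale $\eps^l$ is absorbed by the choice of $S$, and the factor $\eps^{-jQ_*/p_0}$ by the choice of $K$. Lemma~\ref{lem:25d}, Corollary~\ref{cor:18} and Remark~\ref{oss:6} then conclude; for $p=\infty$ one uses Lemmas~\ref{lem:25c} and \ref{lem:2c} instead. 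Convergence in $\Sr'(G)$ follows from the same estimates applied to finite partial sums. Convergence in $\mathring F$ follows by density of finitely supported sequences, since finite sums of atoms lie in the closure of $\Sr(G)$: atoms in $L^{p_0}$ with decay lie in every relevant $F$ space, and these are approximated via Proposition~\ref{prop:20}.
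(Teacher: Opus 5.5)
Your synthesis half, for $p<\infty$, is the paper's argument without the discretization step. The paper combines Lemma~\ref{lem:67} with the almost-diagonal Lemma~\ref{lem:68} and the sampling Proposition~\ref{prop:14}, and Lemma~\ref{lem:68} is proved by essentially your maximal-function estimate.

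\textbf{The gap is in the analysis half.} You build the atoms as $\Lc^S\big[((f*\psi'_j)\tau_{j,k})*\theta_j\big]$, i.e.\ you convolve \emph{after} localizing. This forces you to find a compactly supported $\theta_j$ with $\psi'_j*\Lc^S\theta_j\approx\psi_j$, and then to invert $T$. Neither step is available.
\begin{itemize}
\item On $G$ there are no dilations and no Fourier transform. So ``a dilate of $\theta$ normalized so that $\Lc^S\theta_j$ reproduces the frequency band'' has no mechanism behind it.
\item The truncation version is self-defeating. Definition~\ref{def:11} requires $b_{j,k}$ to be supported in $B(x_{j,k},2\delta R\eps^j)$ for the given, arbitrary $\delta$ and $R$. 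Hence the truncation radius must be at most of order $\delta R\eps^j$; you ask for it to be ``large relative to $\delta R$'', which breaks the support condition.
\item Conversely, the truncation error of $\Kc(\widetilde\eta_j)$ is small only if the radius is large compared with the intrinsic scale $\eps^j$ of that kernel. For small $\delta R$ the two requirements are incompatible.
\item Lemmas~\ref{lem:35}, \ref{lem:40} and Corollary~\ref{cor:12} give boundedness with unspecified constants, not the smallness $\norm{I-T}<1$ on the quasi-Banach space $F^{p,q}_\alpha(G)$.
\item Your fallback of applying Lemma~\ref{lem:35} to $T^{-1}f$ presupposes the very invertibility in question.
\end{itemize}

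\textbf{The missing idea.} $\Lc^S$ is a differential operator, hence support-preserving. So one should localize \emph{before} applying it and never convolve afterwards. Define $\Lc^{-S}\psi_j$ spectrally; this is legitimate because $\phi_j$ vanishes near $0$ for $j\Meg 1$. The paper then sets $\widetilde a_{f,0,k}=\tau_{0,k}(f*\psi_0)$ and $\widetilde a_{f,j,k}=\Lc^S\big(\tau_{j,k}(f*\Lc^{-S}\psi_j)\big)$. Consequently:
\begin{itemize}
\item $\sum_k\widetilde a_{f,j,k}=f*\psi_j$ exactly, and $\sum_j f*\psi_j=f$ by Lemma~\ref{lem:41};
\item the supports lie in $B(x_{j,k},2\delta R\eps^j)$ for any $\delta$ and $R$;
\item no approximation or inversion is needed.
\end{itemize}
If you let $\theta_j$ be a reproducing kernel at scale $c\eps^j$ and send $c\to0$, your construction collapses to this one.

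The price is that derivatives now fall on $f*\Lc^{-S}\psi_j$ rather than on a kernel, so a sup of $\abs{f*\psi'_j}$ no longer controls them. Instead, set $\widetilde\psi_j=(\eps^{j\grado}\Lc)^{-S}\psi_j$. Lemma~\ref{lem:33} combined with Lemma~\ref{lem:39} gives $\abs{X(f*\Lc^{-S}\psi_j)(x)}\lesssim\abs{X}\eps^{j(S\grado-\deg X)}\Mc_{p_0}(f*\widetilde\psi_j)(y)$ whenever $d(x,y)<\delta\eps^j$. One then takes $\lambda_{f,j,k}=C\inf_{B(x_{j,k},\delta\eps^j)}\Mc_{p_0}(f*\widetilde\psi_j)$.
\begin{itemize}
\item The atom bounds follow from the Leibniz rule and Lemma~\ref{lem:43}(3).
\item The coefficient bound follows from the vector-valued maximal inequality, together with the independence of the quasi-norm from the chosen family (Corollary~\ref{cor:16}).
\end{itemize}

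\textbf{Two points in your synthesis half are also too quick.}
\begin{itemize}
\item Uniform bounds on finite partial sums do not yield convergence of $\sum_{j,k}\lambda_{j,k}a_{j,k}$ in $\Sr'(G)$ when $\lambda\notin\mathring f^{p,q}_\alpha((x_{j,k}))$. The paper first shows that $(\eps^{-jQ_*}\langle a_{j,k}\vert\phi\rangle)_{j,k}$ is bounded in the dual sequence space for $\phi$ in a bounded subset of $\Sr(G)$, via Lemmas~\ref{lem:67}, \ref{lem:68} and Proposition~\ref{prop:14}. It then uses Proposition~\ref{prop:15} to get uniform convergence.
\item For $p=\infty$, Lemmas~\ref{lem:25c} and~\ref{lem:2c} tie the scale of the maximal function to the summation index. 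That does not cover the terms $j>l$, where the bump lives at the coarser scale $\eps^l$. The paper obtains the $p=\infty$ case of Lemma~\ref{lem:68} by duality from $p=1$.
\end{itemize}
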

	
	Both Theorem~\ref{teo:13bis} and~\ref{teo:13} are consequences of the following lemmas.  
	
	\begin{lem}\label{lem:67}
		Take $\eps\in (0,1)$, $\delta_0>0$, $R\Meg 2$, $K,S\in\N$, $N>0$, $p\in [1,\infty]$, and $(\psi_{j})\in \Psi_{\eps^{\grado}}$. Then, there is a constant $C>0$ such that
		\[
		\abs{(a_{j,k}*\psi_{j'})(x)}\meg C  \eps^{(j-j')_+(S\grado+Q_*/p)+(j'-j)_+(K\grado-Q_*/p)}(1+d(x,x_{j,k})/\eps^{\min(j,j') })^{-N} 
		\]
		for every $(j,k)\in J$, for every $j'\in\N$, and for every system  $(a_{j,k})$ of $(K,S,N,\Lc)$-atoms associated with a reduced $(\eps,\delta,R)$-lattice $(x_{j,k})_{(j,k)\in J}$ on $G$, with $\delta\in (0,\delta_0]$.
	\end{lem}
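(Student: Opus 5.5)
The plan is to use the two kinds of cancellation: the vanishing moments of the atoms, $a_{j,k}=\Lc^S b_{j,k}$, and the regularity hidden in $\psi_{j'}$. Write $\psi_{j'}=\Kc_{\eps^{j'\grado}\Lc}(\phi_{j'})$ with $(\phi_{j'})$ bounded in $C^\infty_c(\R)$. For $j'\Meg 1$ the $\phi_{j'}$ vanish near $0$, so $\Lc^{-m}\psi_{j'}$ makes sense in the sense of the Remark after Definition~\ref{def:12}. We split into the cases $j'\meg j$ and $j'>j$, and in each case write $a_{j,k}*\psi_{j'}$ as a convolution of a weighted-$L^p$ factor with a Schwartz-type factor whose weighted $L^{p'}$ norm is controlled by Corollary~\ref{cor:12} or Lemma~\ref{lem:36}. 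We then conclude with H\"older's inequality and the quasi-triangle inequality
\[
(1+d(x,x_{j,k})/r)^{-N}\Meg (1+d(y,x_{j,k})/r)^{-N}(1+d(x,y)/r)^{-N}.
\]

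\textsc{Case $j'\meg j$, $j\Meg 1$.} Write
\[
a_{j,k}*\psi_{j'}=b_{j,k}*\Lc^S\psi_{j'}=\eps^{-j'S\grado}\, b_{j,k}*\Kc_{\eps^{j'\grado}\Lc}\big((\,\cdot\,)^S\phi_{j'}\big),
\]
where $\Lc^S$ is moved onto $\psi_{j'}$ by self-adjointness and the fact that convolution commutes with $\Lc$. By H\"older's inequality, the pointwise value at $x$ is bounded by
\[
\norm{(1+d(\,\cdot\,,x_{j,k})/\eps^{j})^N b_{j,k}}_{L^p}\,\sup_y\big[(1+d(y,x_{j,k})/\eps^j)^{-N}\big]\cdots ,
\]
with the remaining factor in $L^{p'}$. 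Corollary~\ref{cor:12} with weight $(1+\abs{\,\cdot\,}_*/\eps^{j'})^{N}$ and exponent $p'$ gives $\norm{\Kc(\cdots)(1+\abs{\cdot}_*/\eps^{j'})^N}_{L^{p'}}\lesssim \eps^{-j'Q_*/p}$. Since $\eps^{j'}\Meg\eps^j$, the weight with scale $\eps^j$ dominates the one with scale $\eps^{j'}$. Combining, we get
\[
\eps^{j(S\grado+Q_*/p)}\,\eps^{-j'S\grado}\,\eps^{-j'Q_*/p}\,(1+d(x,x_{j,k})/\eps^{j'})^{-N},
\]
which is the desired factor $\eps^{(j-j')(S\grado+Q_*/p)}$.

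\textsc{Case $j'>j$.} Here we use smoothness instead, through
\[
a_{j,k}*\psi_{j'}=\Lc^{S+K}b_{j,k}*\Lc^{-K}\psi_{j'}=\eps^{j'K\grado}\,\big[(\eps^{j\grado}\Lc)^{K+S}b_{j,k}\big]*\eps^{-j(K+S)\grado}\,\Kc_{\eps^{j'\grado}\Lc}\big((\,\cdot\,)^{-K}\phi_{j'}\big).
\]
The symbols $(\cdot)^{-K}\phi_{j'}$ are bounded in $C^\infty_c$ because $0\notin\overline{\bigcup_{j'\Meg1}\supp\phi_{j'}}$. Applying the atom bound on $(\eps^{j\grado}\Lc)^{K+S}b_{j,k}$ and Corollary~\ref{cor:12} as before, the kernel factor contributes $\eps^{-j'Q_*/p}$ with weight at scale $\eps^{j'}\meg\eps^j$; this weight in turn dominates the one at scale $\eps^j$. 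The powers then combine to
\[
\eps^{jS\grado+jQ_*/p-j(K+S)\grado+j'K\grado-j'Q_*/p}=\eps^{(j'-j)(K\grado-Q_*/p)},
\]
with decay $(1+d(x,x_{j,k})/\eps^{j})^{-N}$.

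\textsc{Level $j=0$ and $j'=0$.} For $j=0$ we use the bounds on $a_{0,k}$ and $\Lc^Ka_{0,k}$ directly: for $j'=0$ use $a_{0,k}*\psi_0$, and for $j'\Meg1$ use $\Lc^Ka_{0,k}*\Lc^{-K}\psi_{j'}$. When $j'=0$ and $j\Meg 1$, $\psi_0$ is the only term without cancellation, but the first case did not need any, so it goes through unchanged. Note that $\Lc^S\psi_0$ is just $\Kc_\Lc((\cdot)^S\phi_0)$.

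\textsc{Justification and main obstacle.} The routine part is justifying the convolution identities for $b_{j,k}\in\Sr'(G)$ with weighted $L^p$ bounds; associativity follows from the $\Oc'_{C}$ properties of the kernels. The step I expect to be the main obstacle is obtaining Corollary~\ref{cor:12} uniformly in $j'$ with weight exponent $N$ and Lebesgue exponent $p'$. This should be fine, since the $\phi_{j'}$ are bounded in $C^\infty_c$ with supports in a fixed compact set and the scales $\eps^{j'}\meg1$. The corollary's normalisation $r^{(1/p'-1)Q_*}=r^{-Q_*/p}$ is exactly what is needed.
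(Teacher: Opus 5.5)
Your proposal is correct and follows the paper's proof essentially step by step: for $j'\meg j$ you move $\Lc^S$ from $b_{j,k}$ onto $\psi_{j'}$, for $j'>j$ you use $a_{j,k}*\psi_{j'}=\Lc^K a_{j,k}*\Lc^{-K}\psi_{j'}$, and in both cases you conclude with H\"older's inequality, the triangle inequality for the weights (comparing the scales $\eps^j$ and $\eps^{j'}$), and Corollary~\ref{cor:12} in $L^{p'}$, exactly as the paper does. You also treat the level $j=0$ separately, using $a_{0,k}$ and $\Lc^K a_{0,k}$ directly since no $b_{0,k}$ exists, which the paper leaves implicit; the stray ``$\sup_y$'' in your first H\"older step should just be the weighted $L^{p'}$ norm of the kernel factor.
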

	
	This lemma is essentially a repetition of Lemma~\ref{lem:40}.
	
	\begin{proof}
		Observe that, for $j\Meg j'$,
		\[
		\begin{split}
			\abs{(a_{j,k}*\psi_{j'})(x)}&=\abs{(b_{j,k}*\Lc^S\psi_{j'})(x)} \\
			&\meg \int_G \abs{b_{j,k}(y) \Lc^S \psi_{j'}(y^{-1}x)}\,\dd \beta(y)\\ 
			&\meg \eps^{(j-j') (S\grado+Q_*/p)} \norm*{\eps^{j' Q_*/p} \frac{ [(\eps^{j'\grado}\Lc)^S \psi_{j'}](y^{-1}x)}{(1+d(y,x_{j,k})/\eps^{j })^N}}_{L^{p'}_y(G)}\\
			&\meg  \eps^{(j-j') (S\grado+Q_*/p)} \norm*{\eps^{j' Q_*/p} \frac{ [(\eps^{j'\grado}\Lc)^S \psi_{j'}](y)}{(1+\abs{x_{j,k}^{-1}xy^{-1}}_*/\eps^{j' })^N}}_{L^{p'}_y(G)}\\
			&\meg  \eps^{(j-j')(S\grado+Q_*/p)}(1+d(x,x_{j,k})/\eps^{j' })^{-N} \norm*{\eps^{j' Q_*/p} (1+\abs{\,\cdot\,}_*/\eps^{j' })^{N} (\eps^{j'\grado}\Lc)^S \psi_{j'}}_{L^{p'}(G)},
		\end{split}
		\]
		so that the assertion follows from Corollary~\ref{cor:12} in this case (here we took $b_{j,k}$ corresponding to $a_{j,k}$ as in Definition~\ref{def:11}).
		Conversely, take $j<j'$, and observe that $j'\Meg 1$, so that we may consider $\Lc^{-K}\psi_{j'}$ according to our previous conventions. Then,  arguing as before, 
		\[
		\begin{split}
			\abs{(a_{j,k}*\psi_{j'})(x)}&=\abs{(\Lc^K a_{j,k}*\Lc^{-K}\psi_{j'})(x)} \\
			&\meg \eps^{(j'-j) (K\grado-Q_*/p)} \norm*{\eps^{j'Q_*/p}\frac{  [(\eps^{j'\grado}\Lc)^{-K} \psi_{j'}](y)}{(1+\abs{x_{j,k}^{-1}xy^{-1}}_*/\eps^{j })^N}}_{L^{p'}_y(G)}\\
			&\meg  \eps^{(j'-j)(K\grado-Q_*/p)}(1+d(x,x_{j,k})/\eps^{j })^{-N} \norm*{\eps^{j'Q_*/p} (1+\abs{\,\cdot\,}_*/\eps^{j })^{N} (\eps^{j'\grado}\Lc)^{-K} \psi_{j'}}_{L^{p'}_y(G)}\\
			&\meg  \eps^{(j'-j)(K\grado-Q_*/p)}(1+d(x,x_{j,k})/\eps^{j })^{-N} \norm*{\eps^{j'Q_*/p} (1+\abs{\,\cdot\,}_*/\eps^{j'})^{N} (\eps^{j'\grado}\Lc)^{-K} \psi_{j'}}_{L^{p'}(G)},
		\end{split}
		\]
		so that the assertion follows again from Corollary~\ref{cor:12}.
	\end{proof}
	
	\begin{lem}\label{lem:68bis} 
		Take $p,q\in (0,\infty]$, $\alpha \in\R$, $\eps\in (0,1)$, $\delta_0>0$, $R\Meg 2$, $K>\alpha $, $S> Q_*/\min(1,p) -\alpha $, and $N>D/\min(1,p)$. Then, there is a constant $C>0$ such that the following hold. 	
		Take a reduced $(\eps,\delta, R)$-lattice $(x_{j,k})_{(j,k)\in J}$ on $G$ and $a\in \C^{J\times J}$ such that
		\[
		\abs{a_{(j,k),(j',k')}}\meg \eps^{(j'-j)_+S +(j-j')_+K }(1+d(x_{j,k},x_{j',k'})/\eps^{\min(j,j') })^{-N}  
		\]
		for every $(j,k),(j',k')\in J$. If we define
		\[
		T\colon \lambda \mapsto \Big(\sum_{j',k'} a_{(j,k),(j',k')}\lambda_{j',k'} \Big)_{j,k},
		\]
		(pointwise convergence),
		then
		\[
		\norm{T \lambda}_{b^{p,q}_\alpha(\eps ,J)}\meg C\norm{\lambda}_{b^{p,q}_\alpha(\eps ,J)}  
		\]	
		for every $\lambda\in b^{p,q}_\alpha(\eps,J )$.
	\end{lem}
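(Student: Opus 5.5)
We reduce the estimate to a scalar inequality for each fixed scale pair $(j,j')$ and then sum over $j'$ by means of Lemma~\ref{lem:25d}. With $b^{p,q}_\alpha(\eps,J)$ normalized by the weight $\eps^{j(Q_*/p-\alpha)}$, set $\mu_{j'}\coloneqq \norm{\lambda_{j',k'}}_{\ell^p_{k'}}$. The first aim is the bound
\[
\norm*{\sum_{k'} a_{(j,k),(j',k')}\lambda_{j',k'}}_{\ell^p_k}\meg C\,\eps^{(j'-j)_+S+(j-j')_+K}\,\eps^{-(j'-j)_+ Q_*/\min(1,p)}\,\eps^{(j-j')_+\cdot 0}\,\mu_{j'} ,
\]
or more precisely the following: if $j\meg j'$ (fine target scale, coarse source), we want a loss of at most $\eps^{-(j'-j)Q_*/\min(1,p)}$ (through the factor $\eps^{Q_*/p}$ together with the $(1/p-1)_+$ loss); if $j>j'$, we want a gain $\eps^{(j-j')Q_*/p}$ only. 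After multiplying by the weights, we obtain an operator on sequences $(\eps^{j(Q_*/p-\alpha)}\mu_j)$ whose kernel is bounded by $\eps^{(j'-j)_+(S+\alpha-Q_*/\min(1,p))+(j-j')_+(K-\alpha)}$. The hypotheses $S>Q_*/\min(1,p)-\alpha$ and $K>\alpha$ make both exponents positive, so Lemma~\ref{lem:25d} yields the $\ell^q_j$ bound for every $q\in(0,\infty]$.

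For the fixed-scale estimate we set $m\coloneqq\min(j,j')$ and use the lattice geometry. Take first $p\meg 1$. We apply the $p$-triangle inequality, $\abs{\sum_{k'}\cdots}^p\meg \sum_{k'}\abs{a}^p\abs{\lambda_{j',k'}}^p$, and sum over $k$. This reduces the problem to bounding $\sup_{k'}\sum_k (1+d(x_{j,k},x_{j',k'})/\eps^m)^{-Np}$. The balls $B(x_{j,k},\delta\eps^j)$ are pairwise disjoint, so this sum compares to $(\delta\eps^j)^{-Q_*}\int_G(1+d(y,x_{j',k'})/\eps^m)^{-Np}\,\dd\beta(y)$. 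Here one has to allow for the enlargement of the ball, which is harmless: $(1+d/\eps^m)$ changes by at most a bounded factor within distance $\delta\eps^j\meg\delta_0\eps^m$. Lemma~\ref{lem:37}, which requires $Np>D$, bounds the integral by $C\eps^{mQ_*}$, uniformly since $\eps^m\meg 1$. The resulting count is $C\eps^{(m-j)Q_*}$, which equals $1$ when $j\meg j'$ and $\eps^{-(j-j')Q_*}$ when $j>j'$. Taking $p$-th roots gives the factor $\eps^{-(j-j')_+Q_*/p}$; combined with the weight $\eps^{(j-j')Q_*/p}$ it cancels exactly. On the other side, $j<j'$, the weight contributes $\eps^{-(j'-j)Q_*/p}$, which $S$ absorbs. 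For $p\Meg 1$ we use the Schur test instead, bounding both $\sup_{k}\sum_{k'}$ and $\sup_{k'}\sum_k$ of the kernel in the same way. The counts are $\eps^{(m-j')Q_*}$ and $\eps^{(m-j)Q_*}$ respectively, and interpolation (Hölder) gives the loss $\eps^{-(j-j')_+Q_*/p-(j'-j)_+Q_*/p'}$. Together with the weights this produces at worst $\eps^{-(j'-j)Q_*}$ on the side $j<j'$, again absorbed by $S>Q_*-\alpha$, while the other side is neutral.

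The step I expect to be the main obstacle is the exponent bookkeeping. We must check that in each of the four regimes ($p\lessgtr1$, $j\lessgtr j'$) the net exponent is exactly $S+\alpha-Q_*/\min(1,p)$ or $K-\alpha$, with no residual factor; the uniform constants in $\delta\in(0,\delta_0]$ also depend on this. The case $p=\infty$ is included in the Schur argument, using only $\sup_k\sum_{k'}$. Finally, pointwise convergence of the defining series of $T\lambda$ follows from the same estimates, which show absolute convergence.
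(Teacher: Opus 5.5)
Your argument is correct, but it follows a different route from the paper. The paper proves this lemma by the same scheme as Lemma~\ref{lem:68}. It first reduces to $\alpha=0$ and $p>1$ by replacing $a$ with $\eps^{(j'-j)p_0\alpha}\abs{a}^{p_0}$ and passing to the exponents $p/p_0,q/p_0$. It then moves to functions via $\iota(\lambda)(j,x)=\sum_k\lambda_{j,k}\chi_{B(x_{j,k},\delta\eps^j)}(x)$, and dominates $\abs{\iota(T\lambda)(j,\cdot)}$ pointwise by $\delta^{-Q_*}\sum_{j'}\eps^{(j'-j)_+(S-Q_*)+(j-j')_+K}\,\Nc_{1,N}(\iota\lambda(j',\cdot))$. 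It concludes with Lemma~\ref{lem:39}, the $L^p$-boundedness of the Hardy--Littlewood maximal operator for $p\in(1,\infty]$, and Lemma~\ref{lem:25d}.

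You stay entirely on the sequence side. For each pair of scales you bound the block $j'\to j$ by the $p$-triangle inequality ($p\meg 1$) or the Schur test ($p\Meg 1$). The row and column sums are estimated by lattice-point counting through the disjoint balls and Lemma~\ref{lem:37}. Your exponent bookkeeping, giving net exponents $(j-j')(K-\alpha)$ and $(j'-j)(S+\alpha-Q_*/\min(1,p))$, checks out in all four regimes.

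What each route buys: yours is more elementary and self-contained, with no maximal theorem and no power reduction, and it handles $p<1$ and $p=\infty$ directly. The paper's route is literally the argument used for Lemma~\ref{lem:68}. There the $\ell^q$ norm sits inside the $L^p$ norm, the scales no longer decouple, and a blockwise Schur test is not enough, so the vector-valued maximal inequality is genuinely needed.

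Three small corrections.

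\begin{itemize}
\item Your count is $C\delta^{-Q_*}\eps^{(m-j)Q_*}$, not $C\eps^{(m-j)Q_*}$, and the factor $\delta^{-Q_*}$ cannot be removed. For $p=q=\infty$ and $\alpha=0$, take $a_{(j,k),(j',k')}=(1+d(x_{j,k},x_{j,k'})/\eps^j)^{-N}$ if $j=j'$ and $0$ otherwise, and $\lambda_{j,k}=1$ for $j=0$, $\lambda_{j,k}=0$ otherwise. Then $T\lambda$ has entries of size $\gtrsim(\delta R)^{-Q_*}$. So $C$ must depend on $\delta$, as it does in the paper's own argument (compare the factor $\delta^{-Q_*}$ in Lemma~\ref{lem:68}). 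Do not claim uniformity in $\delta\in(0,\delta_0]$.
\item For $p<1$ you must apply the $p$-triangle inequality to the sum over $j'$ as well, not only over $k'$. Then apply Lemma~\ref{lem:25d} to the $p$-th powers in $\ell^{q/p}$.
\item Your preliminary display is garbled, and $j\meg j'$ means a coarse target and a fine source, not the reverse. The detailed computation that follows it is the part to keep.
\end{itemize}
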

	
	The proof is similar to that of the following Lemma~\ref{lem:68} (in this case, one reduces to the case $p>1$ and there is no need to deal separately with the case $p=\infty$).
	
	\begin{lem}\label{lem:68}
		Take $p,q\in (0,\infty]$, $\alpha \in\R$, $\eps\in (0,1)$, $\delta_0>0$, $R\Meg 2$, $K>\alpha $, $S> Q_*/\min(1,p,q) -\alpha $, and $N>D/\min(1,p,q)$. Then, there is a constant $C>0$ such that the following hold. 	
		Take a reduced $(\eps,\delta, R)$-lattice $(x_{j,k})_{(j,k)\in J}$ on $G$ and $a\in \C^{J\times J}$
		\[
		\abs{a_{(j,k),(j',k')}}\meg \eps^{(j'-j)_+S +(j-j')_+K }(1+d(x_{j,k},x_{j',k'})/\eps^{\min(j,j') })^{-N}  
		\]
		for every $(j,k),(j',k')\in J$. If we define
		\[
		T\colon \lambda \mapsto \Big(\sum_{j',k'} a_{(j,k),(j',k')}\lambda_{j',k'} \Big)_{j,k},
		\]
		(pointwise convergence),
		then
		\[
		 \norm{T \lambda}_{f^{p,q}_\alpha((x_{j,k}))}\meg C\delta^{-Q_*} \norm{\lambda}_{f^{p,q}_\alpha((x_{j,k}))} 
		\]	
		for every  $\lambda\in f^{p,q}_\alpha((x_{j,k}))$.
	\end{lem}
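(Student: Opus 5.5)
The plan is to realize the coefficient sequence as a family of step functions and to reduce the boundedness of $T$ to the maximal-function estimates already established for $L^{q,p}$ and $\Cc^q(\eps)$. For $\lambda\in\C^J$ set
\[
g_\lambda(j,x)\coloneqq \eps^{-j\alpha}\sum_k \lambda_{j,k}\chi_{B(x_{j,k},\delta\eps^j)}(x).
\]
By definition, $\norm{\lambda}_{f^{p,q}_\alpha((x_{j,k}))}$ is the $L^{q,p}(\N,G)$ quasi-norm of $g_\lambda$ when $p<\infty$, and its $\Cc^q(\eps)$ quasi-norm when $p=\infty$; this uses the disjointness of the balls. Fix $r\in(0,\min(1,p,q))$ with $N>D/r$ and $S>Q_*/r-\alpha$, which is possible because the hypotheses are strict inequalities. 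The goal is the pointwise bound
\[
\abs{g_{T\lambda}(j,x)}\meg C\delta^{-Q_*}\sum_{j'}\eps^{(j'-j)_+(S+\alpha-Q_*/r)+(j-j')_+(K-\alpha)}\,\Nc_{r,N',\eps^{\min(j,j')}}\bigl(g_\lambda(j',\,\cdot\,)\bigr)(x)
\]
for some $N'>D/r$.

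\textbf{Step 1 (pointwise bound).} Fix $j$, $k$, $x\in B(x_{j,k},\delta\eps^j)$ and $j'$. By the $r$-triangle inequality, $\abs{\sum_{k'}a\lambda}^r\meg\sum_{k'}\abs{a}^r\abs{\lambda}^r$. Write $\abs{\lambda_{j',k'}}^r=\eps^{j'\alpha r}\dashint_{B(x_{j',k'},\delta\eps^{j'})}\abs{g_\lambda(j',y)}^r\,\dd\beta(y)$. On that ball, $1+d(x,y)/\eps^{m}\lesssim 1+d(x_{j,k},x_{j',k'})/\eps^{m}$ with $m=\min(j,j')$, uniformly in $\delta\meg\delta_0$. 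Summing over $k'$ therefore gives
\[
\eps^{j'\alpha r}(\delta\eps^{j'})^{-Q_*}\int\abs{g_\lambda(j',y)}^r(1+d(x,y)/\eps^m)^{-Nr}\,\dd\beta(y)
= \eps^{j'\alpha r}\delta^{-Q_*}\eps^{(m-j')Q_*}\,\Nc_{r,N,\eps^m}(g_\lambda(j',\cdot))(x)^r .
\]
When $j'>j$, this produces the factor $\eps^{-(j'-j)Q_*}$, which must be absorbed by $\eps^{(j'-j)Sr}$; that is where the condition on $S$ enters. Taking $r$-th roots and multiplying by $\eps^{-j\alpha}$ yields the displayed bound, with total factor $\delta^{-Q_*/r}$. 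I expect the $\delta$-power to come out as $\delta^{-Q_*/r}$ rather than $\delta^{-Q_*}$. I would either settle for this, since $r\meg 1$ and the statement presumably tolerates any fixed power for $\delta\meg\delta_0$, or use the doubling remark to compare $\beta(B(x_{j',k'},\delta\eps^{j'}))$ more carefully. I flag this as a point to check.

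\textbf{Step 2 (from radius $\eps^m$ to radius $\eps^{j'}$).} Since $m\meg j'$, enlarging the scale satisfies $\Nc_{r,N,\eps^m}\meg\eps^{(j'-m)(Q_*/r-N)}\ldots$, which is the wrong direction. Instead I use $(1+d/\eps^m)^{-N}\meg(1+d/\eps^{j'})^{-N}\eps^{-(j'-m)N}$ only when $j'\meg j$, in which case $m=j'$ and nothing needs to be done. When $j'>j$, the radius is $\eps^{j}$, which is larger, and this is simply a larger averaging scale. Here I invoke the maximal bound $\Nc_{r,N,t}\lesssim\Mc_r$ from Lemma~\ref{lem:39}, which holds uniformly in $t\meg1$. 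The result is
\[
\abs{g_{T\lambda}(j,x)}\lesssim\sum_{j'}\eps^{\gamma\abs{j-j'}}\,\Mc_r(g_\lambda(j',\cdot))(x),
\qquad \gamma>0 .
\]

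\textbf{Step 3 (conclusion).} For $p<\infty$, Lemma~\ref{lem:25d} (discrete convolution in $\ell^q_j$) reduces the bound to $\norm{\Mc_r g_\lambda(j',x)}_{L^{q,p}}$. This is controlled by $\norm{g_\lambda}_{L^{q,p}}$ through the vector-valued Fefferman--Stein inequality, in the form of Corollary~\ref{cor:18} applied to $\Nc_{r,N,\eps^{j'}}$ before passing to $\Mc_r$; keeping $\Nc$ with a fixed radius per $j'$ is what makes Corollary~\ref{cor:18} directly applicable. For $p=\infty$, I replace Lemma~\ref{lem:25d} and Corollary~\ref{cor:18} by Lemma~\ref{lem:25c} and Lemma~\ref{lem:2c}, passing through the remark identifying $\Cc^q(\eps)$ with $\Cc^q(\mi,1)$ for $\mi=\sum_j\delta_{\eps^j}$.

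\textbf{Main obstacle.} The main obstacle is the $p=\infty$ case when $j'>j$. There the coarse-scale maximal function $\Nc_{r,N,\eps^j}g(j',\cdot)$ has to be controlled in $\Cc^q$ at the fine level $j'$, and Lemma~\ref{lem:25c} is designed precisely for kernels of the type $s^\delta t^\eta/(s+t)^{\delta+\eta}$ acting on $\Nc_{q,b,cs^a}f_s$. I would therefore rewrite the sum so that its radius is attached to the summation variable, using the fact that the radius $\eps^{\min(j,j')}$ is at most $\eps^{j'}$ times $\eps^{-(j'-j)_+}$, and pay the corresponding polynomial loss out of $\gamma$.
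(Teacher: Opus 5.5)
\textbf{The gap: $p=\infty$, terms with $j'>j$.} Your $p=\infty$ argument does not close for the terms with $j'>j$. There your bound carries $\Nc_{r,N,\eps^j}(g_\lambda(j',\,\cdot\,))$ with the coarse radius $\eps^j$. Moving this radius to $\eps^{j'}$ costs
\[
\Nc_{r,N,\eps^j}(g)\meg \eps^{-(j'-j)(N-Q_*/r)}\,\Nc_{r,N,\eps^{j'}}(g),
\]
and this is sharp when $g$ is concentrated at distance $\Meg\eps^j$ from $x$. You could first use H\"older's inequality to pass to $\Nc_{q,b,\,\cdot\,}$ with $b$ just above $D/q$, as Lemma~\ref{lem:25c} requires. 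Even then the loss is $\eps^{-(j'-j)(b-Q_*/q)}$ with $b-Q_*/q>(D-Q_*)/q$. Absorbing it into $\eps^{(j'-j)(S+\alpha-Q_*/r)}$ forces $S+\alpha>Q_*/\min(1,q)+(D-Q_*)/q$. For $q<\infty$ and $D>Q_*$ this is strictly stronger than the hypothesis $S>Q_*/\min(1,q)-\alpha$. The case $D>Q_*$ does occur: for the Heisenberg group with $\gf_1=\gf$ one has $Q_*=3$ and $Q_G=4$. So ``paying the loss out of $\gamma$'' fails in general.

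\textbf{How to repair it, and what the paper does.} The fix is not to move the radius at all. For fixed $m=j'-j\Meg 1$, consider the shifted sequence $j\mapsto g_\lambda(j+m,\,\cdot\,)$. Its $\Cc^q(\eps)$ quasi-norm is at most $\norm{g_\lambda}_{\Cc^q(\eps)}$, since at scale $\eps^{N_0}$ it only involves levels $\Meg N_0+m$. In $\Nc_{r,N,\eps^j}(g_\lambda(j+m,\,\cdot\,))$ the radius is tied to the level $j$ of this shifted sequence. Lemma~\ref{lem:2c}, with $p_0=r$, $b=N>D/r$ and $\kappa(t)=t$, therefore gives a bound $C\norm{g_\lambda}_{\Cc^q(\eps)}$ uniformly in $m$. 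You can then sum over $m$ using the factor $\eps^{m(S+\alpha-Q_*/r)}$ and the $\min(1,q)$-subadditivity of the $\Cc^q$ quasi-norm. The terms with $j'\meg j$ go through Lemma~\ref{lem:25c}, as you planned.

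The paper avoids a direct $\Cc^q$ estimate altogether. After reducing to $p,q>1$ and $\alpha=0$, it obtains $p=\infty$ by duality from $p=1$, via Proposition~\ref{prop:15}. Under the pairing $\sum\eps^{jQ_*}a_{j,k}\overline{b_{j,k}}$, the transpose of $T$ again satisfies the hypotheses, with $(S,K)$ replaced by $(K+Q_*,S-Q_*)$.

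\textbf{The case $p<\infty$.} This part of your proposal is essentially the paper's argument. The paper replaces $a$ by $\eps^{(j'-j)p_0\alpha}\abs{a}^{p_0}$ to reduce to $p,q>1$ and $\alpha=0$, dominates by $\Mc_1$, and concludes with Lemma~\ref{lem:25d} and Fefferman--Stein. Your $r$-triangle inequality in $k'$, followed by Lemma~\ref{lem:39} and $\Mc_r$, is the same argument done inline. Once you have $\Mc_r$ you only need the vector-valued maximal inequality; Corollary~\ref{cor:18} with radius $\eps^{j'}$ does not literally fit the terms with $j'>j$.

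\textbf{The power of $\delta$.} Your $\delta^{-Q_*/r}$ is what the argument actually gives. The paper's power reduction likewise turns $\delta^{-Q_*}$ into $\delta^{-Q_*/p_0}$. For $p<1$ the stated $\delta^{-Q_*}$ cannot hold. Test on $\lambda=e_{j_0,k_0}$ with $a$ saturating the bound: the level $j_0$ alone gives a ratio $\gtrsim\delta^{-Q_*/p}$, because $B(x_{j_0,k_0},\eps^{j_0})$ contains $\gtrsim\delta^{-Q_*}$ of the points $x_{j_0,k}$. This is harmless, since the lemma is only applied with $\delta$ fixed.
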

	
	\begin{proof}[First part of the proof of Lemma~\ref{lem:68}.]
		\textsc{Step I} Assume that the assertion has been proved for $p,q> 1$, $\alpha=0$, and $\lambda\in \C^{(J)}$, and let us prove it in the general case. Fix $p_0\in (0,\min(1,p,q))$ so that $S>Q_*/p_0-\alpha$ and $N>D/p_0$. In addition, consider $a'_{(j,k),(j',k')}\coloneqq \eps^{(j'-j)p_0\alpha}\abs{a_{(j,k),(j',k')}}^{p_0} $, so that
		\[
			\abs{a'_{(j,k),(j',k')}}\meg  \eps^{(j'-j)_+ (S+\alpha)p_0 +(j-j')_+ (K-\alpha)p_0 }(1+d(x_{j,k},x_{j',k'})/\eps^{\min(j,j') })^{-N p_0} .
		\]
		By our assumption, there is a constant $C_1>0$ such that, if we define
		\[ 
		T'\colon \lambda \mapsto \Big(\sum_{j',k'} a'_{(j,k),(j',k')}\lambda_{j',k'} \Big)_{j,k},
		\]
		then 
		\[
		\norm{T' \lambda}_{f^{p/p_0,q/p_0}_0((x_{j,k}))}\meg C_1 \norm{\lambda}_{f^{p/p_0,q/p_0}_0((x_{j,k}))}.
		\]
		Then, observe that
		\[
		\begin{split}
			\norm{T \lambda}_{f^{p,q}_\alpha((x_{j,k}))}&=\norm{(\eps^{-jp_0\alpha}\abs{(T \lambda)_{j,k}}^{p_0})_{j,k}}_{f^{p/p_0,q/p_0}_0((x_{j,k}))}^{1/p_0}\\
				&\meg \norm{T'[ (\eps^{-j'p_0\alpha}\abs{\lambda_{j',k'}}^{p_0})_{j',k'}]}_{f^{p/p_0,q/p_0}_0((x_{j,k}))}^{1/p_0}\\
				&\meg C_1 \norm{  (\eps^{-j'p_0\alpha}\abs{\lambda_{j',k'}}^{p_0})_{j',k'}}_{f^{p/p_0,q/p_0}_0((x_{j,k}))}^{1/p_0}\\
				&\meg  C_1 \norm{\lambda}_{f^{p,q}_\alpha((x_{j,k}))}
		\end{split}
		\]
		for every $\lambda\in \C^{(J)}$. Pointwise convergence for general $\lambda\in f^{p,q}_\alpha((x_{j,k}))$ is then ensured considering the case in which $\lambda_{j,k}\Meg 0$ for every $j,k\in\N$ (since each component of $a'$ is positive); the above inequality is then extended to every $\lambda\in f^{p,q}_\alpha((x_{j,k}))$ by lower semi-continuity. 
		
		\textsc{step II} We shall now consider the case $p,q> 1$, $\alpha=0$,   and $p<\infty$. Take a constant $C_2>0$ such that $C_2 r^{Q_*}\meg \beta(B(e,r))\meg C_2 r^{Q_*}$ for every $r\in (0,R\delta_0]$. Define
		\[
		(\iota\lambda)(j,x)\coloneqq \sum_{k\in\N} \lambda_{j,k}\chi_{B(x_{j,k},\delta \eps^j)}(x)
		\] 
		for every $\lambda\in \C^{J}$, for every $j\in\N$ and for every $x\in G$, and observe that
		\[
		\begin{split}
			\abs{(\iota T \lambda)(j,x)}&\meg\sum_{k} \chi_{B(x_{j,k},\delta \eps^j)}(x)\sum_{j',k'} \abs{a_{(j,k),(j',k')}} \abs{\lambda_{j',k'}}\\
				&\meg C_2\delta^{-Q_*}\sum_{k} \chi_{B(x_{j,k},\delta \eps^j)}(x)\sum_{j',k'}\eps^{-j' Q_*} \int_{B(x_{j',k'},\delta \eps^{j'})} \abs{a_{(j,k),(j',k')}}\abs{(\iota\lambda)(j',x')}\,\dd \beta(x')\\
				&\meg (2\max(1,\delta_0))^N C_2 \delta^{-Q_*} \sum_{j'} \int_G \Ac((j,x),(j',x'))\abs{(\iota\lambda)(j',x')}\,\dd \beta(x')
		\end{split}
		\]
		for every $\lambda\in \C^{(J)}$ and for every $(j,x)\in\N\times G$, where
		\[
		\Ac((j,x),(j',x'))\coloneqq \eps^{(j'-j)_+S +(j-j')_+K -j' Q_* }(1+d(x,x')/\eps^{\min(j,j') })^{-N}
		\] 
		for every $(j,x),(j',x')\in\N\times G$. It is then clear that
		\[
		\abs{(\iota T \lambda)(j,x)}\meg  (2\max(1,\delta_0))^N C_2 \delta^{-Q_*} \sum_{j'} \eps^{(j'-j)_+(S-Q_*)+(j-j')_+K  } [\Nc_{1,N}(\iota\lambda)(j',\,\cdot\,)](x)
		\]
		for every $(j,x)\in\N$. Therefore, combining Lemmas~\ref{lem:25d} and~\ref{lem:39}, we see that there is a constant $C_3>0$ such that
		\[
		\norm{(\iota T \lambda)(j,x)}_{L^{q,p}_{(j,x)}(\N,G)}\meg C_3\delta^{-Q_*} \norm{[\Mc_1(\iota\lambda)(j,\,\cdot\,)](x)}_{L^{q,p}_{(j,x)}(\N,G)}, 
		\]
		so that the assertion follows from, e.g.,~\cite{Singular}.
	\end{proof}

	\begin{proof}[First part of the proof of Theorem~\ref{teo:13}.]
		Let $U$ be the set of $(z_{j,k})\in G^{J}$ such that $z_{j,k}\in B(e,\delta R \eps^j)$ for every $(j,k)\in J$. Take $(\psi_j)\in \widetilde\Psi_{\eps^{\grado}}$. 
		Then, combining Lemmas~\ref{lem:67} and~\ref{lem:68}, we see that there is a constant $C_1>0$ such that
		\[
		\norm{ ((A \lambda)*\psi_j)(x_{j,k}z_{j,k}) }_{f^{p,q}_\alpha((x_{j,k}))}\meg C_1 \norm{\lambda}_{f^{p,q}_\alpha((x_{j,k}))}
		\]
		for every $\lambda\in \C^{(J)}$ and for every $(z_{j,k})\in U$, where $A\lambda\coloneqq \sum_{j,k} \lambda_{j,k} a_{j,k}$ for every $\lambda\in \C^{(J)}$. Then, Proposition~\ref{prop:14} shows that there is a constant $C_2>0$ such that
		\[
		\norm{A \lambda}_{F^{p,q}_\alpha(G)}\meg C_2\norm{\lambda}_{f^{p,q}_\alpha((x_{j,k}))} 
		\] 
		for every $\lambda\in \C^{(J)}$, hence also for every $\lambda\in \mathring f^{p,q}_\alpha((x_{j,k}))$ by density and continuity (with convergence in $F^{p,q}_\alpha(G)$ of the sum in the definition of $A$).

		\textsc{Step II} Take a family $(\tau_{j,k})$ as in Lemma~\ref{lem:43}.
		Take $f\in F^{p,q}_\alpha(G)$, and define $\widetilde a_{f,0,k'}\coloneqq \tau_{0,k'} (f*\psi_0)$ and $\widetilde a_{f,j,k}\coloneqq \Lc^S(\tau_{j,k}(f*\Lc^{-S}\psi_j))$ for every $(0,k')\in J$ and for every $(j,k)\in J$ with $j\Meg 1$. 
		Observe that clearly $\widetilde a_{f,j,k}$ is of class $C^\infty$ and is supported in $ B(x_{j,k},2 \delta R \eps^{j })$; if $j\Meg 1$, it is the $\Lc^S$ derivative  of some $C^\infty$ function supported in $B(x_{j,k},2\delta R \eps^{j })$. 
		Let us now define, for simplicity, $W^{\lambda,\infty}(V)$ as the set of $g\in L^\infty(V)$ such that $\sup_{X\in U_\lambda, \abs{X}\meg 1} \norm{X g}_{L^\infty(V)}<\infty$ for every $\lambda\Meg 0$, and for every open subset $V$ of $G$.
		Take $p_0\in (0,\min(1,p,q))$ so that $S\grado> Q_*/p_0-\alpha$ and $N>D/p_0$, and observe that, by Lemmas~\ref{lem:33} and~\ref{lem:39}, there is a constant $C_3>0$ such that, setting $\widetilde \psi_0\coloneqq \psi_0$ and $\widetilde \psi_j\coloneqq (\eps^{j\grado} \Lc)^{-S} \psi_j$ for every $j\Meg 1$,
		\[
		\abs{(X (f*\psi_0))(x)}\meg 2^{D/p_0+1} \Nc_{\infty,D/p_0+1,\delta_0 }(X(f*\psi_0))(y)\meg C_3 \abs{X} \Mc_{p_0}(f*\widetilde\psi_0)(y)
		\]
		for every $x,y\in G$ with $d(x,y)<\delta $ and for every $X\in U_{K\grado}$,
		and, analogously,
		\[
		\abs{(X (f*\Lc^{-S}\psi_j))(x)}\meg C_3 \abs{X} \eps^{j(S\grado-\deg(X) )} \Mc_{p_0}(f*\widetilde\psi_j)(y)
		\]
		for every $j\Meg 1$, for every $x,y\in G$ with $d(x,y)< \delta \eps^{j }$, and for every $X\in U_{(K+S)\grado}$. 
		Therefore, using  the properties of the $\tau_{j,k} $, it is readily seen that there is a constant $C_{4}>0$ such that
		\[
		\abs{X \widetilde a_{f,0,k} }\meg C_{4} \inf_{B(x_{0,k}, \delta  )} \Mc_{p_0}(f*\widetilde \psi_0)  \eqqcolon \lambda_{f,0,k}
		\]
		for every $X\in U_{K\grado}$ with $\abs{X}\meg 1$ and for every $(0,k)\in J$,
		and  
		\[
		\eps^{-j(S\grado-\deg(X) )}\abs{X(\tau_{j,k}(f*\Lc^{-S}\psi_j)) }\meg C_{4}  \inf_{B(x_{j,k}, \delta \eps^{j })} \Mc_{p_0}(f*\widetilde \psi_j) \eqqcolon \lambda_{f,j,k}
		\]
		for every $X\in U_{(K+S)\grado}$ with $\abs{X}\meg 1$, for every $(j,k)\in J$ with $j\Meg 1$. 
		
		Consequently, if we set   $a_{f,j,k}\coloneqq \widetilde a_{f,j,k}/\lambda_{j,k}$ if $\lambda_{f,j,k}\neq 0$, and  $a_{f,j,k}=0$ if $\lambda_{f,j,k}=0$ (in which case $\widetilde a_{f,j,k}=0$), then $(a_{f,j,k})$ is a system of strong $(K,S,\Lc)$-atoms associated with $(x_{j,k})$ such that 
		\[
		\begin{split}
			\sum_{j,k} \lambda_{f,j,k}a_{f,j,k}&=\sum_k \tau_{0,k} (f*\psi_0)+\sum_{j\Meg 1}\sum_k \Lc^S(\tau_{j,k} f*\Lc^{-S}\psi_j)\\
			&=f*\psi_0 +\sum_{j\Meg 1} \Lc^S(f*\Lc^{-S} \psi_j)\\
			&=\sum_j f*\psi_j=f
		\end{split}
		\]
		in $\Sr'(G)$. In addition,
		\[
		\begin{split}
			\norm{\eps^{-j\alpha } \lambda_{f,j,k} \chi_{B(x_{j,k},\delta \eps^{j })}(x) }_{\ell^q(\N)}&= C_{4}\norm{\eps^{-j\alpha }   \chi_{B(x_{j,k}, \delta \eps^{j })}(x) \inf_{B(x_{j,k}, \delta \eps^{j })} \Mc_{p_0}(f*\widetilde \psi_j) }_{\ell^q(\N)}\\
			&\meg C_{4}\norm{\eps^{-j\alpha } \Mc_{p_0}(f*\widetilde \psi_j)(x) }_{\ell^q(\N)}
		\end{split}
		\] 
		for every $j\in\N$. Consequently, by means of, e.g.,~\cite{Singular}, we see that there is a constant $C_{5}>0$ such that
		\[
		\norm{\lambda_f}_{f^{p,q}_\alpha((x_{j,k}))}\meg C_{5} \norm{f}_{F^{p,q}_\alpha(G)},
		\] 
		whence the conclusion.
	\end{proof}

	\section{Duality}\label{sec:5}
	
	In this section, functions and distributions in the spaces $B^{p,q}_\alpha(G)$, $\mathring B^{p,q}_\alpha(G)$, etc., are assumed to take values in $Z$, while functions and distributions in the spaces $B^{p',q'}_{-\alpha+(1/p-1)_+Q_*}(G)$, etc., are assumed to take values in $Z'$. Similar conventions hold for the corresponding spaces of sequences.
	In addition, when $Z$ is not a Hilbert space, sesquilinear pairings have to be replaced by bilinear ones. This causes no harm, since we may assume that the operator $\Lc$ has real coefficients, so that $\Kc_\Lc(m)$ is real valued for every real valued $m$. The entire section could in fact have been written directly with bilinear pairings, but some constructions appeared to be slightly  more natural with sesquilinear ones.
	
	\begin{teo} \label{teo:12b}
		Take $p,q\in (0,+\infty]$ and $\alpha\in \R$. 
		There is a unique continuous sesquilinear mapping $B\colon \mathring B^{p,q}_\alpha(G)\times B^{p',q'}_{-\alpha+(1/p-1)_+Q_*}(G)\to \C$ which extends the canonical sesquilinear pairing on $\Sr(G)\times \Sr'(G)$.
		In addition, $B$ induces an antilinear isomorphism of $B^{p',q'}_{-\alpha+(1/p-1)_+Q_*}(G)$ onto the dual of $\mathring B^{p,q}_\alpha(G)$.  
	\end{teo}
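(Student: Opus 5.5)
The plan is to reduce everything to the Littlewood--Paley decomposition of Lemma~\ref{lem:41} and to the sampling/atomic machinery of Section~\ref{sec:4}. Fix $\eps\in(0,1)$ and $(\psi_j)\in\widetilde\Psi_\eps$ with $\psi_j=\psi_j^*$ (take real $\phi_j$), and put $\widetilde\psi_j\coloneqq\psi_{j-1}+\psi_j+\psi_{j+1}$, so that $\psi_j=\psi_j*\widetilde\psi_j$ by the support conditions. Set $\alpha'\coloneqq-\alpha+(1/p-1)_+Q_*$. For $\phi\in\Sr(G)$ and $g\in\Sr'(G)$ Lemma~\ref{lem:41} gives
\[
\langle \phi,g\rangle=\sum_{j\in\N}\langle \phi*\psi_j, g*\widetilde\psi_j\rangle ,
\]
with convergence justified in $\Oc'_{C,L}(G)$. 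Continuity of $B$ and its uniqueness then reduce to the estimate $\abs{\langle\phi,g\rangle}\meg C\norm{\phi}_{B^{p,q}_\alpha}\norm{g}_{B^{p',q'}_{\alpha'}}$, since $\Sr(G)$ is dense in $\mathring B^{p,q}_\alpha(G)$ by definition.

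\textbf{Step 1: the pairing estimate.} If $p\Meg1$, I will use H\"older in $x$ and then in $j$:
\[
\abs{\langle\phi*\psi_j,g*\widetilde\psi_j\rangle}\meg \norm{\phi*\psi_j}_{L^p}\norm{g*\widetilde\psi_j}_{L^{p'}}.
\]
The weights $\eps^{-j\alpha/\grado}$ and $\eps^{j\alpha/\grado}$ cancel. The norm of $g$ with respect to $(\widetilde\psi_j)$ is controlled by Corollary~\ref{cor:16}, since $(\widetilde\psi_j)\in\Psi_\eps$.

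If $p<1$, I first pass through Proposition~\ref{prop:13}(1): $B^{p,q}_\alpha\subseteq B^{1,q}_{\alpha-(1/p-1)Q_*}$ continuously. The case $p\Meg1$ applied to this larger space then gives exactly the dual exponent $\alpha'=-\alpha+(1/p-1)Q_*$ with $p'=\infty$. This explains the shift $(1/p-1)_+Q_*$.

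For $q<1$, take $q'=\infty$ and use the $\ell^1\supseteq\ell^q$ inclusion. This proves that every $g\in B^{p',q'}_{\alpha'}$ defines an element of $(\mathring B^{p,q}_\alpha)'$, with norm at most $C\norm{g}$. Injectivity holds because $\Sr(G)\subseteq\mathring B^{p,q}_\alpha$ and $g$ is determined by its action on $\Sr(G)$.

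\textbf{Step 2: surjectivity (the main obstacle).} Lacking a retraction onto sequence spaces, I will use the sampling embedding of Proposition~\ref{prop:14}. Choose a reduced lattice with $\delta\meg\delta_-$. Then $S\colon f\mapsto((f*\psi_j)(x_{j,k}))$ embeds $\mathring B^{p,q}_\alpha(G)$ isomorphically onto a closed subspace of $\mathring b^{p,q}_\alpha(\eps^{1/\grado},J)$, with the appropriate $\delta^{Q_*/p}$ normalisation.

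Given $\Lambda\in(\mathring B^{p,q}_\alpha)'$, I extend $\Lambda\circ S^{-1}$ from $S(\mathring B^{p,q}_\alpha)$ to the whole of $\mathring b^{p,q}_\alpha$. For $p,q\Meg1$ this is Hahn--Banach. For $\min(p,q)<1$ I will instead factor through the Banach envelope: by Proposition~\ref{prop:13}, $\mathring B^{p,q}_\alpha\hookrightarrow \mathring B^{\max(p,1),\max(q,1)}_{\alpha-(1/p-1)_+Q_*}$, and this inclusion is dense. The plan is to show that $\Lambda$ is continuous for the larger norm. I would try to prove this via the atomic decomposition (Theorem~\ref{teo:13bis}): any $f$ is written as $\sum\lambda_{j,k}a_{j,k}$ with strong atoms, and each atom has bounded norm in the larger space. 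I expect this to be the delicate point and a likely place for the argument to need repair.

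The dual of $\mathring b^{p,q}_\alpha$ (with $p,q\Meg1$) is $b^{p',q'}_{-\alpha}$ after the weight bookkeeping, the dual weight being $\eps^{j(Q_*/p'+\alpha)}$ relative to counting measure. So there is $\mu=(\mu_{j,k})$ with $\Lambda(f)=\sum\overline{\mu_{j,k}}(f*\psi_j)(x_{j,k})$.

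I then set
\[
g\coloneqq\sum_{j,k}\mu_{j,k}\,\psi_j^*(x_{j,k}^{-1}\,\cdot\,)
\]
(up to the normalisation $\delta^{Q_*/p}\eps^{jQ_*/\grado}$). The family $\eps^{jQ_*/(p\grado)}\psi_j(x_{j,k}^{-1}\,\cdot)$ is a suitably normalised system of $(K,S,N,p',\Lc)$-atoms: the $L^{p'}$ decay follows from Corollary~\ref{cor:12}, and the moment structure from $\psi_j=\Lc^S(\Lc^{-S}\psi_j)$. Hence the first half of Theorem~\ref{teo:13bis} gives $g\in B^{p',q'}_{\alpha'}$ with $\norm{g}\meg C\norm{\mu}\meg C'\norm{\Lambda}$. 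By construction $B(\phi,g)=\Lambda(\phi)$ for $\phi\in\Sr(G)$, hence on all of $\mathring B^{p,q}_\alpha$.

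Two points remain to check. First, Theorem~\ref{teo:13bis} requires no density hypothesis on $\mu$ when $p'$ or $q'$ is $\infty$, since its series converges in $\Sr'(G)$. Second, the conjugate-linearity in $\mu$ gives the antilinear isomorphism asserted in the statement.
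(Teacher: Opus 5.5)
\textbf{Continuity is fine; surjectivity has a gap when $\min(p,q)<1$.} Your Step~1 (continuity and uniqueness) is correct. It is in fact more elementary than the paper's argument, which bounds $\sum_j\abs{\langle f*\psi_j\vert g*\psi'_j\rangle}$ through the sampled sequences of Proposition~\ref{prop:14} and the sequence-space pairing of Proposition~\ref{prop:15}. For $p,q\in(1,\infty]$ your surjectivity route (sampling embedding, Hahn--Banach, atomic synthesis) is also a legitimate alternative.

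When $\min(p,q)<1$, however, $\mathring b^{p,q}_\alpha$ is not locally convex, so Hahn--Banach cannot extend $\Lambda\circ S^{-1}$. The paper points out exactly this error in Frazier--Jawerth, right after Proposition~\ref{prop:15}. Your proposed repair is to show that $\Lambda$ is continuous for the norm of $\mathring B^{\max(p,1),\max(q,1)}_{\alpha-(1/p-1)_+Q_*}(G)$, but this is only announced, and as sketched it does not work. Knowing that the atoms have bounded norm in the larger space is useless, since $\Lambda$ is only known to be continuous on $\mathring B^{p,q}_\alpha(G)$. Even with the atoms controlled in $\mathring B^{p,q}_\alpha(G)$, termwise bounds $\abs{\Lambda(a_{j,k})}\meg\norm{\Lambda}\norm{a_{j,k}}_{B^{p,q}_\alpha}$ only bound $\abs{\Lambda(f)}$ by an $\ell^1$-in-$(j,k)$ norm of the coefficients, i.e.\ a $b^{1,1}_{\alpha-(1/p-1)Q_*}$-type norm. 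That matches the Banach envelope only when $p,q\meg 1$. When one of $p,q$ exceeds $1$ you need $\ell^{p'}_k$ or $\ell^{q'}_j$ control of the numbers $\Lambda(a_{j,k})$, which is itself a duality statement for sequence spaces. That is the missing ingredient.

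\textbf{How the paper avoids the extension step.} Let $g\in\Sr'(G)$ represent $\Lambda$ on $\Sr(G)$, and set $\eps'=\eps^{1/\grado}$.
\begin{enumerate}
\item By Corollary~\ref{cor:12}, the functions $c\,\eps'^{jQ_*}\psi_j^*(z_{j,k}^{-1}x_{j,k}^{-1}\,\cdot\,)$ form a system of $(K,S,N,\infty,\Lc)$-atoms for every choice of $z_{j,k}\in\overline B(e,2\eps'^{j})$.
\item The first half of Theorem~\ref{teo:13bis}, used only on finitely supported coefficients, then makes $a'\mapsto\Lambda\big(\sum_{j,k}a'_{j,k}\eps'^{jQ_*}\psi_j^*(z_{j,k}^{-1}x_{j,k}^{-1}\,\cdot\,)\big)$ a continuous functional on $\mathring b^{p,q}_\alpha(\eps',J)$.
\item Its coefficients are the samples $(\overline g*\psi_j)(x_{j,k}z_{j,k})$. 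Step~I of Proposition~\ref{prop:15} computes the dual of $\mathring b^{p,q}_\alpha$ for all $p,q\in(0,\infty]$ by H\"older-type arguments, with no Hahn--Banach. It therefore places these samples in $b^{p',q'}_{-\alpha+(1/p-1)_+Q_*}$, uniformly in $z$.
\item The $\max_z$ form of Proposition~\ref{prop:14} then gives $\overline g\in B^{p',q'}_{-\alpha+(1/p-1)_+Q_*}(G)$.
\end{enumerate}
This works uniformly in $p,q$.

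\textbf{A second, fixable issue in your last step.} Whenever $p\meg 1$ or $q\meg 1$, your $\mu$ lies outside $\mathring b^{p',q'}$. In the paper, the $\Sr'$-convergence of atomic series with such coefficients is established only afterwards, with the aid of the very theorem you are proving. The corresponding norm bound also relies on that convergence, since before it is only obtained for $\mathring b$ coefficients by density. So invoking Theorem~\ref{teo:13bis} there is circular as written. For your specific series you can verify both by hand: samples of Schwartz functions decay rapidly, and the Besov quasi-norm is lower semicontinuous on $\Sr'(G)$.
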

	
	\begin{teo}\label{teo:12c}
		Take $p ,q\in (0,+\infty]$  and $\alpha\in \R$. If $p\Meg 1$ and either $p=1$ or $q\Meg 1$, then there is a unique continuous sesquilinear mapping $B\colon F^{p,q}_\alpha(G)\times F^{p',q'}_{-\alpha }(G)\to \C$ which extends the canonical sesquilinear pairing on $\Sr(G)\times \Sr'(G)$. In addition, $B$   induces an antilinear isomorphism of $F^{p',q'}_{-\alpha }(G)$ onto the dual of $\mathring F^{p,q}_\alpha(G)$. 
		
		If  $p<1$, then there is a unique continuous sesquilinear mapping $B\colon F^{p,q}_\alpha(G)\times B^{\infty,\infty}_{-\alpha+(1/p-1)_+Q_*}(G)\to \C$ which extends the canonical sesquilinear pairing on $\Sr(G)\times \Sr'(G)$. In addition, $B$   induces an antilinear isomorphism of $B^{\infty,\infty}_{-\alpha+(1/p-1)_+Q_*}(G)$ onto the dual of $\mathring F^{p,q}_\alpha(G)$.  
	\end{teo}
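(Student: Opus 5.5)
The natural route is through the discretization of Section~\ref{sec:4}: reduce to duality of the sequence spaces $f^{p,q}_\alpha((x_{j,k}))$, using the sampling theorem (Proposition~\ref{prop:14}) and the atomic decomposition (Theorem~\ref{teo:13}, whose proof is completed in this section). Fix $(\psi_j)\in\widetilde\Psi_{\eps^\grado}$ with real $\psi_j=\psi_j^*$ (possible since $\Lc$ may be taken real), and factor $\psi_j=\psi_j'*\psi_j''$ with $(\psi'_j),(\psi''_j)\in\Psi_{\eps^\grado}$ via Lemma~\ref{lem:41}. For $f\in\Sr(G)$ and $g\in\Sr'(G)$, Lemma~\ref{lem:41} gives
\[
\langle f,g\rangle=\sum_j\langle f*\psi'_j, g*\psi''_j\rangle .
\]
Discretizing each integral with the partition of unity $(\tau_{j,k})$ of Lemma~\ref{lem:43} (or a mean-value/maximal argument as in Proposition~\ref{prop:10}) bounds the right-hand side by $\sum_{j,k}\eps^{jQ_*}\,|S f|_{j,k}\,|S' g|_{j,k}$, where $S,S'$ are sampled max-values at lattice points.

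\textbf{Case $p\ge1$.} For $p<\infty$, $q\ge1$, or $p=1$, I would show that $f^{p',q'}_{-\alpha}((x_{j,k}))$ is the dual of $\mathring f^{p,q}_\alpha((x_{j,k}))$ under the pairing $\sum_{j,k}\eps^{jQ_*}\lambda_{j,k}\overline{\mu_{j,k}}$. This is the classical Frazier--Jawerth duality: write the pairing as $\int\sum_j f_\lambda\overline{f_\mu}\,\dd\beta$ and apply H\"older in $L^p(\ell^q)$; when $p=1$, use the $\Cc^{q'}$ Carleson characterization of Lemma~\ref{lem:59b}. Combined with the maximal sampling of Proposition~\ref{prop:14}, this yields
\[
|\langle f,g\rangle|\lesssim \norm{f}_{F^{p,q}_\alpha}\norm{g}_{F^{p',q'}_{-\alpha}}
\]
on $\Sr(G)$. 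Density of $\Sr(G)$ in $\mathring F^{p,q}_\alpha$ then gives the unique continuous extension $B$. For $p=\infty$ one defines $B$ via the series above directly on $F^{p,q}_\alpha$, which converges absolutely by the same estimate.

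\textbf{Surjectivity.} Given $\ell\in(\mathring F^{p,q}_\alpha)'$, compose it with the atomic synthesis operator $A\colon\mathring f^{p,q}_\alpha\to\mathring F^{p,q}_\alpha$ of Theorem~\ref{teo:13}, taking atoms $a_{j,k}=\Lc^S(\tau_{j,k}\,\cdot)$-type translates of $\psi_j$. Then $\ell\circ A$ is represented by a sequence $\mu\in f^{p',q'}_{-\alpha}$. Define $g\coloneqq\sum_{j,k}\overline{\mu_{j,k}}\,\eps^{jQ_*}\,\psi_j(x_{j,k}^{-1}\cdot)$-type atoms, suitably normalized. By the atomic bound, now for the dual indices, $g\in F^{p',q'}_{-\alpha}$, and $B(\cdot,g)=\ell$ on $\Sr(G)$. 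This step is cleanest if one instead uses the exact reproducing identity $f=\sum_j f*\psi_j$ together with Hahn--Banach. Concretely, embed $\mathring F^{p,q}_\alpha$ isometrically into $\mathring L^p(\ell^q)$ by $f\mapsto(\eps^{-j\alpha/\grado}f*\psi_j)_j$, extend $\ell$ to that space, represent the extension by some $(h_j)\in L^{p'}(\ell^{q'})$ (or $\Cc^{q'}$ when $p=1$), and set $g=\sum_j\eps^{-j\alpha/\grado}h_j*\psi_j$. Showing $g\in F^{p',q'}_{-\alpha}$ then reduces to a Lemma~\ref{lem:35}-type almost-orthogonality estimate for $g*\psi_{j'}=\sum_j \eps^{-j\alpha/\grado}h_j*\psi_j*\psi_{j'}$, with only finitely many nonzero terms $|j-j'|\le 2$, controlled by maximal functions via Lemma~\ref{lem:39} and \cite{Singular}. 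Injectivity follows because $B(\phi,g)=\langle\phi,g\rangle$ for all $\phi\in\Sr(G)$.

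\textbf{Case $p<1$.} Here the bound is obtained via the embedding $F^{p,q}_\alpha\subseteq B^{p,p}_{\alpha}$ (Proposition~\ref{prop:12}, hmm, only the weaker $F^{p,q}_\alpha\subseteq B^{p,\max(p,q)}_\alpha$ holds) combined with $F^{p,q}_\alpha\subseteq F^{1,\infty}_{\alpha-(1/p-1)Q_*}\subseteq B^{1,\infty}_{\alpha-(1/p-1)Q_*}$ (Proposition~\ref{prop:13}). This gives continuity of the pairing against $B^{\infty,\infty}_{-\alpha+(1/p-1)Q_*}$, by Theorem~\ref{teo:12b} or directly by the series $\sum_j\langle f*\psi'_j,g*\psi''_j\rangle$. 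For surjectivity, test $\ell$ on atoms: each strong atom has $\mathring F^{p,q}_\alpha$-norm $\lesssim\eps^{j(\alpha-Q_*/p)}$, by Theorem~\ref{teo:13} applied to a single coefficient. Hence the function $g(x)=\ell(\psi_j(x^{-1}\cdot))$-type samples satisfy $\sup_x|g*\psi_j|\lesssim\eps^{-j(-\alpha+(1/p-1)Q_*)/\grado}$, i.e.\ $g\in B^{\infty,\infty}_{-\alpha+(1/p-1)Q_*}$.

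The main obstacle is the surjectivity step for $p=1$ (the $\Cc^{q'}$, $F^{\infty,q'}$ side), where one needs the Carleson-type duality between $L^1(\ell^q)$ and $\Cc^{q'}$. I would try to establish this via Lemma~\ref{lem:59b}: bound $\sum\int|F||H|$ by $\norm{m^{q}_{\eta}F}_{L^1}\norm{H}_{\Cc^{q'}}$ using a stopping-time/tent-space argument. The converse representation would use a Riesz-type representation on sequences, transported through Proposition~\ref{prop:10bis}.
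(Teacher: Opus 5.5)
Your overall architecture (sampling via Proposition~\ref{prop:14}, atomic synthesis via Theorem~\ref{teo:13}, duality of sequence spaces) is the paper's. Your continuity argument for $p\Meg 1$ (Lemma~\ref{lem:41}, then max-sampling, then the sequence-space pairing of Proposition~\ref{prop:15}) is exactly what the paper does.

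\textbf{Surjectivity for $p\Meg 1$ has a gap.} In route (a) you synthesize $g$ from the sequence $\mu$ representing $\ell\circ A$, and then assert $B(\,\cdot\,,g)=\ell$ on $\Sr(G)$. Unwinding the definitions, this amounts to $\phi=A(S\phi)$ for a \emph{linear} sampling map $S$, i.e.\ a discrete Calder\'on reproducing formula with fixed atoms. Nothing in the paper provides this: the decomposition in Theorem~\ref{teo:13} uses $f$-dependent atoms, and the paper stresses that it cannot show these spaces are retracts of the sequence spaces.

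Route (b), Hahn--Banach in $L^p(\ell^q)$, does go through with some care for $1\meg p<\infty$ and $q\Meg 1$. For $p=1$ the extension is represented in $L^\infty(\ell^{q'})\subseteq\Cc^{q'}$, and Lemma~\ref{lem:2c} controls $g*\psi_{j'}$. It fails, however, in the remaining cases of the statement.
\begin{itemize}
\item For $p=1$, $q<1$, the space $L^1(\ell^q)$ is not locally convex, so Hahn--Banach is unavailable. This is exactly the Frazier--Jawerth error discussed after Proposition~\ref{prop:15}.
\item For $p=\infty$, the $F^{\infty,q}_\alpha$-norm is a $\Cc^q$-norm. You would need to represent functionals on a closed subspace of $\Cc^q$ by $L^1(\ell^{q'})$-type objects, which is a tent-space duality theorem that is neither available in the paper nor addressed in your proposal.
\end{itemize}
So the obstacle you single out at $p=1$ is not where your argument actually breaks.

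\textbf{The missing idea: $g$ need not be constructed at all.} Let $g\in\Sr'(G)$ be the distribution representing $\ell$ on $\Sr(G)$.
\begin{enumerate}
\item Up to constants and conjugations, the sample $(g*\psi_j)(x_{j,k}z_{j,k})$ is $\ell$ evaluated at the atom $c\,\eps'^{jQ_*}\psi_j^*(z_{j,k}^{-1}x_{j,k}^{-1}\,\cdot\,)$.
\item The synthesis bounds of Theorems~\ref{teo:13bis} and~\ref{teo:13} therefore make $a'\mapsto \ell\big(\sum a'_{j,k}\,c\,\eps'^{jQ_*}\psi_j^*(z_{j,k}^{-1}x_{j,k}^{-1}\,\cdot\,)\big)$ continuous on the sequence space.
\item Proposition~\ref{prop:15} then places the sampled sequence in the dual sequence space with norm $\lesssim\norm{\ell}$, uniformly in the perturbations $z_{j,k}$.
\item The max-sampling inequality of Proposition~\ref{prop:14} holds for every element of $\Sr'(G)$, so it yields that $g$ lies in the dual function space.
\end{enumerate}
This is precisely what you already did for $p<1$, where the sequence duality reduces to a trivial $b^{\infty,\infty}$ bound. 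Carried out uniformly, it handles every case, because the $p=1$ and $p=\infty$ difficulties are absorbed at the discrete level: Lemmas~\ref{lem:59b} and~\ref{lem:62}, and the bipolar argument in Step~IV of Proposition~\ref{prop:15}.

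\textbf{A smaller error in the $p<1$ continuity step.} Your chain through $B^{1,\infty}_{\alpha-(1/p-1)Q_*}$ does not work, since $B^{1,\infty}$ pairs with $B^{\infty,1}$, not with $B^{\infty,\infty}$. Because $p<1$, Proposition~\ref{prop:13}(1$'$) allows any $q_2$. Use $F^{p,q}_\alpha\subseteq F^{1,1}_{\alpha-(1/p-1)Q_*}=B^{1,1}_{\alpha-(1/p-1)Q_*}$ (Proposition~\ref{prop:12}) instead.
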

	
	We collect in the following remark a description of $B$ which will be proved in the proof of Theorems~\ref{teo:12b} and~\ref{teo:12c}.
	
	\begin{oss}\label{oss:7}
		Take $\eps\in (0,1)$ and $(\psi_j),(\psi'_j)\in \Psi_\eps$  such that $(\psi_j*\psi'^*_j)\in \widetilde \Psi_\eps$. Then, with the notation of Theorems~\ref{teo:12b} and~\ref{teo:12c},
		\[
		B(f,g)=\sum_{j\in \N} \langle f*\psi_j\vert g*\psi'_j\rangle.
		\]
	\end{oss}

	Before we pass to the proof of these results, it will be convenient to find the duals of the spaces $\mathring b$ and $\mathring f$.
	
	\begin{prop}\label{prop:15}
		Take $p,q\in (0,+\infty]$, $\alpha\in \R$, $\eps\in (0,1)$, a reduced $(\eps,\delta,R)$-lattice $(x_{j,k})_{(j,k)\in J}$ for some $\delta>0$ and some $R\Meg 2$, and a bounded family $(c_{j,k})\in \R^{J}$. Then, the sesquilinear form
		\[
		B \colon   (a,b)\mapsto \sum_{(j,k)\in J}  \ee^{c_{j,k}} \eps^{jQ_*} a_{j,k}\overline{b_{j,k}} 
		\]
		induces well defined and continuous mappings 
		\[
		\begin{aligned}
			 b^{p,q}_\alpha(\eps,J)&\times b^{p',q'}_{-\alpha+(1/p-1)_+Q_*}(\eps,J)&&\to \C\\
			f^{p,q}_\alpha((x_{j,k}))&\times f^{p',q'}_{-\alpha }((x_{j,k}))&&\to \C \qquad \text{if    $p,q \in [1,\infty]$}\\
			f^{p,q}_\alpha((x_{j,k}))&\times b^{\infty,\infty}_{-\alpha+(1/p-1)_+Q_* }(\eps ,J)&&\to \C \qquad \text{if either $p\in (0,1)$ or $(p,q)\in \Set{1}\times (0,1]$.}\\
		\end{aligned}
		\]
		In addition, $B$ induces antilinear isomorphisms
		\[
		\begin{aligned}
			b^{p',q'}_{-\alpha+(1/p-1)_+Q_*}(\eps,J)&\to \mathring b^{p,q}_\alpha(\eps,J)' \qquad  \\
			f^{p',q'}_{-\alpha }((x_{j,k}))&\to \mathring f^{p,q}_\alpha((x_{j,k}))' \qquad &&\text{if  $p,q\in [1,\infty]$}\\
			b^{\infty,\infty}_{-\alpha+(1/p-1)_+Q_* }(\eps,J )&\to \mathring f^{p,q}_\alpha((x_{j,k}))' \qquad &&\text{if either $p\in (0,1)$ or $(p,q)\in \Set{1}\times (0,1]$.}\\
		\end{aligned}
		\] 
	\end{prop}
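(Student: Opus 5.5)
First, normalize away the weight. Since $(c_{j,k})$ is bounded, $\ee^{c_{j,k}}\in[\ee^{-M},\ee^{M}]$. Multiplying the second argument by $\ee^{c_{j,k}}$ is therefore an isomorphism of every sequence space involved. So I may assume $c_{j,k}=0$. By the retraction remark after the definition of $b^{p,q}_\alpha(\eps,K)$, I may also work with $J=\N\times\N$ in the Besov case.

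\emph{Besov case.} Set $\alpha'\coloneqq-\alpha+(1/p-1)_+Q_*$. Write $a_{j,k}=\eps^{-j(Q_*/p-\alpha)}u_{j,k}$ and $b_{j,k}=\eps^{-j(Q_*/p'-\alpha')}v_{j,k}$, with $u\in\ell^{p,q}$ and $v\in\ell^{p',q'}$. Then
\[
\eps^{jQ_*}a_{j,k}\overline{b_{j,k}}=\eps^{jQ_*(1-1/p-1/p'+(1/p-1)_+)}u_{j,k}\overline{v_{j,k}}.
\]
The exponent $1-1/p-1/p'+(1/p-1)_+$ vanishes when $p\ge1$; when $p<1$ (so $p'=\infty$ by the paper's convention) it also vanishes. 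So $B(a,b)=\sum u\bar v$. For $p,q\ge1$, continuity and the identification $\mathring\ell^{p,q}{}'=\ell^{p',q'}$ are classical, with $\mathring{}$ meaning closure of finitely supported sequences. For $p<1$ or $q<1$ I use two facts. First, $\ell^{p}\subseteq\ell^1$ contractively. Second, every continuous functional on $\mathring\ell^{p,q}$ is determined by its values on the unit vectors, which gives a bounded family. This yields $(\mathring\ell^{p,q})'=\ell^{\max(p,1)',\max(q,1)'}$. Injectivity and surjectivity then follow by testing on the unit vectors $e_{j,k}$, which span a dense subspace by definition of $\mathring b$.

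\emph{Triebel--Lizorkin case, $p,q\in[1,\infty]$.} Here the balls $B(x_{j,k},\delta\eps^j)$ are disjoint for fixed $j$. Put $F_a(j,x)\coloneqq\eps^{-j\alpha}\sum_k a_{j,k}\chi_{B(x_{j,k},\delta\eps^j)}(x)$ and define $F_b$ similarly with $\eps^{j\alpha}$. Since $\beta(B(x_{j,k},\delta\eps^j))\asymp(\delta\eps^j)^{Q_*}$, one gets $\abs{B(a,b)}\lesssim\delta^{-Q_*}\int_G\sum_j\abs{F_aF_b}\,\dd\beta$. Hölder in $\ell^q_j$ and then in $L^p$ gives continuity when $p<\infty$. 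When $p=\infty$ I treat the dual pairing $\Cc^{q}$ versus $L^{q',1}$ as in~\cite{FrazierJawerth}, via a stopping-time decomposition: I use Lemma~\ref{lem:59b} to select a large set $E$ on which the $L^{q',1}$-type norm controls the sum.

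For the isomorphism in this case, I identify $\mathring f^{p,q}_\alpha$ through the map $a\mapsto F_a$ with a closed subspace of $L^{p,q}(G,\N)$ (for $p<\infty$), resp. of the closure of compactly supported elements of $\Cc^q$. A functional extends by Hahn--Banach to $L^{p',q'}$ (resp. to the dual of $\Cc^q$, which is the tent-type space $L^{q',1}$-like). I then average it over the balls, i.e.\ apply the conditional expectation onto functions constant on each $B(x_{j,k},\delta\eps^j)$. Bounding this averaging operator on $L^{p',q'}$ reduces, by Remark~\ref{oss:6}, to the maximal inequality of Lemma~\ref{lem:39} together with~\cite{Singular}. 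This is the main obstacle I expect: showing that the averaged representative lies in $f^{p',q'}_{-\alpha}$, especially for $p=\infty$ or $p=1$, where the Fefferman--Stein inequality fails. For $p=1$ I would try an $H^1$--$BMO$-type stopping argument in the style of~\cite{FrazierJawerth}, using the duality between $\Cc^{q'}$ and $f^{1,q}$.

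\emph{Case $p<1$, or $p=1$ and $q\le1$.} By Proposition~\ref{prop:12b} we have $f^{p,q}_\alpha\subseteq b^{p,\max(p,q)}_\alpha\subseteq b^{p,\infty}_\alpha$. Moreover $f^{p,q}_\alpha\supseteq b^{p,\min(p,q)}_\alpha$, and the unit vectors have comparable norms in all of these spaces. Continuity against $b^{\infty,\infty}_{-\alpha+(1/p-1)_+Q_*}$ then follows from the Besov computation: after normalizing, one needs $\sum\abs{u_{j,k}}\lesssim\norm{\lambda}_f$. This holds because $\norm{\lambda}_{f^{p,q}}\ge\norm{\lambda}_{b^{p,\max(p,q)}}$ only controls an $\ell^{p,\max(p,q)}$ norm, so instead I use that $p,q\le1$ forces $\abs{\lambda_{j,k}}$ weighted to be controlled by the $f$-norm of the single unit vector term. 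This is a $\sup$ bound, and it is the content of $\ell^p\subseteq\ell^1$ applied at each level. Conversely, a continuous functional $\Lambda$ gives $b_{j,k}\coloneqq\overline{\Lambda(e_{j,k})}$ weighted, which is bounded because $\norm{e_{j,k}}_{f^{p,q}_\alpha}\asymp\eps^{-j\alpha}(\delta\eps^j)^{Q_*/p}$. Density of finite sequences finishes the proof.
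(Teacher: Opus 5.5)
Your Besov argument is fine. So is your Triebel--Lizorkin argument for $1<p<\infty$, up to two points. For $q=\infty$ you must embed into $L^p(G;c_0)$ rather than $L^p(G;\ell^\infty)$. Boundedness of the ball-averaging operators $E_j$ on $L^{p'}(G;\ell^1)$ is not a Fefferman--Stein inequality; it follows by self-adjointness from $\sup_j E_j\abs{h}\lesssim \Mc_1 h$. Your route differs from the paper's: the paper does not use Hahn--Banach, but tests $\lambda$ on averaged sequences $a^{(f)}$ with $f$ in the unit ball of $L^{q,p}(\N,G)$, so the maximal function acts on the $f^{p,q}$ side.

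There are two genuine gaps. The first is the case $p<1<q$ of the pairing $f^{p,q}_\alpha\times b^{\infty,\infty}_{-\alpha+(1/p-1)Q_*}$. After normalizing you need $\sum_{j,k}\eps^{jQ_*/p}\abs{a_{j,k}}\lesssim\norm{a}_{f^{p,q}_0}$. The inclusion $f^{p,q}_0\subseteq b^{p,\max(p,q)}_0=b^{p,q}_0$ together with $\ell^p\subseteq\ell^1$ at each level only controls the sum over $k$, not over $j$. Indeed $b^{p,q}_0\not\subseteq b^{1,1}_{-(1/p-1)Q_*}$ when $q>1$. A termwise sup bound cannot produce this sum, and your argument explicitly assumes $p,q\meg 1$. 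What is needed is the Jawerth-type embedding $f^{p,q}_0\subseteq f^{1,1}_{-(1/p-1)Q_*}=b^{1,1}_{-(1/p-1)Q_*}$. This is Proposition~\ref{prop:13b} in the case $p_1<p_2$, which allows $q_1>q_2$ and is proved by the distribution-function argument there. The paper uses it to sandwich $f^{p,q}_0$ between $b^{p,p}_0$ and $b^{1,1}_{-(1/p-1)Q_*}$; both have dual $b^{\infty,\infty}_{(1/p-1)Q_*}$ by the Besov case.

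The second gap is that the endpoint isomorphisms ($p=1<q$ and $p=\infty$) are not proved. You label them the main obstacle. For $p=\infty$ your route presupposes that the dual of the vanishing part of $\Cc^q$ is a tent space on which averaging is bounded, and no such result is available here. The paper proves $p=1$ by defining $b$ from the values $\lambda(e_{j,k})$ and testing $\lambda$ on $b\abs{b}^{q'-2}$ truncated to the tents $J_{n,N,x}$, using Lemma~\ref{lem:62}. It then deduces $p=\infty$ from $p=1$ by the bipolar theorem in the pair $(\C^{(J)},\C^J)$, since the unit balls of $f^{1,q'}_0$ and $f^{\infty,q}_0$ are closed, convex and solid in $\C^J$.

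In fact this direction needs no maximal inequality at all. Test $\lambda$ on the ball averages of $h$, where $h$ lies in the unit ball of $L^\infty(G;\ell^q)$ (for $p=\infty$) or of $L^q$ of a tent $[N,\infty)\times B(x,\eps^N)$ (for $p=1$). Every level-$j\Meg N$ ball meeting $B(x,\eps^N)$ lies in $B(x,(1+2\delta)\eps^N)$. Jensen's inequality (plus H\"older on the tent base when $p=1$) then gives $f^{\infty,q}$-norm $\lesssim 1$, respectively $f^{1,q}$-norm $\lesssim\eps^{NQ_*/q'}$. This yields the bounds on $\norm{F_b}_{L^1(G;\ell^{q'})}$ and on $\norm{F_b}_{\Cc^{q'}(\eps)}$ (with $\alpha=0$).

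The genuinely delicate endpoint estimate is the continuity of $B$ on $f^{1,q}\times f^{\infty,q'}$, which you rightly route through Lemmas~\ref{lem:59b} and~\ref{lem:60b}. It does not follow from H\"older's inequality when $p=1$, as you claim for all $p<\infty$, because $f^{\infty,q'}$ is measured in $\Cc^{q'}$ and not in $L^\infty(G;\ell^{q'})$.
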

	
	We observe explicitly that, in the classical case,~\cite[Remarks 5.9 and 5.11]{FrazierJawerth} (essentially) state that $B$ induces an antilinear isomorphism of $f^{p',q'}_{-\alpha }((x_{j,k}))$ onto $  f^{p,q}_\alpha((x_{j,k}))'$ for $p\in[1,\infty)$ and $q\in (0,\infty)$. However, the proof is incorrect, since it applies the Hahn--Banach theorem to extend a continuous linear functional on a (closed) vector subspace of $L^p(\beta;\ell^q(\N))$ to the whole $L^p(\beta;\ell^q(\N))$. However, when $q<1$ the space $L^p(\beta;\ell^q(\N))$ is \emph{not} locally convex, so that the Hahn--Banach theorem cannot be applied. When $p=1$, one may still obtain the desired  duality using Proposition~\ref{prop:12b}, but when $p\in (1,\infty)$ we have not found any alternative proof (nor do we know whether the assertion be correct or not).
	
	Observe, in addition, that we added a coefficient $\ee^{c_{j,k}}$ to simplify the application of this result in the proof of Theorems~\ref{teo:12b} and~\ref{teo:12c}, even though the `natural' choice would be $c_{j,k}=0$ for the spaces $b$, and $c_{j,k}=\log( \eps^{-jQ_*} \beta(B(e,\delta \eps^j)) )$ for the spaces $f$ (since this gives $B(a,b)= \sum_{j,k}\int_{B(x_{j,k},\delta \eps^j)} a_{j,k}\overline {b_{j,k}}\,\dd \beta$).
	
	In order to deal with the case $p=1$, we need the following results.

	\begin{lem}\label{lem:60b}
		Take $p\in (0,\infty)$, $q\in (0,\infty]$, $\eta,\eps\in (0,1)$, $\delta_0>0$ and $R\Meg 2$. Then, there is a constant $C>0$ such that the following hold. Take a reduced $(\eps,\delta,R)$-lattice $(x_{j,k})_{(j,k)\in J}$ on $G$, with $\delta\in (0,\delta_0]$, and define $\iota\colon \C^{J}\to \C^{\N\times G}$ so that $\iota(\lambda) (j,x)\coloneqq \sum_{k} \lambda_{j,k}\chi_{B(x_{j,k}, \delta \eps^j)}(x)$ for every $\lambda\in \C^{J}$ and for every $(j,x)\in \N\times G$. In addition, for every $j\in\N$ take a $\beta$-measurable subset  $E_j$ of $  G$ such that 
		\[
			\beta(E_j\cap B(x_{j,k},\delta\eps^j))\Meg \eta \beta(B(e,\delta\eps^j))  
		\]
		for every  $(j,k)\in J$. Then,  
		\[
		\norm{ \iota(\lambda) }_{L^{q,p}(\N,G)}\meg C \norm{\chi_{E_j}(x) \iota(\lambda)(j,x) }_{ L^{q,p}_{(j,x)}(\N,G)}
		\]
		and
		\[
		\norm{ \iota(\lambda) }_{\Cc^{q}(\eps)}\meg C \norm{\chi_{E_j}(x) \iota(\lambda)(j,x) }_{ L^{q,\infty }_{j,x}(\N, G)}
		\]
		for every $\lambda\in \C^{J}$.
	\end{lem}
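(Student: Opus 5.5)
The argument is pointwise at each scale $j$ and relies on the maximal-function techniques already used for Remark~\ref{oss:6}. Fix $j$ and set $g_j\coloneqq \iota(\lambda)(j,\,\cdot\,)$, which is constant ($=\lambda_{j,k}$) on each ball $B_{j,k}\coloneqq B(x_{j,k},\delta\eps^j)$. These balls are pairwise disjoint, and $g_j$ vanishes off their union.

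Fix $r\in(0,\infty)$ and $x\in B_{j,k}$. Then
\[
\abs{g_j(x)}^r=\abs{\lambda_{j,k}}^r\meg \frac{\beta(B_{j,k})}{\beta(E_j\cap B_{j,k})}\dashint_{B_{j,k}}\chi_{E_j}\abs{g_j}^r\,\dd\beta\meg \eta^{-1}\dashint_{B_{j,k}}\chi_{E_j}\abs{g_j}^r\,\dd\beta .
\]
Since $B_{j,k}\subseteq B(x,2\delta\eps^j)$ and $\beta(B(e,2\delta\eps^j))\meg C\beta(B(e,\delta\eps^j))$ uniformly for $\delta\meg\delta_0$ and $j\in\N$, this gives
\[
\abs{g_j}\meg C^{1/r}\eta^{-1/r}\,\Mc_r(\chi_{E_j}g_j)
\]
pointwise on $G$. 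The same bound also yields $\abs{g_j}\meg C\,\Nc_{r,a,\eps^j}(\chi_{E_j}g_j)$ for any $a>0$. Here the factor $\delta^{-Q_*/r}$ produced by the small ball is harmless: we work directly with averages over $B_{j,k}$, or equivalently replace the radius $\eps^j$ by $\delta\eps^j$ in $\Nc$, so the constants are uniform in $\delta$.

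For the first inequality, choose $r<\min(p,q)$ and apply the Fefferman--Stein vector-valued maximal inequality for $\Mc_r$ on $L^{q,p}$, that is, the boundedness of $\Mc_1$ on $L^{p/r}(\ell^{q/r})$ with $p/r,q/r>1$ (cf.~\cite{Singular}). Alternatively, apply Corollary~\ref{cor:18} with $\kappa(j)=\delta\eps^j$ and $p_0=r$, with constants independent of $\delta\meg\delta_0$ because $\kappa$ is bounded. Either route bounds $\norm{\iota(\lambda)}_{L^{q,p}}$ by $C\norm{\chi_{E_j}\iota(\lambda)}_{L^{q,p}}$.

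For the $\Cc^q(\eps)$ inequality, again take $r<q$ and use $\abs{g_j}\meg C\,\Nc_{r,a,\delta\eps^j}(\chi_{E_j}g_j)$ with $a>D/r$. Lemma~\ref{lem:2c}, via the Remark identifying $\Cc^q(\eps)$ with $\Cc^q(\mi,1)$, then gives
\[
\norm{\iota(\lambda)}_{\Cc^q(\eps)}\meg C\norm{\chi_{E_j}\iota(\lambda)}_{\Cc^q(\eps)},
\]
and the trivial comparison $\Cc^q\lesssim L^{q,\infty}$ closes the argument. If one prefers a direct proof instead of Lemma~\ref{lem:2c}, estimate the $L^q$ norm over $[N,\infty)\times B(x,\eps^N)$ ball by ball. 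The balls $B_{j,k}$ meeting $B(x,\eps^N)$ with $j\Meg N$ lie in $B(x,2\eps^N)$ once $\delta_0$ is accounted for, and on each of them $\int_{B_{j,k}}\abs{g_j}^q\meg\eta^{-1}\int_{B_{j,k}}\chi_{E_j}\abs{g_j}^q$. Summing gives the bound by the $\Cc^q$ norm of $\chi_{E_j}g_j$ over a ball of doubled radius, which is controlled by a finite covering. This direct route even avoids maximal functions.

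The main obstacle is keeping all constants uniform in $\delta\in(0,\delta_0]$. This is why the averages should be taken over $B_{j,k}$ itself and scale-invariant estimates used, rather than the fixed radius $\eps^j$. For $q=\infty$ in the first inequality, use instead that the $\ell^\infty_j$ norm is a supremum. The pointwise bound then gives $\sup_j\abs{g_j}\meg C\,\Mc_r(\sup_j\chi_{E_j}\abs{g_j})$, and the scalar maximal theorem with $r<p$ concludes.
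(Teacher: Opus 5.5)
Your proof is correct. For the first inequality it is the paper's argument. The paper derives the same pointwise bound $\abs{\iota(\lambda)(j,x)}\meg C\eta^{-1/p_0}\Mc_{p_0}(\chi_{E_j}\iota(\lambda)(j,\,\cdot\,))(x)$ from the density condition and doubling, using $p_0=\min(p/2,q/2,1)$ where you use any $r<\min(p,q)$, and concludes by the vector-valued maximal theorem.

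The routes differ only for the $\Cc^q(\eps)$ estimate. The paper localizes the pointwise bound to $\chi_{B(x,\eps^N)}\abs{\iota(\lambda)(j,\,\cdot\,)}\meg C\Mc_{p_0}(\chi_{E_j\cap B(x,(1+2\delta)\eps^N)}\iota(\lambda)(j,\,\cdot\,))$ for $j\Meg N$. It then uses the boundedness of $\Mc_{p_0}$ on $L^q(\ell^q)$. Finally it bounds the $L^q$ norm over $[N,+\infty)\times B(x,(1+2\delta_0)\eps^N)$ by $\beta(B(e,(1+2\delta_0)\eps^N))^{1/q}$ times the $L^{q,\infty}$ norm.

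Your first route, through Lemma~\ref{lem:2c}, gives the stronger intermediate bound $\norm{\iota(\lambda)}_{\Cc^q(\eps)}\meg C\norm{\chi_{E_j}\iota(\lambda)}_{\Cc^q(\eps)}$. However, that lemma's proof is only given by reference. You also need its constant to depend on $\kappa(\eps^j)=\delta\eps^j$ only through the bound $\kappa(t)\meg ct$, and its statement does not make this explicit.

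Your direct route sidesteps this and is more elementary than the paper's. Because $\iota(\lambda)(j,\,\cdot\,)$ is constant on each $B(x_{j,k},\delta\eps^j)$, the comparison of $\int\abs{\iota(\lambda)(j,\,\cdot\,)}^q\,\dd\beta$ with $\eta^{-1}\int\chi_{E_j}\abs{\iota(\lambda)(j,\,\cdot\,)}^q\,\dd\beta$ on that ball is exact. No maximal function is needed, and uniformity in $\delta\in(0,\delta_0]$ is evident. You can also drop the covering step and compare directly with $\beta(B(e,(1+2\delta_0)\eps^N))\norm{\chi_{E_j}\iota(\lambda)}_{L^{q,\infty}}^q$; the case $q=\infty$ is trivial.

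The maximal function is really needed only in the first inequality when $p\neq q$. There the $\ell^q_j$-norm must be controlled pointwise in $x$ before integrating, so a ball-by-ball comparison is not enough.
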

	 
		\begin{proof} 
			Set $p_0\coloneqq \min(p/2,q/2,1)$, and observe that there is a constant $C_1>0$ such that
			\[
			\begin{split}
				\abs{\iota(\lambda)(j,x)}&=\left( \dashint_{ B(x_{j,k},\delta\eps^j)\cap E_j}  \abs{\iota(\lambda)(j,y)}^{p_0}\,\dd \beta(y)\right) ^{1/p_0}\\
					&\meg  \eta^{-1/p_0}\left(\dashint_{ B(x_{j,k},\delta\eps^j) }  \chi_{E_j}(y)\abs{\iota(\lambda)(j,y)}^{p_0}\,\dd \beta(y)\right) ^{1/p_0} \\
					& \meg C_1 \eta^{-1/p_0} \left(\dashint_{ B( x,2\delta\eps^j) }  \chi_{E_j}(y)\abs{\iota(\lambda)(j,y)}^{p_0}\,\dd \beta(y)\right) ^{1/p_0}
			\end{split}
			\]
			for  every $j\in\N$ and for every $x\in B(x_{j,k},\delta \eps^j)$ for some $k\in\N$ with $(j,k)\in J$.  Consequently,
			\[
			\abs{\iota(\lambda)(j,x)}\meg C_1 \eta^{-1/p_0} \Mc_{p_0}( \chi_{E_j}  \iota(\lambda)(j,\,\cdot\,))(x)
			\]
			for every $(j,x)\in \N\times G$, so that the first assertion follows from the vector-valued boundedness of $\Mc_{p_0}$ (cf., e.g.,~\cite{Singular}).
			Observe that the same arguments show that
			\[
			\chi_{B(x,\eps^N)}\abs{\iota(\lambda)(j,\,\cdot\,)}\meg C_1 \eta^{-1/p_0} \Mc_{p_0}( \chi_{E_j\cap B(x,(1+2\delta)\eps^N)}  \iota(\lambda)(j,\,\cdot\,))
			\]
			for every $x\in G$, for every $N\in\N$, and for every $j\Meg N$.
			Consequently, there is a constant $C_2>0$ such that
			\[
			\begin{split}
				\norm{\chi_{[N,+\infty)\times B(x,\eps^N)}(j,y)   \iota(\lambda)(j,y) }_{L^q_{(j,y)} (\N\times G)}& \meg C_2\norm{\chi_{[N,+\infty)\times B(x,  (1+2\delta)\eps^N )}(j,y) \chi_{E_j}(y)\iota(\lambda)(j,y) }_{L^q_{(j,y)} (\N\times G)}\\
				&  \meg C_2 \beta(e, (1+2\delta_0) \eps^N)^{1/q}  \norm{ \chi_{E_j}(y)\iota(\lambda)(j,y)}_{L^{q,\infty}_{(j,y)} (\mi, \beta)}
			\end{split}
			\]
			for  every $N\in \N$ and for every $x\in G$. The second assertion follows.
		\end{proof}

	\begin{lem}\label{lem:62}
		Take $q\in (0,\infty]$, $r\in (0,\infty)$, $\eps\in (0,1)$, $\delta_0>0$ and $R\Meg 2$. Then, there is a constant $C>0$ such that
		\[
		\frac{1}{C} \norm{\iota(\lambda)}_{\Cc^q(\eps)}\meg \sup_{(N,x)\in \N\times G} \eps^{-N Q_*/r}\norm{\chi_{[N,+\infty)\times B(x,\eps^N )}\iota(\lambda) }_{L^{q,r} (\N,G)}  \meg C\norm{\iota(\lambda)}_{\Cc^q (\eps)}
		\]
		for every $\lambda\in \C^{J}$ and for every reduced $(\eps,\delta,R)$-lattice $(x_{j,k})_{(j,k)\in J}$ on $G$, with $\delta\in (0,\delta_0]$, and where $\iota(\lambda)(j,x)\coloneqq \sum_{k} \lambda_{j,k} \chi_{B(x_{j,k}, \delta \eps^j)}$.
	\end{lem}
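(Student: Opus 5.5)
We shall compare the two quasi-norms by working separately with the two inequalities, exploiting that $\iota(\lambda)(j,\,\cdot\,)$ is constant on each ball $B(x_{j,k},\delta\eps^j)$ and vanishes outside their (disjoint) union. Fix $N\in\N$ and $x\in G$, and write $F_{N,x}\coloneqq \chi_{[N,+\infty)\times B(x,\eps^N)}\iota(\lambda)$. The middle quantity is $\eps^{-NQ_*/r}\norm{F_{N,x}}_{L^{q,r}(\N,G)}$, whereas $\norm{\iota(\lambda)}_{\Cc^q(\eps)}$ is the supremum of $\eps^{-NQ_*/q}\norm{F_{N,x}}_{L^{q,q}(\N,G)}$. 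When $r=q$ there is nothing to prove, so the point is to change the outer exponent on a ball of radius $\eps^N$, uniformly in $N,x$.

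\textbf{Step 1: the case $r<q$ (and the symmetric easy direction).} If $r\meg q$, Hölder's inequality on $B(x,\eps^N)$, whose measure is comparable to $\eps^{NQ_*}$ (since $\eps^N\meg 1$), gives $\eps^{-NQ_*/r}\norm{F_{N,x}}_{L^{q,r}}\meg C\eps^{-NQ_*/q}\norm{F_{N,x}}_{L^{q,q}}$ after applying the inclusion $\ell^q\subseteq$ nothing further; here one uses $L^{q,r}\supseteq$ via Minkowski when $q\Meg r$: $\norm{F}_{L^{q,r}}\meg \norm{F}_{L^{r,r}}$ is not what we want, so instead write $\norm{F}_{L^{q,r}}\meg \beta(B(x,\eps^N))^{1/r-1/q}\norm{F}_{L^{q,q}}$, which is exactly Hölder in $x$. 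Symmetrically, if $r\Meg q$ the same Hölder argument yields the left inequality. So in each case one of the two inequalities is trivial, and the other is the substantive one.

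\textbf{Step 2: the reverse inequality via a good-set / Lemma~\ref{lem:60b}.} For the nontrivial direction we intend to use Lemma~\ref{lem:59b} together with Lemma~\ref{lem:60b}, localized. Concretely, for $r<\infty$ and the direction ``$\Cc^q$ quasi-norm controlled by the $L^{q,r}$-local quantity'' (case $r>q$), apply the first inequality of Lemma~\ref{lem:60b} with $p$ replaced by $q$ and the lattice restricted to $j\Meg N$, $x_{j,k}\in B(x,(1+2\delta_0)\eps^N)$: one chooses, via Lemma~\ref{lem:59b} applied to $\chi_{[N,\infty)\times B(x,C\eps^N)}\iota(\lambda)$ with exponent $r$, a set $E$ of large density in every ball $B(y,\eps^j)$ on which the pointwise $\ell^q_j$-norm is bounded by $m^q_{\eta,1}$, whose $L^r$ norm is controlled by the $L^{q,r}$ quantity; then on $E$ one passes from $L^r$ to $L^q$ in $x$ by Hölder on the ball, and Lemma~\ref{lem:60b} recovers the full $\iota(\lambda)$ from its restriction to $E$. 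For the direction $r<q$, the reverse inequality is obtained similarly, with the $\Cc^q$ estimate of Lemma~\ref{lem:59b} ($\norm{m^q_{\eta,a}f}_{L^\infty}\meg C\norm{f}_{\Cc^q}$) giving an $L^\infty$ bound of the $\ell^q$-norm on the good set, hence an $L^{q,r}$ bound on the ball of radius $\eps^N$ with the correct factor $\eps^{NQ_*/r}$, and the first part of Lemma~\ref{lem:60b} (with $p=r$) removing the restriction to $E$.

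\textbf{Main obstacle.} The delicate point is the localization in Step~2: Lemmas~\ref{lem:59b} and~\ref{lem:60b} are global, so one must cut $\iota(\lambda)$ to $j\Meg N$ and to lattice points in an enlarged ball $B(x,(1+2\delta_0)\eps^N)$, check that the density condition on $E_j$ persists for the truncated lattice, and cover the enlarged ball by boundedly many balls of radius $\eps^N$ (uniformly in $N$, by doubling) so that the supremum over $(N,x)$ is preserved. I expect this bookkeeping, rather than any genuinely new estimate, to be where care is required; constants must be kept independent of $\delta\in(0,\delta_0]$, which is ensured since Lemma~\ref{lem:60b} is uniform in $\delta$.
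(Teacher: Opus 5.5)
Your Step~1 is correct: for $r\Meg q$, H\"older on $B(x,\eps^N)$ gives the left inequality, and for $r\meg q$ it gives the right one. In Step~2, however, the cases are swapped, and as a result one of the two substantive inequalities is never proved.

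\begin{itemize}
\item For $r>q$ you set out to prove ``$\Cc^q$ controlled by the local $L^{q,r}$ quantity''. You had just obtained this by H\"older.
\item For $r<q$ you prove ``local $L^{q,r}$ quantity controlled by $\Cc^q$''. This is again the H\"older-trivial direction in that case.
\end{itemize}

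The argument you give in the second case does work for every $r$. It uses the estimate $\norm{m^q_{\eta,1}\iota(\lambda)}_{L^\infty(G)}\meg C\norm{\iota(\lambda)}_{\Cc^q(\eps)}$ from Lemma~\ref{lem:59b}, hence an $L^{q,\infty}$ bound on the good set $E$, followed by the localized maximal-function argument of Lemma~\ref{lem:60b} with exponent $r$. This is exactly what the paper does for the right inequality when $r\Meg q$, so that case is fine once reassigned.

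What is missing is the left inequality for $r<q$, namely $\norm{\iota(\lambda)}_{\Cc^q(\eps)}\meg C A$, where $A$ denotes the middle supremum. The first half of your Step~2 cannot give it, for two reasons. Passing ``from $L^r$ to $L^q$ in $x$ by H\"older on the ball'' goes the wrong way when $r<q$. And Lemma~\ref{lem:59b} applied with exponent $r$ to a single truncated function only controls $m^q$ in $L^r$, whereas what is needed is an $L^\infty$ bound.

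The missing idea is a self-improvement that exploits the fact that $A$ is a supremum over \emph{all} balls and scales. By Chebyshev, for every $t=\eps^N$ and every $x\in G$,
\[
\beta\big(\{y\in B(x,t)\colon \norm{\chi_{(0,t]}(\eps^j)\iota(\lambda)(j,y)}_{\ell^q_j(\N)}>\eta\}\big)\meg \eta^{-r}\int_{B(x,t)}\norm{\chi_{(0,t]}(\eps^j)\iota(\lambda)(j,y)}_{\ell^q_j(\N)}^r\,\dd\beta(y)\meg \eta^{-r}t^{Q_*}A^r .
\]
Since $\beta(B(e,t))\asymp t^{Q_*}$ for $t\meg 1$, the right-hand side is less than $\frac12\beta(B(e,t))$ as soon as $\eta$ is a suitable fixed multiple of $A$. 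Hence $m^q_{1/2,1}\iota(\lambda)\meg CA$ everywhere on $G$.

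Two further steps conclude:
\begin{itemize}
\item the second estimate of Lemma~\ref{lem:59b} gives $\norm{\chi_E\iota(\lambda)}_{L^{q,\infty}(\N,G)}\meg C A$;
\item the second estimate of Lemma~\ref{lem:60b} gives $\norm{\iota(\lambda)}_{\Cc^q(\eps)}\meg C\norm{\chi_E\iota(\lambda)}_{L^{q,\infty}(\N,G)}$.
\end{itemize}

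This $L^\infty$ bound on $m^q$, not the localization bookkeeping you flagged, is the real difficulty of the lemma.
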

	
	This result extends~\cite[Corollary 5.7]{FrazierJawerth} to our setting.  
	
	\begin{proof}
		Take $C_1>0$ such that $\rho^{Q_*}/C_1\meg \beta(B(e,\rho))\meg C_1 \rho^{Q_*}$ for every $\rho \in (0,\delta_0 R]$.
		Assume first that $r\Meg q$. Then, by H\"older's inequality,
		\[
		\eps^{-N Q_*/q}\norm{\chi_{[N,+\infty)\times B(x,\eps^N)} \iota(\lambda) }_{L^{q,q} (\N,G)}  \meg C_1^{1/q-1/r} \eps^{-N Q_*/r}\norm{\chi_{[N,+\infty)\times B(x,\eps^N)}\iota(\lambda) }_{L^{q,r} (\N,G)}  
		\]
		for every $(N,x)\in \N\times G$, so that the left inequality follows in this case. For what concerns the right inequality, take $E$ as in Lemma~\ref{lem:59b}, so that there is a constant $C_2>0$ (independent of $\lambda$) such that
		\[
		\norm{\iota(\lambda)}_{\Cc^q(\eps)}\Meg C_2 \norm{ \chi_E \iota(\lambda)  }_{L^{q,\infty}(\N,G)}.
		\]
		Observe that, arguing as in the proof of Lemma~\ref{lem:60b}, we may find a constant $C_3>0$ such that
		\[
		\begin{split}
			\norm{\chi_{[N,+\infty)\times B(x,\eps^N)}  \iota(\lambda) }_{L^{q,r} (\N, G)}& \meg C_3\norm{\chi_{[N,+\infty)\times B(x,(1+2\delta)\eps^N)} \chi_E \iota(\lambda) }_{L^{q,r} (\N,G)}\\
				&\meg C_3 \beta(e, (1+2\delta_0)\eps^N)^{1/r}\norm{ \chi_E \iota(\lambda)  }_{L^{q,\infty}(\N,G)}
			\end{split}
		\]
		for every   $(N,x)\in \N\times G$, so that also the right inequality follows in this case.  
		
		Now, assume that $q<r$. Then, the right inequality follows by H\"older's inequality as before. For what concerns the left inequality, set $\Nc(f)\coloneqq \sup_{(t,x)\in (0,1]\times G} t^{- Q_*/r}\norm{\chi_{[\log_\eps t,+\infty)\times B(x,t)}\iota(\lambda)  }_{L^{q,r} (\N,G)}$ and   take $\eta >(2C_1)^{1/r} \Nc(f) $. Then, for  $t\in (0,1]$,
		\[
		\begin{split}
				\beta(\{\,y\in B(x,t)\colon \norm{\chi_{(0,t]}(\eps^j)\iota(\lambda)(j,y)}_{\ell^q_j(\N)}>\eta \, \})&\meg \eta^{-r} \int_{B(x, t)}\norm{\chi_{(0,t]}(\eps^j)\iota(\lambda)(j,y)}_{\ell^q_j(\N)}^r\,\dd \beta\\
				&\meg \eta^{-r} t^{Q_*}  \Nc(\iota(\lambda))^r \\
				&< \frac{1}{2} \beta(B(e,t)),
			\end{split}
		\]
		so that $(m^q_{1/2,1,t} \iota(\lambda))(x)\meg (2C_1)^{1/r} \Nc(\iota(\lambda)) $ for every $x\in G$ and for every $t\in (0,1]$ (cf.~Definition~\ref{def:1}). Consequently, $\norm{m^q_{1/2,1}\iota(\lambda)}_{L^\infty(G)}\meg (2C_1)^{1/r}\Nc(\iota(\lambda)) $, whence the result by Lemmas~\ref{lem:59b} and~\ref{lem:60b}. 
	\end{proof}

	\begin{proof}[Proof of Proposition~\ref{prop:15}.]
		We may reduce to the case $\alpha=0$. In addition, since $b^{p_1,q_1}_0(\eps,J)$ is a retract of $b^{p_1,q_1}_0(\eps)$ for every $p_1,q_1\in (0,\infty]$, with the same retractions and sections (compatible with $B$), we may assume that $J=\N\times \N$ when proving the first assertion.
		
		\textsc{Step I} Take $C_1\Meg 1$ so that $\frac{1}{C_1} \meg \ee^{c_{j,k}}\meg C_1$ for every $j,k\in\N$.  Observe that
		\[
		\begin{split}
			\sum_{j,k\in \N}  \ee^{c_{j,k}}  \eps^{j Q_*} \abs{a_{j,k}\overline{b_{j,k}}}&\meg C_1 \sum_{j,k\in \N} \eps^{jQ_*/p}\abs{a_{j,k}} \eps^{j(Q_*/p'-(1/p-1)_+Q_*)}\abs{b_{j,k}}\\
				&\meg C_1\norm{\eps^{jQ_*/p } a_{j,k}}_{\ell^{\max(1,p),\max(1,q)}_{k,j}(\N,\N)} \norm{\eps^{j(Q_*/p'-(1/p-1)_+Q_*)} b_{j,k} }_{\ell^{p',q'}_{k,j}(\N,\N)}\\
				&\meg C_1\norm{\eps^{jQ_*/p} a_{j,k}}_{\ell^{p,q}_{k,j}(\N,\N)} \norm{\eps^{j(Q_*/p'-(1/p-1)_+Q_*)} b_{j,k} }_{\ell^{p',q'}_{k,j}(\N,\N)}\\
				&=C_1 \norm{a}_{b^{p,q}_0(\eps)}\norm{b}_{b^{p',q'}_{(1/p-1)_+Q_*}(\eps)}
		\end{split}
		\]
		for every $a,b\in \C^{\N\times \N}$,
		whence the definiteness and continuity of $B$. Using the standard duality between (the closure of $\C^{(\N\times \N)}$ in) $\ell^{p,q}(\N,\N)$ and $\ell^{p',q'}(\N,\N)$ as well as the fact that the mapping $a\mapsto (\ee^{c_{j,k}}a_{j,k})$ is an automorphism of $b^{p,q}_{0}(\eps)$, we then conclude that $B$ induces an antilinear isomorphism of $b^{p',q'}_{(1/p-1)_+Q_*}(\eps)$ onto $\mathring b^{p,q}_0(\eps)'$.
		
		\textsc{Step II} Assume that $p\in (1,\infty)$.  
		Observe that, if we take $C_2>1$ so that $\eps^{j Q_*}/C_2\meg \beta(B(e,\delta \eps^j))\meg C_2\eps^{jQ_*}$ for every $j\in\N$, then
		\[
		\begin{split}
		\sum_{(j,k)\in J}  \ee^{c_{j,k}}  \eps^{j Q_*} \abs{a_{j,k}\overline{b_{j,k}}}&\meg C_1 C_2 \sum_{(j,k)\in J} \int_{B(x_{j,k},\delta \eps^j)}  \abs{a_{j,k}}  \abs{b_{j,k}}\,\dd \beta\\
			&\meg C_1 C_2 \norm{a}_{f^{p,\max(1,q)}_0((x_{j,k}))}\norm{b}_{f^{p',q'}_{0}((x_{j,k}))}\\
			&\meg C_1 C_2\norm{a}_{f^{p,q}_0((x_{j,k}))}\norm{b}_{f^{p',q'}_{0}((x_{j,k}))},
		\end{split}
		\]
		whence the definiteness and continuity of $B$. Then, assume that $q\in [1,\infty]$ and take $\lambda\in \mathring f^{p,q}_0((x_{j,k}))'$. Define $b_{j,k}\coloneqq \ee^{-c_{j,k}} \eps^{-jQ_*}\overline{\lambda(e_{j,k})} $ for every $(j,k)\in J$, where $(e_{j,k})_{j',k'}=\delta_{j,j'}\delta_{k,k'}$ for every $(j',k')\in J$. Clearly, 
		\[
		\begin{split}
			\lambda(a)&= \sum_{(j,k)\in J} a_{j,k} \lambda(e_{j,k}) = \sum_{(j,k)\in J} \ee^{c_{j,k}}\eps^{jQ_*}a_{j,k} \overline{b_{j,k}} = B(a,b)
		\end{split}
		\]
		for every $a\in \C^{(J)}$.  
		Now, by~\cite[Theorem 2]{BenedekPanzone},
		\[
		\begin{split}
			\norm{b}_{f^{p',q'}_{0}((x_{j,k}))} 
				&=   \norm*{ \sum_{k }\chi_{B(x_{j,k},\delta \eps^j)}(x) b_{j,k}     }_{L^{q',p'}_{j,x}(\N,G)}\\
				&= \sup_{f\in U} \sum_{j\in \N} \int_G \abs{f(j,x)} \sum_{k }\chi_{B(x_{j,k},\delta \eps^j)}(x) \abs{b_{j,k}}\,\dd \beta(x) \\
				&\meg C_1 C_2   \sup_{f\in U} \sum_{(j,k)\in J}\ee^{c_{j,k}}\eps^{j Q_*} a^{(f)}_{j,k}  \overline{b_{j,k}} \\
				&= C_1 C_2  \sup_{f\in U} \sup_{n\in\N} \lambda(  a^{(f)} \chi_{\Set{0,\dots,n}^2} )\\
				&\meg C_1 C_2   \norm{\lambda}_{\mathring f^{p,q}_0((x_{j,k}))'} \sup_{f\in U} \norm{ a^{(f)} }_{f^{p,q}_0((x_{j,k}))}
		\end{split}
		\]
		where $U$ is the unit ball of   $L^{q,p}(\N,G)$ and $a^{(f)}_{j,k}=\sgn(b_{j,k}) \dashint_{B(x_{j,k},\delta \eps^j)} \abs{f(j,x)}\,\dd \beta(x)$ for every $(j,k)\in J$. Now, observe that there is a constant $C_3>0$ such that
		\[
		\abs{a^{(f)}_{j,k}}\meg C_3 \Nc_{1,2 D,\eps^j}(f(j,\,\cdot\,) )(x)
		\]
		for every $x\in B(x_{j,k},\delta \eps^j)$ and for every $(j,k)\in J$, so that by Corollary~\ref{cor:18} there is a constant $C_4>0$ such that
		\[
		\begin{split}
		\norm{a^{(f)}}_{f^{p,q}_0(G)}&= \norm*{ \sum_{k} \chi_{B(x_{j,k},\delta \eps^j)}(x)  a^{(f)}_{j,k}    }_{L^{q,p}_{j,x}(\N,G)}\\
			&\meg C_3 \norm*{   \Nc_{1,2D,\eps^j}(f(j,\,\cdot\,) )(x) }_{L^{q,p}_{j,x}(\N,G)}\\
			&\meg C_4\norm{f}_{L^{q,p}(\N,G)}\meg C_4
		\end{split}
		\]
		for every $f\in U$.
		The assertion therefore follows in this case.  
		
		 \textsc{Step III} Consider now the case $p\meg 1$. 
		 If $q\in (0,1]$, then Proposition~\ref{prop:12b}  shows that $b^{p,\min(p,q)}_0(\eps)\subseteq f^{p,q}_0((x_{j,k}))\subseteq b^{p,\max(p,q)}_0(\eps)$ continuously, so that the assertion follows from~\textsc{step I}. Analogously, if $p\in (0,1)$ and $q\in (1,\infty]$, then Proposition~\ref{prop:12b} and~\ref{prop:13b} show that $b^{p,\min(p,q)}_0(\eps)\subseteq f^{p,q}_0((x_{j,k}))\subseteq f^{1,1}_{-(1/p-1)Q_*}((x_{j,k}))= b^{1,1}_{-(1/p-1)Q_*}(\eps)$ continuously, so that the assertion follows again from~\textsc{step I}. We are left with the case $p=1$ and $q\in (1,\infty]$.  
		
		  Let us first prove that $B$ is well defined and continuous on $f^{1,q}_0((x_{j,k}))\times f^{\infty,q'}_{0}((x_{j,k}))$. Observe that it will suffice to show that the mapping $(a,b)\mapsto a \overline b$ maps $f^{1,q}_0((x_{j,k}))\times f^{\infty,q'}_{0}((x_{j,k}))$ continuously into $f^{1,1}_0((x_{j,k}))=b^{1,1}_0(\eps)$. To this aim, take $a\in f^{1,q}_0((x_{j,k}))$ and $b\in f^{\infty,q'}_{0}((x_{j,k}))$, and define $\iota(a')(j,x)\coloneqq\sum_{k} \chi_{B(x_{j,k},\delta \eps^j)}(x) a'_{j,k}$ for every $j\in\N$, for every $x\in G$, and for every $a'\in \C^{J}$. Then, by Lemmas~\ref{lem:59b} and~\ref{lem:60b} we may find a Borel subset $E$ of $\N\times G$ and a constant $C_5>0$ such that
		 \[
		 \norm{a \overline b}_{f^{1,1}_0((x_{j,k}))}=\norm{\iota(a) \iota(\overline b)}_{L^{1,1}(\N,G)}\meg C_5 \norm{\chi_E \iota(a) \iota(\overline b)}_{L^{1,1}(\N,G)}
		 \]
		 and such that
		 \[
		 \norm{\chi_E \iota(\overline b)}_{L^{q',\infty}(\N,G)}\meg C_5 \norm{\iota(\overline b)}_{\Cc^{q'}(\eps)}= C_5\norm{b}_{f^{q',\infty}_{0}((x_{j,k}))}.
		 \]
		 Since clearly $   \norm{\iota(a)}_{L^{q,1}(\N,G)}= \norm{a}_{f^{q,1}_{0}((x_{j,k}))}$, the definiteness and continuity of $B$ follows.
		 Now, take $\lambda\in \mathring f^{1,q}_0((x_{j,k}))'$ and observe that, if we define $b_{j,k}\coloneqq \ee^{-c_{j,k}}\eps^{-jQ_*} \overline{\lambda(e_{j,k})}$, then $\lambda(a)= B(a,b)$ for every $a\in \C^{(J)}$, as in~\textsc{step II}. 
		 We then have to prove that $b\in f^{\infty,q'}_{0}((x_{j,k}))$. Observe that we may reduce to the case in which $\ee^{c_{j,k}}=\eps^{-jQ_*}\beta(B(e,\delta \eps^j))$ for every $(j,k)\in J$, so that
		 \[
		 \lambda(a)=B(a,b)=\sum_{j\in \N} \int_G \iota(a)(j,x)\iota(\overline b)(j,x)\,\dd \beta(x)
		 \]
		 for every $a\in \C^{(J)}$.  
		 Define $a \in \C^{J}$ so that $a_{j,k}= b_{j,k}\abs{b_{j,k}}^{q'-2}$ for every $(j,k)\in J$.
		 Given $x\in G$ and $n,N\in\N$, let $J_{n,N,x}$ be the set of $(j,k)\in J_n\coloneqq \Set{0,\dots, n}^2$ such that $j\Meg N$ and $B(x_{j,k},\delta \eps^j)\cap B(x,\eps^N)\neq \emptyset$.
		 If $q<\infty$, then Lemma~\ref{lem:62} (essentially, cf.~\cite[Lemma 3.2]{Calzi3}) shows that there is a constant $C_6>0$ such that
		 \[
		 \begin{split}
		 \norm{\chi_{J_{n,N,x}} a}_{f^{1,q}_0((x_{j,k}))}&= \norm{ \iota(\chi_{J_{n,N,x}}b)}_{L^{q',q'/q}(\N,G)}^{q'/q}\\
		 	&=\norm{\chi_{[N,+\infty)\times B(x,(1+2\delta)\eps^N)} \iota(\chi_{J_{n,N,x}}b)}_{L^{q',q'/q}(\N,G)}^{q'/q}\\
		 	&\meg  C_6\eps^{N Q_*  }\norm{\iota(\chi_{J_{n,N,x}}b)}_{\Cc^{q'}(\eps )}^{q'/q}\\
		 	&\meg C_6\eps^{N Q_* }\norm{\iota(\chi_{J_n}b)}_{\Cc^{q'}(\eps)}^{q'/q},
		 \end{split}
		 \]
		 and
		 \[
		 \begin{split}
		 \lambda(\chi_{J_{n,N,x}}a)= \norm{ \iota(\chi_{n,N,x} b)}_{L^{q'}(\N\times G)}^{ q'},
		 \end{split}
		 \]
		 so that
		 \[
		 \eps^{-N Q_* }\norm{ \iota(\chi_{J_{n,N,x}}b)}_{L^{q'}(\N\times G)}^{ q'}\meg C_6\norm{\lambda}_{\mathring f^{1,q}_0((x_{j,k}))'} \norm{\iota(\chi_{J_{n}}b)}_{\Cc^{q'}(\eps )}^{q'/q}.
		 \]
		 By the arbitrariness of $x$ and $N$, we then infer that
		 \[
		 \norm{\iota(\chi_{J_n} b)}_{\Cc^{q'}(\eps)}\meg C_6^{1/q'}\norm{\lambda}_{\mathring f^{1,q}_0((x_{j,k}))'}^{1/q'} \norm{\iota(\chi_{J_{n}}b)}_{\Cc^{q'}(\eps)}^{1/q},
		 \]
		 so that  
		 \[
		 \norm{\iota(\chi_{J_{n}}b)}_{\Cc^{q'}(\eps)}\meg C_6\norm{\lambda}_{\mathring f^{1,q}_0((x_{j,k}))'},
		 \]
		 whence our assertion by the arbitrariness of $n$.  
		 
		 If, otherwise, $q=\infty$, then there is a constant $C_7>0$ such that
		 \[
		 \begin{split}
		 	\norm{\chi_{J_{n,N,x}} a}_{f^{1,\infty}_0((x_{j,k}))}&\meg \beta(B(e,(1+2\delta)\eps^N))\meg C_7 \eps^{NQ_*}
		 \end{split}
		 \]
		 and
		 \[
		 \lambda(\chi_{J_{n,N,x}}a)= \norm{  \iota(\chi_{J_{n,N,x}}b)}_{L^{1}(\N\times G)} ,
		 \]
		 so that arguing as before we find
		 \[
		 \norm{b}_{f^{\infty,1}_0((x_{j,k}))}\meg C_7 \norm{\lambda}_{\mathring f^{1,\infty}_0((x_{j,k}))}
		 \] 
		 
		 \textsc{Step IV} Finally, we consider the case $p=\infty$ and $q\in [1,\infty]$. We could proceed as in~\textsc{step III}, but we prefer to get this case as a consequence of the case $p=1$. Let $U$ be the closed unit ball in $f^{1,q'}_0((x_{j,k}))$ and let $V$  be the closed unit ball in $f^{\infty,q}_0((x_{j,k}))$. 
		 We shall tacitly use the fact that $U$ and $V$ are closed and convex in $\C^{J}$ and that both sets are invariant under multiplication by elements of the closed unit ball of $\ell^\infty(J)$. 
		 Observe first that the closures of $\C^{(J)}\cap U$ and $\C^{(J)}\cap V$  in $\C^{J}$  are $U$ and $V$, respectively, while their closures in $\mathring f^{1,q'}_0((x_{j,k}))$ and $\mathring f^{\infty,q}_0((x_{j,k}))$ are $U\cap \mathring f^{1,q'}_0((x_{j,k}))$ and $V\cap \mathring f^{\infty,q}_0((x_{j,k}))$, respectively. 
		 As a consequence, by~\textsc{step III} there is a constant $C_8>1$ such that $C_8^{-1} V\subseteq (\C^{(J)}\cap U)^\circ\subseteq C_8 V$, where $(\C^{(J)}\cap U)^\circ$ denotes the polar of $\C^{(J)}\cap U$ with respect to the sesquilinear form $B'$ induced by $B$ on $\C^{(J)}\times \C^{J}$. By the bipolar theorem, this shows that $C_8^{-1} V^\circ\subseteq \C^{(J)}\cap U\subseteq C_8 V^\circ$, where $V^\circ$ denotes the polar of $V$ with respect to $B'$.  Taking closures in $\C^{J}$, we then see that $C_8^{-1} (V\cap \C^{(J)})^\circ\subseteq   U\subseteq C_8 (V\cap \C^{(J)})^\circ$, where $(V\cap \C^{(J)})^\circ$ denotes the polar of $V\cap \C^{(J)}$ with respect to $B'$. This leads to the conclusion.\footnote{It may appear that, in the vector-valued case, this argument require that $Z$ be reflexive. Nonetheless, it does not, since it suffices to observe that every Banach space embeds naturally as a closed subspace of its bidual.}
	\end{proof}
	
	We may now complete the proof of Lemma~\ref{lem:68}. We shall then be able to apply Theorems~\ref{teo:13bis} and~\ref{teo:13} in order to prove Theorems~\ref{teo:12b} and~\ref{teo:12c}. This is possible since the only part of Theorems~\ref{teo:13bis} and~\ref{teo:13} which we have not proved yet\footnote{Actually, the case $p=\infty$ of Theorem~\ref{teo:13}  has so far a `formal' proof, since it is based on the corresponding case of Lemma~\ref{lem:68}, which will be proved below.} concerns  the convergence of $\sum_{j,k} \lambda_{j,k} a_{j,k}$ in $\Sr'(G)$ when $\lambda \not \in \mathring b^{p,q}_\alpha(\eps)$ or $\lambda \not \in \mathring f^{p,q}_\alpha((x_{j,k}))$, which is not needed in the proof of Theorems~\ref{teo:12b} and~\ref{teo:12c}. We shall, in fact, prove this latter assertion with the aid of Theorems~\ref{teo:12b} and~\ref{teo:12c}.
	
	\begin{proof}[Second part of the proof of Lemma~\ref{lem:68}.]
		\textsc{Step III} We now deal with the case $p=\infty$, $q>1$, and $\alpha=0$. Because of Proposition~\ref{prop:15}, we may proceed by duality, so that the assertion follows from the case $p=1$.
	\end{proof}
	
	\begin{proof}[Proof of Theorems~\ref{teo:12b} and~\ref{teo:12c}.]
		As mentioned before, we shall use the part of Theorem~\ref{teo:13} which we have already proved.
		
		Define $X$ and $Y$ so that either one of the following conditions hold:
		\begin{itemize}
			\item $X=  B^{p,q}_\alpha(G)$ and $Y=B^{p',q'}_{-\alpha+(1/p-1)_+Q_*}(G)$;
			
			\item $p,q\in [1,\infty]$, $X= F^{p,q}_\alpha(G)$, and $Y=F^{p',q'}_{-\alpha }(G)$;
			
			\item $p\in (0,1)$ or $(p,q)\in \Set{1}\times (0,1]$, $X= F^{p,q}_\alpha(G)$, and $Y=B^{\infty,\infty}_{-\alpha+(1/p-1)_+Q_* }(G)$.
		\end{itemize}
		Define $\mathring X$ as the closure of $\Sr(G)$ in $X$, and   $S_{X}, \mathring S_{X}$, and $S_Y$ as the corresponding spaces of sequences.
		Set $\eps'\coloneqq \eps^{1/\grado}$ and take a reduced $(\eps',1,2)$-lattice $(x_{j,k})$ on $G$, and $(\psi_j),(\psi'_{j'})$ as in the statement. Then, for every $f\in \Sr(G)$ and for every $g\in Y$, 
		\[
		\langle f\vert g \rangle= \sum_{j\in \N} \langle f*\psi_{j}*\psi'^*_j\vert g \rangle= \sum_{j\in\N} \langle f*\psi_{j}\vert g*\psi'_{j}\rangle
		\] 
		by Lemma~\ref{lem:41}, so that $B$ induces the canonical sesquilinear pairing on $\Sr(G)\times Y$. In addition, setting $a_{j,k}\coloneqq (\max_{B(x_{j,k},2\eps'^{j } )} \abs{f*\psi_{j}} )_{j,k}$ and $b_{j,k}\coloneqq (\max_{B(x_{j,k},2\eps'^{j } )} \abs{g*\psi'_{j}} )_{j,k}$, by means of  Proposition~\ref{prop:14}  we see that  there is a constant $C_1>0$ such that
		\[
		\norm{a}_{S_{X}}\meg C_1\norm{f}_{X}
		\]
		and
		\[
		\norm{b}_{S_Y}\meg C_1 \norm{g}_Y.
		\] 
		In addition, by Proposition~\ref{prop:15} there is a constant $C_2>0$ such that
		\[
		\sum_{j\in\N}\abs*{ \langle f*\psi_{j}\vert g*\psi'_{j}\rangle}\meg \sum_{j,k} \beta(B(x_{j,k},2\eps'^{j } )) a_{j,k} b_{j,k}\meg C_2 \norm{a}_{S_{X}}\norm{b}_{S_Y}.
		\]
		This proves the existence and continuity of $B$  on $\mathring X\times Y$.

		Now, take $ \lambda\in X'$, and take $g\in \Sr'(G)$ so that $\lambda(f)= \langle f \vert g \rangle$ for every $f\in \Sr(G)$. Observe that, by Corollary~\ref{cor:12}, there is a constant $c>0$ such that $(c \eps'^{j Q_*} \psi_{j}^*(z_{j,k}^{-1}x_{j,k}^{-1}\,\cdot\,))_{j,k}$ is a system of $(K,S,N,\infty,\Lc)$-atoms for every $z_{j,k}\in \overline B(e,2\eps^j)$ ($(j,k)\in J$), where $K,S,N\in\N$ are chosen so that $K>\alpha$, $S>Q_*/\min(1,p,q)-\alpha$, and $N>D/\min(1,p,q)$. Then, Theorems~\ref{teo:13bis} and~\ref{teo:13} show that the mapping
		\[
		\C^{(J)}\ni a' \mapsto \sum_{j,k} \eps'^{jQ_*}a'_{j,k} \overline{(\overline g*\psi_{j})(x_{j,k}z_{j,k})}= \lambda \left(\sum_{j,k} a'_{j,k}  \eps'^{jQ_*}\psi_{j}^*(z_{j,k}^{-1}x_{j,k}^{-1}\,\cdot\,) \right)\in \C
		\]
		is continuous for the topology of $S_{X}$. By Proposition~\ref{prop:15}, this implies that $(\overline g*\psi_j)(x_{j,k}z_{j,k})\in S_Y $. More precisely, there is a constant $C_3>0$ such that
		\[
		\norm{((\overline g*\psi_j)(x_{j,k}z_{j,k}))_{j,k}}_{S_Y}\meg C_3 \norm{\lambda}_{X'}.
		\]
		By Proposition~\ref{prop:14}  we then see that $\overline g\in Y$ and that there is a constant $C_4>0$ such that $\norm{\overline g}_Y\meg C_4 \norm{\lambda}_{X'}$.  The conclusion follows from the fact that conjugation induces an automorphism of $Y$, as one sees taking $\Lc$ with real coefficients, so that the $\psi_j$ are real-valued (if the corresponding $\phi_j$ are real valued). 
	\end{proof}

\begin{deff}
	Take $p,q\in [1,\infty]$ and $\alpha\in \R$. Then, we shall denote with $\langle\,\cdot\,\vert \,\cdot\,\rangle$ the   extension of the canonical sesquilinear pairing on $\Sr(G)\times \Sr'(G)$ to $\mathring B^{p,q}_\alpha(G)\times B^{p',q'}_{-\alpha}(G)$ and $\mathring F^{p,q}_\alpha(G)\times F^{p',q'}_{-\alpha}(G)$, and also the  extension of the canonical sesquilinear pairing on $\Sr'(G)\times \Sr(G)$ to $ B^{p,q}_\alpha(G)\times \mathring B^{p',q'}_{-\alpha}(G)$ and $  F^{p,q}_\alpha(G)\times  \mathring F ^{p',q'}_{-\alpha}(G)$.
\end{deff}

\begin{oss}
	Using Propositions~\ref{prop:12} and~\ref{prop:13}, one may see that $\langle\,\cdot\,\vert \,\cdot\,\rangle$ induces continuous sesquilinear forms on 
	\[
	\begin{aligned}
		\mathring B^{p,q}_\alpha(G)&\times B^{p',q'}_{-\alpha+(1/p-1)_+Q_*}(G) &  B^{p,q}_\alpha(G)&\times \mathring B^{p',q'}_{-\alpha+(1/p-1)_+Q_*}(G)\\
		\mathring F^{p,q}_\alpha(G)&\times F^{p',q'}_{-\alpha }(G) &  F^{p,q}_\alpha(G)&\times \mathring F^{p',q'}_{-\alpha }(G) & &(p\Meg 1)\\
		\mathring F^{p,q}_\alpha(G)&\times B^{\infty,\infty}_{-\alpha+(1/p-1) Q_*}(G) &  F^{p,q}_\alpha(G)&\times \mathring B^{\infty,\infty}_{-\alpha+(1/p-1) Q_*}(G) & &(p<1).
	\end{aligned}
	\]
	As a matter of fact, one may actually define a continuous sesquilinear form on $B^{p,q}_\alpha(G)\times B^{p',q'}_{-\alpha+(1/p-1)_+Q_*}(G)$ and $F^{p,q}_\alpha(G)\times F^{p',q'}_{-\alpha+(1/p-1)_+Q_* }(G)$ using the description of $\langle\,\cdot\,\vert\,\cdot\,\rangle$ provided in Remark~\ref{oss:8} as a definition. Nonetheless, proving that this definition does \emph{not} depend on the choice of $\eps$, $\Lc$, and $(\psi_j)$, $(\psi'_j)$ is considerably involved, as well as proving `elementary' statements such as $\langle f*\psi\vert g\rangle= \langle f\vert g*\psi^*\rangle$ for suitable $\psi\in \Sr(G)$. Since the only cases that cannot be dealt with using the above procedure essentially reduce to $ B^{p,\infty}_\alpha(G) \times B^{p',1}_{-\alpha+(1/p-1) Q_*}(G)$, for $p\meg 1$, and $F^{1,\infty}_\alpha(G) \times F^{\infty,1}_{-\alpha }(G) $ and since we shall not need the more general sesquilinear form, we shall not pursue this investigation.
\end{oss}

\begin{proof}[Second part of the proof of Theorem~\ref{teo:13}.]
	It remains to show that the sum $\sum_{j,k} \lambda_{j,k} a_{j,k}$ converges in $\Sr'(G)$. Observe that, by Proposition~\ref{prop:13b}, we may reduce to the case $p,q\Meg 1$.
	Take $(\psi_j)\in \widetilde \Psi_{\eps^{\grado}}$ and take $(\psi'_j)\in \Psi_{\eps^{\grado}}$ so that $\psi_j=\psi_j*\psi'_j$ for every $j\in\N$. We may also assume that $\psi'_j=\psi_j'^*$ for every $j\in\N$. Take a bounded subset $B$ of  $\Sr(G)$ and observe that
	\[
	\lambda_{j,k}\langle a_{j,k}\vert \phi\rangle=\sum_{j'} \lambda_{j,k} \langle a_{j,k}\vert \phi*\psi_{j'}\rangle =\sum_{j'} \lambda_{j,k} \langle a_{j,k}*\psi'_{j'}\vert \phi*\psi_{j'}\rangle
	\]
	for every $j,k\in\N$ and for every $\phi\in B$, thanks to Lemma~\ref{lem:41}. Observe that, by Proposition~\ref{prop:14} and Lemma~\ref{lem:67}, there is a constant $C_1>0$ such that
	\[
	\norm{ \Sc_+ \phi  }_{f^{p',q'}_{-\alpha}((x_{j,k}))}\meg C_1\norm{\phi}_{F^{p',q'}_{-\alpha}(G)}
	\]
	for every $\phi\in \Sr(G)$
	and
	\[
	\abs{(a_{j,k}*\psi'_{j'})(x)}\meg C_1 \eps^{(j-j')_+( S\grado+Q_*/p_0)+ (j'-j)_+ (K\grado-Q_*/p_0)}(1+d(x,x_{j,k})/\eps^{\min(j,j')})^{-N}
	\]
	for every $j,j',k\in\N$,
	where $(\Sc_+ \phi)_{j,k}=\max_{\overline B(x_{j,k},\delta R \eps^j)} \abs{\phi*\psi_j}$ for every $j,k\in\N$.
	Therefore,
	\[
	\abs{ \langle a_{j,k}\vert \phi\rangle }\meg  C_1 C_2 (\delta R)^{Q_*}[2\max(1,\delta R)]^N  \sum_{j',k'}  (\Sc_+ \phi)_{j',k'} \frac{\eps^{(j-j')_+( S\grado+Q_*/p_0)+ (j'-j)_* (K\grado-Q_*/p_0)+j'Q_*}}{(1+d(x_{j',k'},x_{j,k})/\eps^{\min(j,j')})^{N}}
	\]
	for every $(j,k)\in J$ and for every $\phi\in \Sr(G)$, where $C_2>1$ is such that $C_2^{-1} r^{Q_*}\meg \beta(B(e,r))\meg C_2 r^{Q_*}$ for every $r\in (0,\delta_0 R]$.  
	Consequently, by Lemma~\ref{lem:68} there is a constant $C_3>0$ such that
	\[
	\norm*{ (\eps^{-jQ_*}\langle a_{j,k}\vert \phi\rangle  )_{j,k}}_{f^{p',q'}_{-\alpha}((x_{j,k}))}\meg C_3\norm{ \Sc_+ \phi  }_{f^{p',q'}_{-\alpha}((x_{j,k}))}\meg C_1C_3\norm{\phi}_{F^{p',q'}_{-\alpha}(G)},
	\]
	which is bounded for $\phi\in B$. By means of Proposition~\ref{prop:15} we then see that the sum $\sum_{j,k} \lambda_{j,k} \langle a_{j,k}\vert \phi\rangle$ converges, uniformly as $\phi$ runs through $B$, whence the conclusion.
\end{proof}

The following results provide a weaker analogue of Proposition~\ref{prop:14}. One the one hand, much greater flexibility is allowed on how one may compute `samples.' On the other hand, we only get one inequality, so that in general the mapping $f \mapsto (\eps^{-jQ_*}\langle f\vert a_{j,k}\rangle)_{j,k} $ is not an isomorphism onto its image.   

\begin{prop}\label{prop:40bis}
	Take $p,q\in (0,\infty]$ and $ \alpha\in \R$, and take $K,S\in \N$, $N>0$, and $p_0\in [1,\infty]$ so that $S>( Q_*/p_0'+\alpha)/\grado$, $K>(Q_*(1/\min(1,p)-1/p_0')  -\alpha)/\grado$, and $N>D/\min(1,p)$.
	Take $\eps\in (0,1)$, $\delta>0$, and $R\Meg 2$. Then, there is a constant $C>0$ such that, for every system of $(K,S,N,p_0,\Lc)$-atoms $(a_{j,k})$ associated with some reduced $(\eps,\delta,R)$-lattice $(x_{j,k})_{(j,k)\in J}$,
	\[
	\norm{ (\eps^{-jQ_*}\langle f\vert a_{j,k}\rangle)_{j,k}  }_{b^{p,q}_\alpha(\eps,J)}\meg C \norm{f}_{B^{p,q}_\alpha(G)}
	\]
	for every $f\in B^{p,q}_\alpha(G)$.  
\end{prop}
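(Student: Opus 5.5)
The plan is to reduce Proposition~\ref{prop:40bis} to the matrix estimate of Lemma~\ref{lem:68bis}, in the same way as the second part of the proof of Theorem~\ref{teo:13}. Note that the roles of $K$ and $S$ are interchanged compared with Theorem~\ref{teo:13bis}: now $\langle f\vert a_{j,k}\rangle$ pairs $f$ with the atom. Derivatives of the atoms then produce decay on the \emph{opposite} side of the diagonal.

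\textbf{Step 1: decompose the pairing.} Take $(\psi_j)\in\widetilde\Psi_{\eps^{\grado}}$ and $(\psi'_j)\in\Psi_{\eps^{\grado}}$ with $\psi_j=\psi_j*\psi'_j$ and $\psi'_j=\psi'^*_j$, as in the last proof. By Lemma~\ref{lem:41} one formally has
\[
\langle f\vert a_{j,k}\rangle=\sum_{j'}\langle f*\psi_{j'}\vert a_{j,k}*\psi'_{j'}\rangle .
\]
To justify this, first take $f\in\Sr(G)$; convergence follows from the bounds below. For general $f\in B^{p,q}_\alpha(G)$ I would use the description of the pairing in Remark~\ref{oss:7}, or, when $q=\infty$, Proposition~\ref{prop:20b} and lower semicontinuity, and define the left-hand side accordingly.

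\textbf{Step 2: estimate each term.} Bound the sum by $\sum_{j',k'}\beta(B(x_{j',k'},\delta R\eps^{j'}))\,(\Sc_+f)_{j',k'}\max_{B(x_{j',k'},\delta R\eps^{j'})}\abs{a_{j,k}*\psi'_{j'}}$, where $(\Sc_+f)_{j',k'}=\max_{\overline B(x_{j',k'},\delta R\eps^{j'})}\abs{f*\psi_{j'}}$. Proposition~\ref{prop:14} gives $\norm{\Sc_+f}_{b^{p,q}_\alpha}\meg C\norm{f}_{B^{p,q}_\alpha(G)}$.

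For the kernel factor, Lemma~\ref{lem:67} gives the pointwise bound
\[
\abs{(a_{j,k}*\psi'_{j'})(x)}\lesssim \eps^{(j-j')_+(S\grado+Q_*/p_0)+(j'-j)_+(K\grado-Q_*/p_0)}(1+d(x,x_{j,k})/\eps^{\min(j,j')})^{-N}.
\]
I would rerun that proof with the Hölder exponents chosen so that the final decay matches the hypotheses $S>(Q_*/p_0'+\alpha)/\grado$ and $K>(Q_*(1/\min(1,p)-1/p_0')-\alpha)/\grado$.

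After multiplying by $\eps^{-jQ_*}$ and by the volume factor $\eps^{j'Q_*}$, the matrix $a_{(j,k),(j',k')}$ picks up the weight $\eps^{(j'-j)Q_*}$. This turns the exponents into the form required by Lemma~\ref{lem:68bis}, namely $(j'-j)_+S'+(j-j')_+K'$ with $K'>\alpha$ and $S'>Q_*/\min(1,p)-\alpha$, after normalising the regularity index.

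\textbf{Step 3: conclude.} Apply Lemma~\ref{lem:68bis} to obtain $\norm{(\eps^{-jQ_*}\langle f\vert a_{j,k}\rangle)}_{b^{p,q}_\alpha}\lesssim\norm{\Sc_+f}_{b^{p,q}_\alpha}$, and then invoke Proposition~\ref{prop:14}.

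\textbf{Main obstacle.} The delicate step is the exponent bookkeeping in Step 2. Which of $K$ and $S$ controls which side of the diagonal, and how $p_0$ versus $p_0'$ enters, must produce exactly the thresholds stated; the extra factor $\eps^{(j'-j)Q_*}$ is what shifts $Q_*/p_0$ into $Q_*/p_0'$. A secondary difficulty is the meaning of $\langle f\vert a_{j,k}\rangle$ for general $f\notin\mathring B^{p,q}_\alpha(G)$. I would handle it by taking the series of Step 1 as the definition, after checking that it converges absolutely by the same bounds.
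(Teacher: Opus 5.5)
Your proposal is correct and is essentially the paper's argument, which is written out for Proposition~\ref{prop:40} and uses Lemma~\ref{lem:68bis} in place of Lemma~\ref{lem:68}: expand the pairing via Remark~\ref{oss:7}, bound $a_{j,k}*\psi'^*_{j'}$ by Lemma~\ref{lem:67} applied directly with $p=p_0$ (no need to redo its H\"older step), absorb $\eps^{(j'-j)Q_*}$ to get the exponents $S\grado-Q_*/p_0'$ and $K\grado+Q_*/p_0'$ on the two sides of the diagonal, and conclude with Lemma~\ref{lem:68bis} and Proposition~\ref{prop:14}. For well-definedness, the paper notes that by Theorem~\ref{teo:13bis} (whose hypotheses for $B^{p',q'}_{-\alpha+(1/p-1)_+Q_*}(G)$ follow from yours on $K,S,N$) each $a_{j,k}$ lies in $\mathring B^{p',q'}_{-\alpha+(1/p-1)_+Q_*}(G)$, so the pairing is the established one and Remark~\ref{oss:7} applies; you therefore need neither a series-based definition nor Proposition~\ref{prop:20b}, which in any case requires $q<\infty$.
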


\begin{prop}\label{prop:40}
	Take $p,q\in (0,\infty]$ and $ \alpha\in \R$, and take $K,S\in \N$, $N>0$, and $p_0\in [1,\infty]$ so that $S>( Q_*/p_0'+\alpha)/\grado$, $K>(Q_*(1/\min(1,p,q)-1/p_0')  -\alpha)/\grado$, and $N>D/\min(1,p,q)$.
	Take $\eps\in (0,1)$, $\delta>0$, and $R\Meg 2$. Then, there is a constant $C>0$ such that, for every system of $(K,S,N,p_0,\Lc)$-atoms $(a_{j,k})$ associated with some reduced $(\eps,\delta,R)$-lattice $(x_{j,k})_{(j,k)\in J}$,
	\[
	\norm{ (\eps^{-jQ_*}\langle f\vert a_{j,k}\rangle)_{j,k}  }_{f^{p,q}_\alpha((x_{j,k}))}\meg C \norm{f}_{F^{p,q}_\alpha(G)}
	\]
	for every $f\in F^{p,q}_\alpha(G)$. 
\end{prop}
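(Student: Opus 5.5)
The plan is to reduce the estimate to the matrix bound of Lemma~\ref{lem:68} via a Calderón-type reproducing formula, exactly as in the second part of the proof of Theorem~\ref{teo:13}, with the roles of $f$ and of the atoms interchanged.

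\emph{Reproducing formula.} Fix $(\psi_j)\in\widetilde\Psi_{\eps^{\grado}}$ and $(\psi'_j)\in\Psi_{\eps^{\grado}}$ with $\psi_j=\psi_j*\psi'_j$ and $\psi'_j=\psi'^*_j$. By Lemma~\ref{lem:41},
\[
\langle f\vert a_{j,k}\rangle=\sum_{j'}\langle f*\psi_{j'}\vert a_{j,k}*\psi'_{j'}\rangle .
\]
I would first verify this for $f\in\Sr(G)$ and then pass to general $f$. For $p,q<\infty$ this is a density argument based on Proposition~\ref{prop:20}. Otherwise I would interpret $\langle f\vert a_{j,k}\rangle$ through the pairing of Section~\ref{sec:5} and its description in Remark~\ref{oss:7}, or directly through the right-hand side, since the bounds below make it absolutely convergent.

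\emph{Pointwise kernel bound.} Lemma~\ref{lem:67} gives a bound for $\abs{a_{j,k}*\psi'_{j'}(x)}$. Here I use the roles of $S$ and $K$ as in that lemma, but with $p_0$ replaced by $p_0'$, because the atom enters through its conjugate. This fits the hypotheses: $S\grado$ controls $j'<j$ and $K\grado$ controls $j'>j$. I would then cover $G$ at scale $\eps^{j'}$ by the balls $B(x_{j',k'},\delta R\eps^{j'})$ and replace $\abs{f*\psi_{j'}}$ on each ball by its maximum $(\Sc_+f)_{j',k'}$. This yields
\[
\eps^{-jQ_*}\abs{\langle f\vert a_{j,k}\rangle}\meg C\sum_{j',k'}\frac{\eps^{(j'-j)_+(K\grado-Q_*/p_0')+(j-j')_+(S\grado+Q_*/p_0')}\,\eps^{(j'-j)Q_*}\,(\Sc_+f)_{j',k'}}{(1+d(x_{j,k},x_{j',k'})/\eps^{\min(j,j')})^{N}} .
\]
The factor $\eps^{(j'-j)Q_*}$ comes from $\beta(B(x_{j',k'},\delta R\eps^{j'}))\eps^{-jQ_*}$.

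\emph{Apply the matrix lemma.} I would show that this matrix $a_{(j,k),(j',k')}$ satisfies the hypotheses of Lemma~\ref{lem:68bis}, respectively Lemma~\ref{lem:68}, with the lattice $\eps$ rescaled to $\eps^{\grado}$. The decay exponent for $j'>j$ is
\[
K\grado-Q_*/p_0'+Q_*>Q_*/\min(1,p,q)-\alpha ,
\]
and the one for $j'<j$ is
\[
S\grado+Q_*/p_0'-Q_*>\alpha-Q_*\cdot 0 ,
\]
where the loss is absorbed using $S\grado>Q_*/p_0'+\alpha$. These inequalities are exactly the stated hypotheses, once they are matched with the roles of $S$ and $K$ in those lemmas. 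The lemmas then bound the sequence $(\eps^{-jQ_*}\langle f\vert a_{j,k}\rangle)$ in $b^{p,q}_\alpha$ or $f^{p,q}_\alpha$ by $\Sc_+f$. Finally, Proposition~\ref{prop:14} gives $\norm{\Sc_+f}\meg C\norm{f}$ in the respective spaces, which proves the claim.

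\emph{Main obstacle.} The delicate point is the bookkeeping of exponents: checking that the normalisation $\eps^{j(S\grado+Q_*/p_0)}$ of the atoms, the conjugate exponent $p_0'$, and the volume factor combine to precisely the thresholds required by Lemmas~\ref{lem:68bis} and~\ref{lem:68}. A second issue is the justification of the reproducing formula when $p=\infty$ or $q=\infty$. There I would use absolute convergence, which follows from the same bounds, together with the weak-$*$ interpretation of the pairing.
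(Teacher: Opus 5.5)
Your architecture is the paper's: expand $\langle f\vert a_{j,k}\rangle$ via Remark~\ref{oss:7}, bound $a_{j,k}*\psi'^*_{j'}$ with Lemma~\ref{lem:67}, take maxima of $\abs{f*\psi_{j'}}$ on the balls $B(x_{j',k'},\delta R\eps^{j'})$, and conclude with Lemmas~\ref{lem:68bis}, \ref{lem:68} and Proposition~\ref{prop:14}. However, the step you flagged as the main obstacle is done incorrectly, and as written the argument does not close.

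There is no reason to replace $p_0$ by $p_0'$ in Lemma~\ref{lem:67}. The atoms are normalized in $L^{p_0}$, and conjugation does not change that. The lemma's proof is H\"older with $b_{j,k}$ (resp.\ $\Lc^{K+S}b_{j,k}$) in $L^{p_0}$ against the kernel in $L^{p_0'}$, and it gives the factor $\eps^{(j-j')_+(S\grado+Q_*/p_0)+(j'-j)_+(K\grado-Q_*/p_0)}$. So your kernel bound is unjustified. For $p_0=1$, it claims decay $\eps^{(j'-j)K\grado}$ for $j'>j$, while only $\eps^{(j'-j)(K\grado-Q_*)}$ is available. After the volume factor $\eps^{(j'-j)Q_*}$, your exponents are $S\grado-Q_*/p_0$ for $j>j'$ and $K\grado+Q_*/p_0$ for $j'>j$. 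The hypotheses only give $S\grado-Q_*/p_0'>\alpha$ and $K\grado+Q_*/p_0'>Q_*/\min(1,p,q)-\alpha$. Unless $p_0=2$, one of the two conditions you need in Lemma~\ref{lem:68} therefore does not follow. For $p_0=1$ you would need $S\grado>\alpha+Q_*$, and the claim that ``the loss is absorbed using $S\grado>Q_*/p_0'+\alpha$'' is false.

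The fix is to apply Lemma~\ref{lem:67} with $p=p_0$. The volume factor then converts $Q_*/p_0$ into $Q_*/p_0'$:
\[
(j-j')_+(S\grado+Q_*/p_0)+(j'-j)_+(K\grado-Q_*/p_0)+(j'-j)Q_*=(j-j')_+(S\grado-Q_*/p_0')+(j'-j)_+(K\grado+Q_*/p_0').
\]
The requirements of Lemma~\ref{lem:68} are a decay exponent $>\alpha$ for $j>j'$ and $>Q_*/\min(1,p,q)-\alpha$ for $j'>j$. With these exponents they are exactly the stated hypotheses, which is why $p_0'$ appears in the statement.

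No rescaling of the lattice is involved. It is already an $(\eps,\delta,R)$-lattice, and $\psi_j\in\widetilde\Psi_{\eps^\grado}$ lives at spatial scale $\eps^j$, so $S\grado$ and $K\grado$ are already exponents relative to $\eps$.

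To give meaning to $\langle f\vert a_{j,k}\rangle$ for every $f\in F^{p,q}_\alpha(G)$, including $p=\infty$ or $q=\infty$, the paper notes that Theorem~\ref{teo:13} applied to a single atom gives $a_{j,k}\in\mathring F^{p',q'}_{-\alpha+(1/p-1)_+Q_*}(G)$. Hence the pairing of Section~\ref{sec:5} and Remark~\ref{oss:7} apply directly.
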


\begin{proof}[Proof of Proposition~\ref{prop:40}.]
	Observe first that, by Theorem~\ref{teo:13}, $a_{j,k}\in \mathring F^{p',q'}_{-\alpha+(1/p-1)_+Q_*}(G)$ (which is contained in $\mathring B^{\infty,\infty}_{-\alpha+(1/p-1)Q_*}(G)$ when $p<1$ by Proposition~\ref{prop:12}) for every $(j,k)\in J$.
	Take $(\psi_j)\in \widetilde\Psi_{\eps^\grado}$ and take $(\psi'_j)\in \Psi_{\eps^\grado}$ so that $\psi_j=\psi_j*\psi'_j$ for every $j\in\N$, and observe that
	\[
	\langle f\vert a_{j,k}\rangle=  \sum_{j'\in \N} \langle f*\psi_{j'} \vert a_{j,k}*\psi'^*_{j'}\rangle
	\]
	for every $f\in F^{p,q}_\alpha(G)$ and for every $(j,k)\in J$, thanks to Remark~\ref{oss:7}. 
	Then, observe that by Lemma~\ref{lem:67} there is a constant $C_1>0$ such that
	\[
	\begin{split}
		\eps^{-jQ_*}\abs{\langle f\vert a_{j,k}\rangle}&\meg \eps^{-jQ_*}\sum_{j',k' } \int_G \max_{\overline B(x_{j',k'},\delta R \eps^j)} \abs{f*\psi_{j'}}\chi_{ B(x_{j',k'},\delta R\eps^{j'})} \abs{ a_{j,k}*\psi'^*_{j'} }\,\dd \beta\\
		&\meg C_1\sum_{j',k' }  \max_{\overline B(x_{j',k'},\delta R \eps^j)} \abs{f*\psi_{j'}}  \frac{\eps^{(j'-j)Q_*}  \eps^{(j-j')_+(S \grado+Q_*/p_0)+(j'-j)_+(K\grado-Q_*/p_0) }}{ (1+d(x_{j',k'},x_{j,k})/\eps^{\min(j,j')})^{N}}\\
		&= C_1\sum_{j',k'}  \max_{\overline B(x_{j',k'},\delta R \eps^j)} \abs{f*\psi_{j'}}  \frac{  \eps^{(j-j')_+(S \grado-Q_*/p'_0)+(j'-j)_+(K\grado+Q_*/p'_0) }}{ (1+d(x_{j',k'},x_{j,k})/\eps^{\min(j,j')})^{N}}.
	\end{split}
	\]
	The assertion then follows from Lemmas~\ref{lem:68bis} and~\ref{lem:68} combined with Proposition~\ref{prop:14}.
\end{proof}

\end{document}